\newtheorem{coro}{Corollary}[section]
\newtheorem{defi}{Definition}[section]
\newtheorem{prop}{Proposition}[section]
\newtheorem{theo}{Theorem}
\newtheorem{lemm}{Lemma}[section]
\newtheorem{ques}{Question}[section]
\newtheorem{exem}{Example}
\newtheorem{rema}{Remark}
\newtheorem{clai}{Claim}
\newtheorem{claim}{Claim}[section]
\def\R{I\kern -0.37 em R}
\def\N{I\kern -0.37 em N}
\def\Z{I\kern -0.37 em Z}
\def\supess_#1{\mathop{\rm supess}\limits_{#1}}
\def\infess_#1{\mathop{\rm infess}\limits_{#1}}
\def\DD{{\mathbb D}}
 \def\NN{{\mathbb N}} 
\def\PP{{\mathbb P}}
\def\QQ{{\mathbb Q}} \def\RR{{\mathbb R}} \def\SS{{\mathbb S}}
\def\TT{{\mathbb T}}
 \def\ZZ{{\mathbb Z}}
\def\Si{\Sigma}
\def\De{\Delta}
\def\Ga{\Gamma}
\def\cA{{\mathcal A}}  \def\cG{{\mathcal G}}  \def\cS{{\mathcal S}} 
  \def\cH{{\mathcal H}}   
\def\cC{{\mathcal C}}  \def\cI{{\mathcal I}} \def\cO{{\mathcal O}} 
\def\cD{{\mathcal D}}   \def\cP{{\mathcal P}} \def\cV{{\mathcal V}}
\def\cE{{\mathcal E}}    
\def\cF{{\mathcal F}}  \def\cL{{\mathcal L}} \def\cR{{\mathcal R}} \def\cX{{\mathcal X}}
\title[Circle at infinity of foliations of $\RR^2$]{\bf Action on the circle at infinity of foliations of $\RR^2$} 
\begin{document}



\author{Christian Bonatti}
\begin{abstract}This paper provides a canonical compactification of the plane $\RR^2$ by adding a circle at infinity  associated to  a  countable family of  singular foliations or laminations (under some hypotheses), generalizing an idea by Mather \cite{Ma}.  Moreover any homeomorphism of $\RR^2$ preserving the foliations extends on the circle at infinity.  

Then this paper provides conditions ensuring the minimality of the action on the circle at infinity induced by an action on $\RR^2$ preserving  one foliation or two transverse foliations. 

In particular the action on the circle at infinity associated to an Anosov flow $X$ on a closed $3$-manifold is minimal if and only if $X$ is  non-$\RR$-covered.

\end{abstract}

\maketitle
$$$$
\vskip 5mm
\footnotesize{
\textbf{Keywords:} Foliation of the plane, Anosov flow, compactification. 

\textbf{Codes AMS: 37D20-37E10-37E35-37C86}}

\today


\section{Introduction}
\subsection{General presentation}
There are many ways to compactify the plane $\RR^2$, the simplest one being the Alexandrov compactification by point at infinity, and $\RR^2\cup\{\infty\}$ is the topological sphere $\SS^2$. This compactication is canonical and does not depend on any extra structure on $\RR^2$.  That is its strength, but also its weakness as it does not bring any informations on any structure we endow $\RR^2$. 

Another very natural and usual compactification of $\RR^2$ is by adding a circle at infinity, so that $\RR^2\cup \SS^1$ is the disc $\DD^2$. This compactification is not canonical: it consists in a homeomorphism $h\colon \RR^2\to \mathring{\DD^2}$, where $\mathring{\DD^2}$ is the open disc. Two homeomorphisms $h_1,h_2$ define the same compactification if $h_2\circ h_1^{-1}\colon \mathring{\DD^2}\to\mathring{\DD^2}$ extends on $\SS^1=\partial\DD^2$ as a homeomorphism of $\DD^2$.  There are uncountably many such a compactification.  

Here, we start be recalling Mather \cite{Ma} canonical compactification of the plane $\RR^2$, endowed with a foliation $\cF$,  by a circle at infinity $\SS^1_\cF$. Then we explore the flexibility of this contruction for extending it to more general objects. 
Thus, we provide an elementary (nothing sophisticated), simple (nothing too complicated), and unified construction which associates a compactification $\DD^2_\cF$ of the plane $\RR^2$ by the disc $\DD^2$ to  a countable family $\cF=\{\cF_i\}$ of foliations, non-singular or with singular points of saddle type,  which are pairwise transverse or at least have some kind of weak transversality condition at infinity, see the precise statements below. The boundary $\partial \DD^2_\cF$ is called \emph{the circle at infinity} of $\cF$ and is denoted by $\SS^1_\cF$. This compactification is unique, in the sense that the identity on $\RR^2$  extends as a homeomorphism on the circles at infinity of two such compactifications. 

For giving a concrete example, Corollary~\ref{e.algebraic} 
builds this canonical compactification $\DD^2_\cF$ associated to any countable family $\cF=\{\cF_i\}$  of singular foliations, where each $\cF_i$ is directed by a polynomial vector field on $\RR^2$ whose singular points are hyperbolic saddles.

The uniqueness of the compactification implies that any homeomorphism of $\RR^2$ preserving  $\cF$ (that is, permuting the $\cF_i$) extends as an homeomorphism of the compactification $\DD^2_\cF$ , inducing a homeomorphism of the circle at infinity $\SS^1_\cF$.

\subsection{Mather idea for building the circle at infinity}

The common setting for this unified construction are families of \emph{rays}, where a ray is a proper topological embedding of $[0,+\infty)$ on $\RR^2$. 
We require that the \emph{germs  of the rays} in the family are pairwize disjoint, meaning that the intersection between any two distinct rays is compact. The key idea is that a set of rays  in $\RR^2$ whose germs are pairwize disjoint  is  \emph{totally cyclically ordered}, and we will use this cyclic order for building the circle at infinity.

The key technical result (essentially due to \cite{Ma}) is:

\begin{theo}\label{t.rays} Let $\cR$ be a family of rays in $\RR^2$ whose germs are pairwise disjoint. Let $\cE\subset \cR$ be a countable subset which is \emph{separating for the cyclic order}, that is, any non-degenerate interval contains a point in $\cE$ (see Definition~\ref{d.separating}).

Then there is a compactification of $\RR^2$ by the disc $\DD^2$ so that:
\begin{itemize}
 \item any ray of $\cR$ tends to a point of the circle at infinity $\partial \DD^2=\SS^1$.
 \item any two distinct rays of $\cR$ tend to distinct points of $\SS^1$
 \item the points of $\SS^1$ which are the limit point of a ray in $\cR$ are dense in $\SS^1$.
\end{itemize}
Furthermore, this compactification is unique up to a homeomorphism of $\DD^2$ and does not depend on the separating countable set $\cE$.
\end{theo}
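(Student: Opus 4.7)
The plan is to use the countable family $\cE$ as a combinatorial skeleton: place its elements as a dense, cyclically ordered subset of $\SS^1$, and then glue a compactification of $\RR^2$ around this data.

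Enumerate $\cE=\{r_1,r_2,\ldots\}$. Any countable totally cyclically ordered set admits a cyclic-order-preserving injection into $\SS^1$, and the separating hypothesis guarantees that one can arrange the image $\iota(\cE)\subset\SS^1$ to be dense: I fix such an injection $\iota\colon\cE\to\SS^1$. Since the germs of rays in $\cE$ are pairwise disjoint, for every $N$ one can pick a large compact disk $D_N\subset\RR^2$ outside of which the rays $r_1,\ldots,r_N$ are pairwise disjoint; the complement $\RR^2\setminus(D_N\cup r_1\cup\cdots\cup r_N)$ then splits into finitely many unbounded \emph{sectors}, cyclically ordered around infinity in a way compatible with $\iota$.

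I would construct inductively a sequence of homeomorphisms $\varphi_N\colon\RR^2\to\mathring{\DD^2}$ such that $\varphi_N(D_N)$ lies in a small concentric disk, each tail of $r_n$ ($n\le N$) is sent onto an arc ending at $\iota(r_n)\in\SS^1$, and the cyclic arrangement of sectors is respected. The inductive step is a surgery in a neighborhood of the new ray $r_{N+1}$: one modifies $\varphi_N$ only in a thin cyclic strip between two consecutive already-placed rays of $\cE$, outside an exhausting compact set. Choosing these modifications small enough, a standard Cauchy argument yields a limit homeomorphism $\varphi=\lim\varphi_N\colon\RR^2\to\mathring{\DD^2}$ sending each $r_n\in\cE$ to an arc with endpoint $\iota(r_n)$. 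The resulting compactification $\DD^2_\cR$ is what we want.

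For an arbitrary $r\in\cR\setminus\cE$, the separating property provides sequences of rays in $\cE$ approaching $r$ from both sides in the cyclic order; under $\iota$ their endpoints converge in $\SS^1$ to a single point $p(r)$, and, because the corresponding arcs $\varphi(s_n^{\pm})$ bound shrinking sectors containing the tail of $\varphi(r)$, the proper curve $\varphi(r)$ is forced to converge to $p(r)$. Different rays of $\cR$ can again be separated by some element of $\cE$, yielding distinct limits; density of ray endpoints on $\SS^1$ is inherited from density of $\iota(\cE)$. Uniqueness is then soft: given two compactifications satisfying the conclusions, the rays of $\cE$ determine in each a dense cyclically ordered subset of $\SS^1$, and the induced order-isomorphism between these countable dense subsets extends uniquely to a homeomorphism of $\SS^1$, hence of $\DD^2$.

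The main obstacle is the step establishing that $\varphi(r)$ has a single limit point for $r\in\cR\setminus\cE$: a priori $\varphi(r)$ could accumulate on a whole arc of $\SS^1$. This is exactly where the separating hypothesis is essential, as it supplies the two-sided squeeze by rays of $\cE$ that cuts off the accumulation set to a single point; the remainder of the argument is combinatorial bookkeeping of the cyclic order.
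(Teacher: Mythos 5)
Your proposal is correct and follows essentially the same route as the paper: embed the countable separating set $\cE$ order-preservingly as a dense subset of $\SS^1$ (the paper's Proposition~\ref{p.cyclic-order}), build the homeomorphism $\RR^2\to\mathring{\DD^2}$ inductively along an exhaustion by compact disks placing one ray of $\cE$ at a time in the prescribed cyclic position, squeeze an arbitrary ray of $\cR$ between rays of $\cE$ to get a single well-defined limit point, and get uniqueness from the extension of the order isomorphism between countable dense subsets of $\SS^1$. The only cosmetic difference is that the paper first defines the topology on $\RR^2\sqcup\SS^1$ abstractly via half-planes bounded by lines with ends in $\cE$, whereas you go directly to the limit of homeomorphisms, but the content is the same.
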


Then Theorem~\ref{t.union} provides such a canonical compactification for a countable union $\cR=\bigcup\cR_i, i\in I\subset \NN$ of families of rays, assuming that the germs of rays in $\cR$ are pairwise disjoint and each $\cR_i$ admits a countable separating subset $\cE_i$. The difficulty here is that $\cR$ by itself may not admit any separating family. The idea for solving this problem consists in considering a natural equivalence relation on $\cR$, identifying the rays which cannot be separated.   

\subsection{Countable families of transverse foliations}

A natural setting where we will apply this general construction  are (at most countable) families of transverse foliations on the plane $\RR^2$. Notice that any \emph{half leaf} of a (non-singular) foliation of $\RR^2$ is a ray.
An \emph{end of leaf} is the germ at infinity of an half leaf.
In this setting we get:
\begin{theo}\label{t.foliations} Let $\cF=\{\cF_i\}_{i\in I\subset \NN}$ be an at most countable family of  pairwise transverse foliations on the plane $\RR^2$.

There is a compactification $\DD^2_\cF\simeq \DD^2$ of $\RR^2$ by adding a circle $\SS^1_\cF=\partial\DD^2_\cF$ with the following properties:
 \begin{itemize}
  \item Any end of leaf tends to a point of the circle at infinity $\SS^1_\cF$,
  \item The set of ends of leaves tending to a same points  of $\SS^1_\cF$ is at most countable,
  \item For any non-empty open subset $O\subset \SS^1_\cF$ the set of   ends of leaves  having their limit in $O$ is uncountable.
 \end{itemize}

 This compactification  with these three properties is unique, up to a homeomorphism of $\DD^2_\cF$.
\end{theo}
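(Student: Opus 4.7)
The strategy is to apply Theorem~\ref{t.union} to a suitable family of rays. For each $i\in I$ let $\cR_i$ consist of one chosen representative half-leaf for each end of each non-compact leaf of $\cF_i$, and set $\cR=\bigcup_{i}\cR_i$. First I check that the germs at infinity of rays in $\cR$ are pairwise disjoint. Two distinct rays of $\cR_i$ either lie on disjoint leaves, or on a common leaf but represent opposite ends, in which case their intersection is a compact arc. Two rays coming from transverse foliations $\cF_i\ne\cF_j$ meet in at most one point, for otherwise the concatenation of two transverse arcs between intersection points would bound a disk in the simply connected plane $\RR^2$, contradicting an index computation on the two transverse line fields.

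Next, fix a countable dense subset $D\subset\RR^2$. For each $i$ let $\cE_i\subset\cR_i$ be the set of representatives of those ends of leaves of $\cF_i$ that pass through a point of $D$; this is countable. To verify the separating property, take two distinct ends $\alpha,\beta$ of $\cF_i$ with representatives $r_\alpha,r_\beta\in\cR_i$. Together with a compact arc, these half-leaves bound an open region $U$ at infinity of $\RR^2$ corresponding to one arc of the cyclic order between $\alpha$ and $\beta$. Since $r_\alpha,r_\beta$ are arcs of leaves of $\cF_i$, no leaf of $\cF_i$ can cross them, so any leaf entering $U$ has an end escaping to infinity inside $U$. Picking any point $p\in D\cap U$, the leaf of $\cF_i$ through $p$ has an end strictly between $\alpha$ and $\beta$ in the cyclic order, and its representative belongs to $\cE_i$.

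Applying Theorem~\ref{t.union} to $(\cR,\{\cE_i\})$ produces the desired compactification $\DD^2_\cF$. Property (a) is immediate since every end of leaf is represented by a ray of $\cR$. For (b), the separating property of $\cE_i$ within $\cR_i$ (and a fortiori of $\bigcup_j\cE_j$) implies via Theorem~\ref{t.union} that distinct ends of a single $\cF_i$ tend to distinct points of $\SS^1_\cF$; summing over the countable index set $I$ shows that at most countably many ends share a limit on $\SS^1_\cF$. For (c), given a nonempty open $O\subset\SS^1_\cF$, density of the limits of rays of $\cR$ yields some $p\in O$ equal to the limit of a ray $r\in\cR_i$, itself a half-leaf of some leaf $L$. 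Choosing a short transversal $\tau$ to $\cF_i$ crossing $L$ produces an uncountable one-parameter family of leaves; the half-leaves on the same side as $r$ yield uncountably many ends whose images in the cyclic order form a small arc containing the end of $r$. By (b) these ends have pairwise distinct limits in $\SS^1_\cF$, and by making $\tau$ sufficiently short these limits all cluster near $p$ and so lie in $O$. Uniqueness follows from the uniqueness in Theorem~\ref{t.rays}: any compactification of $\RR^2$ satisfying (a)--(c) makes the (countable) images of the rays of $\bigcup_i\cE_i$ into a dense subset of $\SS^1_\cF$ arranged in the prescribed cyclic order, and this data pins down the compactification up to a homeomorphism of $\DD^2_\cF$.

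The main obstacle I expect is property (c): showing that the monotone map from the parameter along $\tau$ to $\SS^1_\cF$ has nondegenerate image rather than collapsing to a single point. This step combines the monotonicity of the end-to-limit map (inherited from the cyclic-order-preserving nature of the compactification) with the countability bound (b), which rules out an uncountable collection of ends sharing a single limit. A secondary subtlety is ensuring in Theorem~\ref{t.union} that the equivalence relation identifying non-separable rays never collapses two distinct ends of the same $\cF_i$, which is precisely what is guaranteed by $\cE_i\subset\bigcup_j\cE_j$ being separating for $\cR_i$.
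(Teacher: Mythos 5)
There is a genuine gap, and it sits at the foundation of your construction: the claim that $\cE_i$ (ends of leaves through a countable dense set $D$) is separating for the set $\cR_i$ of \emph{all} ends of leaves of $\cF_i$. This is false as soon as $\cF_i$ has a pair of non-separated leaves, which the theorem must allow. If $A$ and $B$ are two asymptotic ends (say the right end of $L_1$ and the left end of $L_2$ with $L_1,L_2$ non-separated from above), they bound a hyperbolic sector (Lemma~\ref{l.hyperbolic}): the region $U$ between them is a non-empty open set foliated by \emph{compact} arcs of leaves, each of which enters and exits $U$ through the compact arc $c$ of $\partial U$. So your assertion that ``any leaf entering $U$ has an end escaping to infinity inside $U$'' fails: the leaf through a point $p\in D\cap U$ has both of its ends outside $(A,B)$, and in fact $(A,B)\cap\cR_i=\emptyset$, so \emph{no} subset of $\cR_i$ can separate $A$ from $B$. (Choosing $c$ transverse to $\cF_i$ does not rescue this, because no single transversal joins $A$ to $B$ inside the sector; with a concatenation of transversals and leaf arcs, leaves still enter through one transversal piece and exit through another, exactly as in the proof of Lemma~\ref{l.finite}.) Consequently Theorem~\ref{t.union} cannot be applied to the family of all ends with your $\cE_i$, and the derived claim in your step (b) --- that distinct ends of a single $\cF_i$ always reach distinct points of $\SS^1_\cF$ --- is also false for the correct compactification: asymptotic ends of the same foliation do share their limit point (see Corollary~\ref{c.hyperbolics}), which is why item 2 of the statement only asserts countability.

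The paper's route avoids this by taking $\cR_i$ to be the set of ends of \emph{regular} leaves of $\cF_i$ only. For those, Lemma~\ref{l.separating} shows that the ends of a countable dense family of regular leaves form a separating set (your half-plane argument essentially works there, because regularity of $L$ forces $\mathfrak{U}(L^{\pm})=L^{\pm}$). Theorem~\ref{t.union} then yields the compactification together with items 2 and 3 for regular ends, and the at most countably many non-regular ends are handled afterwards by Lemma~\ref{l.union}: they still converge to points of $\SS^1_\cF$, possibly in groups sharing a common limit, which is compatible with item 2. Your verification of pairwise disjointness of germs and your argument for item 3 are fine in spirit, but the proof cannot be repaired without this restriction to regular ends (or an equivalent device such as the quotient by the relation $\sim$ of Proposition~\ref{p.union}).
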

The circle $\SS^1_\cF$ is called \emph{the circle at infinity} of the family $\cF=\{\cF_i\}_{i\in I\subset \NN}$.

\begin{rema} The countablity of the set of ends tending to the same point  implies that
\begin{itemize}\item the two ends of a given leaf always have distinct limits on $\SS^1_\cF$.
 \item if two leaves $L_1,L_2$ of the same  foliation $\cF_i$ have the same pair of limits of ends, they are equal (see Lemma~\ref{l.injective}).
\end{itemize}
 \end{rema}

Recall that foliations of $\RR^2$ may have leaves which are \emph{not separated} one from the other. The leaves which are separated from any other leaves are called \emph{regular leaves}. At most countably many leaves are not regular (see here Lemma~\ref{l.countable}). We will see that,

\begin{prop}Let $\cF=\{\cF_i\}_{i\in I\subset \NN}$ be an at most countable family of  pairwise transverse foliations on the plane $\RR^2$. Any two distinct  ends of regular leaves of the same foliation $\cF_i$  tend to two distinct points of $\SS^1_\cF$.
\end{prop}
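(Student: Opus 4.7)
If the two ends belong to the same regular leaf, the statement is immediate from the remark following Theorem~\ref{t.foliations} (the two ends of any single leaf always have distinct limits). So suppose $L_1, L_2$ are distinct regular leaves of $\cF_i$ with ends $e_1, e_2$ both tending to the same point $p \in \SS^1_\cF$. The plan is to use $L_1, L_2$ to produce uncountably many distinct ends of leaves of $\cF_i$ all tending to $p$, contradicting the countability statement of Theorem~\ref{t.foliations}.

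Let $R$ be the connected component of $\RR^2 \setminus (L_1\cup L_2)$ whose boundary contains both $L_1$ and $L_2$. As $L_1, L_2$ are leaves of $\cF_i$, the region $R$ is $\cF_i$-saturated. By the above remark, the opposite ends $\bar e_1, \bar e_2$ tend to points $q_1, q_2$ with $q_1, q_2 \neq p$. Reading off the boundary of $R$ inside the topological disc $\DD^2_\cF$ gives
\[ \overline{R} \cap \SS^1_\cF = \{p\} \cup \gamma, \]
where $\gamma$ is the closed arc of $\SS^1_\cF$ from $q_1$ to $q_2$ not passing through $p$ (reducing to $\{q_1\}$ if $q_1 = q_2$). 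Since $\gamma$ is closed and $p \notin \gamma$, the point $p$ is \emph{isolated} in $\{p\} \cup \gamma$.

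I next exploit the regularity of $L_1$: it admits a saturated product neighborhood $V_1 \cong L_1 \times (-\varepsilon, \varepsilon)$ whose leaves are the $L^{(t)} := L_1 \times \{t\}$; without loss of generality, $L^{(t)} \subset R$ for $t \in (0, \varepsilon)$ (the $R$-side of $L_1$). The product structure singles out, for each such $t$, an end $e^{(t)}$ of $L^{(t)}$ corresponding to $e_1$; since $e^{(t)} \subset R$, its limit lies in $\{p\} \cup \gamma$. I then argue that $\lim e^{(t)} \to p$ in $\DD^2_\cF$ as $t \to 0^+$ (in essence, $\overline{L^{(t)}}$ converges to $\overline{L_1}$ in the compactification). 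Combined with the isolation of $p$ in $\{p\} \cup \gamma$, this forces $\lim e^{(t)} = p$ for all sufficiently small $t > 0$, producing uncountably many distinct ends of leaves of $\cF_i$ tending to $p$ — the desired contradiction.

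The main obstacle is justifying the convergence $\lim e^{(t)} \to p$. The natural approach is via the cyclic-order construction of Theorem~\ref{t.rays}: $e^{(t)}$ becomes arbitrarily close to $e_1$ in the cyclic order on $\cR$ as $t \to 0^+$, because any separating ray of a countable family $\cE$ lying between them in the cyclic order would have to accumulate on $L_1$ from the $R$-side — which is ruled out by the regularity of $L_1$. Making this rigorous requires a careful analysis of the topology of $\DD^2_\cF$ near the cusp at $p$.
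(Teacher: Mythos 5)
Your overall strategy --- producing uncountably many distinct ends of leaves of $\cF_i$ converging to $p$ and contradicting the countability clause of Theorem~\ref{t.foliations} --- is coherent, and the reduction to two distinct regular leaves, the identification of $\overline{R}\cap\SS^1_\cF$ with $\{p\}\cup\gamma$, and the final counting step are all essentially fine. But the step you yourself flag, namely $\lim e^{(t)}\to p$ in $\SS^1_\cF$, is a genuine gap, and the reason you sketch for it does not close it. Regularity of $L_1$ is a condition internal to the foliation $\cF_i$: it says that $\mathfrak{U}(L_1)=\bigcap_t U_t$ contains no leaf of $\cF_i$ other than $L_1$. It says nothing about the other foliations $\cF_j$, whose leaves pass through every point of $L_1$ and therefore do accumulate on $L_1$ from the $R$-side; so ``accumulation on $L_1$ is ruled out by regularity'' is not a valid objection to the obstructing rays. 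Concretely, the scenario you must exclude is this: $\lim e^{(t)}$ converges monotonically to some $q\neq p$, and the open arc between $q$ and $p$ containing no $\lim e^{(t)}$ --- which by the third item of Theorem~\ref{t.foliations} contains uncountably many limits of ends of leaves --- contains only limits of ends of leaves of the foliations $\cF_j$ with $j\neq i$. Such an end lies strictly between $e^{(t)}$ and $e_1$ in the cyclic order for every $t$, hence admits for each $t$ a realization inside the half-strip bounded by $e_1$, a transverse arc, and $e^{(t)}$, while never crossing $L_1$; nothing in the regularity of $L_1$ within $\cF_i$ forbids this. Excluding it requires an argument in the spirit of Lemma~\ref{l.tranverse-sector} or Proposition~\ref{p.proj}, exploiting that each $\cF_j$ foliates the whole plane transversely to $\cF_i$, so that a half-leaf of $\cF_j$ trapped in all these shrinking half-strips would have to coexist, without intersection, with the $\cF_j$-leaves issued from the points of $e_1$ itself. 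That is real work which the proposal does not carry out.

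For comparison, the paper obtains the statement directly from the construction: $\SS^1_\cF$ is built by applying Theorem~\ref{t.union} to the families $\cR_i$ of ends of regular leaves, and Lemma~\ref{l.separating} provides, for each $i$ separately, a countable subset $E_i\subset\cR_i$ separating $\cR_i$ --- this is where regularity is used, and only against other leaves of the same $\cF_i$. Two distinct ends of regular leaves of $\cF_i$ then have infinitely many elements of $E_i$ in each of the two intervals they bound, so they are not identified by the relation $\sim$ of Proposition~\ref{p.union} and tend to distinct points at infinity (item 2 of Theorem~\ref{t.union}). Alternatively, you could compose with the projection $\DD^2_\cF\to\DD^2_{\cF_i}$ and invoke the single-foliation statement (the remark following Theorem~\ref{t.feuilletage}); either route bypasses the delicate analysis of the topology of $\SS^1_\cF$ near $p$ on which your argument depends.
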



 

Now, in the setting of Theorem~\ref{t.foliations} we can apply this theorem to each foliation $\cF_i$, $i\in I$ so that we get a family of compactifications $\DD^2_{\cF_i}$.  In fact, we get a compactification $\DD^2_J$ for any subfamily $J\subset I$ leading to an uncountable set of (maybe distinct) compactifications of $\RR^2$ by the disc $\DD^2$ (Example~\ref{e.different}  provides a simple example where these compactifications $\DD^2_J$, for  $J\subset I$, are pairwize distincts and uncountably many).  

These compactifications are easily related : for any subfamily $J\subset I$ the identity map on $\RR^2$ extends in a unique way by continuity as a projection $\Pi_{I,J}\colon \DD^2_{\cF}=\DD^2_I\to\DD^2_J$, which simply consists in colapsing the intervals in $\SS^1_I$ which do not contain any limit of an end of a leaf of a foliation $\cF_j, j\in J$.  

We will also see in a simple example that the assumption of \emph{at most countability} of the family $I$ of foliations cannot be erased: for instance, the conclusion Theorem~\ref{t.foliations} is false for the family of all afine foliations (by parallel straight lines) of $\RR^2$, parametrized by $\RR\PP^1$  (see Example~\ref{e.uncountable}).  

Example~\ref{e.Weierstrass} and Lemma~\ref{l.center-like} present a simple example where generic points (i.e. points in a residual set) of the circle at infinity $\SS^1_\cF$ of a foliation $\cF$ are not the limit of any  end of leaf of $\cF$. In this example, at the contrary, points in a dense subset of $\SS^1_\cF$ are limit of $2$ distinct ends of leaves. 

Lemma~\ref{l.finite} and~\ref{l.hyperbolic}  caracterize the points $p$ at the circle at infinity $\SS^1_\cF$, where $\cF$ is a foliation of $\RR^2$,  which are limit of several ends of leaves: the rays arriving at $p$ are ordered as an interval of $\ZZ$ and two successive ends bound  a hyperbolic sector.

Corollary~\ref{c.hyperbolics} generalizes Lemma~\ref{l.finite} and~\ref{l.hyperbolic} to the case of a countable family $\cF=\{\cF_i\}$ of transverse foliations and gives a complete description of the points in $\SS^1_\cF$ which are limit of several ends of leaves of the same $\cF_i$. 

\subsection{Countable families of non-transverse or singular foliations}

This construction can be generalized easily to the setting of families of non transverse or singular foliations. Let us present the most general setting we consider here. 

The foliations we consider admit singular points which are  \emph{saddle point with $k$-separatrices} (also called \emph{ $k$-prongs singularity}), $k>1$, the case $k=2$ corresponding to non-singular points. 

In this setting an \emph{end of leaf} is a ray of $\RR^2$ disjoint from the singular points and contained in a leaf. 

\begin{theo}\label{t.countable-singular}Let $\cF=\{\cF_i\}$, $i\in I\subset \NN$ be a family of singular foliations of $\RR^2$ whose singular points are each a saddle with $k$-separatrices with $k>2$. We assume that, given any two  ends $L_1, L_2$ of leaves  we have the following alternative:
\begin{itemize}
 \item either the germs of $L_1$ and $L_2$ are disjoints
 \item or the germs of $L_1$ and $L_2$ coincide. 
\end{itemize}
Then there is a compactification $\DD^2_\cF\simeq \DD^2$ of $\RR^2$ by adding a circle $\SS^1_\cF=\partial\DD^2_\cF$ with the following properties:
 \begin{itemize}
  \item Any end of leaf tends to a point of the circle at infinity $\SS^1_\cF$,
  \item The set of ends of leaves tending to a same points  of $\SS^1_\cF$ is at most countable,
  \item For any non-empty open subset $O\subset \SS^1_\cF$ the set of   ends of leaves  having their limit in $O$ is uncountable.
 \end{itemize}

 This compactification  with these three properties is unique, up to a homeomorphism of $\DD^2_\cF$.
\end{theo}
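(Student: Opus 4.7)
My plan is to reduce Theorem~\ref{t.countable-singular} directly to Theorem~\ref{t.union} by producing, for each foliation $\cF_i$, a countable subfamily of ends which is separating in the sense of Definition~\ref{d.separating}. For each $i\in I$ let $\cR_i$ denote the set of ends of leaves of $\cF_i$, viewed as rays of $\RR^2$ disjoint from the singular points, and set $\cR=\bigcup_{i\in I}\cR_i$. The standing hypothesis is exactly that any two elements of $\cR$ have either disjoint germs or coincident germs at infinity, which is precisely the compatibility condition underlying Theorem~\ref{t.union}. So once each $\cR_i$ carries a countable separating subfamily $\cE_i$, Theorem~\ref{t.union} can be invoked directly.

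To construct $\cE_i$, I would follow the strategy used in the non-singular transverse case (Theorem~\ref{t.foliations}). The singular set of $\cF_i$ is a discrete subset of $\RR^2$, hence countable, so the set of regular points of $\cF_i$ contains a countable dense subset $\{x_n^i\}_{n\in\NN}$ of $\RR^2$. For each $n$, I pick one end $\rho_n^i$ of the leaf of $\cF_i$ through $x_n^i$ and set $\cE_i=\{\rho_n^i\}_{n\in\NN}$. Any non-degenerate cyclic interval of rays of $\cR_i$ at infinity corresponds to an open region of the plane swept out by a continuous family of leaves of $\cF_i$; by density this region contains some $x_n^i$, so its end has an associated $\rho_n^i\in\cE_i$ in the interval. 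This provides the required separating family.

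Applying Theorem~\ref{t.union} to $\cR$ with the separating families $\{\cE_i\}_{i\in I}$ then produces the compactification $\DD^2_\cF$ and its circle $\SS^1_\cF$, together with the first property (every end tends to a point of $\SS^1_\cF$) and the third property (uncountability of limits of ends in any open arc of $\SS^1_\cF$, since each $\cE_i$ is dense). For the second property, that at most countably many ends have the same limit, I would argue that within a single $\cF_i$ the fibre over each point is at most countable by a local analysis analogous to Lemmas~\ref{l.finite} and~\ref{l.hyperbolic}, adapted to $k$-prong saddles so that successive ends landing at the same point still bound hyperbolic sectors; summing over the countable index set $I$ preserves countability. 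Uniqueness of the compactification is inherited from the uniqueness statement of Theorem~\ref{t.union}.

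The main obstacle I anticipate is proving that $\cE_i$ is genuinely separating in the presence of $k$-prong saddles with $k\geq 3$: near such a singularity the cyclic order of nearby ends has several hyperbolic sectors meeting, and one must exclude the possibility that a cyclic interval of $\cR_i$ collapses to a cluster of separatrices without being detected by any regular point $x_n^i$. The hypothesis ``germs disjoint or coincident at infinity'' is what prevents pathological accumulation of ends, and combining it with the local $k$-prong model---tracking which hyperbolic sector each $x_n^i$ falls into---should close the argument.
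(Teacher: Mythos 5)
There is a genuine gap at the very first step: you apply Theorem~\ref{t.union} to the families $\cR_i$ consisting of \emph{all} ends of leaves of $\cF_i$, and for this you need each $\cR_i$ to admit a countable separating subset. But the set of all ends of a single foliation need not admit \emph{any} separating subset, countable or not. Whenever $\cF_i$ has two non-separated leaves, Lemmas~\ref{l.finite} and~\ref{l.hyperbolic} produce two \emph{successive} ends $A,B$ with $(A,B)=\emptyset$ in the cyclic order on $\cR_i$; no subset can contain an element strictly between $A$ and $B$, so the separating property fails outright. Your density argument (``any non-degenerate cyclic interval corresponds to an open region swept out by leaves, which must contain some $x_n^i$'') is exactly where this breaks: the interval $(A,B)$ bounds a hyperbolic sector of the plane, but every leaf crossing that sector exits it, and its ends are \emph{not} between $A$ and $B$ in the cyclic order. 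You anticipate an obstacle, but you locate it at the $k$-prong singularities; in fact the obstruction already occurs for non-singular foliations with branching, and tracking sectors around saddles does not repair it.

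The correct route (and the one the paper takes) is to let $\cR_i$ be the set of ends of \emph{regular} leaves of $\cF_i$ only. Lemma~\ref{l.separating} shows that the ends of a countable dense family of regular leaves form a separating subset of this smaller family, so Theorem~\ref{t.union} applies and yields the compactification together with items 2 and 3 (for item 2 one also uses that the non-regular ends are at most countable in total). The ends of non-regular leaves are then handled separately by Lemma~\ref{l.union}: since their germs are disjoint from (or equal to) the germs already compactified, they too converge to points of $\SS^1_\cF$, which gives item 1. Uniqueness is inherited from Theorem~\ref{t.union} as you say. So your overall architecture (reduce to Theorem~\ref{t.union}) is the right one, but the reduction must be made on the regular ends, with the non-regular ends added afterwards.
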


The hypothesis that the germs of ends of leaves are either equal or disjoint means that if the intersection of two leaves is not bounded, then these two leaves coincide on an half leaf. One easily checks that transverse foliations satisfy this hypothesis so that Theorem~\ref{t.foliations} is a straightforward corollary of Theorem~\ref{t.countable-singular}.

As a simple and natural example, we will see that any countable family $\cF=\{\cF_i\}$  of singular foliations, directed by  polynomial vector fields on $\RR^2$ whose singular points are hyperbolic saddles, satisfies the hypotheses of Theorem~\ref{t.countable-singular}:  this will prove  Corollary~\ref{e.algebraic} already mentioned above.

\subsection{Laminations}

The construction of the circle at infinity for foliations cannot be extended without hypotheses to the case of laminations, as leaves of laminations may fail to be lines, and can even be recurrent, see for instance example~\ref{e.plykin}. 

Theorems~\ref{t.lamination} and ~\ref{t.laminations} provide a generalisation of this construction to closed orientable laminations with no compact leaves and with uncountably many leaves.  This generalisation is not as satifactory as in the case of foliations, and we discuss some of the issues in Section~\ref{s.laminations}.
In particular Theorem~\ref{t.Lami} provides another canonical compactification, which holds also for countable oriented laminations with no compact leaves. 

\subsection{Minimality of the action on the circle at infinity}

Then we consider group actions $H\subset Homeo(\RR^2)$  on $\RR^2$ preserving $1$ or $2$ transverse foliations $\cF_i$.  The action of $H$ extends canonically on the circle at infinity and we will consider the following question: 
\begin{ques} Under what conditions on $H$ and on the foliations $\cF_i$ can we ensure that the action induced on $\SS^1_{\{\cF_i\}}$ is minimal?
\end{ques}

Our main result, for the case of $1$ foliation is the following:
\begin{theo}\label{t.mini} Let $\cF$ be a foliation of $\RR^2$ and $H\subset Homeo(\RR^2)$ be a group of homeomorphisms preserving $\cF$. We assume that for any leaf $L$, the union of its images $H(L)$ is dense in $\RR^2$. 

Then the two following properties are equivalent 
\begin{enumerate}
 \item the action induced by $H$ on the circle at infinity is minimal
 \item there are  pairs of distinct leaves $(L_1,L_2)$ and $(L_3,L_4)$ so that $L_1$ and $L_2$ are not separated from above and $L_3$ and $L_4$ are not separated from below. 
\end{enumerate}
\end{theo}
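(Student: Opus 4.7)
The approach is to prove the two implications separately, relying on Corollary~\ref{c.hyperbolics} which characterizes the points of $\SS^1_\cF$ that are limits of several ends of leaves as endpoints of hyperbolic sectors bounded by chains of non-separated leaves. By the canonical nature of the compactification $\DD^2_\cF$ (Theorem~\ref{t.foliations}), the action of any $h\in H$ preserving $\cF$ extends uniquely and continuously to $\SS^1_\cF$; this extended action preserves the set of limits of ends of leaves as well as the combinatorial branching structure at such points.

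For the implication $(2)\Rightarrow(1)$, I would fix a nonempty, closed, $H$-invariant subset $K\subset\SS^1_\cF$ and aim to show $K=\SS^1_\cF$. Using (2), pick pairs $(L_1,L_2)$ not separated from above and $(L_3,L_4)$ not separated from below. By Lemma~\ref{l.hyperbolic} applied to each pair, the two ``upper'' ends of $L_1,L_2$ tend to a common point $p^+\in\SS^1_\cF$, and similarly one gets $p^-\in\SS^1_\cF$ from $(L_3,L_4)$. The core sublemma to establish is: \emph{for every point $q\in\SS^1_\cF$ that is the limit of an end of some leaf $M$, the $H$-orbit of $p^+$ accumulates on $q$}. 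To prove this, I would use the hypothesis that $H(L_1)$ is dense in $\RR^2$ to choose $h_n\in H$ with $h_n(L_1)$ crossing arbitrarily small transversals to $M$ arbitrarily far along the end of $M$ tending to $q$; the key point is that the non-separation of $h_n(L_1)$ and $h_n(L_2)$ is preserved by $h_n$, so the hyperbolic sector between $h_n(L_1)$ and $h_n(L_2)$ shrinks around the relevant end of $M$, forcing $h_n(p^+)\to q$. Since the set of end-limits is dense in $\SS^1_\cF$, this gives $\overline{H\cdot p^+}=\SS^1_\cF$, and hence $K=\SS^1_\cF$.

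For $(1)\Rightarrow(2)$, I would argue by contrapositive: assume (2) fails, so without loss of generality there is no pair of leaves not separated from above. By Corollary~\ref{c.hyperbolics}, every point of $\SS^1_\cF$ which is a limit of several ends has all its hyperbolic sectors opening from below. I would exploit this one-sided branching structure to build a nonempty, closed, proper $H$-invariant subset of $\SS^1_\cF$, for instance the closure of the set of points $p\in\SS^1_\cF$ that are limits of a unique end of leaf and whose local order structure is compatible with the absence of above-branching. The $H$-invariance comes from the fact that $H$ preserves the foliation (hence the branching data), and properness comes from the existence of ``below'' branch points which have incompatible local structure. This produces the required contradiction with minimality.

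The main obstacle is the sublemma at the heart of $(2)\Rightarrow(1)$: turning density of $H(L_1)$ in $\RR^2$ into density of the orbit of $p^+$ on $\SS^1_\cF$. Density in $\RR^2$ only provides finite-range approximation, whereas controlling the limit point of an end of $h_n(L_1)$ requires uniform approximation at infinity. The non-separated pair hypothesis is precisely what enables this passage: the ``trapped'' hyperbolic sector between $h_n(L_1)$ and $h_n(L_2)$ must squeeze toward $q$, and quantifying this squeezing — ensuring that small transverse errors near a fixed segment of $M$ translate into small angular errors on $\SS^1_\cF$ — is where the bulk of the technical work will lie.
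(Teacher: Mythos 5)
Your proposal has a genuine gap in each direction, and in both cases the missing step is precisely the one the paper's argument is designed to supply.

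For $(2)\Rightarrow(1)$, the sublemma you isolate — that density of $H(L_1)$ in $\RR^2$ forces $h_n(p^+)\to q$ — is not established by your sketch, and the route you propose does not obviously work: knowing that $h_n(L_1)$ crosses a small transversal far along the end of $M$ is finite-range information in $\RR^2$, and it does not control where the \emph{ends} of $h_n(L_1)$ and $h_n(L_2)$ land on $\SS^1_\cF$; the image hyperbolic sector could perfectly well open onto a large arc far from $q$. The paper circumvents this by never tracking the orbit of a single branch point. Instead (proof of item 2 of Theorem~\ref{t.minimal}) it first passes to a finite-index subgroup preserving both orientations (Lemma~\ref{l.finite-index} — a reduction you also need, since ``from above/below'' is not $H$-invariant otherwise), then shows two things: (a) by Corollary~\ref{c.dense}, leaves non-separated at the right are dense, so every basic neighborhood $U^+_t$ of a point at infinity traps an \emph{entire} leaf $L$, hence contains one of the two closed half-discs $\De^\pm_L$ of $\DD^2_\cF$; and (b) the ping-pong statement (Claim~\ref{cl.chaussette}) that for every leaf $L$ there are $g_1,g_2\in H$ with $g_1(\De^+_L)\subset\mathring\De^-_L$ and $g_2(\De^-_L)\subset\mathring\De^+_L$, so each half-disc meets every $H$-orbit in $\DD^2_\cF$. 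This purely topological nesting argument is what converts density of leaf orbits in $\RR^2$ into density of orbits at infinity, and it is absent from your proposal.

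For $(1)\Rightarrow(2)$, your candidate invariant set is not well defined and is unlikely to be proper: the points that are limits of a unique end of leaf include all regular ends, which are dense in $\SS^1_\cF$, so the closure of any such set is typically everything, and ``local order structure compatible with the absence of above-branching'' is not a condition you have defined. The paper instead proves Proposition~\ref{p.onlyup}: if $\cF$ has no leaves non-separated from below, then $\bigcap_{L\in\cF}\De^+_L$ is a \emph{single point} $O_\cF\in\SS^1_\cF$, necessarily fixed by every homeomorphism preserving $\cF$; this immediately kills minimality. (The degenerate case where $\cF$ has no non-separated leaves at all is handled by the two center-like points.) The proof of that proposition is itself nontrivial — it builds a partial order on leaves and uses the absence of branching from below to show any two leaves admit a common upper bound, yielding an exhaustion by nested half-planes. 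Your contrapositive would need to be replaced by, or reduced to, a statement of this kind. A minor point: for a single foliation the relevant structure results are Lemmas~\ref{l.finite} and~\ref{l.hyperbolic}, not Corollary~\ref{c.hyperbolics}, which concerns families of transverse foliations.
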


We will also generalize Theorem~\ref{t.mini} for families of transverse foliations. 

\subsection{Action on the circle at infinity of an Anosov flow}

Finally, we will consider the setting of an \emph{Anosov flow $X$} on a closed  $3$-manifold $M$. 
\begin{rema}In this setting it is known that $\pi_1(M)$ acts on $\SS^1$ by orientation preserving homeomorphisms, see Calegari Dunfield \cite{CaDu} inspirated by an unpublished work of Thurston \cite{Th}. This works follows completely distinct ideas that those presented here. 

Another construction of this circle at infinity (called \emph{ideal circle boundary}) is given in \cite{Fe4} for pseudo-Anosov flows. 
\end{rema}

Barbot and Fenley \cite{Ba1,Fe1} show that the lift  $\tilde X$ of $X$ is conjugated to the constant vector field $\frac{\partial}{\partial x}$ on $\RR^3$, so that the $\tilde X$-orbit space is a plane $\cP_X\simeq \RR^2$. 
This plane  $\cP_X$ is endowed with two transverse foliations $F^s, F^u$ which are  the projection of the stable and unstable foliations of $X$ lifted on $\RR^3$. Thus $(\cP_X,F^s,F^u)$ is the \emph{bifoliated plane} associated to $X$. Furthermore, the fundamental group $\pi_1(M)$ acts on $\cP_X$ and its action preserves both foliations $F^s$ and $F^u$. This action induces a natural action of  $\pi_1(M)$ on the circles at infinity $\SS^1_{F^s},\SS^1_{F^u},$ and $\SS^1_{F^s,F^u}$. 

A folklore conjecture asserts that two Anosov flows are orbitaly equivalent if and only if they induces the same action on the circle at infinity of $\{F^s,F^u\}$, see \cite{Ba1} for a result in this direction.  This conjecture as been recently announced to be proved in \cite{BFM}.

\cite{Ba1,Fe1} show that every leaf of $F^s$ is regular if and only if every leaf of $F^u$ is regular, and then the Anosov flow $X$ is called \emph{$\RR$-covered}. Our main result in that setting is 

\begin{theo}\label{t.Anosov} Let $X$ be an Anosov flow on a closed $3$-manifold and $(\cP_X,F^s,F^u)$ its bifoliated plane.  Let $\DD^2_{F^s,F^u}$, $\DD^2_{F^s}$, and $\DD^2_{F^u}$  be the compactifications associated to, respectively, the pair of foliations $F^s,F^u$, the foliation $F^s$ and the foliation $F^u$.  Then
\begin{enumerate}
 \item $\DD^2_{F^s,F^u}=\DD^2_{F^s}=\DD^2_{F^u}$ unless $X$ is orbitally equivalent to the suspension of an Anosov diffeomorphism of the torus $\TT^2$. 
 \item the action of $\pi_1(M)$ on the circles at infinity $\SS^1_{F^s,F^u}$,(or equvalently  $\SS^1_{F^s}$ or $\SS^1_{F^u}$) is   minimal if and only if $X$ is not $\RR$-covered. 
\end{enumerate}
\end{theo}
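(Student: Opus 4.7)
The plan is to deduce part~(2) from the minimality criterion of Theorem~\ref{t.mini}, applied to the foliation $F^s$ (and symmetrically to $F^u$) with group $H=\pi_1(M)$; and to prove part~(1) by identifying exactly which intervals of $\SS^1_{\{F^s,F^u\}}$ are collapsed by the canonical projections $\Pi_{\{s,u\},s}$ and $\Pi_{\{s,u\},u}$.

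For part~(1), recall that $\Pi_{\{s,u\},s}\colon \DD^2_{\{F^s,F^u\}}\to \DD^2_{F^s}$ collapses precisely those arcs of $\SS^1_{\{F^s,F^u\}}$ which contain no limit of an end of an $F^s$-leaf. Hence the two compactifications coincide if and only if the set of limits of ends of $F^s$-leaves is dense in $\SS^1_{\{F^s,F^u\}}$; the same then holds for $F^u$ by symmetry. I would establish this density from density of periodic orbits of $X$: in the bifoliated plane $\cP_X$, lifts of periodic orbits form a $\pi_1(M)$-invariant dense set, and any arc $O\subset\SS^1_{\{F^s,F^u\}}$ contains the limit of some end of an $F^u$-leaf. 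Translating an $F^u$-ray ending in $O$ by a well-chosen sequence of deck transformations and using the transverse intersections of $F^s$ and $F^u$ leaves in $\cP_X$, one produces a sequence of $F^s$-leaves whose ends accumulate in $O$. The Barbot--Fenley dichotomy ensures that the only case where this argument fails is the rigid \emph{product} structure of $(\cP_X,F^s,F^u)$, which is precisely the case where $X$ is orbit-equivalent to a toral suspension; there every $F^s$-leaf has only two points at infinity and the projections $\Pi_{\{s,u\},s}$, $\Pi_{\{s,u\},u}$ collapse large intervals.

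For part~(2), applying Theorem~\ref{t.mini} requires checking two hypotheses. The density of each $\pi_1(M)$-orbit of a leaf follows from density of every weak stable manifold of $X$ in $M$, a classical consequence of transitivity and density of periodic orbits of Anosov flows on closed $3$-manifolds. The existence of pairs of $F^s$-leaves non-separated from above and pairs non-separated from below is, by definition, equivalent to $X$ being non-$\RR$-covered together with a symmetry between the two sides: I would appeal to Fenley's theorem that for non-$\RR$-covered Anosov flows branching of $F^s$ occurs simultaneously from above and from below (this uses that time-reversal produces a topologically equivalent flow interchanging $F^s$ and $F^u$, combined with the way the $F^u$-saturations of branching leaves of $F^s$ arrange themselves in $\cP_X$). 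Conversely, if $X$ is $\RR$-covered the leaf spaces of $F^s$ and $F^u$ are Hausdorff, no pair of leaves is non-separated, so condition~(2) of Theorem~\ref{t.mini} fails and the action is not minimal. Part~(1) then transfers minimality (or its failure) between the three circles in the non-suspension case, while the suspension case is itself $\RR$-covered and non-minimality holds on all three circles by separate inspection.

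The principal obstacle is the density of $F^s$-ends in $\SS^1_{\{F^s,F^u\}}$ in part~(1): this is not formal but a genuinely dynamical statement, and one must carefully rule out both \emph{skew} $\RR$-covered and non-$\RR$-covered flows from the exceptional suspension case. The rest assembles the general tools developed earlier in the paper (Theorem~\ref{t.mini} and the projections between the compactifications $\DD^2_J$) with the well-established structural results of Barbot and Fenley for Anosov flows in dimension $3$.
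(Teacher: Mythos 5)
There is a genuine gap in your treatment of part~(2): you justify the hypothesis of Theorem~\ref{t.mini} (density in $\cP_X$ of the $\pi_1(M)$-orbit of every leaf) by ``density of every weak stable manifold of $X$ in $M$, a classical consequence of transitivity''. But transitivity is \emph{not} automatic for Anosov flows on closed $3$-manifolds (Franks--Williams type examples are non-transitive), and for a non-transitive flow the orbit of a leaf lying in the basin of an attractor stays in the lift of that basin, so the density hypothesis of Theorem~\ref{t.mini} genuinely fails. Your argument therefore only proves minimality in the transitive non-$\RR$-covered case, which is exactly where the paper also reduces to Theorem~\ref{t.minimal} plus Fenley's result on two-sided branching. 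The non-transitive case is the bulk of the paper's work (Theorem~\ref{t.non-transitive}): it replaces leaf-orbit density by a structural analysis of the laminations in $\cP_X$ coming from attractors and repellers, proving (i) that every nonempty open arc of $\SS^1_X$ contains both endpoints of an unstable leaf in an attractor or of a stable leaf in a repeller (Proposition~\ref{p.attracteur}), and (ii) that for such a leaf one can map each complementary half-disc strictly inside the other by deck transformations (Proposition~\ref{p.attractor}), which together force every orbit on the circle to be dense. Nothing in your proposal substitutes for this step.

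A secondary, smaller point concerns part~(1). Your plan --- translate an $F^u$-ray ending in an arc $O$ by deck transformations and use transversality to accumulate $F^s$-ends in $O$ --- is left quite vague, and again implicitly leans on recurrence that is unavailable in the non-transitive case. The paper's route is more direct and avoids this: if some arc of $\SS^1_{F^s,F^u}$ contains no limit of an $F^u$-end, then all the $F^s$-ends landing there are regular, and the union of the corresponding half-leaves is a \emph{product region} in Fenley's sense; Fenley's theorem that an Anosov flow with a product region is a suspension then gives the dichotomy in one stroke, with no case analysis between skew $\RR$-covered and non-$\RR$-covered flows. Your correct observations (the projections collapse exactly the arcs missing $F^s$-ends; the $\RR$-covered case is non-minimal because the two center-like points are fixed) are consistent with the paper, but the two dynamical inputs above --- the product-region theorem for part~(1) and the attractor/repeller analysis for part~(2) --- are the missing ideas.
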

When $X$ is assumed to be transitive, this result is a simple consequence of Theorem~\ref{t.mini} above and a result by Fenley \cite{Fe3} ensuring that, assuming $X$ is  non-$\RR$-covered, then $F^s$ and $F^u$ admit non-separated leaves from above and non-separated leaves form below.  The proof of  Theorem~\ref{t.Anosov}, when $X$ is not assumed to be transitive, is certainly the most technically difficult argument of the paper, and is based on a description of hyperbolic basic sets for flows on $3$-manifolds.

Theorem~\ref{t.Anosov} implies that the minimality of the action on the circle at infinity is not related with the transitivity of the flow. However, according to \cite{BFM} the action on the circle at infinity charaterizes the dynamics of the flow.  This leads to the following question:

\begin{ques} What property of the action of $\pi_1(M)$ on the circle at infinity $\SS^1_{F^s,F^u}$ implies the transitivity of $X$? 

Can we find  the transverse tori by looking at the action of $\pi_1(M)$ on the circle at infinity? 
\end{ques}

\subsubsection{Aknowledgments} I would thank Sebastien Alvarez who invited me to present the results in this paper as a mini-course in Montevideo. This mini-course has been a motivation for ending this paper. I would also thanks 
Kathrin Mann for indicating me that the argument of Theorem~\ref{t.rays} is essentially contained in \cite{Ma}, and Michele Triestino for the statement and reference of Cantor-Bendixson theorem. 

\section{Circles at infinity for families of rays on the plane}
\subsection{Cyclic order}

Let $X$ be a set. A \emph{total cyclic order} on $X$ is a map  $\theta\colon X^3\to \{-1,0,+1\}$ with the following properties
\begin{itemize}
 \item $\theta(x,y,z)=0$ if and only if $x=y$ or $y=z$ or $x=z$.
 \item $\theta(x,y,z)=-\theta(y,x,z)=-\theta(x,z,y)$ for every $(x,y,z)$
 \item for every $x\in X$ the relation on $X\setminus\{x\}$ defined by 
 $$y<z\Leftrightarrow \theta(x,y,z)=+1$$ is a total order.
\end{itemize}

The emblematic example is:
\begin{exem} The oriented circle $\SS^1=\RR/\ZZ$  is totally cyclically ordered by the relation $\theta$ defined as follows: 
$\theta(x,y,z)=+1$ if and only if the $y$ belongs to the interior of the positively oriented simple arc staring at $x$ and ending at $z$. 
\end{exem}

If $\theta$ is a total cyclic order then for $x\neq z$ we define  the interval $(x,y)$ by 
$$(x,z)=\{y, \theta(x,y,z)=1\}.$$

We define the semi closed and closed  intervals $[x,z)$,$(x,z]$, and $[x,z]$ by adding the corresponding extremities $x$ or $z$ to the interval $(x,z)$. 

We say that $y$ is \emph{between} $x$ and $z$ is $y\in(x,z)$. 

The following notion of \emph{separating set} will be fundamental all along this work: 
\begin{defi}\label{d.separating}
Let $X$ be a set endowed with a total cyclic order.  A  subset $\cE\subset X$  is said \emph{separating} if  given any distinct $x,z\in X$ there is $y\in \cE$ (distinct from $x$ and $z$), between $x$ and $z$. 
\end{defi}

We will use the following easy exercize of topology of $\RR$ and $\SS^1$: 
\begin{prop}\label{p.cyclic-order} Let $X$ be a set endowed with a total cyclic order.  Assume that there is a countable subset $\cE\subset X$   which is separating.  

Then there is a bijection $\varphi$ of $X$ on a dense subset $Y\subset \SS^1$ which is strictly increasing for the cyclic orders of $X$ and of $\SS^1$.  Furthermore this bijection is unique up to a composition by a homeomorphism of $\SS^1$. 
\end{prop}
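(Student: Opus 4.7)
The plan is the classical reduction of a cyclic order to a linear order, followed by Cantor's back-and-forth argument. First, assume $\cE$ has at least two elements (otherwise $X$ itself has at most two by the separating axiom and the statement is trivial). Fix $x_0\in\cE$ and define on $X\setminus\{x_0\}$ the linear order $y\prec z\Leftrightarrow\theta(x_0,y,z)=+1$. The third axiom of cyclic order guarantees $\prec$ is total. By the separating hypothesis $\cE\setminus\{x_0\}$ is \emph{dense in itself} for $\prec$; moreover it has neither minimum nor maximum, because for any $e\in\cE\setminus\{x_0\}$ the separating property applied to the pair $(x_0,e)$ produces an element of $\cE\setminus\{x_0\}$ strictly $\prec$-below $e$, and applied to $(e,x_0)$ one strictly $\prec$-above.

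Next I would invoke Cantor's classical theorem: any countable, dense-in-itself linearly ordered set without endpoints is order-isomorphic to $\QQ$. This yields a strictly $\prec$-increasing bijection $\varphi_0\colon\cE\setminus\{x_0\}\to\QQ\cap(0,1)$ with dense image. I then extend $\varphi_0$ to $X\setminus\{x_0\}$ by Dedekind completion: for $x\notin\cE$ set
$$\varphi(x)\;=\;\sup\bigl\{\varphi_0(e)\colon e\in\cE\setminus\{x_0\},\;e\prec x\bigr\}\in(0,1).$$
Density of $\varphi_0(\cE\setminus\{x_0\})$ in $(0,1)$ together with the separating property imply that this $\sup$ equals the dual $\inf\{\varphi_0(e)\colon x\prec e\}$, and that $\varphi$ is strictly $\prec$-increasing, hence injective: indeed, given $y\prec z$ in $X\setminus\{x_0\}$, applying the separating property twice produces $e_1\prec e_2$ in $\cE$ strictly between them, so $\varphi(y)\le\varphi_0(e_1)<\varphi_0(e_2)\le\varphi(z)$. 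Identifying $[0,1]/(0\sim 1)$ with $\SS^1$ and sending $x_0$ to the glued point yields the required strictly cyclically-increasing bijection $\varphi\colon X\to Y\subset\SS^1$. The image $Y$ contains $\varphi_0(\cE\setminus\{x_0\})$ and is therefore dense in $\SS^1$.

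For uniqueness, any two such bijections $\varphi_1,\varphi_2\colon X\to Y_i\subset\SS^1$ yield a strictly cyclically-increasing bijection $\varphi_2\circ\varphi_1^{-1}\colon Y_1\to Y_2$ between dense subsets of $\SS^1$, which extends uniquely by continuity to an orientation-preserving homeomorphism of $\SS^1$; this is the required conjugating homeomorphism. The main obstacle I expect is the extension step from $\cE$ to $X$: one must check that the Dedekind-completion formula above does not collapse distinct points of $X\setminus\cE$ onto a common value, nor collide with any $\varphi_0(e)$ already in the image. Both facts reduce to the separating hypothesis, which is exactly what prevents two elements of $X$ from being ``squeezed'' into the same cut of $\cE$; verifying this carefully (while also making sure the initial choice of $x_0$ does not leave a gap at the glued point of $\SS^1$) is the technical heart of the argument.
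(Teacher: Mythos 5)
Your proof is correct and follows essentially the same route as the paper: a back-and-forth isomorphism of the countable separating set with a countable dense subset of the circle, followed by a monotone extension to all of $X$ whose injectivity rests on the separating hypothesis. The only difference is packaging — you cut the circle at a point $x_0$ to linearize and invoke Cantor's theorem on countable dense orders, whereas the paper runs the back-and-forth directly in the cyclic order; your treatment of the extension and injectivity step is in fact more explicit than the paper's.
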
The argument is classical but short and beautiful and I have no references for this precise statement. So let me present it: 
\begin{proof} One builds a bijection $\phi$ of $\cE$ to a contable dense subset $\cD\subset \SS^1$ by induction, as follows: one choose an indexation of $\cE=\{e_i,i\in \NN\}$ and of  $\cD=\{d_i,i\in\NN\}$.  One defines 
\begin{itemize}
 \item $\phi(e_0)=d_0$, $\phi(e_1)=d_1$  $i(0)=j(0)=0$ $i(1)=j(1)=1$
 \item consider $e_2$, it belongs either in $(e_0,e_1)$ or in $(e_1,e_0)$ and we chose $\phi(e_2)$ being $d_{j(2)}$ where $j(2)$ is the infimum of the $d_i$ in the corresponding interval $(d_0,d_1)$ or $(d_1,d_0)$.  One denotes i(2)=2. 
 \item consider now $j(3)=\inf \NN\setminus \{0,1,i(2)\}$ and define $\phi^{-1}(d_{j(3)})=e_{i(3)}$ where $i(3)$ is the infimum of the $i\notin\{0,1,2\}$ so that the position of $e_{i(3)}$ with respect to $e_0,e_1,e_2$ is the same as the position of $d_{j(3)}$ with repsect to $d_0,d_1,d_{j(2)}$. 
 \item \dots
 \item choose $i(2n)=\inf \NN\setminus\{i(k), k<2n\}$  and $\phi(e_{i(2n})$ is $d_{j(2n)}$ where $j(2n)$ is the infimum of the $j$ so that $d_j$ as the same position with respect to the $d_{j(k)}, k<2n$ as $e_{i(2n)}$ with respect to the $e_{i(k)}$. 
 \item choose $j(2n+1)=\inf \NN\setminus\{j(k), k<2n+1\}$  and $\phi^{-1}(d_{j(2n+1})$ is $e_{i(2n+1)}$ where $i(2n+1)$ is the infimum of the $i$ so that $e_i$ as the same position with respect to the $e_{i(k)}, k<2n+1$ as $d_{j(2n+1)}$ with respect to the $d_{j(k)}$. 
\end{itemize}
At each step of this construction one uses the separation property of $\cE$ and $\cD$ for ensuring the existence of the point announced in the same position. 

Once we built $\phi$ on $\cE$, it extends in a unique increasing way on $X$.  Then the separation property of $\cE$ implies that this extension is injective. 
\end{proof}

\begin{rema}\label{r.cyclic-order} Assume that $Z,\theta$ is a set endowed with a total cyclic order, and $\cE\subset X\subset Z$ are subsets so that $\cE$ is separating for
$X,\theta$. 

Let $\varphi\colon X\to Y$ be the map given by Proposition~\ref{p.cyclic-order}.  Then $\phi$ extends in a unique way as an (not strictly) increasing map $\Phi\colon Z\to \SS^1$: $\Phi(y)$ is between $\Phi(x)$ and $\Phi(z)$ only if $y$ is between $x$ and $z$. 

The non-injectivity of the  map $\Phi$ is determined as follows. Consider distinct points  $x\neq y$ of $Z$, then $\Phi(x)=\Phi(y)$ if and only if either $(x,y)$ or $(y,x)$ contains no more than $1$ element of $X$
\end{rema}

\subsection{Cyclic order on families of rays}
A \emph{line} is a proper embedding of $\RR$ in $\RR^2$.
A line $L$ cuts $\RR^2$ in two half plane. 
If $L$ is oriented, then there is an orientation preserving homeomorphism $h$ of $\RR^2$  mapping $L$ on the oriented $x$-axis of $\RR^2$ (endowed with the coordinates $(x,y)$).  This allows us to defined the upper and lower half-planes $\De^+_L$ and $\De^-_L$ as the pre-images by $h$ of $\{y\geq 0\}$ and $\{y\leq 0\}$ respectively. 

A  \emph{ray} is a proper embedding of $[0,+\infty)$ in $\RR^2$. Two rays define the same \emph{germ of ray} if their images coincide out of a compact ball. 
Two germs of rays are said disjoint if they admit disjoint realisations. 

\begin{exem}\begin{enumerate}
\item If $\cF$ is a foliation of $\RR^2$, every leaf defines to germs of rays called the \emph{ends of the leaf}.  By fixing an orientation of $\cF$ we will speak of the \emph{right and left ends} of a leaf. 
\item If $\{\cF_i\}_{i\in\cI}$ is a family of pairwise transverse foliations of $\RR^2$ then the set of all ends of leafs of the foliations $\cF_i$ is a family of pairwise disjoint germs of rays. 
\item Consider the set $\cS$ of all germs of rays $\gamma$ which are contained in an orbit of an affine (polynomial of degree $= 1$) vector field of saddle type.  Then $\cS$ is a family of pairwise disjoint germs of rays. 
            \end{enumerate}
 
\end{exem}

Next lemmas are  simple exercizes of plane topology: 
\begin{lemm}\label{l.3rayons} Let $\gamma_0,\gamma_1,\gamma_2$ be three disjoint rays. 

Assume that $C_1$ and $C_2$ are simple closed curves on the plane $\RR^2$ so that $\gamma_i\cap Cj$ is a unique point $p_{i,j}$, $i\in\{0,1,2\}, j\in\{1,2\}$. We endow $C_i$ with the boundary-orientation corresponding to the compact disc bounded by $C_i$.  Then the cyclic order of the $3$ points $p_{0,1},p_{1,1},p_{2,1}$ for the orientation of $C_1$  is the same as the cyclic order  of the $3$ points $p_{0,2},p_{1,2},p_{2,2}$ for the orientation of $C_2$.

We call it the cyclic order on the rays $\gamma_0,\gamma_1,\gamma_2$. 
\end{lemm}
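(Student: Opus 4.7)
The plan is to reduce the lemma to a germ-level statement at infinity. One-point compactify $\RR^2$ to $S^2=\RR^2\cup\{\infty\}$; since each ray $\gamma_i$ is proper, it extends continuously to an arc $\bar\gamma_i$ from $\gamma_i(0)$ to $\infty$, and the three $\bar\gamma_i$ are pairwise disjoint outside the common endpoint $\infty$. The germs of these three arcs at $\infty$, together with the orientation of $S^2$, determine an intrinsic cyclic order $\theta_\infty$ on $\{0,1,2\}$ (read off by intersecting with any small circle around $\infty$ transverse to the three germs), which does not depend on any auxiliary choice. The lemma reduces to showing that, for each $j\in\{1,2\}$, the cyclic order of $(p_{0,j},p_{1,j},p_{2,j})$ on $C_j$ equals $\theta_\infty$ up to a sign depending only on orientation conventions.

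Fix $j$, let $t_{i,j}$ be the parameter with $\gamma_i(t_{i,j})=p_{i,j}$, and let $\Delta_j=\overline{S^2\setminus D_j}$ be the closed disc in $S^2$ bounded by $C_j$ and containing $\infty$ in its interior. The tail $\gamma_i([t_{i,j},+\infty))$ lies entirely on one side of $C_j$ (since it has no further intersection with $C_j$), and that side must be $\RR^2\setminus D_j^\circ$ because the tail escapes to infinity by properness. Adding $\{\infty\}$ turns the tail into an arc $\beta_{i,j}\subset\Delta_j$ from $p_{i,j}\in\partial\Delta_j$ to the interior point $\infty$, meeting $\partial\Delta_j$ only at $p_{i,j}$. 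The three arcs $\beta_{0,j},\beta_{1,j},\beta_{2,j}$ are disjoint except at $\infty$, and a successive application of Jordan's arc theorem (gluing two $\beta$-arcs at $\infty$ gives a simple boundary-to-boundary arc cutting $\Delta_j$ into two discs; the third $\beta$-arc lies in one of these and subdivides it) shows $\Delta_j\setminus\bigcup_i\beta_{i,j}$ has exactly three connected components, each an open topological disc whose boundary consists of two of the $\beta$-arcs (meeting at $\infty$) and the arc of $C_j$ cut off between the two corresponding points $p_{i,j}$.

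This sector decomposition carries the matching of the two cyclic orders: the cyclic arrangement of the three sectors around $\infty$ is simultaneously the cyclic order of the $\beta$-arcs near $\infty$ (hence $\theta_\infty$, modulo the orientation of $\Delta_j$) and the cyclic order of the three arcs of $C_j$ between consecutive $p_{i,j}$'s (hence the cyclic order of the $p_{i,j}$ on $\partial\Delta_j$). Swapping the boundary orientation of $\Delta_j$ for that of $D_j$ contributes a uniform sign flip independent of $j$. Therefore the cyclic order of the $p_{i,j}$ on $C_j$ for the boundary orientation of $D_j$ equals $\theta_\infty$ up to a sign independent of $j$, and the cyclic orders on $C_1$ and $C_2$ coincide. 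The main obstacle is the sector count together with the orientation bookkeeping that identifies both cyclic orders with $\theta_\infty$; once these are in hand the lemma is immediate.
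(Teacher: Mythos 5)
Your reduction to the one-point compactification and the decomposition of $\Delta_j$ into three sectors are both correct, and the identification of the cyclic order of the $p_{i,j}$ on $C_j$ with the arrangement of the three sectors of $\Delta_j$ is sound. The gap sits at the very center of the argument: you never prove that the ``cyclic arrangement of the sectors around $\infty$'' is the same for $j=1$ and $j=2$. The object $\theta_\infty$ is introduced as an intrinsic invariant of the three germs at $\infty$, ``read off by intersecting with any small circle around $\infty$'', with independence of the choice simply asserted --- but that independence is precisely the statement of the lemma (specialized to circles near $\infty$), so as written the argument is circular. Concretely, the two sector decompositions live in the discs $\Delta_1$ and $\Delta_2$, which in general overlap in a complicated way and do not nest, so nothing in your argument matches the sector of $\Delta_1$ adjacent to $\beta_{a,1},\beta_{b,1}$ with the sector of $\Delta_2$ adjacent to $\beta_{a,2},\beta_{b,2}$ in an orientation-compatible way. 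Moreover, for merely topological arcs a small circle around $\infty$ may meet each germ in infinitely many points, and a sector may be locally disconnected near $\infty$, so ``the cyclic order of the $\beta$-arcs near $\infty$'' has no direct meaning without further input.

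Two standard ways to close the gap. (i) Tameness: every embedded triod in $S^2$ is standard, i.e.\ there is a homeomorphism of $S^2$ carrying $\bar\gamma_0\cup\bar\gamma_1\cup\bar\gamma_2$ to three radii of a round disc meeting at the centre; this makes $\theta_\infty$ genuinely well defined and finishes your proof. This Schoenflies-type straightening is the route the paper implicitly intends: it gives no proof of this lemma at all (it is listed among the ``simple exercizes of plane topology''), but its proof of Lemma~\ref{l.cyclic-order} begins exactly by mapping the rays to radial rays by a homeomorphism of $\RR^2$. (ii) Interpolation: choose a third circle $C_3$ meeting each $\gamma_i$ exactly once and bounding a disc containing $D_1\cup D_2$; it then suffices to treat the nested case $D_1\subset\mathring{D_3}$, where the sub-arcs of the $\gamma_i$ joining $C_1$ to $C_3$ are three disjoint crossing arcs of the annulus $D_3\setminus\mathring{D_1}$, cutting it into three rectangles, whence the cyclic orders of their endpoints on the two boundary circles visibly agree. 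Either ingredient turns your sketch into a complete proof; without one of them the key identification with $\theta_\infty$ is an assumption, not a deduction.
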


\begin{lemm}\label{l.3germes} The cyclic order on three disjoint germs of rays $R_0,R_1,R_2$ does not depend on the choice of disjoint rays $\gamma_0,\gamma_1,\gamma_2$ realizing the germs $R_0,R_1,R_2$. 
\end{lemm}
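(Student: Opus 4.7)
My plan is to reduce Lemma~\ref{l.3germes} to Lemma~\ref{l.3rayons} by producing, for the two triples of rays, a single simple closed curve $C$ that simultaneously witnesses their cyclic orders.

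First I would exploit the germ hypothesis. For each $i\in\{0,1,2\}$ there is a compact set outside of which $\gamma_i$ and $\gamma'_i$ coincide as subsets of $\RR^2$; I choose a large closed topological disk $K\subset\RR^2$ containing the union of these compact sets and all the base points $\gamma_i(0),\gamma'_i(0)$, so that $\gamma_i\cap(\RR^2\setminus K)=\gamma'_i\cap(\RR^2\setminus K)$ for each $i$. By properness of the six rays, this common set has a unique unbounded connected component $\tau_i$, a proper arc from a point $p_i\in\partial K$ to infinity, and any remaining components are contained in a fixed compact set $K'\supset K$ formed from the images of the six rays up to their respective last exits from $K$. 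The three tails $\tau_0,\tau_1,\tau_2$ are pairwise disjoint since they are contained in the pairwise disjoint $\gamma_i$.

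Second, I would use these tails to build $C$. The three disjoint proper arcs $\tau_i$ cut the closed annulus $\overline{\RR^2\setminus\mathring K}$ into three topological sectors, each bounded by two tails and an arc of $\partial K$. Picking a point $q_i$ in the interior of $\tau_i$ lying outside the compact set $K'$, I would connect $q_{i-1}$ to $q_i$ (indices mod $3$) by an embedded arc lying in the sector bounded by $\tau_{i-1}$ and $\tau_i$, disjoint from $K'$, and meeting the tails only at its endpoints. Concatenating the three connecting arcs produces a simple closed curve $C$ that encloses $K'$ and satisfies $C\cap\gamma_i=C\cap\gamma'_i=\{q_i\}$ for every $i$: indeed $C$ lies outside $K'$, hence avoids all the compact excursions of any $\gamma_j$ or $\gamma'_j$.

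Third, Lemma~\ref{l.3rayons} applied with this single curve $C$ to each of the triples $(\gamma_0,\gamma_1,\gamma_2)$ and $(\gamma'_0,\gamma'_1,\gamma'_2)$ reads off in both cases the same data, namely the cyclic order of the points $q_0,q_1,q_2$ on $C$. Hence both triples define the same cyclic order on the germs $R_0,R_1,R_2$, which is the conclusion of Lemma~\ref{l.3germes}. The main obstacle is in the second step: ensuring that the connecting arcs can really be drawn inside each sector while staying outside $K'$. This rests on the facts that each sector is an unbounded topological disk whose boundary goes to infinity along two of the tails, and that $K'$ is compact; the existence of the required arc is then a routine Jordan-curve manipulation in a topological half-plane, after which the conclusion is immediate.
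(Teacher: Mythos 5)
Your argument is correct: the paper states this lemma without proof (as one of the ``simple exercizes of plane topology''), and your reduction to Lemma~\ref{l.3rayons} via a single simple closed curve $C$ built outside a compact set where $\gamma_i$ and $\gamma'_i$ already coincide, so that $C\cap\gamma_i=C\cap\gamma'_i=\{q_i\}$, is exactly the natural argument the author is implicitly invoking. The only step left to flesh out is the routine construction of the three connecting arcs inside the sectors while avoiding $K'$, which you correctly identify as the crux and which becomes immediate after straightening the three disjoint tails to radial rays as in the proof of Lemma~\ref{l.cyclic-order}.
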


\begin{coro}\label{c.3germes} Let $\gamma_0,\gamma_1,\gamma_2$ be three disjoint rays and $C$ be any simple close curve,  oriented as the boundary of the compact disc bounded by $C$, and having  a non-empty intersection with every $\gamma_i$. 

Let $p_i$ be the last point of $\gamma_i$ in $C$. Then the cyclic order of the $\gamma_i$ coincides with the cyclic order of the $p_i$ for the orientation of $C$. 
 
\end{coro}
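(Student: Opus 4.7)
The plan is to reduce to Lemma~\ref{l.3rayons} by replacing each $\gamma_i$ with a suitable tail, enclosing $C$ inside a larger disc $D$, and transferring cyclic orders through the annulus between $C$ and $\partial D$.

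First, let $t_i$ be the parameter with $\gamma_i(t_i)=p_i$ and set $\gamma_i':=\gamma_i|_{[t_i,+\infty)}$. The ray $\gamma_i'$ has the same germ as $\gamma_i$, so by Lemma~\ref{l.3germes} we may work with $\gamma_i'$ instead of $\gamma_i$ without changing the cyclic order on germs. Moreover, since $p_i$ is the \emph{last} point of $\gamma_i$ in $C$, the open tail $\gamma_i'|_{(t_i,+\infty)}$ is disjoint from $C$; being connected and tending to infinity by properness, it lies entirely in the unbounded component $U$ of $\RR^2\setminus C$. Thus $\gamma_i'\cap C=\{p_i\}$.

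Next, choose a closed topological disc $D$ whose interior contains the bounded component of $\RR^2\setminus C$, and, using the properness of each $\gamma_i$, enlarge $D$ so that for every $i$ there exists $T_i\geq t_i$ with $\gamma_i'(T_i)\in\partial D$ and $\gamma_i'(s)\notin D$ for all $s>T_i$. Put $q_i:=\gamma_i'(T_i)$ and $\gamma_i'':=\gamma_i'|_{[T_i,+\infty)}$. Each $\gamma_i''$ is a ray, has the same germ as $\gamma_i$, and meets $\partial D$ only at $q_i$. Applying Lemma~\ref{l.3rayons} to the triple $\gamma_0'',\gamma_1'',\gamma_2''$ with $C_1=\partial D$ (a convenient second curve meeting each ray once can always be found far beyond $D$), we conclude that the cyclic order of the germs of the $\gamma_i$ coincides with the cyclic order of $(q_0,q_1,q_2)$ on $\partial D$, oriented as the boundary of $D$.

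It remains to match the cyclic order of $(q_0,q_1,q_2)$ on $\partial D$ with that of $(p_0,p_1,p_2)$ on $C$. The compact annulus $A$ bounded by $C$ (inner) and $\partial D$ (outer) contains the three pairwise disjoint properly embedded arcs $\alpha_i:=\gamma_i'|_{[t_i,T_i]}$, each running from $p_i\in C$ to $q_i\in\partial D$. A classical topological argument --- for instance, cutting $A$ along $\alpha_0$ to obtain a topological disc and reading off the cyclic boundary word, or equivalently isotoping the $\alpha_i$ rel $\partial A$ to three parallel ``radial'' segments in $A\simeq \SS^1\times[0,1]$ --- shows that the cyclic order of $(p_0,p_1,p_2)$ along $C$ and that of $(q_0,q_1,q_2)$ along $\partial D$ coincide, when both curves are oriented as boundaries of their respective bounded discs. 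Chaining the equalities closes the proof.

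The main obstacle is the last step: the arcs $\alpha_i$ may be arbitrarily complicated inside $A$ (they are not assumed transverse to anything, and $\gamma_i$ may have wandered many times between $C$ and $\partial D$), so one cannot settle this by drawing a picture. The cleanest rigorous route is to invoke the classification of disjoint properly embedded arcs in an annulus up to ambient isotopy rel $\partial A$, which straightens the $\alpha_i$ to parallel radial segments and renders the coincidence of the two cyclic orders manifest.
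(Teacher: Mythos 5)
Your reduction has a genuine gap in its last and decisive step. You define $T_i$ as the \emph{last} time $\gamma_i'$ meets $D$ and then assert that $\alpha_i:=\gamma_i'|_{[t_i,T_i]}$ is a properly embedded arc in the annulus $A$ between $C$ and $\partial D$. That containment can fail: between $t_i$ and the last exit time $T_i$ the ray may leave $D$, wander in the unbounded region, and re-enter $D$ several times, so $\alpha_i\not\subset D$ and hence $\alpha_i\not\subset A$. (What you actually proved is only that $\alpha_i$ avoids the open disc bounded by $C$.) The classification of disjoint properly embedded arcs in an annulus therefore cannot be invoked. If you instead take $T_i$ to be the \emph{first} hitting time of $\partial D$, the arcs do lie in $A$, but then $\gamma_i''$ no longer meets $\partial D$ in a single point, so Lemma~\ref{l.3rayons} no longer applies on the outer circle; either way the chain of equalities breaks.

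The whole detour through $D$ and the annulus is unnecessary, and dropping it is also the repair. Your first paragraph already establishes everything needed: the truncated rays $\gamma_i'=\gamma_i|_{[t_i,+\infty)}$ are pairwise disjoint, have the same germs as the $\gamma_i$, and each meets $C$ in the unique point $p_i$. So $C$ itself is an admissible curve $C_1$ in Lemma~\ref{l.3rayons} applied to $\gamma_0',\gamma_1',\gamma_2'$ (pair it with any auxiliary curve $C_2$ meeting each once, as furnished by Lemma~\ref{l.cyclic-order}); by the very definition of the cyclic order on rays given there, the cyclic order of $(p_0,p_1,p_2)$ on $C$ is the cyclic order of the $\gamma_i'$, and Lemma~\ref{l.3germes} identifies the latter with the cyclic order of the germs of the $\gamma_i$. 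This one-step argument is what the paper intends (it states the corollary without proof as an immediate consequence of the two preceding lemmas).
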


\begin{coro}\label{c.entre} Let $R_0,R_1,R_2$ be three disjoint germs of rays. Let $L$ be an oriented  line whose  right end is $R_0$ and whose left end  is $R_2$. Then 
$R_1$ is between $R_0$ and $R_2$ for the cyclic order defined above (we denote $R_1\in(R_0,R_2)$)  if and only if it admits a realization contained in the upper 
half-plane $\De^+_L$ bounded by $L$. 
\end{coro}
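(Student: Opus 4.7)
The plan is to reduce to Corollary~\ref{c.3germes} by realizing the three germs as concrete disjoint rays and choosing a large round disc as the simple closed curve $C$.

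First, I would fix explicit disjoint realizations of the germs. Take $\gamma_0$ and $\gamma_2$ to be the two halves of $L$, each truncated to start slightly past the midpoint of $L$ so that $\gamma_0 \cap \gamma_2 = \emptyset$; these are rays realizing $R_0$ (forward end) and $R_2$ (backward end). Pick any realization $\gamma_1$ of $R_1$. Since the germ $R_1$ is disjoint from the germs $R_0$ and $R_2$, the intersection $\gamma_1 \cap L$ is compact. Choose then a topological closed disc $D\subset \RR^2$ so large that it contains $\gamma_1 \cap L$, the starting points of $\gamma_0,\gamma_1,\gamma_2$, and the truncated compact piece of $L$ lying between $\gamma_0$ and $\gamma_2$. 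Orient $C=\partial D$ as the boundary of $D$.

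Let $p_0,p_1,p_2$ be the last points at which $\gamma_0,\gamma_1,\gamma_2$ meet $C$. The points $p_0$ and $p_2$ are the last crossings of $L$ with $C$ in the forward and backward directions, so they split $C$ into two arcs, one contained in $\De^+_L$ and one in $\De^-_L$. By the very definition of $\De^+_L$, pulled back via an orientation-preserving homeomorphism $h$ sending $L$ to the oriented $x$-axis and $D$ to a round disc around the origin, the arc of $C$ traversed from $p_0$ to $p_2$ following the boundary orientation of $D$ is the one lying in $\De^+_L$ (this is just the standard fact that the counterclockwise boundary of a disc around the origin goes from $(R,0)$ to $(-R,0)$ through $\{y>0\}$).

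By Corollary~\ref{c.3germes}, the cyclic order of $(R_0,R_1,R_2)$ equals that of $(p_0,p_1,p_2)$ on $C$. Hence $R_1\in(R_0,R_2)$ if and only if $p_1$ lies in the arc from $p_0$ to $p_2$, i.e.\ in $\partial D\cap \De^+_L$. If this holds, then just past $p_1$ the ray $\gamma_1$ is outside $D$ and disjoint from $L$ (by the choice of $D$, since $\gamma_1\cap L\subset D$), so by connectedness the entire tail of $\gamma_1$ past $p_1$ lies in $\De^+_L$; this tail is a realization of $R_1$ inside $\De^+_L$. Conversely, if some realization $\gamma'_1$ of $R_1$ lies in $\De^+_L$, using $\gamma'_1$ in place of $\gamma_1$ forces $p_1\in\De^+_L$ and the same application of Corollary~\ref{c.3germes} yields $R_1\in(R_0,R_2)$.

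The only non-trivial step is the orientation check — verifying that the boundary orientation of $D$ on $C$ is compatible, in the sense described, with the orientation of $L$ used to define $\De^+_L$. This is where all the geometry sits, but it is routine once one reduces to the standard model $(L,\De^+_L,D)=(\RR\times\{0\},\,\RR\times[0,+\infty),\,\text{round disc})$ via an orientation-preserving homeomorphism of $\RR^2$.
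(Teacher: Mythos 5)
The paper offers no proof of Corollary~\ref{c.entre}: it is listed among the statements introduced as ``simple exercizes of plane topology'' and left to the reader, so there is no argument of the author's to compare yours against. Your proof is correct in substance and is the natural one: normalize, apply Corollary~\ref{c.3germes} to a large circle, and observe that the positively oriented arc from $p_0$ to $p_2$ is exactly $C\cap \De^+_L$. Two small points should be tightened. First, the simultaneous normalization ``$h$ sending $L$ to the oriented $x$-axis and $D$ to a round disc'' is not available for an arbitrary topological disc $D$ (its boundary may cross $L$ many times, in which case the two arcs determined by $p_0,p_2$ do \emph{not} each lie in one half-plane); the clean order of operations is to first straighten $L$ to the $x$-axis and only then choose $D$ to be a round disc, so that $C\cap L=\{p_0,p_2\}$ and the arc decomposition you use is literally true. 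Second, Corollary~\ref{c.3germes} is stated for three pairwise \emph{disjoint} rays, whereas your $\gamma_1$ is an arbitrary realization and may meet $L\supset\gamma_0\cup\gamma_2$ inside $D$; you should first replace $\gamma_1$ by its tail beyond the (compact) set $\gamma_1\cap L$, which realizes the same germ and is disjoint from $L$. This replacement also streamlines both implications: a realization disjoint from $L$ is connected and hence lies entirely in one open half-plane, and which half-plane it is is read off from the position of its last exit point $p_1$ on the corresponding open semicircle; conversely a realization contained in $\De^+_L$ has its last exit point in the closed upper semicircle but off $L$ (since $\gamma_1'\cap L$ sits in the interior of $D$), hence in the open upper arc. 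With these two adjustments the argument is complete.
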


Next proposition summerizes what we have got with this sequence of easy lemmas. 
\begin{prop} Consider $\cR$ a family of pairwise disjoint germs of rays. Then $\cR$ is totally cyclically ordered by the following relation: 

given three distinct germs of rays $R_0,R_1,R_2\in\cR $, 
the germ $R_2$ is between $R_1$ and $R_3$ if it admits a realisation contained in the upper-half plane $\De^+_L$  where $L$ an oriented line whose right end is $R_0$ and and whose  left end is $R_3$.  
\end{prop}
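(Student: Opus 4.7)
The plan is to reduce the three axioms of a total cyclic order to the corresponding statements for the cyclic order of three or four points on an oriented circle. All the needed ingredients have already been assembled: Lemmas~\ref{l.3rayons} and~\ref{l.3germes} show that the ``between'' relation defined via the half-plane $\De^+_L$ depends only on the germs (not on realizations, nor on the auxiliary curve), and Corollary~\ref{c.3germes} identifies it with the ordinary cyclic order of the last-exit points of three disjoint rays through a large simple closed curve.

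First, I would define $\theta(R_0,R_1,R_2)=0$ when two arguments coincide and $\pm 1$ otherwise, according to whether the condition of Corollary~\ref{c.entre} holds or fails. Non-vanishing on triples of distinct germs is clear: for pairwise disjoint germs one can trim to pairwise disjoint realizations and take a round disc $D$ large enough that each realization is eventually outside $\overline{D}$; the three last-exit points on $\partial D$ are then pairwise distinct. The first axiom is then automatic, and the antisymmetry axiom is immediate: reversing the orientation of the line $L$ (which corresponds to swapping the roles of $R_0$ and $R_2$) swaps $\De^+_L$ and $\De^-_L$, matching the sign change of the cyclic order of the exit points on the oriented boundary circle.

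The main step is the total-order axiom, namely transitivity of $<_{R_0}$ on $\cR\setminus\{R_0\}$. For any four pairwise distinct germs $R_0,R_1,R_2,R_3 \in \cR$, I would apply the same large-disc construction simultaneously: trim to four pairwise disjoint realizations and choose $D$ big enough to contain all their initial arcs while each realization is eventually in its complement, so that the four last-exit points $p_0,p_1,p_2,p_3$ on the oriented circle $C=\partial D$ are pairwise distinct. By Corollary~\ref{c.3germes}, the cyclic order of each triple of germs equals the cyclic order of the corresponding triple of exit points on $C$. Since the cyclic order of four points on an oriented circle induces a genuine total order after cutting at $p_0$, transitivity of $<_{R_0}$ on $\{R_1,R_2,R_3\}$ follows, and totality is clear.

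The only genuine technical point is the plane-topology construction of the common disc $D$ adapted to four pairwise disjoint proper rays, which I expect to be the ``hard'' part of the write-up only in the bookkeeping sense: one must genuinely pass from disjoint germs to disjoint realizations before choosing $D$. Since the cyclic order axioms involve only triples and quadruples of germs, no infinite issue arises, and no further input beyond Lemmas~\ref{l.3rayons}--\ref{l.3germes} and Corollaries~\ref{c.3germes}--\ref{c.entre} is needed.
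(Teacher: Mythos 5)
Your proof is correct and follows essentially the same route as the paper, which presents this proposition explicitly as a summary of Lemmas~\ref{l.3rayons} and~\ref{l.3germes} and Corollaries~\ref{c.3germes} and~\ref{c.entre}: identify the relation with the cyclic order of exit points of disjoint realizations on a large simple closed curve, and read off the axioms (including transitivity, via a common curve for four rays) from the cyclic order of the circle. The paper leaves exactly this assembly to the reader, so your write-up supplies the intended argument.
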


\subsection{Compactification of a family of rays by a circle at infinity}

In this paper \emph{a compactification of the plane  $\RR^2$ by the disc $\DD^2$} is by definition a homeomorphism between $\RR^2$ and the open disc $\mathring{\DD}^2$.

The aim of this section is the proof of Theorem~\ref{t.rays} which build a canonical compactification of $\RR^2$ associated to a family $\cR$ of rays, assuming it admits a countable separating (for the cyclic order) subset $\cE\subset \cR$. One of the main ingredients for the proof of Theorem~\ref{t.rays} is the following lemma which is an easy exercize of plane topology.


\begin{lemm}\label{l.cyclic-order} Let $\gamma_0,\gamma_1, \dots,\gamma_n$ be $n$ disjoint rays, $n>0$, and $K\subset \RR^2$ be a compact set.  Then there is a simple closed curve $C$ disjoint from $K$
bounding a compact disc $D$ containing $K$ in its interior and so that $C\cap \gamma_i$ consists in a unique point $p_i$, $i\in\{0,\dots,n\}$. 
\end{lemm}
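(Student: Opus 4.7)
The plan is to build $C$ as the boundary of a compact topological disc $D$ enclosing $K$ together with long enough initial segments of each $\gamma_i$, using the properness of the rays to control the intersections.

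Choose an open ball $B := B(0,R)$ whose interior contains $K$ and all the starting points $\gamma_i(0)$. Since $\gamma_i$ is proper, $\gamma_i^{-1}(\overline B)$ is compact; set $T_i := \max\gamma_i^{-1}(\overline B)$ and $p_i := \gamma_i(T_i)\in\partial B$, so that the tail $\gamma_i((T_i,\infty))$ is disjoint from $\overline B$. Let $X := \overline B \cup \bigcup_{i=0}^n\gamma_i([0,T_i])$: the pairwise disjointness and injectivity of the rays yield $\gamma_i\cap X = \gamma_i([0,T_i])$. Form $\hat X$ by adjoining to $X$ every bounded component of $\RR^2\setminus X$. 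Each tail $\gamma_i((T_i,\infty))$ is a connected proper (hence unbounded) subset of $\RR^2\setminus X$, so it lies in the unique unbounded complementary component, giving $\gamma_i\cap\hat X = \gamma_i([0,T_i])$ as well. In particular $\hat X$ is compact and simply connected with connected complement, and is at positive distance $\rho>0$ from the closed set $\bigcup_i\gamma_i([T_i+1,\infty))$.

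Now I would build a closed topological disc $D$ with $\hat X\subset\operatorname{int}(D)$ and $D\cap\gamma_i = \gamma_i([0,S_i])$ a connected initial segment. Fix $\epsilon\in(0,\rho)$ and a very fine triangulation of $\RR^2$; let $D_0$ be the union of the closed triangles meeting $\hat X$, with all bounded components of its complement filled in. For a fine enough triangulation $D_0$ is a simply connected compact polygonal region with $\hat X\subset\operatorname{int}(D_0)\subset D_0\subset\{x:d(x,\hat X)<\epsilon\}$, so by choice of $\epsilon$, $D_0\cap\gamma_i\subset\gamma_i([0,T_i+1])$. Should $D_0\cap\gamma_i$ fail to be connected — i.e., should $\gamma_i$ leave and re-enter $D_0$ between times $T_i$ and $T_i+1$ — attach to $D_0$ along each such gap $\gamma_i([a,b])$ a thin tubular neighborhood of this arc (disjoint from all other rays by compactness and disjointness of the rays), plus the bounded region it traps with $D_0$; PL Schoenflies then gives the closed polygonal disc $D$, with $D\cap\gamma_i$ the initial segment $\gamma_i([0,S_i])$ where $S_i$ is the last parameter in $[0,T_i+1]$ with $\gamma_i(t)\in D$.

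Setting $C := \partial D$ produces a Jordan curve disjoint from $K\subset B\subset\operatorname{int}(D)$, bounding a compact disc containing $K$ in its interior and meeting each $\gamma_i$ in the single point $\gamma_i(S_i)$. The main obstacle is the third step, turning the polyhedral thickening $D_0$ into a closed disc whose intersection with each $\gamma_i$ is a connected initial segment; the positive-distance bound $\rho>0$ is crucial here because it confines the possibly troublesome re-entries of $\gamma_i$ into $D_0$ to the compact arc $\gamma_i([T_i,T_i+1])$, where a standard (finite-step, after a generic perturbation making $\partial D_0\cap\gamma_i$ finite) PL planar surgery suffices.
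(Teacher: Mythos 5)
Your route is genuinely different from the paper's: the paper disposes of this lemma in one line by invoking an ambient homeomorphism of $\RR^2$ carrying the finitely many disjoint rays onto radial half-lines (a classical consequence of the Schoenflies theorem), after which a large round circle works. Your direct construction is sound up to and including $\hat X$ and the existence of a compact neighborhood $D_0$ of $\hat X$ that misses the far tails $\gamma_i([T_i+1,\infty))$ (indeed the far tails cannot even lie in a filled-in bounded complementary component, since they are connected and unbounded). The problem is that the whole difficulty of the lemma has been pushed into your final surgery step, and that step is not closed as written.

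Concretely: the excursions of $\gamma_i$ outside $D_0$ are the connected components of the open set $\{t\in[T_i,S_i]:\gamma_i(t)\notin D_0\}$, and nothing bounds their number --- there can be countably infinitely many gap arcs accumulating on $\partial D_0\cap\gamma_i$, so the proposed tube-attachment is not a finite-step surgery, and the required tube thickness shrinks to $0$ along the sequence of gaps. The genericity claim meant to rescue this is unjustified: the rays are only topological embeddings, and a topological arc in the plane can meet every polygonal curve in a given neighborhood in a Cantor set (Osgood arcs have positive area), so no perturbation of the triangulation is guaranteed to make $\partial D_0\cap\gamma_i$ finite. Two further, smaller issues: $D_0$ defined as the union of triangles meeting $\hat X$ (plus filled components) can be pinched at a vertex, hence need not be a disc nor have a Jordan curve as boundary; and each tube attachment modifies $\partial D$ near the endpoints $\gamma_i(a),\gamma_i(b)$ of a gap arc and can create new boundary intersections with $\gamma_i$, so connectedness of $D\cap\gamma_i$ is not preserved step by step. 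These are precisely the pathologies that the paper's straightening homeomorphism eliminates at the outset; to salvage a direct argument you would need either to prove that straightening statement first, or to handle all excursions of a given ray at once (e.g.\ by a cross-cut argument in the Jordan domain $\RR^2\setminus D_0$) rather than gap by gap.
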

\begin{proof} Just notice that there is a homeomorphism of $\RR^2$ mapping $\gamma_i$, $i\in\{1,\dots,n\}$ on radial (half-straight lines) rays. Then the proof is trivial. 
\end{proof}

\begin{proof}[sketch of proof of Theorem~\ref{t.rays}] We consider the set of rays endowed with the cyclic order and we embedd it in the circle $\SS^1$ by Proposition~\ref{p.cyclic-order}.  We denote by $E\subset \SS^1$ the dense countable subset corresponding to $\cE$.  
We define a topology on $\RR^2\coprod \SS^1$ by choosing a basis of neighborhood of the points in $\SS^1$ as the halph planes  bounded by lines $L$ whose both ends are rays $R_-, R_+$ in $\cE$ (each half plane correspond to a segment in 
$\SS^1\setminus \{R_-, R_+\}$. 

This topology does not depend of the choice of the countable separating subset $\cE$: if $\tilde \cE$ is another countable separating subset, each neighborhood of a point of $\SS^1$ obtain by using one family contains a neighborhood obtained by using the other family. 

Now one builds a map from $\RR^2$ on the interior of $\DD^2$ as follow: 
\begin{enumerate}
 \item  one considers the circles $C_n$, $n\geq 1$, of radius $\rho_n=1-\frac1{n+1}$ (that is , $C_n=\rho_n\cdot \SS^1$) endowed with the finite set of point 
$\rho_n\cdot x_1,\dots, \rho_n\cdot x_n$, where $E=\{x_n,n\geq 1\}$ is a choice of indexation of the countable set $E$. 
\item one choses by induction a realisation $R_n$ of the rays in $\cE$ 
and a family of simple closed loops $\gamma_n$ with the following properties:
\begin{itemize}
\item $\gamma_n$ is the boundary of a compact disc $D_n$ containing $D_{n-1}$ in its interior and containing the disk of radius $n$ of $\RR^2$.  In particular, $\bigcup_n D_n=\RR^2$. 
 \item $\gamma_n$ cuts the rays $R_m$, $m< n$ in a unique point. 
 \item one choses a representative of $R_n$ , disjoint from $R_m$, $m<n$, with origin on $\gamma_n$ and with no other intersection point with $\gamma_n$. 
 
\end{itemize}
Then, by definition of the cyclic order on the rays, the points $\gamma_n\cap R_i$, $i\leq n$, are cyclically ordered on $\gamma_n$ as the points $\rho_n\cdot x_1,\dots, \rho_n\cdot x_n$ on $C_n$
\item this allows us to choose a homeomorphism of $\RR^2$ to the interior of $\DD^2$ sending the loops $\gamma_n$ on the circles $C_n$ and the rays $R_n$ on the segments $[\rho_n,1)\cdot x_n$.  
\end{enumerate}
This homeomorphisms extends on the circle at infinity $\SS^1$ to $\partial \DD^2$.  
 
\end{proof}

\subsection{Union of countably many families of rays: the circle\label{ss.circle}}

\begin{prop}\label{p.union}
Let $\{X_i, i\in I\}$, $I\subset \NN$   be a finite or countable family of sets so that 
$\bigcup_i X_i$ is endowed with a total cyclic order. 
Assume that, for every $i$, there exist $E_i\subset X_i$ countable separating subset. 

On the union $X=\bigcup_i X_i$  we consider the relation 
$$x\sim y \Leftrightarrow ( [x,y]\cap E_i \mbox{ is finite for every } i \mbox{, or } [y,x]\cap E_i \mbox{ is finite for every } i).$$
In other words, $x\sim y$ if one of the two segments (for the cyclic order) bounded by $x$ and $y$ meets each family $E_i$ in at most finitely many points.

Then $\sim$ is an equivalence relation and every class contains at most $1$ point in each $X_i$. 

Let denote  
$$\pi\colon X\to \cX=\bigcup_i X_i/\sim.$$
We denote by $\cE$ the projection $\pi(E)$ of $E=\bigcup E_i$ on $\cX$. 

Then $\sim$ provides a complete cyclic order on $\cX$ and $\cE$ is a countable separating subset. 
\end{prop}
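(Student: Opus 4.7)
The plan is to handle the four conclusions in order, noting that $\cE=\pi(E)$ is automatically countable as a countable union of countable sets. Reflexivity and symmetry of $\sim$ are built into the definition, so the first real step is transitivity. Given $x\sim y$ and $y\sim z$, I would denote by $A_{xy}$ and $A_{yz}$ arcs witnessing the two equivalences (each being one of the two cyclic arcs bounded by its endpoints and meeting every $E_i$ in a finite set). A short case analysis on the cyclic position of the three points then shows that either $z\in A_{xy}$ (and the sub-arc from $x$ to $z$ inside $A_{xy}$ works), or $x\in A_{yz}$ (symmetric), or neither; in the last case $A_{xy}\cup A_{yz}$ is itself a single cyclic arc from $x$ to $z$ passing through $y$, meeting every $E_i$ in a finite set as a union of two finite sets. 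In every case $x\sim z$.

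Next, for the at-most-one-per-$X_i$ claim, the separating hypothesis enters directly. If $x,y\in X_i$ are distinct and $x\sim y$, one of the two cyclic arcs bounded by $x,y$ is finite on every $E_j$, and in particular on $E_i$. But the separating property of $E_i$ inside $(X_i,\theta|_{X_i})$, iterated by splitting, produces an infinite nested sequence of $E_i$-points inside either of those two arcs, contradicting the finiteness. Once this is in hand, the descent of $\theta$ to $\cX$ follows by changing one representative at a time: if $x\sim x'$ with finite arc $A_{xx'}$, a point $u$ with $[u]\neq[x]$ cannot lie in $A_{xx'}$, since the sub-arc from $x$ to $u$ inside $A_{xx'}$ would force $x\sim u$. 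Hence for three distinct classes $[x],[y],[z]$, the representatives $y,z$ lie in the arc complementary to $A_{xx'}$, and $\theta(x,y,z)=\theta(x',y,z)$. Iterating on each of the three arguments gives a well-defined $\bar\theta\colon\cX^3\to\{-1,0,+1\}$, and the three axioms of a total cyclic order for $\bar\theta$ are inherited directly from $\theta$.

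For the separating property of $\cE$, I would pick two distinct classes $[a]\neq[b]$ and invoke exactly the at-most-one-per-$X_i$ property. Since $a\not\sim b$, neither cyclic arc bounded by $a,b$ is finite on every $E_j$; hence each meets some $E_k$ in infinitely many points. On the arc $(a,b)$, the infinite set $E_k\cap(a,b)\subset X_k$ intersects each equivalence class in at most one point, so at most one of its elements lies in $[a]$ and at most one in $[b]$. Any remaining element of $E_k\cap(a,b)$ projects to an $\cE$-point strictly between $[a]$ and $[b]$ in $\cX$, and the symmetric argument on the complementary arc produces an $\cE$-point in the other interval.

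The main obstacle I anticipate is the combinatorial case analysis in the transitivity step, where one has to verify carefully that $A_{xy}\cup A_{yz}$ really is a single cyclic arc in all subcases of the three-point configuration; everything else is essentially bookkeeping, built on the crucial structural input that ``separating'' implies ``infinitely many points on every arc''.
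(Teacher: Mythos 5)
Your proposal is correct and follows essentially the same route as the paper: transitivity via the union of two finite-intersection arcs, the at-most-one-point-per-$X_i$ claim via iterating the separating property of $E_i$ to produce infinitely many $E_i$-points in each arc, well-definedness of the quotient order from the fact that each class is an interval disjoint from the other classes, and the separating property of $\cE$ from the injectivity of $\pi$ on each $E_i$ together with the infinitude of $E_k\cap(a,b)$. The only differences are cosmetic (you change one representative at a time where the paper treats all three intervals $I_a,I_b,I_c$ at once), and your transitivity case analysis does close correctly.
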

\begin{proof} The fact that $\sim$ is an equivalence relation is quite easy, as the union of two intervals meeting $X_i$ on finite sets meets $X_i$ on a finite set. 

Note that, assuming $x\sim y$,  the interval $[x,y]$ or $[y,x]$ (meeting every $E_i$ in finitely many points) is contained in the class of $x$ and $y$. Thus the class $[x]_\sim$ is a (proper) interval for the cyclic order. 

Consider $x,y\in X$ and assume that $[x,y]\cap E_j$ is finite for every $j$.  Assume that there is $i$ and distinct $z,t\in [x,y]\cap X_i$. 
Then the separating property of $E_i$ for $X_i$ ensures that $[x,y]\cap E_i$ is infinite contradicting the choice of the interval $[x,y]$. We deduces that every class meets every $X_i$ in at most $1$ point. 

Notice that this implies that the projection of $E_i$ on $\cX$ is injective. 

Consider $x,y,z\in X$ whose classes are distinct, and assume $z\in (x,y)$.  Consider now $a\sim x$, $b\sim y$ and $c\sim z$. Let $I_a,I_b,I_c$ be the  intervals $[x,a]$ or $[a,x]$, $[y,b] $ or $[b,y]$, $[z,c]$ or $[c,z]$ with finite intersections with the $E_i$, respectively.  Then these intervals are disjoints as there a contained in disjoint equivalence classes.  Thus the cyclic order for point in $I_a,I_b,I_c$ does not depend on the point in $I_a,I_b,I_c$ and thus $c\in(a,b)$. 

This shows that the quotient $\cX$ is endowed with a total cyclic order.

Consider now two distinct classes $[x]_\sim,[y]_\sim \in \cX$ of points $x,y\in X$. 
Thus 
there is $i$ so that $[x,y]\cap X_i$ is infinite 
Now the separating property of $E_i$ implies that $[x,y]\cap E_i$ 
is infinite.

As $\pi$ is injective on $E_i$ one gets that $([x]_\sim,[y]_\sim)\cap \pi(E_i)$ is infinite and thus $([x]_\sim,[y]_\sim)\cap \cE$ is infinite. 
One proved that $\cE$ is separating for $\cX$, ending the proof. 
\end{proof}

\subsection{Union of countably many families of rays: the compactification}

\begin{theo}\label{t.union} Let $\cR=\coprod_{i\in I}\cR_i$, $I\subset \NN$,  be a family of rays in $\RR^2$ whose germs are pairwise disjoint. Assume that for every $i\in I$ there is  a countable subset $E_i\subset \cR_i$ which is separating for $\cR_i$. 

Then, there is a compactification of $\RR^2$ by the disc $\DD^2$ so that:
\begin{itemize}
 \item any ray of $\cR$ tends to a point of the circle at infinity $\partial \DD^2=\SS^1$. 
 \item for every $i$, any two distinct rays of $\cR_i$ tend to distinct points of $\SS^1$
 \item for any non-empty open interval $J\subset \SS^1$ there is $i\in I$ so that at least $2$ rays in $\cR_i$ have there limit point in $J$. 
\end{itemize}
Furthermore, this compactification is unique up to a homeomorphism of $\DD^2$ and does not depend on the separating countable sets $E_i$.  
\end{theo}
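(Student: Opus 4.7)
The approach is to reduce Theorem~\ref{t.union} to Theorem~\ref{t.rays} by collapsing indistinguishable rays of $\cR$ into a single quotient class. The set $\cR$ carries a total cyclic order coming from the geometry of disjoint germs, and by hypothesis each $\cR_i$ carries a countable separating subset $E_i \subset \cR_i$. The first step is to apply Proposition~\ref{p.union} to the cyclically ordered set $\bigcup_i \cR_i$ equipped with the subsets $E_i$: this produces an equivalence relation $\sim$ on $\cR$, a quotient $\cX = \cR/\sim$ with induced total cyclic order, a projection $\pi\colon \cR \to \cX$, and a countable separating subset $\cE = \pi\bigl(\bigcup_i E_i\bigr) \subset \cX$, with the crucial property that each $\sim$-class contains at most one ray from each $\cR_i$. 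Proposition~\ref{p.cyclic-order} then furnishes an order-preserving embedding $\phi\colon \cX \hookrightarrow \SS^1$ with dense image.

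The second step is the Mather-style inductive construction of the compactification, following the proof of Theorem~\ref{t.rays} almost verbatim. I enumerate a family $(R_n)_{n\geq 1}$ of representatives in $\bigcup_i E_i$ for the points of $\cE$, and by Lemma~\ref{l.cyclic-order} inductively build nested simple closed loops $\gamma_n \subset \RR^2$, each intersecting the first $n$ representatives in single points whose cyclic order on $\gamma_n$ matches the order given by $\phi$ on a corresponding circle of radius $1 - \tfrac{1}{n+1}$ inside $\DD^2$. This yields a homeomorphism $h\colon \RR^2 \to \mathring{\DD}^2$; declaring $\partial \DD^2 = \SS^1$ to be the circle at infinity gives the desired compactification.

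The verification of the three properties proceeds as follows. For any ray $R \in \cR$, Corollary~\ref{c.entre} implies that $R$ is eventually contained in every half-plane $\De^+_L$ bounded by a line $L$ whose two ends are representatives of $E$ and whose corresponding open arc in $\SS^1$ contains $\phi(\pi(R))$; such half-planes form, by construction, a basis of neighborhoods of $\phi(\pi(R))$ in the compactification, so $R$ tends to $\phi(\pi(R))$. If $R_1 \ne R_2$ both lie in $\cR_i$, iterating the separating property of $E_i$ shows that both arcs between them in $\cR_i$ meet $E_i$ infinitely; hence $R_1 \not\sim R_2$ and their limits in $\SS^1$ differ. The third property follows from the density of $\cE$ in $\SS^1$ combined with the separating property of $\pi(E_i)$ within $\pi(\cR_i)$ for appropriate $i$.

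Uniqueness is handled by noting that any compactification meeting the three conditions must assign to the representatives of $\cE$ points on $\SS^1$ in the cyclic order prescribed by Corollary~\ref{c.3germes}; two such compactifications therefore differ by an order-preserving bijection between two dense countable subsets of $\SS^1$, which extends to a homeomorphism of $\SS^1$ and hence of $\DD^2$. The main obstacle is precisely what Proposition~\ref{p.union} is designed to overcome: $\cR$ itself may fail to admit any countable separating subset (for instance when ends accumulate densely), so Theorem~\ref{t.rays} cannot be applied to $\cR$ directly; the quotient $\cX$ provides the correct replacement. The remaining delicate point is showing that rays \emph{outside} the chosen representative set still converge to their predicted limit, which is exactly where Corollary~\ref{c.entre} enters.
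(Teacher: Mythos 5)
Your proposal follows essentially the same route as the paper: apply Proposition~\ref{p.union} to quotient $\cR$ by the relation $\sim$, choose representatives, run Theorem~\ref{t.rays} (or its inductive construction) on them, and then use the half-plane neighborhoods together with Corollary~\ref{c.entre} to show that the remaining rays converge to the predicted points. The one step you should expand is uniqueness: you need to show that in \emph{any} compactification satisfying the three items, two rays of $\cR$ share a limit point exactly when they are $\sim$-equivalent (item 2 gives one implication, item 3 the other) — this is the paper's Claim~\ref{c.class}, and it is precisely where item 3, rather than mere density of limit points, is indispensable for your "order-preserving bijection between dense countable subsets" to be well defined.
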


Let us discuss item 3, whose formulation may be surprising. 
\begin{rema}\begin{enumerate}\item The third item implies that the points of $\SS^1$ which are the limit point of a ray in $\bigcup E_i$ are dense in $\SS^1$. 
\item  if $I$ is finite, item 3 is equivalent to the density of points in $\SS^1$ which are limit of rays. 
\item item 3 is necessary when there is an uncountable set of equivalence classe $[c]$, for the equivalence relation $\sim$ (defined at Section~\ref{ss.circle}),   which are infinite (necessarily countable) and contain a set $\cC([c])$ which is separating for the cyclic order.  In that case the uniqueness property announced in  Theorem~\ref{t.union} would be wrong if we replace item 3 by the density of points in $\SS^1$ which are limit of rays.  Example~\ref{e.separating-classes} provides a simple illustration of this trouble. 
\end{enumerate}
\end{rema}

\begin{proof}[Proof of Theorem~\ref{t.union}.] Let us denote $X=\cR$,  $X_i=\cR_i$, and  $E=\bigcup E_i$. 
Let $\sim$ be the equivalence relation defined in Proposition~\ref{p.union} on $X=\cR$ and let $\pi$ benote the projection $\pi\colon X\to \cX=X/\sim$.  
We choose a subset $Y\subset X$ with the following properties
\begin{itemize}
 \item each equivalence class $[\gamma]$ for $\sim$ contains exactly $1$ point $y_\gamma$ in $Y$
 \item if a class  for $\sim$ contains a point in $E$, then $y_\gamma\in E$. 
\end{itemize}
The existence of such subset $Y$ is certainly implied by the choice axiom  but this existence does not require this axiom.  For instance we can fix
\begin{itemize}
 \item if $[\gamma]_\sim\in  \pi(\bigcup \cE_i)$ then $y_\gamma$ is the unique point in $E_i$ in the $[\gamma]_\sim$ where $i$ is the smallest index for which $[\gamma]_\sim\in  \pi(\cE_i)$.
 \item if $[\gamma]_\sim\notin  \pi(E)$ then $y_\gamma$ is the unique point in $X_i\cap [\gamma]_\sim$ where $i$ is the smallest index for which $[\gamma]_\sim\cap X_i)\neq \emptyset$.
\end{itemize}

We denote $F=Y\cap E$. Notice that $\pi(F)=\pi(E)$ by construction.

Now  the projection $\pi\colon Y\to \cX$ is a bijection which is strictly increasing for the cyclic order.  As $\cE=\pi(E)$ is separating for $\cX$ (see Proposition~\ref{p.union}) one gets that $F$ is separating for $Y$. 

We can now apply Theorem~\ref{t.rays} to the set of rays $Y$ and the countable separating subset $F$. 
One gets a compactification of $\RR^2$ as a disc $\DD^2$ so that every ray in $Y$ tends to a point at the circle at infinity, two distinct rays in $Y$ tends to two distinct points, and the set of points at infinity limit of rays in $F$ is dense in the circle at infinity. 

Let us check now that every ray $\gamma\in \cR$ tends to a point at infinity. By construction of $Y$ there is $\sigma\in \cR$ so that $\sigma\sim \gamma$. We will prove that $\gamma$ tends to the limit point $s\in\SS^1$ of $\sigma$. For that we recall that a basis of neighborhood of $s$ is given by the half planes $\Delta_n$ bounded by lines $L_n$ whose both ends are $\sigma^-_n,\sigma^+_n\in Y$ so that $\sigma\in (\sigma^-_n,\sigma^+_n)$. 
Note that both intervals   $(\sigma,\sigma^+_n)$ and $(\sigma^-_n,\sigma)$ are infinite when one of the intervals $(\sigma,\gamma)$ or $(\gamma,\sigma)$ is finite.  One deduces that $\gamma\in (\sigma^-_n,\sigma^+_n)$ and thus the end of $\gamma$ is contained in $\Delta_n$.  Thus $\gamma$ tends to $s$. 

Now consider two distinct rays $\gamma, \gamma'\in \cR_i$. As every class for $\sim$ contains at most $1$ point of $\cR_i$ the classes of $\gamma$ and $\gamma'$ are distincts.  Thus there are $\sigma\neq \sigma'\in Y$ which are equivalent to $\gamma$ and $\gamma'$, respectively. The limit points of $\gamma$ and $\gamma'$ are those of $\sigma$ and $\sigma'$ respectively, which are distinct. We just checked that distincts rays in $\cR_i$ tends to distinct point at infinity.

\begin{clai}\label{c.class} Let  $\varphi\colon \RR^2\to\DD^2$ be a compactification satisfying the announced properties.  Then 
$2$ rays in $\cR$ tends to the same point of the circle at  infinity of the compactification if and only if they are equivalent for $\sim$
 
\end{clai}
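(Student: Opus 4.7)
The plan is to establish both directions by exploiting a compatibility between the intrinsic cyclic order on $\cR$ and the cyclic order on $\SS^1$ induced by the compactification $\varphi$. As a preliminary lemma I would first prove the following compatibility statement: if $\alpha,\beta,\delta \in \cR$ tend to three \emph{distinct} points $a,b,d \in \SS^1 = \partial\DD^2$, then the cyclic order of the germs $(\alpha,\beta,\delta)$ equals the cyclic order of $(a,b,d)$ on $\SS^1$. To prove this I would invoke Lemma~\ref{l.cyclic-order} to produce, for each sufficiently large compact $K \subset \RR^2$, a simple closed curve $C_K$ containing $K$ in the interior of the disc it bounds and meeting each of $\alpha,\beta,\delta$ in exactly one point; by Corollary~\ref{c.3germes} the cyclic order of the germs equals the cyclic order of these three intersection points on $C_K$. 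As $K$ exhausts $\RR^2$, the images $\varphi(C_K)$ form a compact exhaustion of $\mathring{\DD^2}$, so the intersection points on $\varphi(C_K)$ converge to $a,b,d$, and their cyclic order on $C_K$ must coincide with that of $a,b,d$ on $\SS^1$ for $K$ large enough.

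For direction $(\Leftarrow)$, I would assume $\gamma \sim \gamma'$ with (without loss of generality) $[\gamma,\gamma'] \cap E_i$ finite for every $i$, and argue by contradiction: suppose $\gamma,\gamma'$ tend to distinct points $p \neq p'$. By the compatibility lemma the cyclic interval $(\gamma,\gamma')$ in $\cR$ corresponds to a well-defined open arc $J$ of $\SS^1 \setminus \{p,p'\}$, in the sense that any ray $\delta$ with limit outside $\{p,p'\}$ lies in $(\gamma,\gamma')$ if and only if its limit lies in $J$. Applying item 3 of the theorem to any sub-arc of $J$ produces an index $i$ and two rays $\alpha,\beta \in \cR_i$ with limits in $J$, hence $\alpha,\beta \in (\gamma,\gamma')$. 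Iterating the separating property of $E_i$ in $\cR_i$, I pick $\epsilon_1 \in E_i \cap (\alpha,\beta)$, then $\epsilon_2 \in E_i \cap (\alpha,\epsilon_1)$, and so on, producing infinitely many distinct elements of $E_i \cap (\gamma,\gamma')$, contradicting $\gamma \sim \gamma'$.

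For direction $(\Rightarrow)$, I would take $\gamma,\gamma' \in \cR$ both tending to the same point $p$, choose disjoint realizations, and connect their starting points by a segment $S$ to form a line $L = \gamma \cup S \cup \gamma'$ whose two germs at infinity are those of $\gamma$ and $\gamma'$. In $\overline{\DD^2}$ the closure of $\varphi(L)$ is a Jordan curve through $p$, dividing $\DD^2$ into two open regions: a \emph{thin} region $U$ whose closure meets $\SS^1$ only at $p$, and a \emph{big} region meeting $\SS^1$ on the whole arc $\SS^1 \setminus \{p\}$. Corollary~\ref{c.entre} identifies one of the two cyclic intervals $(\gamma,\gamma')$ or $(\gamma',\gamma)$ with the rays admitting a realization in the half-plane of $L$ whose image under $\varphi$ lies in $U$; every such ray has its tail eventually in $U$, so its limit lies in $\overline{U} \cap \SS^1 = \{p\}$. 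By item 2 of the theorem, at most one ray of each $\cR_i$ can tend to $p$, so this interval contains at most one ray of each $E_i$; in particular it meets every $E_i$ finitely, giving $\gamma \sim \gamma'$.

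I expect the main obstacle to be the preliminary compatibility lemma, since the curves $C_K$ given by Lemma~\ref{l.cyclic-order} are produced in $\RR^2$ and one needs to know that their $\varphi$-images eventually separate any fixed compact subset of $\mathring{\DD^2}$ from $\SS^1$; this is a general fact about homeomorphisms onto $\mathring{\DD^2}$, but must be used carefully to synchronize the orientations inherited from $C_K$ and from $\SS^1$. Once the compatibility is established, both directions reduce to short topological arguments in the disc.
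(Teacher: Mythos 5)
Your proof is correct and follows essentially the same route as the paper: one direction combines item 3 with the separating property of the $E_i$ to produce infinitely many elements of some $E_i$ in each of the two cyclic intervals, and the other combines item 2 with the fact that all rays lying in the cyclic interval determined by two rays with a common limit $p$ must themselves tend to $p$. The paper's version is terser (it leaves the Jordan-curve/half-plane step and the order-compatibility between $\cR$ and $\SS^1$ implicit), but the underlying argument is the same.
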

\begin{proof} If two rays $a,b$ are not equivalent then each of  $(a,b)$ and $(b,a)$ contains  infinitely many rays in the same of the sets $\cR_i$, by definition of $\sim$.  As the limits of distinct rays in the same $R_i$ are different, one deduces that the limits of $a$ and $b$ are distinct. 

Conversely, if $a$ and $b$ have different limit points $r$ and $s$ in $\SS^1$  for the compactification then item 3 implies that there is $i\in I$ (resp. $j\in I$ so that , $(a,b)$ (resp. $(b,a)$) contains the ends of at least $2$ rays in $\cR_i$ (resp. $\cR_j$). As $\cR_i$ and $\cR_j$ admits separating subsets, this implies that  both $(a,b)\cap \cR_i$ and $(b,a)\cap \cR_j$ are infinite, so that $a\nsim b$.
\end{proof}

Consider now a non-empty open interval $J$ of the circle at infinity. We announced that there is $i$ for which  $J$ contains at least $2$ limits of rays in $\cR_i$. Recall that, according to Theorem~\ref{t.rays} the points in $J$ which are limit of rays  in $Y$ are dense in $J$. Thus there are at least $2$ points in $J$ which are limit of rays $R_1,R_2$ in $Y$.
This implies that, up to exchange $R_1$ and $R_2$ any ray in $\cR$ between $R_1$ and $R_2$ tend to a point in $J$. 
Now, the rays $R_1, R_2$ are not equivalent for $\sim$ according to Claim~\ref{c.class}.   By definition of $\sim$, there is $i$ so that there are infinitely rays between $R_1$ and $R_2$.  This proves Item 3 of Theorem~\ref{t.union}. 

Assume now that one has another compactification $\psi\colon\RR^2\to \DD^2$ satisfying also the announced properties. 
One deduces from Claim~\ref{c.class}  the fact that the images by $\psi$ of  two distinct rays in the set $Y$ (that we used for building the first compactification $\varphi$)  have two distinct limit points and that the limit points of the image by $\psi$ of rays in $Y$ are dense in $\SS^1$. Thus this new compactification satisfies the same property on the set of rays $Y$ as the one we built.  Now Theorem~\ref{t.rays} asserts that these compactifications differs from $\varphi$ by a homeomorphism of $\DD^2$, concluding the proof. 
\end{proof}

\begin{lemm}\label{l.union} Assume that $\cR$ satisfies the hypotheses of Theorem~\ref{p.union}, and let $\tilde \cR$ be a set of rays to that the germs of rays in $\cR\cup\tilde\cR$ are pairwise disjoint. 

Let $\RR^2\hookrightarrow \DD^2$ be a compactification given by Theorem~\ref{t.union} applied to $\cR$. Then any ray $\tilde \gamma$ in $\tilde \cR$ tends to a point at infinity.  
 
\end{lemm}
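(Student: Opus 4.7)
The first observation is that the limit set
$L(\tilde\gamma)=\bigcap_{T\geq 0}\overline{\tilde\gamma([T,\infty))}$
of $\tilde\gamma$ in $\DD^2$ is contained in $\SS^1=\partial\DD^2$ (because $\tilde\gamma$ is proper in $\RR^2$, and the embedding $\RR^2\hookrightarrow \mathring\DD^2$ sends compact sets of $\RR^2$ to sets bounded away from $\SS^1$), is non-empty (by compactness of $\DD^2$), and is connected (as a decreasing intersection of compact connected subsets of the Hausdorff compact space $\DD^2$). So $L(\tilde\gamma)$ is a continuum in $\SS^1$: either a single point, a closed arc with distinct endpoints, or all of $\SS^1$. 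The goal is to exclude the last two cases.

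Suppose for contradiction that $L(\tilde\gamma)$ is not a single point, so that one can pick two distinct points $p\neq q$ in $L(\tilde\gamma)$. They split $\SS^1$ into two open arcs $A_1, A_2$. By the density of limits of rays in $E=\bigcup E_i$ (which follows from the construction in the proof of Theorem~\ref{t.union}, since the separating set $F=Y\cap E$ already has its limits dense in $\SS^1$ by Theorem~\ref{t.rays}), one can choose rays $\sigma^-,\sigma^+\in E$ whose limits $s^-,s^+$ in $\SS^1$ lie respectively in $A_1$ and $A_2$. In particular $s^-\neq s^+$ and the pair $\{s^-,s^+\}$ separates $p$ from $q$: one of the closed arcs $[s^-,s^+],[s^+,s^-]$ contains $p$ in its interior, the other contains $q$.

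Now the germ of $\tilde\gamma$ is disjoint from the germs of both $\sigma^-$ and $\sigma^+$ by hypothesis, so the cyclic order on the triple $\{\sigma^-,\sigma^+,\tilde\gamma\}$ is well-defined. Up to exchanging the roles of $\sigma^-$ and $\sigma^+$, we may assume $\tilde\gamma\in(\sigma^-,\sigma^+)$ in this cyclic order. By Corollary~\ref{c.entre}, there is a line $L$ whose right end is $\sigma^-$ and left end is $\sigma^+$, and $\tilde\gamma$ admits a realisation contained in the upper half-plane $\Delta^+_L$. Since this realisation has the same germ as $\tilde\gamma$, the two coincide outside a compact set; combined with properness of $\tilde\gamma$, this gives a $T\geq 0$ such that $\tilde\gamma([T,\infty))\subset \Delta^+_L$.

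It remains to identify the closure of $\Delta^+_L$ in $\DD^2$. The image of $L$ in $\DD^2$ is a proper arc joining $s^-$ to $s^+$; together with one of the two closed arcs of $\SS^1$ determined by $\{s^-,s^+\}$, it bounds a closed topological disc which is exactly $\overline{\Delta^+_L}\cap\DD^2$. The arc of $\SS^1$ in question is the one containing the limits of all rays $R\in(\sigma^-,\sigma^+)$, that is, the closed arc $[s^-,s^+]$ determined by our choice of orientation. Therefore
$$L(\tilde\gamma)\subset \overline{\Delta^+_L}\cap\SS^1=[s^-,s^+].$$
But by the choice of $\sigma^\pm$, exactly one of $p,q$ lies in $[s^-,s^+]$ and the other does not. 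This contradicts $p,q\in L(\tilde\gamma)$ and forces $L(\tilde\gamma)$ to be a single point, namely the limit at infinity of $\tilde\gamma$. The main (minor) subtlety to be careful with is to match the orientation conventions so that the cyclic interval $(\sigma^-,\sigma^+)$ in $\cR\cup\{\tilde\gamma\}$ truly corresponds to the half-plane $\Delta^+_L$ and to the arc $[s^-,s^+]$ on $\SS^1$ used in the compactification.
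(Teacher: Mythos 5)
Your argument is correct and is essentially the paper's own (two-sentence) proof fleshed out: both pin down the limit via the cyclic position of $\tilde\gamma$ among the rays of $\cR$ together with the half-plane neighborhood basis of points of $\SS^1$ — you by trapping the connected limit set $L(\tilde\gamma)$ in arbitrarily fine arcs $[s^-,s^+]$, the paper by exhibiting the limit directly as the nested intersection of all such arcs $[a,b]$ with $\tilde\gamma\in(a,b)$. The orientation/closure bookkeeping you flag at the end ($\overline{\De^+_L}\cap\SS^1=[s^-,s^+]$) is treated at the same implicit level of rigor throughout the paper, so no gap there.
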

\begin{proof}The candidate for the limit  the intersection of all the closed intervals in $\SS^1$, bounded by limit of rays $a,b\in \cR$, so that $\tilde \gamma\in(a,b)$.  The basis of neighborhood of this point that we exhibit implies that indeed $\tilde \gamma$ tends to that point at infinity. 
 
\end{proof}

\subsection{An example with uncountably many compactifications}

The example below shows that, in the case of a infinite countable family $\cR=\{\cR_i\}, i\in\NN$, the compactification announced by Theorem~\ref{t.union} would not be unique if  we replace the item 3 of the conclusion by the density in $\SS^1_\cR$ of the limits of the rays in $\cR$. In the example below, 

\begin{exem}\label{e.separating-classes} Let $B\subset \RR^2$ be the open strip $\{(x,y)\in\RR^2, |x-y|<1\}.$
Let  $I$ be the set of linear lines with a rational inclination $\neq 1$. For any $i\in I$, let $\cF_i$ be the restriction to $B$ of the trivial  foliation by parallel straight lines directed by $i\in\RR\PP^1$.  For any $i$, let $\cR_i$ be the set of ends of  leaf of $\cF_i$.  Each $\cR_i$ admits a countable separating subset. Thus $\cR=\bigcup_{i\in I} \cR_i$ satisfies the hypotheses of Theorem~\ref{t.union}.  

Then there are uncountably many distinct compactifications of $\RR^2$ for which 
\begin{itemize}
\item any ray of $\cR$ tends to a point of the circle at infinity $\partial \DD^2=\SS^1$. 
 \item for every $i$, any two distinct rays of $\cR_i$ tend to distinct points of $\SS^1$
 \item the points of $\SS^1$ which are the limit point of a ray in $\bigcup E_i$ are dense in $\SS^1$. 
\end{itemize}
\end{exem}
 \begin{proof}
Let $\DD^2_\cR$  be the compactification of $B\simeq \RR^2$ by adding the circle at infinity $\SS^1_\cR$.

Every class $C$ for $\sim$ contains exactly $1$ ray in $\cR_i$ for any $i\in I$.  The rays in $C$ are ordered,for the cyclic order, as the points of $I$ in $\RR\PP_1$. So, $C$ is a separating set for itself. 

By construction of $\SS^1_\cR$, the class  $C$  corresponds to a point $c\in\SS^1_\RR$.  We can build another circle $\SS^1_{\cR,C}$ by opening the point $c$ in a segment $I_C$. Then, we can build a compactification $\DD^2_{\cR,C}$ so that the rays in $C$ tend to distinct points dense in $I_C$. In particular $I_C$ contains exactly $1$ limit point of a ray in $\cR_i$, for any $i$. 

We can repeat this argument opening not just a point in $\SS^1_\cR$ but a countable  subset $\cC$ of classes for $\sim$: we build a compactification  $\DD^2_{\cR,\cC}$ where the circle at infinity contains disjoint intervals $I_C, C\in\cC$, so that  each $I_C$ contains exactly $1$ limit point of a ray in $\cR_i$ for any $i$, ans these points are dense in $I_C$. 

As there are uncountably many such countable subsets $\cC$, this provides an uncountable family of pairwise distinct compactifications of $B$ satisfying  the $2$ first items and the density of the limit points of rays. 
\end{proof}

This shows that the uniqueness part of  Theorem~\ref{t.union} becomes wrong if we replace item 3 by the density in $\SS^1$ of the set of limits of rays.

\subsection{Uncountable families of families of rays}
 Theorem~\ref{t.union} is wrong for the union of an uncountable family of sets of rays, as shows the Example~\ref{e.uncountable} below.

\begin{exem}\label{e.uncountable} We consider $\RR^2$ endowed with all constant foliations $\cF_\theta, \theta\in\RR\PP^1$, where $\cF_\theta$ is the foliation whose leaves are the straight lines parallels to $\theta$. 

Then given any compactification of $\RR^2$ by $\DD^2$  for which every end of leaf tends to a point at infinity, then for all but a countable set of $\theta$ the right ends of the leaves of $\cF_\theta$ tends to the same point at infinity. 
 
\end{exem}
\begin{proof}The ends of leaves $F^+_\theta$ at the right, and those at the left $F^-_\theta$ of the foliation $ \cF_\theta$ are disjoint interval depending on the uncountable parameter
theta.  On the circle at most countably many disjoint intervals can be non trivial, ending the proof. 
 
\end{proof}

\subsection{Projection on the compactifications associated to each families}
Let us start with a very easy example, showing the at the circles at infinity associated to the subsets of a countable family of transverse foliations may lead to uncountabily many distinct compactifications, all quotient of the compactification associated to the whole family.  

\begin{exem}\label{e.different} Consider now a infinite countable subset $I\subset \RR\PP^1$ and consider the family $\cR_I$ of the leaves of the  constant foliations $\cF_\theta, \theta\in I$ on $\RR^2$ as already considered in example~\ref{e.uncountable}. 

Now the set of ends of leaves of each foliation $\cF_\theta$ corresponds to $2$ (because each leaf has $2$ ends) non-empty open intervals in $\SS^1_I$, and these intervals do not contain any end of leaf of any other foliation. 

Thus if $J,K\subset I$ are distinct subsets, the circles at infinity $\SS^1_{J}$ and $\SS^1_K$ are obtained by collapsing distinct intervals of $\SS^1_I$ and they are different. 

As the set $\cP(I)$ of all subset of $I$ in uncountable, this leads to an uncountable family of compatifications $\{\DD^2_J\}_{J\in\cP(I)}$ of 
$\RR^2$ by a circle at infinity. 

\end{exem}

This situation is quite general.

Let $\cR=\cR_1\coprod\cdots\coprod\cR_k$ be a family of rays in $\RR^2$ whose germs are pairwise disjoint. Assume that for every $i\in \{1,\dots,k\}$ there is  a countable subset $E_i\subset \cR_i$ which is separating.

Thus for every subset $I\subset\{1,\dots,k\}$, Theorem~\ref{t.union} provides a compactification $\DD^2_I$ of $\RR^2$, by the circle at infinity  corresponding to the rays in $\cR_i, i\in I$. 

\begin{prop} If $J\subset I$ then the identity map on $\RR^2$ extend by continuity as a projection $\Pi_{I,J}\colon \DD^2_I\to \DD^2_J$.  This projection consists in collapsing intervals of $\SS^1_I=\partial \DD^2_I$ which do not contain any limit points  of ray is $\cR_j, j\in J$. 

Furthermore if $K\subset J$ then 
$$\Pi_{I,K}=P_{J,K}\circ P_{I,J}.$$
 
\end{prop}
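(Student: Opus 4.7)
My plan is to construct $\Pi_{I,J}$ directly. I set $\Pi_{I,J}|_{\RR^2}$ to be the identity, and for $p\in\SS^1_I$ I set $\Pi_{I,J}(p)$ to be the limit in $\DD^2_J$ of any sequence $(x_n)\subset\RR^2$ converging to $p$ in $\DD^2_I$. By compactness of $\DD^2_J$ a convergent subsequence exists; the main technical point is that this limit is independent of the choice of sequence and subsequence, which makes $\Pi_{I,J}(p)$ unambiguously defined.

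To prove independence, I argue by contradiction. Suppose two subsequences converge to distinct $q_1,q_2\in\DD^2_J$. The case where either limit lies in $\RR^2$ is easily ruled out since the topologies of $\DD^2_I$ and $\DD^2_J$ agree on $\RR^2$, so I may assume $q_1,q_2\in\SS^1_J$. By normality of the compact Hausdorff space $\DD^2_J$ I pick basis half-plane neighborhoods $V_i=\Delta^+_{L_i}$ of $q_i$ with disjoint closures, where each $L_i$ is an oriented line with both ends in $E_J=\bigcup_{j\in J}E_j$; this furnishes four distinct endpoint rays in $E_J$, determining four distinct $\sim_J$-classes. The crux of the argument is that the equivalence $\sim_I$ on $X_I=\bigcup_{i\in I}\cR_i$ from Proposition~\ref{p.union} \emph{refines} $\sim_J$ on $X_J=\bigcup_{j\in J}\cR_j$, since $E_J\subset E_I$ makes the defining finite-intersection condition for $\sim_I$ strictly stronger than that for $\sim_J$. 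Consequently the four endpoint rays also yield four distinct points of $\SS^1_I$, and these carry the same cyclic order as in $\SS^1_J$ (both being inherited from the canonical cyclic order on $\cR$), so the corresponding closed arcs $\alpha_i^I\subset\SS^1_I$ bounded by the lifted endpoints on the $V_i$-side are again disjoint. Therefore $\overline{V_1\cap\RR^2}\cap\overline{V_2\cap\RR^2}$ computed in $\DD^2_I$ equals $(\Delta^+_{L_1}\cup L_1\cup\alpha_1^I)\cap(\Delta^+_{L_2}\cup L_2\cup\alpha_2^I)\subset\RR^2$, whereas the hypothesis $x_n\to p$ in $\DD^2_I$ forces $p\in\SS^1_I$ to lie in both closures, giving the desired contradiction.

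Continuity of $\Pi_{I,J}$ on $\DD^2_I$ then follows from a standard diagonal argument: given $p_n\to p$ in $\DD^2_I$, choose $y_n\in\RR^2$ close to $p_n$ in $\DD^2_I$ and close to $\Pi_{I,J}(p_n)$ in $\DD^2_J$; then $y_n\to p$ in $\DD^2_I$, so by construction $y_n\to\Pi_{I,J}(p)$ in $\DD^2_J$, whence $\Pi_{I,J}(p_n)\to\Pi_{I,J}(p)$. The same separation analysis yields the fiber description: two points $p_1,p_2\in\SS^1_I$ have the same image iff no basis half-plane of $\DD^2_J$ (whose boundary endpoints lie in $E_J$) separates them on $\SS^1_I$, which is equivalent to the existence of an arc of $\SS^1_I$ between $p_1$ and $p_2$ containing no limit of any ray in $\bigcup_{j\in J}\cR_j$; this is exactly the announced collapsing of intervals devoid of such limits. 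Finally, the composition identity $\Pi_{I,K}=\Pi_{J,K}\circ\Pi_{I,J}$ is immediate: both sides are continuous maps $\DD^2_I\to\DD^2_K$ which extend the identity on the dense subset $\RR^2$, and $\DD^2_K$ is Hausdorff. The hardest part of the argument is the cyclic-order matching in the independence step, which is what forces me to work with the refinement relation between the equivalence relations $\sim_I$ and $\sim_J$.
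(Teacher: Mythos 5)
Your proof is correct, but it takes a genuinely different route from the paper's. The paper first builds the circle map $\pi_{I,J}\colon\SS^1_I\to\SS^1_J$ purely order-theoretically, as the monotone extension (via Remark~\ref{r.cyclic-order}) of the order-isomorphism between the set $R_J\subset\SS^1_I$ of limits of rays of $\cR_J$ and its dense image in $\SS^1_J$; continuity then comes for free (an increasing map with dense image is continuous), the collapsing description is already contained in Remark~\ref{r.cyclic-order}, and the only thing left to check is that the glued map is continuous, which follows from the observation that the basic $\DD^2_J$-neighborhoods of points at infinity (half-planes bounded by lines with ends in $\cR_J\subset\cR_I$) are also $\DD^2_I$-neighborhoods. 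You instead \emph{define} the map as the sequential continuous extension of the identity and prove it is well defined by a separation argument; the real content of your proof is the observation that $\sim_I$ refines $\sim_J$ together with the compatibility of the cyclic orders, which lets you lift disjoint closed half-plane neighborhoods from $\DD^2_J$ to sets with disjoint closures in $\DD^2_I$. What the paper's route buys is brevity and an immediate, transparent description of the fibers as collapsed intervals; what your route buys is that uniqueness of the extension and the composition law $\Pi_{I,K}=\Pi_{J,K}\circ\Pi_{I,J}$ become automatic (two continuous maps into a Hausdorff space agreeing on the dense set $\RR^2$), and the construction is more self-contained since it does not invoke Remark~\ref{r.cyclic-order}. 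Two small remarks: your closure computation $\overline{V\cap\RR^2}^{\,\DD^2_I}=\Delta^+_L\cup L\cup\alpha^I$ uses a plane-topology fact (a half-plane bounded by a line with ends in $\cE$ accumulates at infinity exactly on the corresponding arc) that is implicit in the construction of Theorem~\ref{t.rays}, at the same level of detail as the paper's own argument; and your ``iff'' for the fibers (``an arc containing \emph{no} limit of a ray of $\cR_J$'') has the same harmless looseness as the statement of the proposition itself --- the precise condition is ``at most finitely many such limits,'' which, as the paper notes, describes the same topological collapsing.
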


\begin{proof} We first define a projection $\pi_{I,J}\colon\SS^1_I\to \SS^1_J$  by using Remark~\ref{r.cyclic-order}: the subset $R_J \subset \SS^1_{I}$ of limit of rays of $\cR_J$
is is a strcitly increasing bijection with $\cR_J$.  Thus the increasing bijection of $\cR_J$ on a dense subset of $\SS^1_J$ induces an increasing bijection of $R_J$ in this dense subset of $\SS^1_J$. Now Remark~\ref{r.cyclic-order} asserts that this bijection extends on the whole $\SS^1_I$ in a not-stricly increasing map  $\pi_{I,J}\colon\SS^1_I\to \SS^1_J$. An increasing map with dense image is always continuous, so that $\pi_{I,J}$ is continuous. 

Finaly Remark~\ref{r.cyclic-order} asserts that the non-injectivity of $\pi_{I,J}$ consist in collapsing intervals of $\SS^1_I$ with at most $1$ point in $R_J$, which is the same topological operation as collapsing intervals 
with no points in $R_J$. 

For ending the proof, we will check that $\pi_{I,J}$ is the extension by continuity of the identity map of $\RR^2$ to the circles at infinity. 

Recall that we defined a basis of neighborhood of each point of the circle at infinity $\SS^1_I$ (resp. $\SS^1_J$ ) as the half-planes $\De^+_L$ bounded by lines whose both ends are in $\cR_I$ (resp. $\cR_J$). In particular, as $J\subset I$,  the neighborhoods of points at infinity
in $DD^2_J$ are still neighborhoods of points at infinity for $\DD^2_I$, proving that the map which is the identity from  $\RR^2=\mathring\DD^2_I$ to $\RR^2=\mathring\DD^2_J$
and is $\pi_{I,J}$ from $\SS^1_I$ to $\SS^1_J$ is continuous. This ends the proof. 
 
\end{proof}

\section{backgroung on foliations: regular leaves, non-separated leaves\label{s.background}}
\subsection{Non-singular foliations\label{ss.non-singular}}
Let $\cF$ be a foliation of $\RR^2$. Then 
\begin{enumerate}\item as $\RR^2$ is simply connected, $\cF$ is orientable and admits a transverse orientation.  Let us fix an orientation of $\cF$ and a transverse orientation. 
 \item every leaf is a line (i.e. a proper embedding of $\RR$ in $\RR^2$).
 \item a basis of  neighborhoods of a leaf $L$ is obtained by considering  the union of leaves through a transverse segment $\sigma$ through a point of $L$. 
 \end{enumerate}
 
 \begin{defi}\begin{itemize}\item two leaves $L_1$, $L_2$ are \emph{not separated} one from the other  if 
 they do not admit disjoint neighborhood.
 
  \item A leaf $L$ is called \emph{not separated} or \emph{not regular} if there is a leaf $L'$ which is not separated from $L$.  
  \item A leaf is called \emph{regular} if it is separated from any other leaf. 
  \end{itemize}
 \end{defi}

 We will need some times to be somewhat more specific.  
 
 Let $L_1$ and $L_2$ be distinct leaves of $\cF$.  Consider two segments $\sigma_i\colon [-1,1]$ transverse to $\cF$, positively oriented for the transverse orientation of $\cF$, and so that $\sigma_i(0)\in L_i$, $i=1,2$.   Then $L_1$ is not separated from $L_2$ means that there are sequences 
 $t^i_n$, $i=1,2$ tending to $0$ as $n\to+\infty$  so that $\sigma_1(t^1_n)$ and  $\sigma_2(t^2_n)$ belongs to the same leaf $L^n$. Then
 
\begin{itemize}
 \item as $\cF$ is transversely oriented and the $\sigma_i$ are positively oriented, one gets that 
 $t^1_n$   has the same sign as $t^2_n$, for every $n$. Futhermore all the $t^i_n$ have the same sign. 
 
 One says that $L_1$ and $L_2$ are \emph{not separated from above} (resp. \emph{from below}) if the $t^i_n$ are positive (resp. negative). 
 \item By shirinking the segments $\sigma_i$ if necessary one may assume that they are disjoint. Now, up to exchange $L_1$ with $L_2$ we may assume that $\sigma_1(t_n)$ is at the left of $\sigma_2(t_n)$ in the oriented leaf $L^n$. 
 We say that $L_1$ (resp. $L_2$) is not separated from $L_2$ at its  right (resp. at its left). 
\end{itemize}

Consider a leaf $L$ and $\sigma\colon[-1,1]\to \RR^2$ a transverse segment (positively oriented) with $\sigma(0)\in L$. Let $L_t$ be the leaf through $\sigma(t)$.  Let $U_t$, $t\in (0,1)$, be the closure of the connected component of $\RR^2\setminus (L_t\cup L_{-t})$ containing $L$.  Then 
\begin{lemm} \label{l.regular}
 The leaf $L$ is regular if and only if $$\bigcap_t U_t= L.$$
 
 The intersection $\bigcap_t U_t$ does not depend on the segment $\sigma$ and is denoted $\mathfrak{U}(L)$.

If $L$ is not regular, $\mathfrak{U}(L)$ as non-empty interior, and  the leaves which are not separated from $L$ are precisely the leaves in the boundary of $\mathfrak{U}(L)$. 
\end{lemm}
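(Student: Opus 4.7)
The plan is to record monotonicity and saturation properties of $\{U_t\}$, establish a characterization of $\bigcap_tU_t$ intrinsic to $L$ (which will give $\sigma$-independence for free), and deduce the three assertions. First I would check that $\{U_t\}_{t\in(0,1)}$ is nested, $U_{t'}\subset U_t$ for $0<t'<t$: the leaves $L_{\pm t'}$ meet $\sigma((-t,t))\subset\mathring U_t$ and hence, by saturation, lie in $\mathring U_t$, so the component of $\RR^2\setminus(L_{t'}\cup L_{-t'})$ containing $L$ lies in $\mathring U_t$. Each $\mathring U_t$ is open, connected and $\cF$-saturated (no leaf crosses $L_{\pm t}$), and contains $L$; in particular $L\subset\bigcap_t U_t$.

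\textbf{Key characterization.} The heart of the proof is the claim that a leaf $L'\neq L$ lies in $\bigcap_tU_t$ if and only if $L'$ is non-separated from $L$, or $L'$ lies in a pocket bounded by leaves non-separated from $L$---a description intrinsic to $L$ and making no reference to $\sigma$. For the easy direction, if $L'$ is non-separated from $L$ I fix a transversal $\sigma'$ through $y\in L'$ and sequences $s_n,s'_n\to 0$ with $\sigma(s_n)$ and $\sigma'(s'_n)$ on a common leaf $L^n$ (Section~\ref{ss.non-singular}). For any $t>0$, eventually $|s_n|<t$, so $\sigma(s_n)\in\mathring U_t$; by saturation $L^n\subset\mathring U_t$, hence $\sigma'(s'_n)\in\mathring U_t$, and in the limit $y\in U_t$; thus $L'\subset\bigcap_tU_t$. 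The same argument applied to the bounding non-separated leaves, combined with connectedness and saturation of $\mathring U_t$, places pocket leaves in $\bigcap_tU_t$ as well. For the harder converse, I would exploit that the saturated open neighborhoods $B_t:=\bigcup_{|s|<t}L_s$ of $L$ satisfy $\bigcap_t B_t=L$ (since $\sigma$ meets each leaf at most once) while $B_t\subset\mathring U_t$; any leaf of $\mathring U_t\setminus B_t$ is separated from $B_t$ inside the connected set $\mathring U_t$ only by leaves of $\partial B_t\cap\mathring U_t$, and such a boundary leaf is a non-separated companion of some $L_s$ with $|s|<t$. Letting $t\to 0$ forces any surviving $L'\subset\bigcap_tU_t$ either to be non-separated from $L$ itself or to lie in a pocket bounded by such leaves.

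\textbf{Three assertions.} Since the characterization makes no reference to $\sigma$, $\bigcap_tU_t$ is $\sigma$-independent; denote it $\mathfrak U(L)$. The first assertion ($L$ regular iff $\mathfrak U(L)=L$) is then immediate. If $L$ is not regular, the characterization provides a non-separated companion $L'$, and the bridge leaves $L^n$ from the easy direction sweep out an open $\cF$-saturated region between $L$ and $L'$ lying in $\mathfrak U(L)$, so $\mathring{\mathfrak U}(L)\neq\emptyset$. A leaf $L''\subset\partial\mathfrak U(L)$ is a leaf approached from outside $\mathfrak U(L)$ by leaves of $\cF$, which by the characterization happens exactly when $L''$ is non-separated from $L$.

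The main obstacle is the harder direction of the key characterization, where one must extract non-separation from the purely topological statement ``$L'$ persists in every $\mathring U_t$''. A clean alternative passes to the (simply connected, possibly non-Hausdorff) leaf space $\mathcal L=\RR^2/\cF$, where $\mathring U_t$ projects to an open connected subset $I_t$ around $[L]$ and $\bigcap_tI_t$ is determined by the local structure of $\mathcal L$ near $[L]$.
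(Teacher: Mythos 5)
Your preliminary steps are sound: the nestedness of the $U_t$, their saturation, and the easy direction (a companion $\tilde L$ of $L$, together with the half-plane it bounds on the side away from $L$, survives in every $U_t$) are all correct and give the non-empty interior statement. The gap is exactly where you flag "the main obstacle", and it is a genuine one: in the converse you produce, for each $t$, a leaf $M_t\subset\partial B_t\cap\mathring U_t$ lying between $L'$ and $B_t$, and you correctly note that such a leaf is a non-separated companion of \emph{some} $L_s$ with $|s|\leq t$. But "letting $t\to 0$ forces\dots" is an assertion, not an argument: being non-separated from $L_{s(t)}$ with $s(t)\neq 0$ says nothing about being non-separated from $L$, and the family $\{M_t\}$ has no reason to converge, so nothing so far places $L'$ behind a companion of $L$ itself. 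Since the whole content of the lemma (regularity criterion, identification of $\partial\mathfrak{U}(L)$, and the $\sigma$-independence you want to extract from the intrinsic characterization) rests on this step, the proof is incomplete as written. It can be patched inside your framework: a leaf $M\subset\partial B_t$ that persists in every $U_{t'}$ cannot be a companion of $L_{s_0}$ with $s_0\neq 0$, because it would then be accumulated by leaves $L_{s_k}$ with $|s_k|\to|s_0|>0$, which lie outside $U_{t'}$ for $t'<|s_k|$; that would force $M\subset\partial U_{t'}=L_{t'}\cup L_{-t'}$ for every small $t'$, which is absurd. Hence $s_0=0$ and $M$ is non-separated from $L$ itself; the pocket description then follows.

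You should also know that the paper's own converse is much more direct and avoids $B_t$ entirely: $\bigcap_t U_t$ is saturated, hence so is its boundary; a transverse segment through a point of a boundary leaf $L''$ is not contained in $\bigcap_t U_t$, hence by nestedness it leaves $U_t$ for every small $t$, hence it crosses $\partial U_t=L_t\cup L_{-t}$ at points accumulating on $L''$ --- which is verbatim the definition of $L''$ being non-separated from $L$. Combined with the remark that $\mathfrak{U}(L)\neq L$ forces $\partial\mathfrak{U}(L)\neq L$ (a half-plane bounded by $L$ cannot lie in $U_t$, since $L_t$ and $L_{-t}$ sit on opposite sides of $L$), this yields all three assertions without classifying the interior leaves. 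Your idea of deriving the $\sigma$-independence from an intrinsic description of $\mathfrak{U}(L)$ is a good one --- the paper leaves that point implicit --- but it only becomes available once the converse is actually proved.
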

\begin{proof}A leaf $\tilde L$ not separated from $L$ is contained in every $U_t$ and is accumulated by leaves $L_{t_n}$ in the boundary of $U_{t_n}$. Thus $\tilde L$ is contained in the boundary of $\mathfrak{U}(L)=\bigcap_t U_t$. Furthermore one of the two half planes bounded by $\tilde L$ is contained in $U_t$ and therefore in $\mathfrak{U}(L)$. 

Conversely,$\bigcap_t U_t$ consist in entire leaves of $\cF$ and so does its boundary. Now any transverse segment through a leaf in the boundary of $\bigcap_t U_t$ crosses the boundary $L_t\cup L_{-t}$ of $U_t$ for $t$ small: that is the definition of being not sepatated from $L$. 
\end{proof}

 \begin{lemm}\label{l.countable} Let $\cF$ be a foliation of $\RR^2$. The set of not separated leaves is at most countable. 
 \end{lemm}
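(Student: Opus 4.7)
The plan is to exhibit an at most countable set into which the non-regular leaves inject. I would first fix an orientation and transverse orientation of $\cF$, and cover $\RR^2$ by a countable family of foliation boxes $B_n = \sigma_n \times \tau_n$, $n \in \NN$, where $\tau_n \colon (-1,1) \to \RR^2$ is a transverse segment positively oriented for the transverse orientation. For any pair $m \neq n$, let $D_{m,n} \subset (-1,1)$ be the set of parameters $t$ such that the leaf through $\tau_m(t)$ also meets $\tau_n$; the associated \emph{holonomy map} $h_{m,n} \colon D_{m,n} \to (-1,1)$ is defined by requiring $\tau_m(t)$ and $\tau_n(h_{m,n}(t))$ to lie on a common leaf. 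Each $D_{m,n}$ is open in $(-1,1)$, hence a disjoint union of countably many open intervals, so its boundary $\partial D_{m,n}$ is countable; consequently $\bigcup_{m,n} \{(m,n)\} \times \partial D_{m,n}$ is countable.

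Next I would associate to each non-regular leaf $L$ a boundary point of some $D_{m,n}$. Choose a partner $L'$ not separated from $L$, say from above (the from-below case is symmetric, doubling the count). Pick disjoint foliation boxes $B_m$ meeting $L$ and $B_n$ meeting $L'$, and let $t_L$ satisfy $\tau_m(t_L) \in L$. By the non-separation-from-above description in Section~\ref{ss.non-singular} there are sequences $t^m_k \to t_L^+$ and $t^n_k \to t_{L'}^+$ with $\tau_m(t^m_k)$ and $\tau_n(t^n_k)$ on a common leaf. Thus $t^m_k \in D_{m,n}$, while $t_L \notin D_{m,n}$ because $L$ is disjoint from $B_n$ (as $L \neq L'$ and $B_m \cap B_n = \emptyset$). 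Hence $t_L \in \partial D_{m,n}$.

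Finally, distinct leaves of $\cF$ intersect $\tau_m$ in disjoint subsets of $(-1,1)$. So the assignment $L \mapsto (m,n,t_L)$ sends different non-regular leaves to disjoint non-empty subsets of the countable set $\bigcup_{m,n}\{(m,n)\}\times \partial D_{m,n}$, which proves the lemma. The only delicate point, and in this sense the main obstacle, is that a single leaf $L$ may cross a foliation box $B_m$ in several plaques and therefore receive several values $t_L$; this forces the injectivity to be phrased as ``distinct leaves meet $\tau_m$ in disjoint parameter sets'' rather than as a genuine function, but otherwise the argument is elementary and uses no input beyond Lemma~\ref{l.regular} and the discussion of non-separation above.
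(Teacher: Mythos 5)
The decisive step of your argument is the assertion that $\partial D_{m,n}$ is countable, and the reason you give --- ``$D_{m,n}$ is open, hence a disjoint union of countably many open intervals, so its boundary is countable'' --- is false as a general statement: the complement of a Cantor set in $(-1,1)$ is a countable disjoint union of open intervals whose boundary is the uncountable Cantor set. Since the whole content of the lemma is to rule out precisely this kind of uncountable accumulation, this cannot be taken for granted. What saves your approach in this specific situation is that a holonomy domain $D_{m,n}$ between two disjoint transversals of a foliation of the plane is in fact \emph{connected}, hence a single open interval with at most two boundary points: if two leaves meet both $\tau_m$ and $\tau_n$, arcs of these leaves together with arcs of the two transversals bound a disc, and any leaf entering this disc through $\tau_m$ must leave it through $\tau_n$, since it is properly embedded, cannot cross the two boundary leaves, and cannot meet a transversal twice. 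This planarity input is the real heart of the step and is absent from your proof; by contrast, the point you single out as ``the only delicate point'' (a leaf crossing a box in several plaques) is comparatively harmless.

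A secondary, more easily repaired slip: you claim $t_L\notin D_{m,n}$ ``because $L$ is disjoint from $B_n$ (as $L\neq L'$ and $B_m\cap B_n=\emptyset$)''. Neither hypothesis prevents $L$ from entering $B_n$ in a plaque other than the one through $L'$; you need to choose $\tau_n$, from a countable basis of foliation boxes fixed in advance, small enough to meet $L'$ while avoiding the closed set $L$. For comparison, the paper's proof avoids holonomy entirely: for the leaves $L$ crossing a fixed transversal, the saturated sets $\mathfrak{U}(L)$ of Lemma~\ref{l.regular} are pairwise disjoint, and $L$ is non-regular exactly when $\mathfrak{U}(L)$ has non-empty interior; a family of pairwise disjoint subsets of the plane with non-empty interiors is at most countable.
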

 
 \begin{proof} We consider a countable family of transverse lines $\Si$ whoses union cuts every leaf of $\cF$. It is enough to proof that such a tranverse line $\Si$ cuts at most a countable set of non-regular leaves $L$ admiting a non separated leaf $\tilde L$ from below.
 
 For that just notice that the $\mathfrak{U}(L)$ for $L\cap \Si\neq \emptyset$ are pairwise disjoint. Thus,there are at most countably many of them   with non-empty interior, ending the proof. 
 \end{proof}

Note that $L$ cuts the strip $U_t$, $t\in(0,1]$ in two strips 
$U^+_t$ and $U^-_t$ bounded respectively by $L_t\cup L$ and by $L_{-t}\cup L$, and we denote 
$$\mathfrak{U}^+(L)=\bigcap_t U^+_t\mbox{ and }\mathfrak{U}^-(L)=\bigcap_t U^-_t$$

Then 
\begin{lemm} $L$ is non-separated from above (resp. from below) if and only if $\mathfrak{U}^+(L)\neq L$ (resp. $\mathfrak{U}^-(L)\neq L$) and if and only if $\mathfrak{U}^+(L)$ (resp. $\mathfrak{U}^-(L)$) has non-emptyinterior. 
\end{lemm}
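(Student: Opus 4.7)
The plan is to adapt the proof of Lemma~\ref{l.regular} by restricting the argument to one side of $L$. I will describe the ``from above'' case; the ``from below'' case is symmetric, obtained by reversing the transverse orientation.

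For the forward direction (that $L$ non-separated from above implies $\mathfrak{U}^+(L)$ has non-empty interior, and in particular is not reduced to $L$), let $\tilde L$ witness the non-separation from above. By definition there exist sequences $t_n^1, t_n^2\to 0^+$ and leaves $L^n$ meeting $\sigma$ at $\sigma(t_n^1)$ and a positively oriented transverse segment $\tilde\sigma$ through $\tilde L$ at $\tilde\sigma(t_n^2)$. Fix $t>0$; for $n$ large, $0<t_n^1<t$, so $L^n$ crosses $\sigma$ strictly between $L$ and $L_t$. Being a leaf disjoint from both $L$ and $L_t$, $L^n$ lies in $U^+_t$. Passing to the limit and using that $U^+_t$ is closed yields $\tilde L\subset U^+_t$ for every $t>0$, hence $\tilde L\subset \mathfrak{U}^+(L)$. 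The same argument applied to points $\tilde\sigma(s)$ on the side of $\tilde L$ opposite $L$ shows that the entire closed half-plane bounded by $\tilde L$ on that side lies in every $U^+_t$, exactly as in Lemma~\ref{l.regular}; this gives non-empty interior.

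For the converse, I would first note that $\mathfrak{U}^+(L)$ is closed and $\cF$-saturated: a leaf through a point of $\mathfrak{U}^+(L)\setminus L$ cannot cross either boundary leaf of any $U^+_t$, so it is entirely contained in every $U^+_t$. Assuming $\mathfrak{U}^+(L)\neq L$, a leaf $\tilde L\neq L$ can be located in the topological boundary of $\mathfrak{U}^+(L)$ by taking $p\in \mathfrak{U}^+(L)\setminus L$, pushing a transverse segment from $p$ into the complement of $\mathfrak{U}^+(L)$, and picking the ``first exit'' leaf (which lies in $\overline{\De^+_L}\setminus L$, hence is not $L$). Along a positively oriented transverse segment $\tau$ through $\tilde L$, one direction immediately exits $\mathfrak{U}^+(L)$, producing sequences $s_n\to 0$ of fixed sign and $t_n\to 0^+$ with $\tau(s_n)\notin U^+_{t_n}$. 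Since $\tau(s_n)\to \tilde L\subset U^+_t$ for each fixed $t$, the only way to leave $U^+_{t_n}$ is to meet its boundary leaf $L_{t_n}$, so $\tau(s_n)\in L_{t_n}$; this is precisely the sequence witnessing non-separation of $\tilde L$ and $L$ from above. The equivalence between $\mathfrak{U}^+(L)\neq L$ and $\mathfrak{U}^+(L)$ having non-empty interior is then automatic from the two implications.

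The main obstacle is the converse direction: specifically, verifying that the boundary of $\mathfrak{U}^+(L)$ contains a leaf distinct from $L$, and that the sign of $s_n$ correctly encodes the ``above'' side in the sense of Section~\ref{ss.non-singular}. The forward direction, by contrast, is essentially a rerun of Lemma~\ref{l.regular}'s argument with $U_t$ replaced by $U^+_t$.
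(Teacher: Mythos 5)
Your proof is correct and follows exactly the route the paper intends: the lemma is stated there without proof as the one-sided analogue of Lemma~\ref{l.regular}, and your argument is precisely that adaptation (forward direction via $\tilde L\subset U^+_t$ plus the far half-plane bounded by $\tilde L$, converse via a boundary leaf of $\mathfrak{U}^+(L)$ and a transverse segment forced to meet the frontier of $U^+_{t_n}$). The only literal slip is the phrase ``$\tau(s_n)\in L_{t_n}$'': since $\tau(s_n)$ lies \emph{outside} $U^+_{t_n}$ it cannot lie on the boundary leaf, and what you mean is that the subsegment of $\tau$ joining $\tau(0)\in\tilde L\subset U^+_{t_n}$ to $\tau(s_n)$ must cross the frontier $L\cup L_{t_n}$, hence (as $\tau$ misses $L$) meets $L_{t_n}$ at some intermediate parameter $s'_n\to 0$ of the same sign, and it is this sequence that witnesses the non-separation from above.
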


In the same spirit, $\sigma$ cuts the strip $U_t$ in two half strips $U^{left}_t$ and $U^{right}_t$ according to the orientation of $\cF$.  
Then one says that the \emph{right end} $L^+$ (resp. \emph{left end}$L^-$)\emph{ of $L$ is regular} if 
$$\mathfrak{U}_{right}=(L)\bigcap_t U^{right}_t= L^+\mbox{ (resp. }\mathfrak{U}_{left}(L)=\bigcap_t U^{left}_t=L^-\mbox{). }$$

We can be even more precise by considering the $4$ quadrants $U^{+,right}_t,U^{+,left}_t,U^{-,right}_t,U^{-,left}_t$ obtained by considering the intersections of $U^+_t$ and $U^-_t$ with $U^{right}_t$ and $U^{left}_t$. 
This allows us to speak on right or left ends of leaves non separated from above or from below, in the obvious way.

\subsection{Singular foliations: saddles with $k$-separatrices}
A singular foliation $\cF$ on $\RR^2$ is a foliation on $\RR^2\setminus Sing(\cF)$ where $Sing(\cF)$ is a closed subset of $\RR^2$. A \emph{leaf of $\cF$} is a leaf of the restriction of $\cF$ to $\RR^2\setminus Sing(\cF)$. Let us now recall the notion of saddles with $k$-separatrices, also called $k$-prongs singularities. 

We denote by $A_0$ the quotient of $[-1,1]^2$ by the involution $(x,y)\mapsto (-x,-y)$. The projection of $(0,0)$ on $A_0$ is still called $0,0$.  Note that the horizontal foliation (whose leaves are the segments $[-1,1]\times \{t\}$ is invariant by $(x,y)\mapsto (-x,-y)$, and therefore passes to the quotient on $A_0\setminus (0,0)$ and we denote by $\cH_1$ the induced foliation on $A_0\setminus\{(0,0)\}$.

A \emph{$1$-prong singular point $p$} of $\cF$ is a point of $Sing(\cF)$ which admits a neighborhood $U$ and a homeomorphism $h$ from $U$ to $A_0$ so that $h(p)=(0,0)$ and $h$ maps $\cF$ on $\cH_1$.

We denote by $A_k,\cH_k$ the cyclic ramified cover of $A_0$ at the point $(0,0)$ with $k$ leaves, endowed with the lift of $\cH_1$. 

A \emph{$k$-prongs singular point $p$},  equivalently a \emph{sadlle point with $k$ separatrices} of $\cF$ is a singular point admiting a homeomorphism of a neighborhood onto $A_k$ mapping $p$ on $(0,0)$ and $\cF$ on $\cH_k$. A separatrix of the saddle point $p$ is the leaf of $\cF$ containing a  connected  component of the lift of $]0,1]\times \{0\}$.

\begin{rema} \begin{itemize}\item If $p$ is a $2$-prongs singular point of $\cF$, then the foliation $\cF$ can be extended on $p$ so that $p$ is not singular. 

\item The Poincar\'e-Hopf index of a $k$-prongs singular point is $1-\frac k2$. 
\end{itemize}

\end{rema}

A \emph{foliation with singularities of saddle type} on $\RR^2$ is a singular foliation  for which each singular point is a saddle with $k$ separatrices, $k>2$.

\subsection{Leaves of singular foliations}

\begin{lemm}\label{l.PH} Let $\cF$ be a foliation on $\RR^2$ with singular points of saddle type.  Let $\sigma\colon [0,1]\to \RR^2\setminus Sing(\cF)$ be a segment transverse to $\cF$. Then for every leaf $\gamma$ one has
$$\# \sigma\cap\gamma\leq 1,$$
where $\#$ denotes the cardinal. 
 \end{lemm}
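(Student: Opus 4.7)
I argue by contradiction. Since $\sigma$ is compact and transverse to $\cF$, the set $\gamma\cap\sigma$ is finite; assume it has at least two points, and choose $p_0,p_1\in\gamma\cap\sigma$ consecutive along $\sigma$, so that $\beta:=\sigma[p_0,p_1]$ satisfies $\beta\cap\gamma=\{p_0,p_1\}$. Let $\alpha\subset\gamma$ be the subarc joining $p_0$ to $p_1$; by the Jordan curve theorem, the simple closed curve $C=\alpha\cup\beta$ bounds a closed topological disk $D\subset\RR^2$. On $\partial D$, the foliation $\cF$ is tangent to $\alpha$ (which is free of singular points, being a subarc of the leaf $\gamma$) and transverse to $\beta$, the two corners $p_0,p_1$ separating the two types of arcs.

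\textbf{Step 1: no singularity inside $D$.} After slightly smoothing the two corners into the interior of $D$, one obtains a disk $D'$ with the same singularities as $D$ but whose boundary is transverse to $\cF$ except at two isolated tangencies (one near each smoothed corner). The Poincar\'e-Hopf formula then reads
$$1=\chi(D')=\sum_{s\in Sing(\cF)\cap D'}\mathrm{ind}(s)+\tfrac12(t_{\mathrm{ext}}-t_{\mathrm{int}}),$$
with $t_{\mathrm{ext}}+t_{\mathrm{int}}\le 2$. Since each $k$-prong saddle ($k\ge 3$) satisfies $\mathrm{ind}(s)=1-k/2\le-\tfrac12$, and the boundary term is at most $1$, the presence of any interior singularity would give $1\le-\tfrac12+1=\tfrac12$, a contradiction. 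Hence $D$ contains no singularity of $\cF$.

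\textbf{Step 2: trapping argument.} A tubular neighborhood of $\sigma$ is simply connected and disjoint from $Sing(\cF)$, so $\cF$ is transversally orientable there, yielding a uniform crossing direction along $\beta$; say leaves enter $D$. Choose $x$ in the interior of $\beta$: the leaf $L$ through $x$ is distinct from $\gamma$ (since $\beta\cap\gamma=\{p_0,p_1\}$) and enters $D$ at $x$. The forward half-leaf $L^+$ cannot exit $D$ through $\alpha$ (distinct leaves of $\cF$ are disjoint) nor through $\beta$ (which would reverse the uniform crossing direction), so $L^+\subset D^\circ$. But by the Haefliger-Reeb theorem, the non-singular foliation $\cF|_{D^\circ}$ on the simply connected open disk has leaves that are proper embeddings of $\RR$ in $D^\circ$, so $L^{-1}(D^\circ)$ is compact in $\RR$, contradicting the inclusion $L^+\subset D^\circ$.

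\textbf{Main obstacle.} The most delicate point is the application of the Poincar\'e-Hopf formula at the two boundary corners: the smoothing has to be arranged so that each corner introduces exactly one isolated boundary tangency, whose contribution is $\pm\tfrac12$. Fortunately only the crude bound $|t_{\mathrm{ext}}-t_{\mathrm{int}}|\le 2$ is needed, so the precise convex/concave classification of each corner can be avoided.
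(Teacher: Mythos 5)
Your Step 1 is sound and is essentially the paper's argument, but deliberately weakened: by using only the crude bound $|t_{\mathrm{ext}}-t_{\mathrm{int}}|\le 2$ you can exclude interior singularities, yet you cannot exclude the singularity-free case (which is consistent with $t_{\mathrm{ext}}=2$, $t_{\mathrm{int}}=0$). The paper closes the argument right there by computing the boundary contribution exactly: the index of $\cF$ along the curve $[x,y]_\gamma\cup[y,x]_\sigma$ is $1$ or $\tfrac12$ depending on the relative orientation of the two crossings, hence strictly positive, while the sum of interior indices is $\le 0$ whether or not the interior singular set is empty (each $k$-prong with $k\ge 3$ has index $\le -\tfrac12$, and the empty sum is $0$). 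So the one delicate point you chose to avoid is exactly the point that finishes the proof in one stroke.

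The genuine gap is in Step 2, which is meant to cover the remaining (singularity-free) case. The inference ``the leaves of $\cF|_{D^\circ}$ are properly embedded lines in $D^\circ$, so $L^{-1}(D^\circ)$ is compact'' is invalid. Haefliger--Reeb gives that the leaf of the \emph{restricted} foliation containing $L^+\setminus\{x\}$ is a closed, properly embedded line in the \emph{open} disk $D^\circ$; such a line is entirely contained in $D^\circ$ and has non-compact parameter set, so this is perfectly consistent with $L^+\subset D^\circ$ and yields no contradiction (every leaf of every non-singular foliation of $D^\circ$ is of this kind). What you would actually need is that $L$ is proper in $\RR^2$, so that $L^{-1}$ of a bounded set is bounded --- but that is not available at this stage: it is precisely what Corollary~\ref{c.PH} deduces \emph{from} the present lemma. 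To salvage Step 2 one must run a genuine Poincar\'e--Bendixson trapping argument (the half-leaf accumulates somewhere in the compact disk $D$, one takes a small transversal through an accumulation point, and two consecutive crossings produce a new leaf-arc/transversal-arc Jordan curve to which one applies the index computation) --- which again requires the exact boundary-index computation you set aside. A minor additional point: $\gamma\cap\sigma$ need not be finite, nor even discrete in $\sigma$, a priori; one should take $p_0,p_1$ consecutive along $\gamma$ (as the paper does), not along $\sigma$, to ensure the Jordan curve is well defined.
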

 \begin{proof}Assume (arguing by contradiction) that $\sigma\cap\gamma\geq 2$. Let $x,y$ be two successive (for the parametrisation of $\gamma$) intersection points with $\sigma$. The concatenation of the segments $[x,y]_\gamma$ and $[y,x]_\sigma$ is a simple closed curve $c$ in $\RR^2\setminus Sing(X)$. By Jordan theorem $c$ bounds a disc $D$ in $\RR^2$ and the Poincaré Hopf index of $\cF$ on $D$ is either equal to $1$, if $\gamma$ cuts $\sigma$ with the same orientation at $x$ and $y$, or $\frac 12$ otherwise: anyway this index is strictly positive.  However, this index is the sum of the Poincaré Hopf index of the singular points of $\cF$ contained in $D$. As each of them is negative, that is a contradiction, ending the proof.  
 \end{proof}
 
 The same argument shows that 
 \begin{lemm}\label{l.PH2} Let $\cF$ be a foliation on $\RR^2$ with singular points of saddle type.  Then $\cF$ has no compact leaves
 \end{lemm}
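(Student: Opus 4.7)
The plan is to argue by contradiction, mimicking the Poincaré--Hopf strategy of Lemma~\ref{l.PH}. Suppose that $\gamma$ is a compact leaf of $\cF$. Since leaves are contained in $\RR^2 \setminus Sing(\cF)$, the curve $\gamma$ is a simple closed curve avoiding the singularities, so by Jordan's theorem it bounds a topological disc $D \subset \RR^2$.

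Next I would apply the Poincaré--Hopf index formula to the restriction of $\cF$ to $D$. The only difference with the situation of Lemma~\ref{l.PH} is that here $\partial D = \gamma$ is entirely a leaf (no transverse arc, no corners), so the boundary contribution is simply $\chi(D) = 1$ instead of the $1$ or $\frac12$ that arose from the mixed boundary in the previous lemma. Thus the total index of $\cF$ on $D$ equals $1$.

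On the other hand, this total index must equal the sum of the Poincaré--Hopf indices of the singular points of $\cF$ contained in $D$. By hypothesis each such singularity is a $k$-prong saddle with $k > 2$, hence has index $1 - \frac{k}{2} \leq -\frac12 < 0$. Consequently the sum is strictly negative if $D$ contains at least one singularity and equal to $0$ if $D$ contains none. In either case this contradicts the value $1$ coming from the boundary, which completes the proof.

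The only potentially delicate step is the computation that the boundary contribution is $1$ when $\partial D$ is a leaf; this is standard (one may double $D$ across $\gamma$ to obtain a sphere with a line field and compare $\chi = 2$ with twice the interior sum), but it is the one place where the argument genuinely differs from Lemma~\ref{l.PH}. Everything else is a direct transcription.
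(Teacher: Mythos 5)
Your proof is correct and follows the same Poincar\'e--Hopf argument as the paper, which simply notes that the index of $\cF$ on the disc bounded by a compact leaf would be $1$, while every singular point of saddle type contributes a negative index. Your extra care with the boundary contribution (the doubling argument) and with the case of a singularity-free disc only makes explicit what the paper leaves implicit.
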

 \begin{proof}The index of $\cF$ on the disc bounded by a compact leaf whould be $1$ which is impossible with singular points with negative index. 
 \end{proof}

\begin{coro}\label{c.PH} Let $\cF$ be a singular foliation of $\RR^2$ whose singular points are all saddle points with at least $3$ separatrices. 
Then every half leaf of $\cF$ is either a ray or tends to a singular point $p$ of $\cF$ and is contained in a separatrix  of $p$. 
\end{coro}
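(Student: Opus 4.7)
Let $\gamma\colon [0,+\infty)\to \RR^2\setminus Sing(\cF)$ parametrize a half leaf. Either $\gamma$ is a ray (a proper embedding into $\RR^2$) and there is nothing to prove, or there exist a compact $K\subset \RR^2$ and $t_n\to +\infty$ with $\gamma(t_n)\in K$; up to subsequence I can assume $\gamma(t_n)\to p\in K$. The first thing I would check is that $p$ cannot be a regular point of $\cF$. Indeed, in that case a flow-box neighborhood of $p$ contains a transverse segment $\sigma$ through $p$ which every nearby plaque crosses exactly once; since $\gamma$ returns arbitrarily close to $p$ infinitely often, it would cross $\sigma$ infinitely often, contradicting Lemma~\ref{l.PH}. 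Hence $p\in Sing(\cF)$.

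Next I would fix a neighborhood $U$ of $p$ coming from the local model $A_k$ (with $k\geq 3$ the number of separatrices at $p$), small enough to be disjoint from $Sing(\cF)\setminus\{p\}$ and from $\gamma(0)$. The boundary $\partial U$ meets the $k$ separatrices of $p$ in exactly $k$ points, and $\partial U$ minus these points is a disjoint union of $k$ open arcs that are transverse to $\cF$. If $\gamma$ is contained in a separatrix of $p$ the conclusion is immediate, so I may assume that the leaf supporting $\gamma$ is not a separatrix of $p$; by disjointness of leaves, $\gamma\cap \partial U$ then avoids the separatrix points of $\partial U$ and is contained in the $k$ transverse arcs. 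By Lemma~\ref{l.PH} each such arc meets $\gamma$ in at most one point, so $\gamma\cap \partial U$ contains at most $k$ points.

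Since $\gamma(t_n)\to p\in U$ while $\gamma$ crosses $\partial U$ only finitely many times, there exists $t_0$ with $\gamma([t_0,+\infty))\subset U$. Pulling back to the local model $A_k$, this exhibits a half leaf of $\cH_k$ trapped inside $A_k$. But by the explicit description of $\cH_k$, every non-separatrix leaf of $\cH_k$ is a topological arc with both endpoints on $\partial A_k$, hence cannot contain a proper ray that stays inside $A_k$; thus the leaf supporting $\gamma$ must be a separatrix of $p$, contradicting our assumption. We conclude that $\gamma$ is contained in a separatrix of $p$ and $\gamma(t)\to p$, which is the second alternative of the conclusion.

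The main obstacle I expect is the combinatorial/topological bookkeeping at the singular point, and specifically the claim that a half leaf trapped inside $U$ must sit on a separatrix of $p$. This rests on the local description of $(A_k,\cH_k)$, together with the fact that a leaf of $\cF$ restricted to $U$ cannot recurrently return to a transverse arc (Lemma~\ref{l.PH}). Once these two ingredients are combined, the rest of the argument is a straightforward dichotomy between ``$\gamma$ escapes every compact'' and ``$\gamma$ is eventually trapped near a singularity''.
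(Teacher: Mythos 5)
Your route is genuinely different from the paper's. The paper passes to the Alexandrov compactification $\RR^2\cup\{\infty\}$ and considers $\limsup_{t\to+\infty}\gamma(t)=\bigcap_{t>0}\overline{\gamma([t,+\infty))}$: this is a non-empty connected compact set; if it were not a single point it would contain a regular point of $\cF$ (the saddle singularities are isolated), forcing infinitely many crossings of a transverse segment and contradicting Lemma~\ref{l.PH}; hence it is a single point, namely $\infty$ or a singularity. That argument gets the dichotomy in one stroke and leaves the "contained in a separatrix" part to the local model, exactly the part you spell out. Your version trades the limsup trick for an explicit trapping argument near the singular point; it is more hands-on and makes the separatrix conclusion fully explicit, which is a genuine plus. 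Your first two reductions (ruling out regular accumulation points via a flow box and Lemma~\ref{l.PH}; reducing to an accumulation point $p\in Sing(\cF)$) are correct.

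There is, however, one concrete false claim in your setup: a neighborhood $U$ of $p$ whose boundary meets the $k$ separatrices in $k$ points and is \emph{transverse to $\cF$ elsewhere} does not exist. At a point where $\partial U$ crosses a separatrix it is also transverse to the foliation (the separatrix is a leaf), so your $\partial U$ would be an everywhere transverse simple closed curve around $p$; by the Poincar\'e--Hopf count used in Lemmas~\ref{l.PH} and~\ref{l.PH2}, such a curve bounds a disc on which the index sum is $1$, while the disc contains only $p$ with index $1-\tfrac k2\leq -\tfrac12$. In the actual model $A_k$ the boundary circle is a union of $2k$ arcs alternating between transverse arcs and \emph{leaf} arcs, with each separatrix ending at an interior point of a transverse arc. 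The finiteness of $\gamma\cap\partial U$ survives with this corrected picture: $\gamma$ meets each of the $k$ transverse arcs at most once by Lemma~\ref{l.PH}, and it is disjoint from the $k$ boundary leaf arcs unless its leaf is one of those $k$ boundary leaves (in which case shrink $U$). With that repair, your trapping step goes through, and the final step is sound provided you note that a forward half-leaf confined to a compact non-separatrix plaque of $A_k$ would have to terminate at a boundary point of $U$, which is impossible since leaves have no endpoints at regular points; so the plaque containing the trapped half-leaf must be a separatrix plaque, whose closure adds only $p$, giving $\gamma(t)\to p$.
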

\begin{proof} Consider the Alexandrov compactification of $\RR^2$ by a point at infinity.  Consider a leaf $\gamma$ and choose a parametrisation $\gamma(t)$. 
Consider  $$\limsup_{t\to+\infty} \gamma(t)=\bigcap_{t>0}\overline{\gamma([t,+\infty)},$$
where the closure is considered in $\RR^2\cup\{\infty\}$. 
It is a decreasing intersection of connected compact sets, and hence it is a non-empty connected compact set. 

If $\limsup_{t\to+\infty} \gamma(t)$ is not just a point, if contains a regular point $x$ of $\cF$, hence it cuts infinitely many times any transverse segment through $x$, which is forbidden by Lemma~\ref{l.PH}. 

Now $\limsup_{t\to+\infty} \gamma(t)$ is either the point $\infty$ or is a singular point of $\cF$, which is the announced alternative. 
\end{proof}

\subsection{Regular leaves of singular foliations}

Let $\cF$ be a foliation with  singular points of saddle type, $L_0$ a leaf of $\cF$ and $\sigma$ be a transverse segment through the point $\sigma(0)\in L_0$. 

The set of $t$ so that $\sigma(t)$ is contained in a separatrix of a singular point is at most countable. For any $t$ so that $\sigma(t)$ and $\sigma(-t)$ are not in a separatrix of  a singular point, the leaves $L_t$  and $L_{-t}$ through  $\sigma(t)$ and $\sigma(-t)$ are disjoint lines and therefore cut $\RR^2$ in $3$ connected components. We denote by $U_t$ the closure of the connected component of $\RR^2\setminus (L_t\cup L_{-t})$ containing $L_0$.  Notice that $U_t$ is a strip (homeomorphic to $\RR\times [-1,1]$)  bounded by $L_t\cup L_{-t}$ and saturated for $\cF$. 

\begin{lemm}With the notation above  $\bigcap_t U_t$ is a non-empty  closed subset of $\RR^2$ saturated for $\cF$ and we have the following alternative:
\begin{itemize}
 \item either $\bigcap_t U_t=L_0$ and $L_0$ is a non-singular leaf of $\cF$, 
 \item or $\bigcap_t U_t$ has non-empty interior. 
\end{itemize}

Furthermore, $\bigcap_t U_t$ does not depend on the choice of the transverse segment $\sigma$  through $L_0$ and is denoted $\mathfrak{U}(L_0)$.  
\end{lemm}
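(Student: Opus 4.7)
The plan is to establish, in order, the three claims: the basic topological properties of $\bigcap_t U_t$, the dichotomy, and independence from $\sigma$. For the first, each $U_t$ is closed by definition, and its boundary $L_t\cup L_{-t}$ consists of two full leaves of $\cF$, so no other leaf can enter the interior of $U_t$ without crossing one of them; hence $U_t$ is saturated by $\cF$. The family $\{U_t\}$ is nested in $t$, so $\bigcap_t U_t$ is a decreasing intersection of closed saturated sets containing $L_0$, and is in particular closed, saturated, and non-empty.

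For the dichotomy I would argue the contrapositive: if either $\bigcap_t U_t$ contains a regular point off $L_0$, or $L_0$ terminates at a singular point, then $\bigcap_t U_t$ has non-empty interior. In the first subcase, let $q\in\bigcap_t U_t\setminus L_0$ be a regular point, let $\tilde L$ be the leaf through $q$ (so $\tilde L\subset\bigcap_t U_t$ by saturation), and pick a transverse arc $\tau$ at $q$ contained in $\mathrm{int}(U_{t_0})$ for some $t_0>0$. A plaque-chain argument along $L_0$ yields, for each $n$, a leaf $L_{s_n}$ (through $\sigma(s_n)$, $s_n\to 0$) meeting $\tau$ at a point $z_n\to q$. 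Since $L_{s_n}\subset U_t$ for every $t\geq s_n$, the leaves through $\tau(u)$ for $u$ on the $q$-side of the $z_n$'s all lie in $\bigcap_{t>0}U_t$ by saturation; they foliate a half flow-box around $\tilde L$ at $q$ and hence give an open subset of $\mathfrak{U}(L_0)$. In the second subcase, pick a neighborhood $W$ of the singular endpoint $p$ of $L_0$ conjugate to the $k$-prong model ($k\geq 3$), with $L_0\cap W$ corresponding to one separatrix. For $t>0$ small the local arcs of $L_t$ and $L_{-t}$ in $W$ lie in the two hyperbolic sectors adjacent to $L_0$, each entering near $\sigma(\pm t)$, passing close to $p$, and exiting along the next separatrix; they thus cut off only narrow strips along $L_0$ in those two sectors, strips which shrink onto $L_0$ as $t\to 0$. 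Every other sector of $p$, together with a punctured neighborhood of $p$, stays in the local component of $W\setminus(L_t\cup L_{-t})$ containing $L_0\cap W$, which is contained in $U_t$, so a fixed open subset of $W$ persists in $\bigcap_t U_t$. Finally, if neither subcase holds, $L_0$ is a line and $\bigcap_t U_t\setminus L_0\subset Sing(\cF)$; any singular point in the intersection would have to be a limit of $L_t$ for every small $t$, i.e.\ a separatrix endpoint of $L_t$ persisting as $t\to 0$, forcing $L_0$ itself to end at that point and contradicting that $L_0$ is a line. Hence $\bigcap_t U_t=L_0$ and $L_0$ is non-singular.

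For independence from $\sigma$, given another transverse segment $\sigma'$ through a point of $L_0$, fix $t>0$. As $\sigma'(0)\in L_0\subset\mathrm{int}(U_t)$, for $t'$ small $\sigma'([-t',t'])\subset U_t$, and saturation gives $L'_{\pm t'}\subset U_t$; the closed component $U'_{t'}$ of $\RR^2\setminus(L'_{t'}\cup L'_{-t'})$ containing $L_0$ then sits inside $U_t$ (its boundary already lies in the saturated set $U_t$, and its complementary components are disjoint from $L_0$). The symmetric inclusion yields equality of the two intersections. The step I expect to be the main obstacle is the local $k$-prong analysis in the second subcase: one must rigorously verify that the arcs of $L_t,L_{-t}$ inside $W$ only ``shave'' the two sectors adjacent to $L_0$ and shrink uniformly onto $L_0$ as $t\to 0$, so that the remaining $k-2$ sectors and a punctured neighborhood of $p$ truly persist in $\bigcap_t U_t$.
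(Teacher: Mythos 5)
Your opening paragraph (closedness, saturation, nestedness) and your final reduction are fine, but the central step of subcase (a) rests on a false claim. You assert that for an \emph{arbitrary} regular point $q\in\bigcap_t U_t\setminus L_0$ the leaves $L_{s_n}$ through $\sigma(s_n)$, $s_n\to 0$, meet a transverse arc $\tau$ at $q$ in points $z_n\to q$. This is only true for points of the topological boundary of $\bigcap_t U_t$ (such a point is approximated by points outside some $U_{t_n}$, hence by points of $L_{t_n}\cup L_{-t_n}$); an interior point need not be accumulated by any $L_{\pm t}$. For a concrete counterexample take the foliation $\cF_K$ of Example~\ref{e.Cantor}, $L_0=I_0\times\{0\}$, and $q=(x,1)$ with $x$ in another component $I_1$ of $\RR\setminus K$: then $q\in\bigcap_t U_t$, while the relevant leaves $L_{\pm t}$ (namely $I_0\times\{t\}$ and $\RR\times\{-t\}$) stay at a definite distance from $q$. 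Since your half flow-box is manufactured precisely from the points of $\tau$ cut off by the $z_n$, the construction collapses at such a $q$. You could restrict to boundary points by arguing by contradiction, but even then the surviving side of $\tau$ has to be identified. The paper's route is both shorter and global: the leaf $\tilde L$ through $q$ is a line disjoint from $L_0$, and the closed half-plane $H$ bounded by $\tilde L$ on the side not containing $L_0$ satisfies $H\subset U_t$ for all small $t$, because $L_{\pm t}$ passes through $\sigma(\pm t)\notin H$, is connected, and is disjoint from $\tilde L=\partial H$, hence misses $H$ entirely. Thus $H\subset\mathfrak{U}(L_0)$ and the interior is non-empty with no accumulation or flow-box analysis; the same global idea (a whole sector at the singular point, obtained from the separatrices of $p$, which all lie in $\mathfrak{U}(L_0)$) replaces the delicate ``shaving'' estimates you flag in subcase (b).

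There is a second, smaller gap in the independence from $\sigma$. From $\partial U'_{t'}=L'_{t'}\cup L'_{-t'}\subset \mathring U_t$ and the fact that the complementary components of $U'_{t'}$ miss $L_0$, you cannot yet conclude $U'_{t'}\subset U_t$: each outer component $V^{\pm}_t$ of $\RR^2\setminus(L_t\cup L_{-t})$ is connected and disjoint from $L'_{\pm t'}$, so it lies entirely in a single component of $\RR^2\setminus(L'_{t'}\cup L'_{-t'})$ --- but a priori that component could be the one containing $L_0$, which happens exactly when $L'_{t'},L'_{-t'}$ fail to separate $L_0$ from $L_t$ and $L_{-t}$. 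To exclude this, use the holonomy of $\cF$ along the compact arc of $L_0$ from $\sigma(0)$ to $\sigma'(0)$: for $r$ small the leaf $L_r$ crosses $\sigma'$ and conversely, so the two germs of families $\{L_r\}$ and $\{L'_s\}$ coincide up to reparametrization and the two nested intersections agree.
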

\begin{proof} $\mathfrak{U}(L_0)$ is saturated for $\cF$. If it contains a non-singular leaf, it contains one of the half planes bounded by this leaf. If it contains a singular leaf, it contains the corresponding singular point, and then  it contains at least one of the sectors bounded by the separatrices. 
\end{proof}

\begin{defi}With the notation above, the leaf $L_0$ is called \emph{regular} if $\mathfrak{U}(L_0)=L_0$, and will be called \emph{non-regular} otherwise. 
\end{defi}

\begin{rema}If $L_0$ is a separatrix of a singular point, then it is non-regular. 
\end{rema}

As in the case of non-singular foliations we have:  
\begin{prop}Let $\cF$ be a foliation with singular points of saddle type.  Then the set of non-regular leaves is at most countable. 
\end{prop}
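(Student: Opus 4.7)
The plan is to split the non-regular leaves into the separatrices of singular points and the remaining non-regular leaves, and bound each set by a countable family.

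First I count the separatrices. The local model $(A_k,\cH_k)$ shows that every singular point is isolated in $\textrm{Sing}(\cF)$, so $\textrm{Sing}(\cF)$ is a closed discrete, hence at most countable, subset of the second countable space $\RR^2$. Each singular point contributes only $k$ separatrices, so the set of separatrix leaves is at most countable, and by the remark following the definition of regular leaf these are all non-regular.

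Next I count the non-regular leaves that are not separatrices. Choose a countable family $\{\sigma_n\}_{n\in\NN}$ of transverse segments in $\RR^2\setminus\textrm{Sing}(\cF)$ whose saturations cover $\RR^2\setminus\textrm{Sing}(\cF)$; this exists because $\cF$ is locally trivial and $\RR^2$ is second countable. It suffices to show that for each fixed $\sigma=\sigma_n$, only countably many non-regular non-separatrix leaves meet $\sigma$. Fix such a $\sigma$. For each non-regular non-separatrix leaf $L$ meeting $\sigma$, the set $\mathfrak{U}(L)$ has non-empty interior, so (by Corollary~\ref{c.PH}, both ends of $L$ are rays and) there exists a distinct leaf $\tilde L$ non-separated from $L$. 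Distinguishing cases according to whether the non-separation happens above or below and at the right or left end of $L$, I associate in each case to $L$ a non-empty open $\cF$-saturated region $V(L)\subset \mathfrak{U}(L)\setminus L$, for instance the half-plane $\De^{+}_{\tilde L}$ when $\tilde L$ is a line, exactly as in the commented variant of the proof of Lemma~\ref{l.countable}. The same position-on-$\sigma$ argument then shows that the regions $V(L)$ are pairwise disjoint as $L$ varies among non-regular non-separatrix leaves meeting $\sigma$ in a given case, and the second countability of $\RR^2$ forces countability.

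The main obstacle specific to the singular setting is that the non-separated leaf $\tilde L$ may itself be a separatrix, in which case $\tilde L$ has a finite end at a singular point $p$ and does not split $\RR^2$ into two half-planes. One handles this by replacing $\De^{+}_{\tilde L}$ with the appropriate connected component of $\RR^2\setminus(\tilde L\cup\{p\}\cup \text{neighboring separatrices of }p)$. This substitute is a non-empty, open, $\cF$-saturated subset of $\mathfrak{U}(L)$, and the pairwise disjointness argument — resting on the local product structure near $\sigma$ and on the fact that $\cF$-saturated sets containing leaves that cross $\sigma$ at distinct points have disjoint germs in a neighborhood of $\sigma$ — carries through with no change.
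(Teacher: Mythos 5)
Your argument reaches the correct conclusion and is at bottom the paper's argument: a countable family of transversals, pairwise disjoint $\cF$-saturated sets with non-empty interior attached to the non-regular leaves meeting a fixed transversal, and second countability of $\RR^2$. The paper does this more directly: for the leaves $L_t$ through a fixed transversal $\sigma$, the sets $\mathfrak{U}(L_t)$ are themselves pairwise disjoint (each is contained in every strip $U_\epsilon(L_t)$, and for $s\neq t$ the strips $U_\epsilon(L_t)$ and $U_{\epsilon'}(L_s)$ are disjoint once $\epsilon,\epsilon'$ are small, since $\sigma$ meets each leaf at most once), and $L_t$ is non-regular exactly when $\mathfrak{U}(L_t)$ has non-empty interior; so there is no need to produce a witness leaf $\tilde L$, to build the auxiliary regions $V(L)$, or to count separatrices separately (a separatrix is non-regular, meets some $\sigma_n$, and is covered by the same count). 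One step of yours, as written, does not do the job: you justify the pairwise disjointness of the $V(L)$ by ``disjoint germs in a neighborhood of $\sigma$,'' but a region such as $V(L)=\De^+_{\tilde L}$ typically does not meet $\sigma$ at all --- a leaf $\tilde L$ contained in every $U_\epsilon(L)$ cannot cross $\sigma$, and the half-plane it bounds on the side away from $L$ is then disjoint from $\sigma$ --- so nothing can be read off near $\sigma$. The disjointness you need is true, but for the global reason above: $V(L)\subset\mathfrak{U}(L)\subset U_\epsilon(L)$, and the strips around distinct points of $\sigma$ are eventually disjoint. With that substitution your proof closes, and it is then the paper's proof with extra (harmless) case distinctions.
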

\begin{proof} For any transverse segment $\sigma$ let denote by $L_t$ the leaf through $\sigma_t$.  Then by construction the closed sets $\mathfrak{U}(L_t)$ are pairwise disjoint. Thus at most countably many of them may have non-empty interior, that is, at most countably many of leaves $L_t$ are non-regular.  We conclude the proof by noticing that $\cF$ admits a countable family of transverse segment $\sigma_n, n\in \NN$ every leaf of $\cF$ cuts at least $1$ segment $\sigma_n$.  
\end{proof}

The leaves of a foliations have two ends, and the notion of regular leaves can be made more precise, looking at each of its ends. 

More precisely, let $L_{0,+}$ be an half leaf of $\cF$, and let $\sigma$ be a transverse segment so that $\sigma(0)$ is the initial point of $L_{0,+}$. For any $t$ so that $\sigma(t)$ and $\sigma(-t)$ do not belong to a separatrix of a singular point, we consider $L_{t,+}$ and  $L_{t,-}$ the half leaves starting at $\sigma(t)$ and $\sigma(-t)$ in the same side of $\sigma$ as $L_{0,+}$. We denote by $U_{t}(L_{0,+})\subset \RR^2$  the closed half plane containing $L_{0,+}$ and bounded by  the line of $\RR^2$ obtained by  concatenation of $L_{t,+}$ , $\sigma([-t,t])$ and $L_{t,-}$.   We denote $\mathfrak{U}(L_{0,+})=\bigcap_t U_{t}(L_{0,+})$.  Then :
\begin{itemize}
 \item either $\mathfrak{U}(L_{0,+})= L_{0,+}$ and one says that the half leaf  $L_{0,+}$ (or equivalently, the end of $L_0$ corresponding to $L_{0,+}$) is regular
 \item or $\mathfrak{U}(L_{0,t})\neq L_{0,t}$ is a closed subset  with non-empty interior. 
\end{itemize}

A leaf is regular if and only if its two ends are regular, and the set of non-regular ends of leaves is at most countable.

\subsection{Orientations}

A foliation with  singular points of saddle type  is locally orientable (and transversely orientable) in a neighborhood of a singular point $x$ if and only if the number of separatrices of $x$ is even. 

Thus a foliation of $\RR^2$ whose singular points are sadlles with even numbers of separatrices is locally orientable and transversely orientable, and therefore is globally orientable and transversely orientable, as $\RR^2$ is simply connected. 

Let $\cF$ be a foliation with  singular points of saddle type with even numbers of separatrices, and fix an orientation and transverse orientation of $\cF$. 

Thus every  leaf $L$ have a right and left end.  We defined $\mathfrak{U}^{right}(L)$ and $\mathfrak{U}^left(L)$ so that $L$ can be regular at the right or at the left. 

If $L_0$ is a leaf which is not a separatrix and $\sigma$ be a transverse segment with $\sigma(0)\in L_0$. 
One defines in the same way  the notions of being  regular from above and from below,  for $L_0$ or for each of its two ends. 

For instance $L^{right}_0$ is regular from above if $\mathfrak{U}_+(L_0^{right})=\bigcap_t U_{t,+}(L_0^{right})=L_0^{right}$ where $U_{t,+}(L^{right})$ is bounded by $L_0^{right}$, $\sigma([0,t])$ and $L^{right}_t$.

\section{The circle at infinity of a family of foliations}\label{s.feuilletage}
\subsection{The circle at infinity of a foliation of $\RR^2$: statement}
 
 The aim of this section is to recall the following  result essentially due to \cite{Ma} and to present a short proof of it.  
 
 \begin{theo}\label{t.feuilletage} Let $\cF$ be a foliation of the plane $\RR^2$, possibly with singularities of saddle type. Then there is a compactification $\DD^2_\cF\simeq $ of $\RR^2$  by  adding a circle at infinity $\SS^1_\cF=\partial\DD^2_\cF$ with the following property: 
 \begin{itemize}
 \item any half leaf tends either to a saddle point or to a point at infinity. 
  \item given a point $\theta\in \SS^1_\cF$ the set of ends of leaves tending to $\theta$ is at most countable. 
  \item the subset of $\SS^1_\cF$ corresponding to limits of regular ends of  leaves is dense in $\SS^1_\cF$. 
 \end{itemize}
 
 Furthermore this compactification of $\RR^2$ by $\DD^2$ with these three properties is unique, up to a homeomorphisms of the disk $\DD^2$. 
 \end{theo}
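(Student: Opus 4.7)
The plan is to apply Theorem~\ref{t.union} to a countable collection of small sub-families of rays built from a countable dense family of leaves, and then extend the resulting compactification to all remaining ends by means of Lemma~\ref{l.union}. By Corollary~\ref{c.PH}, each half-leaf of $\cF$ either tends to a saddle (when it is a separatrix) or is a ray, i.e.\ a proper embedding of $[0,+\infty)$ in $\RR^2$; let $\cR_\cF$ denote the resulting collection of ray-type half-leaves. Any two distinct elements of $\cR_\cF$ have disjoint germs --- either they are the two ends of the same embedded leaf-line (hence disjoint outside a compact set) or they lie in distinct and therefore disjoint leaves --- so $\cR_\cF$ inherits a natural total cyclic order. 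I would then select a countable family $\cL=\{L_k\}_{k\in\NN}$ of \emph{regular} leaves that is ``dense'' in $\cF$: cover $\RR^2\setminus\mathrm{Sing}(\cF)$ by a countable family of transverse segments $\{\sigma_n\}$, take a countable dense subset of each $\sigma_n$, and discard the countably many non-regular leaves thus captured (using Lemma~\ref{l.countable} and its saddle-type analogue). For each $k$, let $\cR_k\subset\cR_\cF$ be the (at most two) ray-type ends of $L_k$; each $\cR_k$ is finite and hence trivially admits a countable separating subset.

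Applying Theorem~\ref{t.union} to the decomposition $\bigsqcup_k\cR_k$ produces a compactification $\DD^2_\cF=\RR^2\sqcup\SS^1_\cF$ in which every ray of $\bigsqcup_k\cR_k$ tends to a point of the circle at infinity, the two ends of a common $L_k$ land at distinct points, and every non-empty open interval of $\SS^1_\cF$ receives both limits of some $\cR_k$. By Lemma~\ref{l.union}, every remaining ray of $\cR_\cF\setminus\bigsqcup_k\cR_k$ also tends to a point of $\SS^1_\cF$; together with the separatrix case this yields property (i). Property (iii) is immediate from the density provided by Theorem~\ref{t.union} and the fact that $\cL$ consists of regular leaves. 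For property (ii), Proposition~\ref{p.union} shows that the rays of $\bigsqcup_k\cR_k$ sharing a given limit $\theta\in\SS^1_\cF$ lie in a single $\sim$-equivalence class, which contains at most one element per $\cR_k$ and is therefore at most countable; the additional ends in $\cR_\cF\setminus\bigsqcup_k\cR_k$ limiting to $\theta$ must, by the sandwich argument in the proof of Lemma~\ref{l.union}, each sit between two consecutive rays of the countable chain already constructed at $\theta$, so contribute only countably many new rays.

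For uniqueness, any other compactification satisfying (i)--(iii) must send the ends of leaves of $\cL$ to a dense subset of its boundary circle with the same cyclic order, and the uniqueness clause of Theorem~\ref{t.union} then supplies a homeomorphism $\DD^2\to\DD^2$ identifying the two compactifications.

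The main obstacle I expect lies in the second half of property (ii): Proposition~\ref{p.union} directly controls only the ends contained in $\bigsqcup_k\cR_k$, and one must argue, via the plane-topology behavior of leaves near \emph{hyperbolic sectors}, that an end $R\in\cR_\cF\setminus\bigsqcup_k\cR_k$ limiting to $\theta$ cannot bring uncountably many new rays into the chain at $\theta$. Making this precise is essentially the content of Lemmas~\ref{l.finite} and~\ref{l.hyperbolic} announced in the introduction, which describe the family of ends limiting at a single point of $\SS^1_\cF$ as being ordered as an interval of $\ZZ$.
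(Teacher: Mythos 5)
Your overall architecture (compactify a countable dense family of regular ends, then extend to the remaining ends via Lemma~\ref{l.union}) is the right one, but the way you invoke Theorem~\ref{t.union} contains a genuine error. You decompose the ends of your dense family $\cL=\{L_k\}$ into the finite sets $\cR_k$ (the two ends of $L_k$) and claim each $\cR_k$ ``trivially admits a countable separating subset.'' It does not: by Definition~\ref{d.separating}, a separating subset of a cyclically ordered set with at least two elements must contain a point strictly \emph{between} any two distinct elements, so any set with $2\le \#X<\infty$ admits no separating subset at all (and a separating subset of a non-singleton is necessarily infinite). Worse, even if one waives this, the equivalence relation $\sim$ of Proposition~\ref{p.union} identifies $x$ and $y$ as soon as one of $[x,y]$, $[y,x]$ meets each $E_i$ in a finite set; with every $E_i$ finite this is automatic, so \emph{all} rays become equivalent and the quotient $\cX$ collapses to a point --- there is no circle. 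Finally, conclusion (3) of Theorem~\ref{t.union} would assert that every non-empty open interval at infinity contains the limits of \emph{both} ends of some single leaf $L_k$, which is already false for the trivial horizontal foliation (an interval inside the arc of right ends contains no left end).

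The paper avoids this by treating the set $\cR(\cF)$ of \emph{all} ends of regular leaves as a \emph{single} family of rays and proving the key Lemma~\ref{l.separating}: if $D$ is a countable family of regular leaves whose union is dense in $\RR^2$, then the ends of the leaves of $D$ form a separating subset of $\cR(\cF)$. The proof uses regularity in an essential way: for a regular leaf $L_0$ one has $\mathfrak{U}(L_0^{+})=L_0^{+}$, so the ends $L^{+}_{-n},L^{+}_{n}$ of nearby leaves of $D$ separate $L_0^{+}$ from every other ray. One then applies Theorem~\ref{t.rays} (not Theorem~\ref{t.union}) directly to $\cR(\cF)$ with this separating subset; item (2) follows because distinct regular ends then have distinct limits and there are only countably many non-regular ends in total (so no appeal to Lemmas~\ref{l.finite} and~\ref{l.hyperbolic} is needed for countability), item (3) is the density statement of Theorem~\ref{t.rays}, and item (1) for non-regular ends is Lemma~\ref{l.union}, as you say. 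Your proposal is missing precisely this separation lemma, which is the real content of the proof; without it the reduction to the machinery of Section~2 does not go through.
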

 \begin{rema} If $L^+_1\neq L^+_2$ are  two ends of leaves tending to the same point $\theta\in\SS^1_\cF$, then  
  $L_2\subset \mathfrak{U}^+$. In particular, the ends $L^+_1$ and $ L^+_2$ are not regular. 
 \end{rema}

\begin{coro}\label{c.extension} If a homeomorphisms $f$ of the plane $\RR^2$ preserves the foliation $\cF$ then it extends in a unique way as a homeomorphism $F$ of  the compactification 
$\DD^2_\cF$. 

Furhtermore the restriction of $F$ to $\SS^1_\cF$ is the identity map if and only if $f$ preserves every leaf of $\cF$ and preserves the orientation on each leaf. 
\end{coro}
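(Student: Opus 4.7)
The plan is to derive the existence of $F$ from the uniqueness statement of Theorem~\ref{t.feuilletage}. Let $\varphi\colon\RR^2\hookrightarrow \DD^2_\cF$ denote the canonical compactification, and consider the alternative map $\psi=\varphi\circ f\colon\RR^2\to\mathring{\DD}^2_\cF$. Because $f$ is a homeomorphism of $\RR^2$ permuting the leaves and saddles of $\cF$, and because regularity of an end is intrinsic to $(\RR^2,\cF)$, the map $f$ sends half-leaves to half-leaves, saddles to saddles and regular ends to regular ends. A direct verification then shows that $\psi$ still satisfies the three properties of Theorem~\ref{t.feuilletage}: every half-leaf tends under $\psi$ to a saddle or a point of $\SS^1_\cF$; the set of ends tending under $\psi$ to a given $\theta$ is the $f$-preimage of the corresponding $\varphi$-fiber, hence countable; and the $\psi$-limits of regular ends coincide with the $\varphi$-limits of regular ends, whose density is already known. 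By the uniqueness in Theorem~\ref{t.feuilletage}, there is a homeomorphism $h$ of $\DD^2_\cF$ with $\psi=h\circ\varphi$. On the interior this reads $h=\varphi\circ f\circ\varphi^{-1}$, so after the identification $\mathring{\DD}^2_\cF\simeq\RR^2$, the map $h$ is exactly the sought extension $F$. Uniqueness of $F$ is clear since the limits of ends of regular leaves form a dense subset of $\SS^1_\cF$ on which any continuous extension is determined.

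For the easy direction of the equivalence, assume $f$ preserves every leaf of $\cF$ together with its orientation. Then every half-leaf $L^+$ is mapped to itself, so $F$ fixes its limit $\lim L^+\in\SS^1_\cF$. These limits are dense in $\SS^1_\cF$, so the homeomorphism $F|_{\SS^1_\cF}$ must be the identity.

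The substantive direction is the converse. Assume $F|_{\SS^1_\cF}=\mathrm{id}$. For any regular half-leaf $L^+$, the image $f(L^+)$ is a half-leaf with $\lim f(L^+)=F(\lim L^+)=\lim L^+$, and the injectivity property for regular ends (distinct regular ends have distinct limits in $\SS^1_\cF$) forces $f(L^+)=L^+$ as germs of rays. Consequently, every leaf $L$ whose two ends are both regular is mapped to itself by $f$; the orientation on such an $L$ is preserved, because its two ends have distinct limits in $\SS^1_\cF$ (the remark after Theorem~\ref{t.foliations}) and both are fixed by $F$. The main technical obstacle is to propagate $f(L)=L$ to the remaining, at most countably many, leaves. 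For this, take any non-singular $x\in\RR^2$ and a small foliated chart around $x$ in which $\cF$ is horizontal, parametrized by a transverse segment $\sigma$; since $f$ preserves $\cF$, it induces locally a continuous map $\tau$ on the transverse parameter. By Lemma~\ref{l.countable} the parameters $t$ for which $L_{\sigma(t)}$ carries a non-regular end form a countable set, hence $\tau$ fixes a dense subset in a neighborhood of the origin and, by continuity, fixes the origin itself: that is, $f(L)=L$ for the leaf $L$ through $x$. Preservation of orientation on an arbitrary leaf follows once more from the distinctness of the limits of its two ends, both of which are fixed by $F$.
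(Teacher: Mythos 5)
Your proof is correct and follows essentially the same route as the paper: existence and uniqueness of $F$ are extracted from the uniqueness clause of Theorem~\ref{t.feuilletage}, the easy direction from the density of limits of ends of leaves, and the converse from the fact that a regular end is the unique end arriving at its limit point. The one step I would flag is the propagation of $f(L)=L$ to the (at most countably many) non-regular leaves. The induced map $\tau$ on the transverse parameter is not obviously well defined, let alone continuous, at a parameter whose leaf is non-regular: all one gets from the fixed dense set of regular leaves is $f(L)\subset\mathfrak{U}(L)$, and a priori $f(L)$ could be a leaf of $\mathfrak{U}(L)$ that does not meet $\sigma$; moreover Lemma~\ref{l.continuity} shows that precisely this kind of continuity fails at non-regular ends, so invoking ``continuity of $\tau$'' at the origin comes close to assuming what is to be proved. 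The gap is harmless because the conclusion is immediate from Lemma~\ref{l.injective}: since $F$ is the identity on $\SS^1_\cF$ and the two ends of any leaf have distinct limits, $f(L)$ has the same two limit points at infinity as $L$, hence $f(L)=L$ (and the two fixed, distinct limit points also give the preservation of orientation, as you note). This repair is in the same spirit as, and arguably cleaner than, the paper's own one-line appeal to the density of regular leaves.
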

\begin{proof}The first part is, as already noted, a straightforward consequence of the uniqueness of the compactification.

If $f$ preserves every leaf and preserves the orientation of the leaves, then it preserves every end of leaf. Thus the extension $F$ fixes every point of $\SS^1_\cF$ which is limit of an end of leaf.  As the limit points of end of leaves are dense in $\SS^1_\cF$ one deduces that the restriction of $F$ to $\SS^1_\cF$ is the identity map. 

Conversely, assume that $F$ is the identity on $\SS^1_\cF$.  If $\theta\in \SS^1_\cF$ is the limit of an unique end of leaf $L_+$ then $L_+$ is preserved by $f$. 

Thus $f$ preserves every regular end of leaf. As the regular leaves are dense in $\RR^2$, one deduces that $f$ preserves every oriented leaf, concluding.

\end{proof}


\subsection{Proof of Theorem~\ref{t.feuilletage}}

We denote by $Reg(\cF)$ the set of regular leaves of $\cF$ and by 
$\cR(\cF)$ the set of ends of regular leaves (any non singular leaf and in particular any regular leaf has two ends). Recall that $\cR(\cF)$ is a family of disjoint rays of $\RR^2$ and therefore is cyclically ordered. 

\begin{lemm}\label{l.separating}
  If $D$ is a family of  regular leaves whose union in dense in $\RR^2$, then the set $\cD$ of ends of the leaves in $D$ is a separating family for the set of ends of regular leaves $\cR(\cF)$. 
\end{lemm}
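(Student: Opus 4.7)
Let $R_1, R_2 \in \cR(\cF)$ be two distinct ends, carried by regular leaves $L_1, L_2$ (possibly $L_1 = L_2$). The goal is to exhibit an element of $\cD$ in the cyclic interval $(R_1, R_2)$; the interval $(R_2, R_1)$ is then handled by the same argument applied on the other side of a transverse segment. Since $R_1 \neq R_2$ have disjoint germs, I would first realize them as disjoint rays $\gamma_i \subset L_i$ starting at points $q_i$, and join $q_1$ to $q_2$ by an embedded arc $\alpha$ meeting $\gamma_1 \cup \gamma_2$ only at its endpoints, thus forming a line $\ell = \gamma_1 \cup \alpha \cup \gamma_2$ oriented so that its right end is $R_1$ and its left end is $R_2$ (if $L_1 = L_2$, simply take $\ell = L_1$ and omit $\alpha$). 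By Corollary~\ref{c.entre}, a ray lies in $(R_1, R_2)$ if and only if it admits a realization inside the upper half-plane $\De^+_\ell$, so it suffices to produce a leaf in $D$ having an end with a realization in $\De^+_\ell$.

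The candidate leaves are going to come from a short transverse segment. I would take $\sigma:[-1,1]\to\RR^2$ transverse to $\cF$ at $q_1=\sigma(0)$, oriented so that $\sigma(t) \in \De^+_\ell$ for $t>0$. For $t\ne 0$ avoiding the at most countable set of separatrix parameters, write $L^t$ for the leaf through $\sigma(t)$ and $L^t_+$ for its half-leaf from $\sigma(t)$ going in the direction of $L_1^+=\gamma_1$; by Corollary~\ref{c.PH}, $L^t_+$ is a ray. The point is that \emph{if $L^t_+$ is disjoint from $\ell$}, then since $L^t_+$ is connected and starts at $\sigma(t)\in\De^+_\ell$, it is entirely contained in $\De^+_\ell$, so the associated end of $L^t$ lies in $(R_1,R_2)$.

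The key geometric step is showing this disjointness for all sufficiently small $t>0$, and this is where regularity of $L_1$ is used. Since $R_1$ is a regular end, $\mathfrak{U}(L_1^+)=\bigcap_t U_t(L_1^+)=L_1^+$. Because $\alpha$ is compact and disjoint from the open ray $L_1^+$, a standard compactness argument provides a threshold $t_0>0$ with $\alpha\cap U_{t_0}(L_1^+)=\emptyset$. For any $0<t<t_0$ off the separatrix parameters, $L^t_+$ is a boundary component of $U_t(L_1^+)\subseteq U_{t_0}(L_1^+)$ and hence disjoint from $\alpha$; moreover, regularity of $L_1$ forces $L_2$ to be separated from $L_1$, which combined with $\sigma$ being transverse gives $L^t\notin\{L_1,L_2\}$ for such small $t$, so that $L^t_+$ is disjoint from $\gamma_1\cup\gamma_2$ as well. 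Together this yields $L^t_+\cap\ell=\emptyset$.

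The final ingredient is the density of $\bigcup D$ in $\RR^2$: any small open product neighborhood of the arc $\sigma((0,t_0))$, foliated as a product by plates transverse to $\sigma$, must meet some leaf $L'\in D$, which then crosses $\sigma$ at some parameter $t\in(0,t_0)$. Choosing such a $t$ outside the countable set of separatrix parameters gives $L^t=L'\in D$ whose right end belongs to $\cD\cap(R_1,R_2)$, and swapping signs of $t$ produces a symmetric element in $\cD\cap(R_2,R_1)$. The main obstacle is the third paragraph: one has to convert the abstract regularity condition $\mathfrak{U}(L_1^+)=L_1^+$ into the concrete statement that nearby right half-leaves cannot wind back to hit the compact connecting arc $\alpha$, since that is precisely what traps $L^t_+$ inside $\De^+_\ell$.
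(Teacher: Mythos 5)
Your overall strategy is the same as the paper's: put a transversal $\sigma$ through the regular leaf $L_1$, use density of $\bigcup D$ to find leaves of $D$ crossing $\sigma$ at parameters $t$ arbitrarily close to $0$, and use regularity in the form $\mathfrak{U}(L_1^+)=\bigcap_t U_t(L_1^+)=L_1^+$ to force the corresponding ends into the prescribed cyclic interval. The paper implements this without any auxiliary arc: it observes that the nested intervals $[L_{-n}^+,L_n^+]$, whose endpoints are ends of leaves of $D$, intersect exactly in $\{L_0^+\}$, so for any other ray $R$ one of $L_{\pm n}^+$ eventually lies in $(L_0^+,R)$ and the other in $(R,L_0^+)$.

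There is, however, a flaw in your third paragraph, which you yourself identify as the crux. The claim that compactness yields $t_0>0$ with $\alpha\cap U_{t_0}(L_1^+)=\emptyset$ is false as stated: the endpoint $q_1=\sigma(0)$ of $\alpha$ lies on the boundary segment $\sigma([-t,t])$ of every $U_t(L_1^+)$, so $\alpha\cap U_t(L_1^+)\neq\emptyset$ for all $t$, and the compactness argument (which needs a compact set \emph{disjoint} from $\bigcap_t U_t$) does not apply. More substantively, nothing in your construction prevents $\alpha$ from leaving $q_1$ into the ``forward, positive'' region: in a foliation chart at $q_1$ with $L_1$ horizontal and $\sigma$ vertical, an arc $\alpha$ entering the quadrant ahead of $\sigma$ and above $L_1$ meets the forward half-leaf $L^t_+$ for every small $t>0$, so the disjointness $L^t_+\cap\ell=\emptyset$, and hence the trapping of $L^t_+$ in $\De^+_\ell$, fails on exactly the side you need. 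The gap is easily repaired: prescribe that $\alpha$ leaves $q_1$ along $\sigma([-\epsilon,0])$ (so that, by Lemma~\ref{l.PH}, $L^t_+$ meets $\sigma$ only at $\sigma(t)$ with $t>0$ and hence misses this initial piece), and apply your compactness argument only to the remaining compact subarc of $\alpha$, which is genuinely disjoint from $L_1^+$. With that modification, and after discarding the at most countably many $t$ for which $L^t\in\{L_2\}$ or $\sigma(t)$ lies on a separatrix, your argument goes through and proves the lemma.
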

\begin{proof} Let $L_0$ be a regular leaf of $\cF$, $\sigma\colon [-1,1]\to \RR^2$ a segement transverse to $\cF$ with $\sigma(0)\in L_0$ and $U_t$ the family of neighborhoods of $L_0$ associated to the transverse segment $\sigma$. Our assumption implies that for a dense subset of $t\in [-1,1]$, the leaf $L_t$ belongs to $\cD$. Consider a sequence $t_n\in[-1,1], n\in \ZZ$ so that
\begin{itemize}\item $L_n=L_{t_n}\in \cD$
 \item $t_n\to 0$ as $|n|\to\infty$
 \item $t_n$ as the same signe as $n\in\ZZ$
\end{itemize}

Let $L_n^+$ and $L_n^-$ be the half leaves of $L_n$ (for the orientation given by the transverse orientation induced by $\sigma$). As $L_0$ is regular one gets 
$\mathfrak{U}(L_0^+)=L_0^+.$  This implies that $L_0^+$  (resp. $L_0^-$)  is the intersection of the intervals (for the cyclic order) $[L_{-n}^+, L_{n}^+]$ (resp.$[L_{n}^-, L_{-n}^-]$) for $n>0$. In other words,   the rays $L_{-n}^+, L_{n}^+$
(resp. $L_{n}^-, L_{-n}^-$) are separating the ray $L_0^+$  (resp. $L_0^-$)  from any other ray in $\cR(\cF)$ (and indeed from any other ray of leaf, regular or not ), concluding the proof. 

\end{proof}

We are now ready to prove Theorem~\ref{t.feuilletage}

\begin{proof}[Proof of Theorem~\ref{t.feuilletage}]
We chose a countable set $E$ of regular leaves whose union is dense in $\RR^2$. According to Lemma~\ref{l.separating} the set $\cE$ of ends of leaves in $E$ is a countable separating subset of $\cR(\cF)$.  Thus we may apply Theorem~\ref{t.rays}. 

One gets a compactification of $\RR^2$ by the disc $\DD^2_\cF\simeq \DD^2$, so that every two distinct ends of regular leaves tend to two distinct points at the circle at infinity $\SS^1_\cF$ and these points are dense on the circle and this compactification does not depend of the choice of the family. This prove the items 2 and 3 of the theorem, and also proves that these two items are enough for the uniqueness of this compactification. 

It remains to prove the first item, that is to show that the rays contained in  non-regular leaves also tend to points on $\SS^1_\cF$. That is done by Lemma~\ref{l.union}. 
\end{proof}

\begin{rema}\label{r.line} Let $\cF$ be a foliation (possibly with saddles).  Then every 
line $L$ transverse to $\cF$ has $2$ distinct limit points at infinity corresponding to its $2$ ends. 
 
\end{rema}
\begin{proof} The two ends of $L$ are rays disjoint from the ends in $\cR(\cF)$ (that is of the ends of leaves of $\cF$), as any transverse segment intersects any leaf in at most $1$ point.  Now Lemma~\ref{l.union} implies that the ends of $L$ tends to points on $\SS^1_\cF$.  These points are distinct because the regular half leaves through $L$ are between these two ends. 
\end{proof}

\begin{lemm}\label{l.injective} Let $\cF$ be a foliation (possibly with saddles). Given any two (non-singular) leaves $L_1,L_2$, if the ends of $L_1$ and $L_2$ tend to the same $2$ points in $\SS^1_\cF$ then $L_1=L_2$. 
\end{lemm}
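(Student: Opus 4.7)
The plan is to argue by contradiction. I would assume $L_1 \ne L_2$ are distinct leaves of $\cF$ whose four ends limit on the same pair $\{p, q\} \subset \SS^1_\cF$, and derive a contradiction with Theorem~\ref{t.feuilletage} item 2. Since distinct leaves of $\cF$ are disjoint in $\RR^2 \setminus \mathrm{Sing}(\cF)$, and each end of $L_i$ tends to a point of $\SS^1_\cF$ (and hence not to a singular point), Corollary~\ref{c.PH} guarantees that each $L_i$ is a properly embedded line in $\RR^2$.

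Next I would work inside the compactified disc $\DD^2_\cF$. The closures $\overline{L_i} = L_i \cup \{p, q\}$ are two Jordan arcs from $p$ to $q$ with disjoint interiors, so $\overline{L_1} \cup \overline{L_2}$ is a Jordan curve in $\DD^2_\cF$. The arc $\overline{L_1}$ alone splits $\DD^2_\cF$ into two closed half-discs; $L_2$ lies in the interior of one of them, and the arc $\overline{L_2}$ then cuts that half-disc into a ``bigon'' $\overline{U}$ whose boundary is exactly $\overline{L_1} \cup \overline{L_2}$ and a complementary region containing one arc of $\SS^1_\cF$ between $p$ and $q$. The crucial topological point is that $\overline{U} \cap \SS^1_\cF = \{p, q\}$, so the interior $U$ of the bigon is precisely the open strip of $\RR^2$ bounded by $L_1 \cup L_2$, with no further contact with $\SS^1_\cF$.

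Then I would count leaves contained in $U$ in two incompatible ways. Since the singularities of $\cF$ are isolated (from the local $k$-prong model with $k \ge 3$), $U$ contains a non-singular point $x$ and a short transverse segment $\sigma \subset U$ through $x$; by local triviality of $\cF$, uncountably many distinct leaves meet $\sigma$, and each of them must stay inside $U$ since it cannot cross $L_1$ or $L_2$. Conversely, every leaf $L \subset U$ has two ends which, by Corollary~\ref{c.PH}, are either rays with limit in $\overline{U} \cap \SS^1_\cF = \{p, q\}$ or separatrices of singular points lying in $U$. By Theorem~\ref{t.feuilletage} item 2, only countably many ends of leaves limit to $p$ and only countably many to $q$, while the at most countably many singular points of $U$ carry only finitely many separatrices each. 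This forces at most countably many leaves in $U$, contradicting the uncountable family produced from $\sigma$.

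The step I expect to require the most care is the plane-topology verification that the bigon satisfies $\overline{U} \cap \SS^1_\cF = \{p, q\}$ in $\DD^2_\cF$ (as opposed to in $\RR^2$, where the analogous statement is trivial). Once that identification is in hand, the ``uncountable versus countable'' comparison of leaves in $U$ closes the argument immediately.
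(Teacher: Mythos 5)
Your proof is correct and follows essentially the same route as the paper: the paper's (very terse) argument is exactly that the strip bounded by $L_1\cup L_2$ contains uncountably many leaves whose ends must all accumulate on the same two points of $\SS^1_\cF$, contradicting item 2 of Theorem~\ref{t.feuilletage}. Your version merely makes explicit the plane-topology of the bigon in $\DD^2_\cF$ and handles the (countably many) separatrices of singular points inside the strip, which the paper leaves implicit.
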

\begin{proof} Assume $L_1\neq L_2$ share the same end points. Then the leaves in the strip bounded by $L_1\cup L_2$ would have their ends on the same points in $\SS^1_\cF$ contradicting the fact that at most contably many ends of leaves share the same end point on $\SS^1_\cF$. 
\end{proof}

As a by-product of the proof of Lemma~\ref{l.separating} we get the following:
\begin{lemm}\label{l.continuity}Let $\cF$ be a foliation (maybe with saddle-like singular points) and le $\sigma\colon[-1,1]\to \RR^2\setminus Sing(\cF)$ be a transverse segment. Let $\{L^+_t\}$ and $\{L^-_t\}$ be the half leaves starting at $\sigma(t)$.  Consider the map associating to $t\in(-1,1)$ the limit point of $L^+_t$ on $\SS^1_\cF$.  Then $t$ is a continuous point of this map if and only if $L^+_t$ is a regular end. 
\end{lemm}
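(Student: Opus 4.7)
Let $\theta\colon(-1,1)\to\SS^1_\cF$ denote the map sending $t$ to the limit point of $L^+_t$ (well defined by Theorem~\ref{t.feuilletage}), and write $U_s=U_s(L^+_{t_0})$ for the closed half plane bounded by $\ell_s=L^+_{t_0+s}\cup\sigma([t_0-s,t_0+s])\cup L^+_{t_0-s}$ and containing $L^+_{t_0}$. The plan is to treat both implications using the same underlying mechanism: the cyclic order of rays forces $\theta(t)$ for $t\in(t_0-s,t_0+s)$ to lie in the arc of $\SS^1_\cF$ bounded by $\theta(t_0\pm s)$ and containing $\theta_0:=\theta(t_0)$, and this arc shrinks to $\{\theta_0\}$ as $s\to 0^+$ exactly when $L^+_{t_0}$ is regular.

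For the ``if'' direction, assume $L^+_{t_0}$ is regular, so $\mathfrak{U}(L^+_{t_0})=L^+_{t_0}$. Given a neighborhood $V$ of $\theta_0$ on $\SS^1_\cF$ and $s>0$ small enough with $L_{t_0\pm s}$ regular (excluding a countable set of $s$), the regularity of $L^+_{t_0}$ is precisely what makes the arc of $\SS^1_\cF$ between $\theta(t_0+s)$ and $\theta(t_0-s)$ containing $\theta_0$ lie inside $V$; this is the squeeze already carried out in the proof of Lemma~\ref{l.separating}. For any $t\in(t_0-s,t_0+s)$, the ray $L^+_t$ starts at $\sigma(t)$ and is disjoint from $L^+_{t_0\pm s}$, so by Corollary~\ref{c.entre} applied to $\ell_s$ suitably oriented, $L^+_t$ lies cyclically between $L^+_{t_0-s}$ and $L^+_{t_0+s}$ on the $L^+_{t_0}$ side; hence $\theta(t)$ lies in the arc and in $V$, proving continuity at $t_0$.

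For the ``only if'' direction, suppose $L^+_{t_0}$ is non regular, so $\mathring{\mathfrak{U}}(L^+_{t_0})\neq\emptyset$. I would pick a point $p$ in this interior away from the countably many non regular leaves of $\cF$, so that the leaf $L'$ through $p$ is regular; since leaves are pairwise disjoint or equal, $L'\subset\mathring{\mathfrak{U}}(L^+_{t_0})\subset U_s$ for every $s>0$. By the remark following Theorem~\ref{t.feuilletage}, since $L'$ is regular while the leaf of $L^+_{t_0}$ is not, the endpoint $\theta':=\theta(L'^+)$ is distinct from $\theta_0$. In $\DD^2_\cF$ the closure of $U_s$ is a topological disk bounded by $\ell_s\cup\{\theta(t_0\pm s)\}$ together with the arc $A_s\subset\SS^1_\cF$ characterized by $\theta_0\in A_s$ (forced by Corollary~\ref{c.entre} applied to $L^+_{t_0-s},L^+_{t_0},L^+_{t_0+s}$, which places $\theta_0$ strictly between $\theta(t_0\pm s)$ on the $U_s$ side). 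Since $L'\subset U_s$, also $\theta'\in A_s$ for every such $s$.

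Assume now for contradiction that $\theta$ is continuous at $t_0$. Picking an open arc $W\subset\SS^1_\cF$ containing $\theta_0$ but not $\theta'$, continuity yields, for $s>0$ sufficiently small with $L_{t_0\pm s}$ regular, $\theta(t_0\pm s)\in W\setminus\{\theta_0\}$; combined with $\theta_0$ lying in the interior of $A_s$ strictly between these two endpoints, the arc $A_s$ is forced to be the short arc inside $W$, contradicting $\theta'\in A_s\setminus W$. I expect the main technical point to be verifying that $A_s$ is on the $U_s$-side of $\ell_s$ in $\DD^2_\cF$, which uses Corollary~\ref{c.entre} to pin down the cyclic order of the three rays $L^+_{t_0-s},L^+_{t_0},L^+_{t_0+s}$ and hence the position of $\theta_0$ as a ``middle'' point of $A_s$ strictly separating its two endpoints.
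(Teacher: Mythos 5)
Your ``if'' direction is correct: it is exactly the squeeze that the paper extracts from the proof of Lemma~\ref{l.separating}, and your use of Corollary~\ref{c.entre} to place $\theta(t)$ in the arc $A_s$ for $t\in(t_0-s,t_0+s)$ is sound. The gap is in the last step of the ``only if'' direction. Having trapped a regular leaf $L'$ in every $U_s$ with $\theta'=\theta(L'^+)\neq\theta_0$, you choose an open arc $W\ni\theta_0$ with $\theta'\notin W$, get $\theta(t_0\pm s)\in W$ from the assumed continuity, and claim that since $\theta_0$ lies cyclically strictly between $\theta(t_0-s)$ and $\theta(t_0+s)$ on the $A_s$ side, $A_s$ must be the subarc of $W$. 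That inference fails: knowing that three points lie in $W$ together with their cyclic order on $\SS^1_\cF$ does not determine which of the two complementary arcs bounded by $\theta(t_0\pm s)$ sits inside $W$. On $\RR/\ZZ$ take $\theta_0=0$, $\theta'=1/2$, $W=(-0.1,0.1)$, $\theta(t_0-s)=0.01$, $\theta(t_0+s)=0.02$: the arc from $0.02$ running positively through $1/2$ and $0$ back to $0.01$ contains $\theta_0$ in its interior (so it is the candidate for $A_s$), contains $\theta'$, and is not contained in $W$. For a single value of $s$ the configuration you want to exclude is therefore perfectly consistent, and no contradiction arises.

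The argument is repaired by producing a point of the complementary arc $B_s=\SS^1_\cF\setminus A_s$ that stays at a definite distance from $\theta_0$, to pair with the point $\theta'\in A_s$. Two such points are available. Either use monotonicity: the $U_s$ decrease, so the nonempty open arcs $B_s$ increase as $s\to 0$; fixing $s_1$ and a point $z\in B_{s_1}\setminus\{\theta_0\}$, one has $z\in B_s$ for all $s<s_1$. Or observe that the opposite half leaf $L^-_{t_0}$ cannot re-enter $U_s$ (it cannot recross $\sigma$ nor meet $L^+_{t_0\pm s}$), so its limit point $\theta_0^-$, distinct from $\theta_0$ because the two ends of a leaf have distinct limits, lies in $\overline{B_s}$ for every $s$. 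In either case, once both endpoints $\theta(t_0\pm s)$ lie in an $\varepsilon$-arc $V_\varepsilon$ around $\theta_0$ with $\varepsilon$ smaller than the distances from $\theta_0$ to $\theta'$ and to $z$ (or $\theta_0^-$), one of the two complementary arcs is the subarc of $V_\varepsilon$ and hence contains neither of these two points, while $A_s\ni\theta'$ and $B_s\ni z$ force both arcs to leave $V_\varepsilon$: contradiction. With that one supplement your proof is complete and follows the route the paper intends, namely the by-product of Lemma~\ref{l.separating} together with the remark after Theorem~\ref{t.feuilletage}.
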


\subsection{Points at $\SS^1_\cF$ limit of several ends of leaves: hyperbolic sectors} 
\begin{lemm}\label{l.finite} Let $A$ and $B$ be distinct  ends of leaves.  Then the following properties are equivalent
\begin{itemize}
 \item There are no end of regular leaf between $A$ and $B$.
 \item The set of ends of leaves between $A$ and $B$ is at most countable.
 \item The set of ends of leaves between $A$ and $B$ is finite.
\end{itemize}
\end{lemm}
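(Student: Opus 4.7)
The strategy is to close the cycle $(3)\Rightarrow(2)\Rightarrow(1)\Rightarrow(3)$. The implication $(3)\Rightarrow(2)$ is immediate.

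For $(2)\Rightarrow(1)$, I argue by contraposition. Suppose some regular end $C$ lies strictly between $A$ and $B$, and let $L_C$ be its leaf. Choose a transverse segment $\sigma\colon[-1,1]\to\RR^2$ with $\sigma(0)\in L_C$, and look at the family of ends $L_t^+$ on the side of $C$. Lemma~\ref{l.continuity} says that $t\mapsto(\text{limit of } L_t^+\text{ on }\SS^1_\cF)$ is continuous at $t=0$, so for $t$ near $0$ these limits remain strictly between the limits of $A$ and $B$ on $\SS^1_\cF$. By Lemma~\ref{l.countable} the $t$ with $L_t$ non-regular form a countable set, so uncountably many $t$ yield regular $L_t$; by the remark following Theorem~\ref{t.feuilletage} two distinct regular ends have distinct limits. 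This produces uncountably many pairwise distinct ends strictly between $A$ and $B$, contradicting $(2)$.

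For $(1)\Rightarrow(3)$, I first observe that $A$ and $B$ must tend to a common point $\theta\in\SS^1_\cF$ and that every end between $A$ and $B$ also tends to $\theta$. Indeed, the map from ends of leaves to $\SS^1_\cF$ is monotone for the cyclic orders, so if the limits of $A$ and $B$ were distinct, density of regular ends in $\SS^1_\cF$ (Theorem~\ref{t.feuilletage}) would place a regular end's limit in the open arc between them, yielding a regular end strictly between $A$ and $B$ and contradicting $(1)$. Theorem~\ref{t.feuilletage}(2) then already shows the set of ends between $A$ and $B$ is at most countable. To upgrade to finiteness, I argue by contradiction: assume a strictly monotone infinite sequence $(D_n)$ of ends in $(A,B)$, carried by distinct leaves $L_n$. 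Using a topological line in $\RR^2$ whose right end is $A$ and left end is $B$ (obtained by joining the bases of the rays $A,B$ by an arc in $\RR^2\setminus(A\cup B)$), the realizations $D_n$ all lie in the corresponding closed topological disc $\overline{\De^+_L}\cup\{\theta\}\subset\DD^2_\cF$. A Hausdorff-limit extraction then produces a closed $\cF$-saturated accumulation set $K$ meeting $\RR^2$, and by Lemma~\ref{l.countable} one selects a regular leaf $M\subset K$ (only countably many leaves are non-regular). Applying Lemma~\ref{l.continuity} at a transverse segment through $M$ crossed by infinitely many $L_n$ forces the end $M^+$ on the side of the $D_n$ to have limit $\theta$. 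Since $M$ is distinct from $L_A$ and $L_B$, this end $M^+$ is a regular end strictly between $A$ and $B$ in the cyclic order, contradicting $(1)$.

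The main obstacle is the last accumulation step. One must verify that the sequence $L_n$ genuinely Hausdorff-accumulates on a set meeting $\RR^2$ (and not only at $\theta\in\SS^1_\cF$), and then extract a regular leaf $M$ inside this accumulation whose relevant end is strictly between $A$ and $B$. Both points rely on the compactness of the disc in $\DD^2_\cF$ cut out by $L\cup\{\theta\}$ together with the countability of non-regular leaves provided by Lemma~\ref{l.countable}.
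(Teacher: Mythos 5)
Your cycle $(3)\Rightarrow(2)\Rightarrow(1)\Rightarrow(3)$ is the right shape, and your route to ``$(1)$ implies countability'' (all ends in $[A,B]$ share a common limit $\theta$, then invoke the countability of the set of ends accumulating at a single point of $\SS^1_\cF$) is correct and even a little slicker than the paper's. The fatal problem is the last step, the upgrade from countable to finite, at the sentence ``by Lemma~\ref{l.countable} one selects a regular leaf $M\subset K$''. The accumulation set $K=\limsup L_n$ is a nonempty closed $\cF$-saturated set, but nothing forces it to contain more than one leaf, let alone uncountably many; the global statement that only countably many leaves of $\cF$ are non-regular therefore gives you no regular leaf inside $K$. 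Worse, under your standing hypotheses such an $M$ cannot exist: any leaf $M\subset K$ is accumulated by the $L_n$, whose ends $D_n$ all tend to $\theta$, so if the end $M^+$ on that side were regular, Lemma~\ref{l.continuity} would force $\lim M^+=\theta$; but a regular end shares its limit point with no other end (Remark after Theorem~\ref{t.feuilletage}), and $A$ already tends to $\theta$. So the object you propose to select provably does not exist, and the contradiction you reach comes from an unjustified (indeed false) intermediate claim rather than from the assumed infinitude of $(D_n)$. A secondary, repairable issue: in $(2)\Rightarrow(1)$ the phrase ``strictly between the limits of $A$ and $B$ on $\SS^1_\cF$'' silently assumes $\lim A\neq\lim B$, which is not known at that stage; the paper sidesteps this by arguing in the plane, using regularity of $L_C$ in the form $\bigcap_t U_t=L_C$ to find $t$ with $A$ and $B$ outside $U_t$ and then trapping all the (uncountably many) half-leaves $L^+_r$, $r\in[-t,t]$, inside $(A,B)$.

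For the finiteness step the paper argues quite differently, and the difference is exactly what closes your gap. It joins $A$ to $B$ by a line $\delta$ whose compact part is a concatenation of finitely many transverse arcs $a_0,\dots,a_k$ and leaf arcs; since no entire leaf can lie in the half-plane $\De^+(\delta)$ (that would give uncountably many ends in $(A,B)$), every half-leaf entering $\De^+(\delta)$ through $a_0$ must exit through some $a_i$ with $i>0$, and the $\limsup$ of the resulting compact leaf segments, as the entry point tends to the base of $B$, identifies the immediate successor $\tilde B_1$ of $B$ among the ends in $[A,B]$. Iterating from $\tilde B_1$, the index of the transverse arc on which each successive end starts strictly increases, so the chain $B,\tilde B_1,\tilde B_2,\dots$ must reach $A$ in at most $k$ steps, and $[A,B]$ is finite. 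The structural input you are missing is this finite decomposition of $\delta$ into transverse and leaf arcs together with the fact that a leaf meets a transverse arc at most once; an arbitrary arc joining the bases of $A$ and $B$, plus a Hausdorff-limit extraction, does not supply the finiteness.
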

\begin{proof}First assume that there is an end $L^+$ of a regular leaf $L$ between $A$ and $B$. We will prove that the interval $(A,B)$ is uncountable. 

Consider the neighborhood $U_t$ of $L$ associated to a transverse segment $\sigma$ with $\sigma(0)\in L$.  As $L$ is regular, one gets that $\mathfrak{U}(L)=\bigcap_t U_t=L$. As a consequence there is $t$ so that $A$ and $B$ are out of $U_t$. 

First assume that $A$ and $B$ are in the same connected component of $\RR^2\setminus U_t$. Then there is a line $L$ whose left end is $B$ and whose right end is $A$ and which is disjoint from $U_t$. One deduces that one of the interval $(A,B)$ and $(B,A)$ contains no end of leaf in $U_t$ (this cannot be $(A,B)$ which contains $L^+$ by assumption) and the other contains all ends of leaves in $U_t$, so $(A,B)$ contains ucountably many ends of leaves as announced. 

Now assume that $A$ and $B$ are in distinct connected components of $\RR^2\setminus U_t$.
Then there is a line $\Ga$ whose left end is $B$, whose right end is $A$ and whose intersection with 
$U_t$ is $\sigma([-t,t])$.  As $L^+$ is in the interval $(A,B)$ so that $L^+\subset \De^+_\Ga$,  one deduces that all the positive half leaves $L^+_r$, $r\in[-t,t]$ are contained in the upper half plane $\De^+_\Ga$ and therefore are between $A$ and $B$. So the interval $(A,B)$ (and also $(B,A)$) is uncoutable which is what we announced.

Conversely, if there are uncountably many ends in $(A,B)$ one of them is the end of a regular leaf as non-regular leaves are countably many.

This proves the equivalence of the two first items. The third items implies trivialy the second, so we now prove that the second implies the third. 

Let $A$ and $B$ be two ends of leaves so that $(A,B)$ is at most countable. 
We consider a line $\delta$ with the folowing properties: 
\begin{itemize} 
\item $A$ and $B$ are the right and left ends of $\delta$, respectively,
\item $\delta\setminus (B\cup A)$ is a segment $\sigma$, consisting in finitely many transverse segments $a_0,\dots a_k$ and finitely many leaf segments $b_1,\dots,b_k$, with $a_0(0)\in B$ and $a_k(1)\in A$.
\end{itemize}

Let $\De=\De^+(\delta)$ be the upper half plane bounded by $\delta$ and corresponding to the interval $(A,B)$. 

Notice that no entire leaf may be contained in $\De$ otherwhise there would be uncountably many ends between $A$ and $B$. 

We consider the half leaves  $L^+_{0,t}$ entering in $\Delta$ through $a_0(t)$.  As there are only countably many end between $A$ and $B$, there is a sequence of $t_n\to 0$ so that  $L^+_{0,t_n}$  goes out of $\Delta$ through a point $\sigma(s_n)$. 
Note that the half leaves $L^+_{0,t}$, $t\in[t_{n+1},t_n]$ need to go out of $\Delta$. 

Thus every  $L^+_{0,t}$, $t\leq t_0$ goes out of $\Delta$ at a point $\sigma(s(t))$, where $t\mapsto s(t)$ is a decreasing function. Let $s_0$ be the limit $$s_0=\lim_{t\to 0}s(t).$$ 

Notice that a half  leaf entering in $\Delta$ though $a_0$ cannot go out $\Delta$ through $a_0$ because a transverse segment cuts a leaf in at most a point. Thus we deduce that $s_0$ belongs to some $a_i, i>0$. 

We consider the compact segments $I_t\subset L^+_{0,t}$ joining $a_0(t)$ to $\sigma(s(t))$. 
We consider
$$\limsup_{t\to 0} I_t.$$
It is a closed subset of $\RR^2$ consisting on $B$ 
and of whole leaves contained in $\Delta$ and of a half leaf  $\tilde B_1$ ending at $\sigma(s_0)$. We already noticed that no entire leaves may be contained in $\Delta$.  Thus this limit  consists in 
$B\cup \tilde B_1$.  As a consequence, the ends $B$ and $\tilde B_1$ are successive ends,  $\tilde B_1\in (A,B)$ and thus $(\tilde B_1,A)$ is at most countable too. 

We consider $B_1\subset \tilde B_1$ the half leaf starting at the last intersection point of $\tilde B_1$ with $\sigma$.  Note that $B_1$ starts at a point of some segment $a_i$, with $i>0$. 

Thus, if $B_1\neq A$ one may iterate the argument, getting successive 
half leaves $B_i$ starting at points of some transverse segment 
$a_{j(i)}$, where $i\mapsto j(i)$ is stricly increasing.  As there are finitely many segments $a_i$ one gets that this inductive argument needs to stop. In other words,  there is $i$ with $B_i=A$, ending the proof: $[A,B]=A=B_i,A_{i-1},\dots, B_1,B$. 

This proves that the second item is equivalent to the third. 
\end{proof}
The proof of Lemma~\ref{l.finite} proved, as a by product, the following:

\begin{lemm}\label{l.hyperbolic} Assume that $A$ and $B$ are successive ends of leaves, that is: the interval $(A,B)$ is empty.  Then, there is an  embedding of $\psi\colon [-1,1]\times [0,1]\to\DD^2_\cF$ so that:
\begin{itemize}
 \item the segments $\psi([-1,1]\times \{t\})$, $0\leq t <1$, are leaf segment
 \item $A=\psi([-1,0)\times \{1\})$ and $B=\psi((0,1]\times \{1\})$
 \item the point $\psi(0,1)$ is the point is $\SS^1_\cF$ end of both end $A$ and $B$.  
\end{itemize}
\end{lemm}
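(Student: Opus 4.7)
The plan is to continue the sweeping construction from the last part of the proof of Lemma~\ref{l.finite}, specialized to the case $(A,B)=\emptyset$. I will choose a line $\delta\subset\RR^2$ whose right and left ends are realizations of $A$ and $B$ respectively, as in that proof, with initial transverse segment $a_0$ satisfying $a_0(0)\in B$; write $\Delta=\Delta^+_\delta$ for the associated upper half-plane and $p\in\SS^1_\cF$ for the common limit of the rays $A,B$. A preliminary observation is that $\overline\Delta\cap\SS^1_\cF=\{p\}$: this intersection is a closed sub-arc of $\SS^1_\cF$ whose interior collects limits of ends lying in $(A,B)$, so if that interior were non-empty then by item~3 of Theorem~\ref{t.foliations} it would contain uncountably many such limits, contradicting $(A,B)=\emptyset$.

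Following the proof of Lemma~\ref{l.finite}, for each small $t>0$ the half-leaf $L^+_{0,t}$ starting at $a_0(t)$ enters $\Delta$ and, since no entire leaf can lie in $\overline\Delta$ (its two ends would both limit to $p$, contradicting the ends-of-a-leaf remark recalled after Theorem~\ref{t.foliations}), must exit at some $\sigma(s(t))$, with $s$ continuous and decreasing. The induction in that proof terminates at once because $(A,B)=\emptyset$, so $s_0=\lim_{t\to 0}s(t)$ corresponds to the starting point of the realization of $A$ in $\delta$, and for the compact leaf-segments $I_t\subset L^+_{0,t}$ joining $a_0(t)$ to $\sigma(s(t))$ one has $\limsup_{t\to 0}I_t=A\cup B$ in $\RR^2$. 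Next I will check that the arc-length midpoint $m(t)$ of $I_t$ converges to $p$ in $\DD^2_\cF$: any accumulation point $q$ of $m(t)$ in $\RR^2$ would, by continuous dependence of leaves on transversals applied to the sub-arcs of $I_t$ of fixed arc-length around $m(t)$, force both half-leaves of the leaf through $q$ to be entirely contained in $\overline\Delta$, hence to have both ends limiting to $p$, contradicting the ends-of-a-leaf remark.

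To build $\psi$, I fix a decreasing homeomorphism $\rho\colon[0,1]\to[0,t_0]$ with $\rho(1)=0$. For $t<1$ I parametrize $I_{\rho(t)}$ by $[-1,1]$ with $\psi(-1,t)=\sigma(s(\rho(t)))$ on the $A$-side, $\psi(1,t)=a_0(\rho(t))$ on the $B$-side, and $\psi(0,t)=m(\rho(t))$. At $t=1$, set $\psi(-1,1)=\sigma(s_0)$, $\psi(1,1)=a_0(0)$, $\psi(0,1)=p$, and take $\psi(\cdot,1)$ on $[-1,0)$ and on $(0,1]$ to parametrize realizations of $A$ and $B$ respectively. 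Injectivity of $\psi$ on $[-1,1]\times[0,1)$ is immediate from the fact that the $I_t$ lie in pairwise distinct leaves, and injectivity along the top edge from the fact that $A$ and $B$ are distinct rays meeting only at $p\in\SS^1_\cF$.

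The main technical obstacle will be the continuity of $\psi$ along the top edge $\{t=1\}$ away from the corner $(0,1)$: for fixed $s\in[-1,0)\cup(0,1]$ I need $\psi(s,t)$ to approach a definite point of $A\cup B$ in $\RR^2$ as $t\to 1$. This amounts to a product-chart statement for the foliated topological disk $\overline U=\overline{\bigcup_t I_t}\subset\DD^2_\cF$, whose boundary decomposes as $I_{t_0}$, two arcs of $\delta$, and the piece $A\cup\{p\}\cup B\subset\SS^1_\cF$. The plan is to pick a transverse arc inside $U$ going from $L_A$ to $L_B$ and meeting each $I_t$ at a single point; by continuous dependence of leaves on transversal parameters this yields a continuously varying intermediate coordinate on the $I_t$ which, combined with the midpoint $m(t)$, synchronizes the parametrization of $I_{\rho(t)}$ with a fixed parametrization of $A\cup\{p\}\cup B$ in the limit $t\to 1$ and gives the sought continuity.
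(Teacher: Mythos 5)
Your proof follows the same route as the paper: the author gives no separate argument for Lemma~\ref{l.hyperbolic}, presenting it as a by-product of the sweeping family $I_t$ built in the proof of Lemma~\ref{l.finite}, and that is exactly the construction you elaborate. Your preliminary observation that $\overline{\Delta}\cap\SS^1_\cF=\{p\}$ (which in particular establishes the third bullet, that $A$ and $B$ share the limit $p$), the exclusion of entire leaves in $\Delta$, the one-step termination of the induction, and the argument that the midpoints $m(t)$ converge to $p$ are all correct, and they usefully make explicit what the paper leaves implicit.

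The one step that would fail as written is the final synchronization device: a transverse arc inside $U$ going from $L_A$ to $L_B$ and meeting each $I_t$ in a single point does not exist. Each $I_t$ separates $\overline{U}$ into a piece containing $I_{t_0}$ and a piece containing $p$, and all of $A\cup B$ lies in the latter; hence any arc joining a point of $L_A\cap\overline{U}$ to a point of $L_B\cap\overline{U}$ has both endpoints on the same side of every $I_t$ and crosses it an even number of times (in the model square both endpoints sit on the top edge). This is only a local defect, because the synchronization you need is already available from data you have: the endpoints $a_0(\rho(t))$ and $\sigma(s(\rho(t)))$ of $I_{\rho(t)}$ converge to the initial points of $B$ and of $A$, and continuous dependence of leaves shows that for each fixed $\ell>0$ the point of $I_{\rho(t)}$ at arc length $\ell$ from $a_0(\rho(t))$ converges to the point of $B$ at arc length $\ell$ from $a_0(0)$ (symmetrically on the $A$ side). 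It therefore suffices to parametrize $I_{\rho(t)}$ by a suitable interpolation between arc length measured from each endpoint and the midpoint $m(\rho(t))$, for instance sending $s\in(0,1]$ to the point at arc length $\bigl(g(s)^{-1}+2/\Lambda(t)\bigr)^{-1}$ from $a_0(\rho(t))$, where $g\colon(0,1]\to[0,\infty)$ is a fixed decreasing homeomorphism and $\Lambda(t)$ is the length of $I_{\rho(t)}$; this is monotone in $s$, equals the midpoint at $s=0$, and converges to the arc-length-$g(s)$ point of $B$ as $t\to1$, which gives the continuity along the top edge. With that replacement your argument is complete.
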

\begin{defi}The embedding $\psi\colon [-1,1]\times [0,1]\to\DD^2_\cF$ is called a \emph{hyperbolic sector}.
\end{defi}

We say that two half leaves $A,B$ are \emph{asymptotic} 
if $[A,B]$ or $[B,A]$ does not contain any end of regular leaf. 
We already proved next lemma:  
\begin{lemm} To be asymptotic is an equivalence relation in the set of ends of leaves of $\cF$. 

Each equivalence class is either finite or countable and is, as an ordered  set, isomorphic to an interval of $(\ZZ,<)$.
 
There are at most countably many non-trivial classes. 
\end{lemm}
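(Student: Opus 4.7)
The plan is to verify the three assertions in order. Reflexivity of $\sim$ is immediate (the ``interval'' $[A,A]$ is degenerate), and symmetry follows from the symmetric form of the definition. For transitivity, suppose $A\sim B$ and $B\sim C$ with $A,B,C$ pairwise distinct. The three points cut the circle of ends into three open arcs $\alpha,\beta,\gamma$, where $\alpha$ is bounded by $A,B$ and does not contain $C$, and similarly for $\beta,\gamma$. By Lemma~\ref{l.finite}, the ``small'' arc $I_{AB}$ witnessing $A\sim B$ (containing no end of regular leaf) is either $\alpha$ or the complementary arc $\beta\cup\{C\}\cup\gamma$, and in both cases is finite; similarly for $I_{BC}\in\{\beta,\gamma\cup\{A\}\cup\alpha\}$. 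A four-case inspection shows that in every case at least one of the two arcs bounded by $\{A,C\}$ is finite: for instance, if $I_{AB}=\alpha$ and $I_{BC}=\beta$, then $\alpha\cup\{B\}\cup\beta$ is a finite arc from $A$ to $C$ through $B$; while if $I_{AB}=\beta\cup\{C\}\cup\gamma$ and $I_{BC}=\gamma\cup\{A\}\cup\alpha$, one has $\gamma=I_{AB}\cap I_{BC}$ finite. In each case, applying Lemma~\ref{l.finite} in reverse, the corresponding arc contains no regular end, so $A\sim C$.

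For the order structure on a class $[A]$, by Lemma~\ref{l.finite} any finite arc between $A$ and some $B\in [A]$ contains only finitely many ends, hence only finitely many elements of $[A]$. Fix an orientation on the circle of ends and split $[A]\setminus\{A\}$ into the elements $B$ whose finite arc from $A$ is ``positive'' and those for which it is ``negative.'' If there is any positive element, there is a closest one $A_1$ (the one whose positive finite arc to $A$ contains no other element of $[A]$); iterating from $A_1$ gives $A_2, A_3,\ldots$; analogously one defines $A_{-1},A_{-2},\ldots$. Any $B\in[A]$ must appear in the enumeration, for the finite arc between $A$ and $B$ contains only finitely many elements of $[A]$, say $k$ of them, so $B=A_{\pm k}$. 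Hence $[A]$ is order-isomorphic to an interval of $(\ZZ,<)$, and in particular at most countable.

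Finally, for the countability of non-trivial classes, observe that if $A\sim B$ with $A\ne B$, then by definition the closed arc $[A,B]$ or $[B,A]$ contains no end of regular leaf; in particular neither $A$ nor $B$ is the end of a regular leaf, so both lie on non-regular leaves. By the proposition of Section~\ref{s.background}, there are at most countably many non-regular leaves, hence at most countably many non-regular ends. The union of all non-trivial classes is contained in this countable set, so there can be at most countably many non-trivial classes.

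The main technical point is the transitivity, which hinges on the correct combinatorial handling of the four configurations of ``small'' arcs with respect to the three-point decomposition of the circle of ends; the order structure of each class and the countability count are then quick consequences of Lemma~\ref{l.finite} and of the countability of non-regular leaves established in Section~\ref{s.background}.
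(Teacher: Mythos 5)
Your proof is correct and follows the route the paper intends: everything is reduced to the equivalence of ``no regular end between'', ``countably many ends between'' and ``finitely many ends between'' from Lemma~\ref{l.finite}, which is exactly why the paper dismisses the statement with ``we already proved next lemma''. Your explicit four-case check of transitivity and the enumeration argument for the $(\ZZ,<)$-interval structure simply supply details the paper leaves implicit.
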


We also already proved: 

\begin{lemm}Let $\cF$ be a foliation (possibly with singular points of saddle type).  Then two half leaves tend to the same point $\theta\in \SS^1_\cF$ if and only if they are asymptotic, and every half leaf arriving to $\theta$ belongs to their asymptotic class. 
\end{lemm}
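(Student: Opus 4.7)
The plan is to derive this lemma as a direct consequence of Lemmas~\ref{l.finite} and~\ref{l.hyperbolic}, combined with Remark~\ref{r.cyclic-order} applied to the construction of $\DD^2_\cF$ given in the proof of Theorem~\ref{t.feuilletage}.

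For the implication that asymptotic half leaves share the same limit point, I would use the preceding lemma, which identifies each asymptotic class with an interval of $(\ZZ,<)$. Given two asymptotic half leaves $A,B$, I enumerate the intervening members of their class in cyclic order as $A=B_0,B_1,\dots,B_n=B$, so that each consecutive pair $(B_i,B_{i+1})$ is successive (i.e.\ the open interval between them is empty). Lemma~\ref{l.hyperbolic} then supplies, for each $i$, a hyperbolic sector $\psi_i\colon[-1,1]\times[0,1]\to\DD^2_\cF$ whose apex $\psi_i(0,1)\in\SS^1_\cF$ is the common limit of $B_i$ and $B_{i+1}$. A straightforward transitivity argument gives that $A$ and $B$ tend to the same $\theta\in\SS^1_\cF$.

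For the converse, I invoke Remark~\ref{r.cyclic-order}. The compactification $\DD^2_\cF$ is built by applying Theorem~\ref{t.rays} to a countable separating subset $\cE$ of the set $\cR(\cF)$ of ends of regular leaves, then extending the resulting increasing embedding $\cR(\cF)\hookrightarrow\SS^1_\cF$ to the set of all ends as a weakly increasing map $\Phi$. By Remark~\ref{r.cyclic-order}, $\Phi(A)=\Phi(B)$ iff one of the cyclic intervals $(A,B),(B,A)$ contains at most one element of $\cR(\cF)$. By Lemma~\ref{l.finite}, any such interval either contains no end of regular leaf at all (and is then finite) or contains uncountably many ends of leaves; since non-regular leaves are at most countable, the latter forces uncountably many regular ends. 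Hence the alternative ``at most one regular end'' collapses to ``zero regular ends,'' which is precisely asymptoticity of $A$ and $B$.

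The second statement then follows immediately: any half leaf $C$ arriving at $\theta$ is asymptotic to $A$ by the converse just established, and therefore belongs to the common asymptotic class of $A$ and $B$. The only subtlety worth flagging is the mismatch between the closed intervals appearing in the definition of ``asymptotic'' and the open ones appearing in Remark~\ref{r.cyclic-order}, but this is harmless since $A,B\notin(A,B)$ and the zero/uncountable dichotomy of Lemma~\ref{l.finite} absorbs the difference; I do not expect any genuine obstacle.
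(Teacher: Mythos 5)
Your proof is correct and assembles exactly the ingredients the paper intends: the lemma is introduced there with ``we also already proved,'' as an immediate consequence of Lemmas~\ref{l.finite} and~\ref{l.hyperbolic} together with the characterization (via Remark~\ref{r.cyclic-order} and the construction in Theorem~\ref{t.feuilletage}) of when two ends share a limit point, which is precisely how you argue. The one small caveat is that your dismissal of the closed-versus-open interval mismatch invokes the wrong mechanism in the case where $A$ or $B$ is itself an end of a regular leaf: there $[A,B]$ and $[B,A]$ both contain an element of $\cR(\cF)$ while $(A,B)$ could a priori contain none, and what rules this out is not the zero/uncountable dichotomy of Lemma~\ref{l.finite} but the separation property established in the proof of Lemma~\ref{l.separating}, which places infinitely many ends of regular leaves in both $(A,B)$ and $(B,A)$ whenever $A\in\cR(\cF)$ and $B\neq A$, so that both sides of the equivalence are false in that case.
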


In particular, if a point of $\SS^1_\cF$ is the limit of a regular end of leaf, it is the limit of a unique end of leaf.

Notice that points at infinity which are limit of a unique end of leaf may be the limit of a non-separated end of leaf as shows next example:
\begin{exem}\label{e.Cantor} Let $K\subset \RR$ be a Cantor set and  consider
$$\cP_K=\RR^2\setminus (K\times[0,+\infty)).$$
Thus $\cP_K$ is homeomorphic to $\RR^2$.

Let $\cF_K$ be the restriction to $\cP_K$ of the horizontal foliation on $\RR^2$ (whose leaves are the $\RR\times \{y\}$). Thus all the leaves of the form $I\times \{0\}$ where $I$ is a connected component of $\RR\setminus K$ are pairwise non separated from below.

However, any two distinct ends of leaves of $\cF_K$ tend to distinct points in $\SS^1_{\cF_K}$.
\end{exem}

\begin{rema}  Assume that $\cF$ is oriented. 

If $A_0,\dots A_k$ are successive ends of leaves, and assuming $A_0$ is a right half leaf, then $A_1$ is a left half leaf and $A_0$ and $A_1$ are not separated from above. 

Then $A_2$ is a right half leaf and $A_1$ and $A_2$ are not separated from below, and so on.

Thus, each non trivial classes of the asymptotic relation consists in alternately right and left ends of non-separated leaves, alternately from above and from below.

\end{rema}

\subsection{Points at infinity which are not limit of leaves: center-like points.}

In this section, foliations are assumed to be non-singular. 

\begin{rema}\label{r.center}
Let $\cF$ be a foliation of $\RR^2$ and $o\in \SS^1_\cF$ be a point so that $o=\bigcap_n (a_n,b_n)$, $n\in \NN$ where $a_n,b_n$ are the limit points of the two ends of a same leaf $L_n$.

Then $a_n$ and $b_n$ tends to $o$ and $o$ is not a limit point of an end of  leaf of $\cF$.
\end{rema}
\begin{proof}Consider $\De_n$ being the compact disc of $\DD^2_\cF$ whose boudary (as a disc) is $L_n\cup [a_n,b_n]$.  Then the $\De_n$ are totally ordered by the inclusion and   $o\in \bigcap_n \De_n$. If a leaf $L$ had an end on $o$, it should be contained in every $\De_n$ and hence contained in $\bigcap_n \De_n$. Thus the two ends of $L$ are distinct points in $\bigcap_n [a_n,b_n]$  contradicting the hypothesis.
\end{proof}

We say that a point $o\in \SS^1_\cF$ satisfying the hypothese of Remark~\ref{r.center} is a \emph{center-like point}. 

Here is a very simple example with this situation: 
\begin{exem}
The trivial horizontal foliation $\cH$ admits two center-like points at infinity which are the limit points of the (vertical) $y$ axis (transverse to $\cH)$.
\end{exem}

It is indeed easy to check that:
\begin{rema}Given any foliation $\cF$ of $\RR^2$, $\SS^1_\cF$ carries at least $2$ center-like points.
To see that, just consider the (decreasing) intersection of the closure in $\DD^2_\cF$ of the half planes $\De^{\pm}_L$ for a maximal chain (given by Zorn lemma) for the inclusion. 
\end{rema}

But the situation may be much more complicated, as shows next example. 

\begin{exem}\label{e.Weierstrass}
Consider a simple closed curve $\gamma=\gamma^+\cup\gamma^-$ of $\RR^2$ where $\gamma^+$ and $\gamma^-$ are the graphs of continuous functions $\varphi\colon [-1,1]\to[0,1]$ and $-\varphi$, respectively, where
\begin{itemize}
 \item $\varphi(-1)=\varphi(1)=0$,
 \item $\varphi(t)>0$ for $t\in(-1,1)$,
 \item the local maxima and minima of $\varphi$ are dense in $[-1,1]$ (some kind of Weierstrass function). 
\end{itemize}
Let $\De$ be the open disc bounded by $\gamma$ and endowed with the constant horizontal foliation $\cF$. 

Then $\SS^1_\cF=\gamma$ and any local maximum point of $\gamma^+$ and any local minimum of $\gamma^-$ are center-like points of $\SS^1_\cF$
\end{exem}

The aim of this section is to show that the situation of Example~\ref{e.Weierstrass} is in fact very common. 

\begin{lemm}\label{l.center-like}  Le $\cF$ be a foliation on $\cR^2$. Assume that the union of leaves which are non separated at their right side  is dense in $\RR^2$, and in the same way, that the union of leaves which are non separated at their left  side  is dense in $\RR^2$. 

Then the set of center-like points on $\SS^1_\cF$ is a residual subset of $\SS^1_\cF$. 
\end{lemm}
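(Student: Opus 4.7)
The plan is to show that the set $\mathcal{C}$ of center-like points of $\SS^1_\cF$ is a dense $G_\delta$ subset, and then to conclude by the Baire category theorem.

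First, $\mathcal{C}$ is automatically a $G_\delta$. Fix any metric $d$ on $\SS^1_\cF$ compatible with its topology. For each integer $n\geq 1$ set
\[
G_n \;=\; \bigcup\bigl\{\,(a,b) : \exists\text{ a leaf }L\text{ of }\cF\text{ with endpoints }a,b\in\SS^1_\cF,\ \operatorname{diam}(a,b)<1/n\,\bigr\},
\]
where $(a,b)$ denotes the open arc of $\SS^1_\cF$ of diameter $<1/n$ bounded by the two endpoints of $L$. Each $G_n$ is open, being a union of open arcs, and Remark~\ref{r.center} identifies $\mathcal{C}$ with $\bigcap_{n\geq 1} G_n$.

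By Baire, it suffices to show that each $G_n$ is dense, and this reduces to the following \emph{short-chord claim}: for every non-empty open arc $J\subset\SS^1_\cF$ there exists a leaf of $\cF$ whose two endpoints both lie in $J$. Indeed, given such $J$ and $n$, first shrink $J$ to a sub-arc of diameter $<1/n$ and then apply the claim to that sub-arc; the resulting leaf cuts off a sub-arc contained in $G_n\cap J$.

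To prove the short-chord claim, fix a non-empty open arc $J$ and pick (by Remark~\ref{r.line} and elementary planar topology) a properly embedded line $\ell\subset\RR^2$ whose two ends on $\SS^1_\cF$ are the endpoints of $J$; let $H\subset\RR^2$ be the open half-plane bounded by $\ell$ whose limit set on $\SS^1_\cF$ is $\overline J$. By the two density hypotheses, $H$ contains a leaf $M$ non-separated at its right side (with asymptotic partner $M'$ and common endpoint $\theta_M\in\overline J$) and a leaf $N$ non-separated at its left side (with partner $N'$ and common endpoint $\theta_N\in\overline J$); each pair determines, by Lemma~\ref{l.hyperbolic}, a hyperbolic sector in $\overline H$ based at $\theta_M$ (resp.~$\theta_N$). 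A planar intermediate-value-type argument—directly modelled on Example~\ref{e.Weierstrass}, where dense local minima of $\varphi^+$ (right-side non-separations) and dense local maxima of $\varphi^-$ (left-side non-separations) force dense local maxima of $\varphi^+$, which are precisely the center-like points on $\gamma^+$—then produces a leaf of $\cF$ entirely contained in $H$ with both endpoints in $J$.

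The main obstacle is this planar intermediate-value step: one must turn the heuristic ``dense extrema of one sign force extrema of the opposite sign in between'' into a rigorous statement on $\DD^2_\cF$ using the hyperbolic-sector structure from Lemma~\ref{l.hyperbolic} and the combinatorics of non-separated leaves along $\SS^1_\cF$. Without both density hypotheses the claim fails—the trivial horizontal foliation of $\RR^2$ has no non-separated leaves and only two center-like points on its circle at infinity—so the argument must crucially exploit the simultaneous density of both right-side and left-side non-separations.
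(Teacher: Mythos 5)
Your overall architecture is the same as the paper's: write the center-like set as $\bigcap_n G_n$ with $G_n$ open, and prove each $G_n$ dense via a ``short-chord'' statement (an arc at infinity containing both endpoints of a single leaf). But the proof of that short-chord statement --- which is the entire content of the lemma --- is not given. You reduce it to ``a planar intermediate-value-type argument'' modelled on Example~\ref{e.Weierstrass} and then explicitly declare that step to be the unresolved ``main obstacle.'' That is precisely where the two density hypotheses must be converted into the existence of an \emph{entire} leaf trapped near a point at infinity, and nothing in your setup yet does this: knowing that the half-plane $H$ contains a leaf $M$ non-separated at its right tells you nothing a priori about where its non-separation partner $M'$ lives, so you have not produced any leaf with both ends in $J$. (There is also a secondary problem with your formulation: for an arbitrary open arc $J$ you cannot in general choose a properly embedded line $\ell$ whose two ends converge exactly to the endpoints of $J$; such lines are available when the endpoints are limits of ends of leaves, not for arbitrary points of $\SS^1_\cF$.)

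The paper closes exactly this gap by localizing at ends of regular leaves rather than at arbitrary arcs. Take a regular leaf $L$, a transverse segment $\sigma$ with $\sigma(0)\in L$, and for $r<s$ near $0$ the strip $U_{r,s}$ bounded by $L_r$ and $L_s$, cut by $\sigma([r,s])$ into half-strips $U^{right}_{r,s}$ and $U^{left}_{r,s}$ with corresponding intervals $I^{right}_{r,s}, I^{left}_{r,s}\subset\SS^1_\cF$; regularity of $L$ makes these intervals short when $r,s\to 0$. The density hypothesis gives some $t\in(r,s)$ with $L_t$ non-separated at its right; the crucial point is that its partner is then an entire leaf contained in $\mathfrak{U}^{right}(L_t)\subset U^{right}_{r,s}$, because the non-separation occurs on the right side, so the partner is caught in every right half-strip neighborhood. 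Hence $I^{right}_{r,s}$ contains an interval bounded by the two ends of one leaf, and symmetrically for $I^{left}_{r,s}$; letting $r,s\to 0$ puts the limits of $L^+$ and $L^-$ in $\overline{\cO_n}$, and density of regular ends finishes the density of $\cO_n$. This half-strip mechanism is the rigorous replacement for your intermediate-value heuristic; without it your argument does not go through.
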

\begin{proof}Fix a metric on $\SS^1_\cF$. 
Let $\cO_n\subset \SS^1_\cF$ be the set of points belonging to an interval $(a,b)$ of length less than $\frac 1n$  where $a,b$ are both ends of a same leaf of $\cF$. 

We will proof that $\cO_n$ is a dense open subset of $\SS^1_\cF$.  Then $\bigcap_n O_n$ will be the announced residual subset. 

The fact that $\cO_n$ is open is by definition. We just need to prove the density of $\cO_n$. 

Recall that the ends of regular leaves are dense in $\SS^1_\cF$. Thus we just need to prove that the ends of regular leaves are contained in the closure of $\cO_n$.

Let $L$ be a regular leaf and $\sigma\colon [-,1,1]\to \RR^2$ be a positively oriented transverse segment with $\sigma_0\in L$. We denote $L_t$ the leaf through $\sigma_t$ and ve recall that, as $L$ is regular, the rignt and left ends $L^+_t,L^-_t$ of $L_t$ tend to the right and left ends $L^+$ annd $L^-$, respectively, as $t\to 0$. 

Given any $r<s\in [-1,1]$, we denote by $U_{r,s}, U^{right}_{r,s}$, and $U^{left}_{r,s}$ the strip bounded by $L_r$ and $L_s$, and the two closed half strips obtained by cutting $U_{r,s}$ along the segment $\sigma([r,s])$. Let $I^{right}_{r,s}\subset \SS^1_\cF$ and $I^{left}_{r,s}\subset \SS^1_\cF$ be the corresponding intervals on $\SS^1_\cF$. Notice that, as $L$ is regular, these interval have a length smaller than $\frac 1n$ if $r,s$ close to $0$.

Our hypotheses imply that there are $t^{right},t^{left}\in (r,s)$ so that $L_{t^{right}}$ is non-separated at the right, and $L_{t^{left}}$ is non-separated at the right.

This implies that both $U^{right}_{r,s}$, and $U^{left}_{r,s}$ contain entire leaves. Thus  $I^{right}_{r,s}$ and $I^{left}_{r,s}$ contain intervals whose both extremal points are ends of the same leaf. Taking $r,s$ small enough, these intervals are contained in $\cO_n$ showing that the points of $\SS^1_\cF$ corresponding to $L^+$ and $L^-$ are in the closure of $\cO_n$. This ends the proof.
\end{proof}

However, not every point $o$ which is not limit of an end of leaf is center-like.
\begin{exem}\label{e.Cantor2}  Let $\cF_K$ be the foliation defined in Example~\ref{e.Cantor} by restriction of the horizontal foliation on $\RR^2\setminus (K\times [0,+\infty))$ where $K$ is a  Cantor set $K\subset \RR$. Consider a point $x\in K$ which are not the end point of a component of $\RR\setminus K$.  Then the point $(x,0)$ corresponds to a point in $\SS^1_\cF$ which is not limit of an end of leaf, and is not center-like.
\end{exem}

Consider a point $o\in \SS^1_\cF$ and assume it is not the limit point of any end of leaf. For any leaf $L$ we denote by $\De_L\subset \DD^2_\cF$ the compact disk containing $o$ and whose frontier in $\DD^2_\cF$ is the segment $\bar L$ closure of $L$. Then  $\De_L\cap \SS^1_L$ is a segment $I_L$ whose end points are the limit points of the ends of $L$. Note that the closed segment $I_L$ are totally ordered for the inclusion, and so does the disks $\De_L$. Let denote
$$I_o=\bigcap_L I_L \mbox{ and } \De_o=\bigcap_L \De_L$$
Then
\begin{itemize}
 \item if $o=I_o$ then $o$ is a center-like point.
 \item Otherwise, $\partial \De_o\cap\mathring{\DD^2_\cF}$ consists in infinitely (countably) many leaves pairwise not separated and there is a subsequence of them whose limit is $o$.
\end{itemize}

\section{The circle at infinity of a countable family of foliations}

The aim of this section is the proof of Theorem~\ref{t.countable-singular}, that is to build the compactification associated to a countable family of foliations with saddles and prove its uniqueness. 

\begin{rema}Example~\ref{e.uncountable} already shown us that Theorem~\ref{t.countable-singular} is wrong for uncountable families. 
\end{rema}

 The new difficulty in comparison to Theorem~\ref{t.feuilletage} is that there are no more separating families for the set of ends of all the foliations. 
 
 \begin{exem}Consider the restriction of the constant horizontal and vertical foliations to the strip $\{(x,y), |x-y|<1\}$, so important for the study of Anosov flows.  Then every end of horizontal leaf has a unique successor or predecessor which is the end of a vertical leaf. Thus no family can be separating. 
 \end{exem}

For by-passing this difficulty,  we will apply Theorem~\ref{t.union} instead of Theorem~\ref{t.rays}.

\begin{proof}[Proof of Theorem~\ref{t.countable-singular}] The ends of regular leaves $\cR=\bigcup_{i\in I}\cR_i$ of all the foliations $\cF_i, i\in I$  is a family of disjoints ends of rays. 

We have seen that for every foliation $\cF_i$  the set of ends of regular leaves $\cR_i$ admits a countable separating family, for instance by considering regular leaves through a dense subset in $\RR^2$. 

Thus Theorem~\ref{t.union} provides a  compactification of $\RR^2$ by $\DD^2$ satisfying the announced properties for the regular leaves, that is, items 2 and 3. 

For item 1 one need to see that even the ends of non regular leaves tends to points at infinity. That is given by Lemma~\ref{l.union}. 

The uniqueness comes from the uniqueness in Theorem~\ref{t.union}, ending the proof. 
\end{proof}

\subsection{Example: Countable families of polynomial vector fields} 
\begin{coro}\label{e.algebraic} Let $\cF=\{\cF_i\}_{i\in I}, I\subset \NN$ be a countable family of foliations directed by polynomial vector fields on $\RR^2$ whose singular points are all of saddle type.   Then the ends of leaves either are disjoint or coincide.  

Thus, according to Theorem~\ref{t.countable-singular}, there is a unique compactification $\DD^2_\cF=\RR^2\cup\SS^1_\cF$ for which the ends of regular leaves of the same foliation tend to pairwise distinct points at the circle at infinity, and this ends of leaves are dense in $\SS^1_\cF$. 
\end{coro}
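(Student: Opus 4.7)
The goal is to verify the hypothesis of Theorem~\ref{t.countable-singular}: for any two ends $L_1,L_2$ of leaves of foliations in $\cF$, the germs are either disjoint or coincide. If the two ends belong to leaves of the same foliation $\cF_i$, then either they are ends of the same (properly embedded) leaf, in which case they are either equal as germs or go off in disjoint directions, or they belong to distinct leaves of $\cF_i$, which are disjoint subsets of $\RR^2$. Either way the hypothesis is trivially satisfied, so I may restrict attention to an end $L_1^+$ of a leaf of $\cF_i$ and an end $L_2^+$ of a leaf of $\cF_j$ with $i\neq j$.

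Let $X_i,X_j$ be the polynomial vector fields directing $\cF_i,\cF_j$, and consider the polynomial $P_{ij}(x,y)=\det(X_i(x,y),X_j(x,y))$. If $P_{ij}\equiv 0$, then $X_i$ and $X_j$ are everywhere proportional and define the same (singular) foliation, reducing us to the previous case. Otherwise $Z_{ij}=\{P_{ij}=0\}$ is a proper real algebraic curve with finitely many irreducible components. At any point of $L_1\cap L_2$ where $P_{ij}\neq 0$, the two leaves cross transversally and the point is isolated in $L_1\cap L_2$; any non-isolated intersection point must lie in $Z_{ij}$. More importantly, any common sub-arc of $L_1$ and $L_2$ is contained in $Z_{ij}$ (on such an arc, both $X_i$ and $X_j$ are tangent, hence parallel), and by real-analyticity it lies in an irreducible component $C$ of $Z_{ij}$ which is invariant under both $X_i$ and $X_j$.

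The heart of the argument is to show that if $L_1^+$ and $L_2^+$ have non-disjoint germs, they must coincide as germs. I would use the Poincar\'e compactification of $\RR^2$ to the closed disc, which extends $X_i,X_j$ to singular analytic foliations with finitely many additional singularities on the boundary circle. By Lemma~\ref{l.PH} no half-leaf can be recurrent, so any half-leaf $L_1^+$ (respectively $L_2^+$) must accumulate on a singularity of the extended foliation, either a finite hyperbolic saddle (excluded for ends going to infinity) or a singularity on the boundary circle. If $L_1^+$ and $L_2^+$ accumulate on distinct boundary singularities they are eventually disjoint in $\RR^2$, so the germs are disjoint. If they accumulate on the same boundary singularity $q$, the local theory of singularities of analytic vector fields (Bendixson sectors, Dulac--Seidenberg desingularization) shows that each extended foliation admits only finitely many invariant analytic branches through $q$, and a trajectory approaching $q$ must do so tangent to one of them. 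If $L_1^+$ and $L_2^+$ approach $q$ along different branches they are eventually separated; if they approach along a common branch $\Gamma$, then $\Gamma$ is invariant for both $X_i$ and $X_j$, hence lies in $Z_{ij}$, and both leaves eventually trace out this algebraic arc, so the germs coincide. The main obstacle is the local analysis at singularities on the line at infinity, since Poincar\'e compactification may produce non-hyperbolic (even degenerate) singularities there even though the finite singularities are assumed to be hyperbolic saddles; however, the finiteness of the degrees of the polynomials together with the classical reduction of singularities of analytic foliations makes the finiteness of invariant branches through $q$ standard, yielding the required dichotomy and hence the corollary via Theorem~\ref{t.countable-singular}.
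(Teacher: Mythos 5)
Your setup is the same as the paper's: reduce to two distinct foliations and study the tangency locus $Z_{ij}=\{\det(X_i,X_j)=0\}$, an algebraic curve. But the heart of your argument --- the sectorial analysis at a singularity $q$ of the Poincar\'e compactification lying on the circle at infinity --- has a genuine gap. The dichotomy ``$L_1^+$ and $L_2^+$ approach $q$ along different invariant branches, hence are eventually disjoint, or along a common branch $\Gamma\subset Z_{ij}$, hence coincide'' is not valid. A trajectory converging to a (possibly degenerate) singularity does so, at best, \emph{tangent to} one of finitely many characteristic directions; it need not be \emph{contained in} any invariant analytic branch. In a parabolic (nodal) sector a continuum of leaves converges to $q$ tangent to the same direction, and only exceptional ones lie on an analytic invariant curve. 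So the case where $L_1^+$ (a leaf of $\cF_i$) and $L_2^+$ (a leaf of $\cF_j$) both enter $q$ tangent to the same direction with neither contained in $Z_{ij}$ is exactly the case you must handle, and your argument says nothing about it: a priori such a pair could cross transversally infinitely many times on the way to $q$ without sharing a germ. (You observe early on that non-isolated intersection points lie in $Z_{ij}$, but you never exclude an infinite sequence of \emph{isolated transverse} crossings accumulating at infinity.) Two smaller unjustified steps: convergence of an end to a \emph{single} boundary singularity requires ruling out an $\omega$-limit polycycle contained in the circle at infinity (this does follow from the saddle hypothesis by an index argument, but not merely from non-recurrence of half-leaves), and the Poincar\'e compactification can have a non-isolated singular locus on the line at infinity.

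The paper closes precisely this hole by a global rather than local argument, with no compactification and no desingularization. Outside a large compact set the algebraic curve $Z_{ij}$ is a finite disjoint union of rays, each of which is either a common leaf of the two foliations or, after shrinking, transverse to both; consequently an end of a leaf of $\cF_i$ which does not eventually coincide with a common leaf meets $Z_{ij}$ in a bounded set, hence is eventually a transversal to $\cF_j$, and by the Poincar\'e--Hopf argument of Lemma~\ref{l.PH} a transversal meets each leaf of $\cF_j$ in at most one point. That is what kills the infinitely-many-transverse-crossings scenario. If you want to keep your structure, you must replace the ``common branch'' step at $q$ by this transversality argument.
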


\begin{proof} We just need to prove it for $2$ such distinct foliations $\cF$ and $\cG$. Consider the tangency locus of $\cF$ and $\cG$.  That is an algebraic set in $\RR^2$ which is either $\RR^2$ (so that $\cF=\cG$ contradicting the assumption)  or is  at most $1$-dimensional.   Thus it consist in the union of a compact part and a family of disjoint rays $\delta_1,\dots,\delta_k$.

If it is compact, then every end of leaf of $\cF$ is transverse to $\cG$ and therefore cuts every leaf of $\cG$ in at most $1$ point: the ends are disjoints. 

Otherwize, each ray $\delta_i$ either is tangent to both foliations and is therefore a comon leaf (which is one of the announced possibilities) or is transverse to $\cF$ and to $\cG$ out of a finite set (because the tangencies on $\delta_i$ are  a algebraic subset).  

Thus up to shrink the non-tangent $\delta_i$, we assume that they are transverse to both foliations therefore cut every leaf of $\cF$ in at most $1$ point.   This implies that every end of leaf of $\cF$ which is not  an end of $\cG$ is transverse to $\cG$ and thus is disjoint from any end of leaf of $\cG$, concluding. 
\end{proof}

\begin{rema}The compactification in Example~\ref{e.algebraic} is in general distinct from the algebraic extension of the $\cF_i$ on $\RR\PP^2$: for instance, consider the trivial example of $\RR^2$ endowed with the horizontal and vertical foliations. In this case  the compactification by the algebraic extension, all the leaves of the horizontal (resp. vertical) foliations tend to the same point at $\RR\PP^1$ (which corresponds to $2$ points for the circle at infinity).
\end{rema}

\subsection{Projections of $\DD^2_{\cF}$ on $\DD^2_{\cF_i}$ and center-like points on the circle at infinity}

 \begin{exem}Consider $\RR^2$ endowed with the trivial horizontal and vertical foliation, $\cH$ and $\cV$ respectively. Then the compactification $\DD^2_{\cH,\cV}$ is conjugated to the square $[-1,1]^2$ endowed with the trivial horizontal and vertical foliation.
 Every point $p\in \SS^1_{\cH,\cV}$, but the four vertices, are limit of exactly $1$ end of leaf, either horizontal (for $p$ in the vertical sides) or vertical (for $p$ in the horizontal sides).

 The projection $\Pi_\cH\colon\DD^2_{\cH,\cV}\to\DD^2\cH$ consists in colapsing the two horizontal sides, which are tranformed in the center-like points of $\SS^1_\cH$.

 The projection $\Pi_\cH\colon\DD^2_{\cH,\cV}\to\DD^2\cV$ consists in colapsing the two vertical sides, which are tranformed in the center-like points of $\SS^1_\cH$.
 \end{exem}

 \begin{exem}Consider the strip $\{(x,y)\in\RR^2, |x-y|<1\}$ endowed with the horizontal and vertical foliations $\cH$ and $\cV$ respectively.. Then $\DD^2_{\cH,\cV}=\DD^2_\cH=\DD^2_\cV$ and consists in adding to two points $\pm\infty$ to the closed strip $\{(x,y)\in\RR^2, |x-y|\leq 1\}$.  Every point in the sides $|x-y|=1$ are the limit of exactly $1$ end of leaf of $\cH$ and $1$ end of leaf of $\cV$, and the points $\pm\infty$ are center like for both foliations.
 \end{exem}

 These two examples show that pairs of very simple foliations may lead to different behavior of the projection of the compactification associated to the pair on the compactification of each foliation.

 Proposition~\ref{p.proj} below shows that, for complicated foliations, the compactification of the pair of foliations in general coincides with the compactification of each foliations.

\begin{prop}\label{p.proj} Let $\cF$, $\cG$ be two transverse foliations on $\RR^2$.  Assume that 
\begin{itemize}\item the union of leaves of $\cG$ which are not separated at their right from an other leaf is dense in $\RR^2$
 \item the union of leaves of $\cG$ which are not separated at their left from an other leaf is dense in $\RR^2$.
\end{itemize}
Then the identity map on $\RR^2$ extend as a homeomorphism from
$\DD^2_{\cF,\cG}\to\DD^2_\cF$: in other words 
$\DD^2_{\cF,\cG}=\DD^2_\cF.$
\end{prop}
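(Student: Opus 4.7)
The identity on $\RR^2$ extends by continuity to the canonical projection $\Pi\colon\DD^2_{\cF,\cG}\to\DD^2_\cF$ discussed before Example~\ref{e.different}. As a continuous surjection between closed discs, $\Pi$ is a homeomorphism if and only if it is injective on $\SS^1_{\cF,\cG}$; by the description of that projection as collapsing intervals of $\SS^1_{\cF,\cG}$ that miss the limits of $\cF$-ends, this injectivity is equivalent to density, inside $\SS^1_{\cF,\cG}$, of the set $R_\cF$ of limits of ends of leaves of $\cF$. My plan is to establish this density.

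Fix a non-empty open arc $J\subset\SS^1_{\cF,\cG}$, corresponding to an open half-plane $\De=\De^+_L\subset\RR^2$ bounded by a line $L$ whose two ends are leaf-ends with limits at the extremities of $J$. By the density hypothesis, $\De$ contains a $\cG$-leaf $G$ non-separated at the right from another $\cG$-leaf $G'$, with both right ends entirely in $\De$; replacing $G,G'$ by successive members of their common $\cG$-asymptotic class (countable and order-isomorphic to an interval of $\ZZ$, by the $\cG$-version of Lemma~\ref{l.finite}), I may assume $G$ and $G'$ bound a hyperbolic sector of $\cG$, via Lemma~\ref{l.hyperbolic}. Let $\alpha,\beta\in\overline{J}$ be the limits on $\SS^1_{\cF,\cG}$ of the right ends of $G$ and $G'$. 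If $\alpha\neq\beta$, the open sub-arc of $J$ corresponding to the sector's interior at infinity is non-empty, and item~3 of Theorem~\ref{t.union} produces some leaf-end with limit inside it; since $G$ and $G'$ are successive inside their $\cG$-asymptotic class, no $\cG$-end has its limit in this sub-arc, so the produced end is necessarily an $\cF$-end and gives the desired point of $R_\cF\cap J$.

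The main obstacle is the degenerate case $\alpha=\beta$, in which the sector collapses to a single point of $\SS^1_{\cF,\cG}$ and the construction produces no $\cF$-end. To exclude it I plan to argue by contradiction, assuming $R_\cF\cap J=\emptyset$ and exploiting symmetrically both halves of the density hypothesis (non-separation on the right and on the left) to force $\alpha=\beta$ for every non-separated $\cG$-pair inside $\De$. A rigidity step would then show that every $\cF$-leaf meeting $\De$ must enter and exit $\De$ through the bounding line $L$: by Lemma~\ref{l.injective} at most one $\cF$-leaf can have both ends at the two extremities of $J$, and no other $\cF$-leaf could be trapped with an end inside $J$, so the restriction $\cF|_\De$ would acquire a product structure whose leaves are arcs connecting the two sides of $L$. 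Transversality of $\cF$ and $\cG$ would then transport this product structure to $\cG|_\De$, ruling out non-separated $\cG$-leaves inside $\De$ and contradicting the hypothesis. Making this rigidity argument rigorous --- turning the absence of $\cF$-ends in $J$ into a genuine product structure in $\De$ --- is the technical heart of the proof.
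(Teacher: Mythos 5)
Your reduction to the density of $R_\cF$ in $\SS^1_{\cF,\cG}$ is correct, and your non-degenerate case ($\alpha\neq\beta$) works. But the proposal is not a proof: the degenerate case $\alpha=\beta$, which you yourself flag as ``the technical heart,'' is left as an unexecuted plan, and it cannot be waved away. This case is not exotic --- it is precisely the first alternative of Lemma~\ref{l.tranverse-sector} (a single transverse end entering the hyperbolic sector, with $A$, $B$, $C$ all converging to one ideal point), and it is the typical picture in the Anosov setting. Your proposed rigidity argument (``no $\cF$-ends in $J$ forces a product structure on $\cF|_\De$, hence on $\cG|_\De$, hence no non-separated $\cG$-leaves in $\De$'') is exactly the kind of statement that needs a real mechanism behind it; as written there is no argument showing that an $\cF$-leaf meeting $\De$ must exit through $L$, nor that a product structure on $\cF|_\De$ transports to one on $\cG|_\De$. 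So there is a genuine gap.

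Two remarks that would close it. First, the paper's own proof avoids hyperbolic sectors entirely: assuming an open arc $I\subset\SS^1_{\cF,\cG}$ contains no limit of an $\cF$-end, the $\cG$-ends are dense in $I$ and the projection to $\SS^1_\cG$ is injective on $I$; then Lemma~\ref{l.center-like} (more precisely the density of the open sets $\cO_n$ in its proof, which uses both halves of the hypothesis) produces a $\cG$-leaf $L$ with \emph{both} ends in $I$, and by transversality every $\cF$-leaf crossing $L$ has a half-leaf trapped in the half-plane bounded by $L$ on the side of $I$, so its limit lies in $I$ --- contradiction. The key object is a single $\cG$-leaf with both ends in the arc, not a pair of non-separated $\cG$-leaves. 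Second, if you want to keep your route, the degenerate case is actually rescued by Lemma~\ref{l.tranverse-sector} with the roles of $\cF$ and $\cG$ exchanged: in either alternative of that lemma there is an $\cF$-end whose limit lies in the closed arc $[\alpha,\beta]$, and by choosing the sector well inside $\De$ (so that $[\alpha,\beta]$ is contained in the open arc $J$) you obtain the required point of $R_\cF\cap J$ without any contradiction argument.
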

\begin{proof} Assume that there is an open interval $I$ of $\SS^1_{\cF,\cG}$ corresponding to no end of leaf of $\cF$.  Then the ends of leaves of $\cG$ are dense in $I$, and therefore the projection of $I$ on  $\SS^1_\cG$ is injective.

Now Lemma~\ref{l.center-like} implies that there are leaves $L$ of $\cG$ having both ends on $I$. Thus up to change positive in negative, every positive half leaf of $\cF$ through $L$ has its end on $I$ contradicting the definition of $I$. 

So the points of $\SS^1_{\cF,\cG}$ corresponding to ends of leaves of $\cF$ are dense. Thus $\SS^1_{\cF,\cG}=\SS^1_\cF$, concluding the proof. 
\end{proof}

As a direct corollary of Proposition~\ref{p.proj} and Lemma~\ref{l.center-like} one gets
\begin{coro}Let $\cF,\cG$ be two transverse foliations on $\RR^2$ so that both $\cF$ and $\cG$ have density of leaves non separated at the right and of leaves non separated at the left. 

Then generic points in $\SS^1_{\cF,\cG}=\SS^1_\cF=\SS^1_\cG$ are center-like for both foliations. 
\end{coro}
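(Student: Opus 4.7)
The plan is to deduce the corollary directly from the two ingredients stated just before it, namely Proposition~\ref{p.proj} and Lemma~\ref{l.center-like}, using only Baire category on the circle.

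First I would use Proposition~\ref{p.proj} twice, once with the roles of $\cF$ and $\cG$ as stated, and once with them swapped. Since by hypothesis both $\cF$ and $\cG$ have a dense set of leaves non-separated at the right and a dense set of leaves non-separated at the left, the assumptions of Proposition~\ref{p.proj} are satisfied in both orderings. This gives the identifications
\[
\DD^2_{\cF,\cG}=\DD^2_\cF \quad\text{and}\quad \DD^2_{\cF,\cG}=\DD^2_\cG,
\]
hence $\SS^1_{\cF,\cG}=\SS^1_\cF=\SS^1_\cG$ (as written in the statement).

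Next I would apply Lemma~\ref{l.center-like} to each foliation separately. The density hypothesis on non-separated leaves on the right and on the left is exactly the hypothesis of that lemma, so the set $C_\cF\subset\SS^1_\cF$ of center-like points of $\cF$ is a residual subset of $\SS^1_\cF$, and analogously $C_\cG\subset\SS^1_\cG$ is residual in $\SS^1_\cG$. Via the identifications above, both $C_\cF$ and $C_\cG$ are residual subsets of the same circle $\SS^1_{\cF,\cG}$.

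Finally, since the circle $\SS^1_{\cF,\cG}$ is a Baire space, the intersection $C_\cF\cap C_\cG$ of two residual subsets is itself residual in $\SS^1_{\cF,\cG}$. Any point in this intersection is center-like for $\cF$ and simultaneously center-like for $\cG$, which is the conclusion. There is no hard step here: once Proposition~\ref{p.proj} and Lemma~\ref{l.center-like} are available, the only ingredient is the stability of residuality under finite intersections in a Baire space, so I expect the entire argument to fit in a few lines.
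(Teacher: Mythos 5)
Your proof is correct and is exactly the argument the paper intends: the corollary is stated there as a direct consequence of Proposition~\ref{p.proj} (applied in both directions to identify the three circles) and Lemma~\ref{l.center-like} (applied to each foliation), with the final step being the intersection of two residual sets in a Baire space. Nothing is missing.
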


\subsection{Hyperbolic sectors}
In the case of $1$ foliation we have seen that, if several ends of leaves have the same limit points on the circle at infinity, then they are ordered as a segment of $\ZZ$ and two succesive ends bound a hyperbolic sector. These hyperbolic sectors have a very precise model, which allows us to understand the position of a transverse foliation.

\begin{lemm}\label{l.tranverse-sector} Let $\cF$ and $\cG$ be two transverse foliations on $\RR^2$. and consider
$\pi_\cF\colon \DD^2_{\cF,\cG}\to \DD^2_\cF$.  Assume that $p\in \DD^2_\cF$ is the corner of a hyperbolic sector bounded by the ends $A$ and $B$ of leaves of $\cF$.

Then there is a non-empty interval $I_\cG$ of ends of leaves of $\cG$ ending at $p$ in $\DD^2_\cF$. Furthermore
\begin{itemize}
 \item either $I_\cG$ consist in a unique end of leaf $C$ of $\cG$ and $A,B,C$ tend to tend same point at infinity in $\DD^2_{\cF,\cG}$
 \item or $\pi_\cF^{-1}(I_G)$ is a closed interval on the circle $\SS^1_{\cF,\cG}$ whose interior consist in regular ends of leaves of $\cG$.
\end{itemize}

\end{lemm}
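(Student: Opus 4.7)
My plan is to work inside the hyperbolic sector model $\psi\colon [-1,1]\times[0,1]\to\DD^2_\cF$ given by Lemma~\ref{l.hyperbolic}, in which the horizontal segments $\psi([-1,1]\times\{t\})$ for $t<1$ are $\cF$-leaf segments and $p=\psi(0,1)$ is the common limit of the two asymptotic ends $A=\psi([-1,0)\times\{1\})$ and $B=\psi((0,1]\times\{1\})$. By Lemma~\ref{l.PH} every $\cG$-leaf meets each horizontal $\cF$-leaf at most once, so in the sector coordinates $(x,t)$ each $\cG$-leaf restricts to the graph of a continuous function $x=g(t)$.

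For the existence of $I_\cG$ I will, for each $x_0\in(-1,1)$, follow the upward branch $L^+_{x_0}$ of the $\cG$-leaf through $\psi(x_0,0)$ and classify it according to whether it exits the sector to the left of $p$ (through $\psi(\{-1\}\times[0,1])$ or through the ray $A$), to the right of $p$ (symmetrically), or stays in the sector forever; in the last case $L^+_{x_0}$ is a proper ray of $\RR^2$ whose closure in $\DD^2_\cF$ lies in the compact set $\psi([-1,1]\times[0,1])$ and meets $\SS^1_\cF$ only at $p$, so it tends to $p$ in $\DD^2_\cF$. This partitions $(-1,1)=X_{\mathrm{left}}\sqcup X_{\mathrm{right}}\sqcup X_p$. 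Flow-box continuity of the transverse pair $(\cF,\cG)$ shows that $X_{\mathrm{left}}$ and $X_{\mathrm{right}}$ are open, and analysing the $\cG$-leaf through $\psi(\mp 1,0)$ in a flow-box gives that either $X_{\mathrm{left}}$ (resp.\ $X_{\mathrm{right}}$) contains a neighborhood of $-1$ (resp.\ $+1$) in $(-1,1)$, or else $X_p$ itself does. Connectedness of $(-1,1)$ then forces $X_p\neq\emptyset$. Non-crossing of distinct $\cG$-leaves next gives that $X_p$ is a closed interval: if $x_1<x_2$ belong to $X_p$ and $x\in(x_1,x_2)$, then $L^+_x$ is trapped between $L^+_{x_1}$ and $L^+_{x_2}$, so it also tends to $p$. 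The corresponding set $I_\cG$ of $\cG$-ends is then a non-empty interval in the cyclic order.

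For the dichotomy I will appeal to the $\sim$-equivalence of Proposition~\ref{p.union}. By Lemma~\ref{l.finite} no $\cF$-end lies strictly between $A$ and $B$ on the sector side, so on that side the only ends of $\cR_\cF\cup\cR_\cG$ are the elements of $I_\cG$. If $I_\cG=\{C\}$, then $(A,C)$ and $(C,B)$ on the sector side are both empty, Proposition~\ref{p.union} yields $A\sim C\sim B$, and all three ends share the same limit in $\SS^1_{\cF,\cG}$. Otherwise $X_p$ is non-degenerate, $I_\cG$ is uncountable, and $A\not\sim B$ since both cyclic arcs between them contain infinitely many $\cG$-ends; the preimage $\pi_\cF^{-1}(p)$ is then a non-trivial closed arc of $\SS^1_{\cF,\cG}$ bounded by the images of $A$ and $B$. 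Since non-regular leaves form a countable set (Lemma~\ref{l.countable}), the regular $\cG$-ends are cofinite in $I_\cG$ and inject into the interior of this arc, filling it densely; a direct check of the $\sim$-classes (using that no $\cF$-end and no further non-regular coincidence can survive in the interior) then shows that every interior point is the limit of a unique regular $\cG$-end.

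The main obstacle I foresee is the openness and boundary analysis in the connectedness step, since the lateral sides of the sector are not compact as $t\to 1$; I would overcome it by covering each relevant exit arc of $L^+_{x_0}$ by finitely many flow-boxes of the transverse pair $(\cF,\cG)$ in order to make ``exit-continuity'' precise and to control the behaviour of $L_{x_0}$ as $x_0$ approaches $\pm 1$. The translation between the $\sim$-equivalence of Proposition~\ref{p.union} and the arc-geometry of $\pi_\cF^{-1}(p)$ in the second case of the dichotomy is then routine bookkeeping.
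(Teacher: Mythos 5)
Your existence argument and the proof that $I_\cG$ is a closed interval follow essentially the same route as the paper (the paper chooses the sector model so that the two vertical sides are $\cG$-leaf segments, making the ``reaches $A$'' and ``reaches $B$'' sets non-empty open disjoint intervals of $[-1,1]$, and concludes by connectedness; your flow-box analysis near $\psi(\mp1,0)$ is an acceptable substitute). The first branch of the dichotomy, via the $\sim$-classes of Proposition~\ref{p.union}, is also fine.

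The gap is in the second branch, precisely where you write that ``a direct check of the $\sim$-classes \dots shows that every interior point is the limit of a unique regular $\cG$-end.'' What the lemma asserts is that \emph{every} end in the interior of $I_\cG$ is a regular end of $\cG$; what your argument delivers is only that the non-regular ones form an at most countable (not finite, so not ``cofinite'') subset, i.e.\ that regular ends are dense in the interior. Density is strictly weaker than the stated conclusion, and nothing in your bookkeeping of $\sim$-classes rules out a non-regular $\cG$-end sitting in the interior of $I_\cG$. The missing idea is geometric: take an interior end $C$ of $I_\cG$, say a right end, and look at $\mathfrak{U}^{right}(C)=\bigcap_t U^{right}_t$. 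If $C$ were not regular this set would contain, besides $C$, an entire leaf $D$ of $\cG$; since $C$ is in the interior of $I_\cG$, the leaves of $\cG$ on both sides of $C$ near $C$ also stay in the sector, so $D$ would be entirely contained in the hyperbolic sector. But then any leaf of $\cF$ crossing $D$ would have a half-leaf trapped in the sector, contradicting the defining property of the hyperbolic sector that its $\cF$-plaques are the compact segments $\psi([-1,1]\times\{t\})$ crossing it from side to side. This contradiction, which is the heart of the paper's proof of the second bullet, is absent from your proposal and cannot be recovered from countability considerations alone.
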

\begin{proof}Just use the model $[-1,1]\times [0,1]$ where $p$ is the point $(0,1)$, $A=[-1,0)\times \{1\}$ and $B=(0,1]\times \{1\}$, and the horizontal segment $[-1,1]\times \{t\}$, $0\leq t<1$ are $\cF$-leaf segments.  We can choose this model so that the vertical sides $\{-1\}\times[0,1]$ and $\{1\}\times[0,1]$ are leaves segments of $\cG$.  Consider the $\cG$-leaves throug $[-1,1]\times \{0\}$.  The leaves reaching $A$ and the leaves reaching $B$ are two non-empty intervals, open in $[-1,1]$ and disjoint.  By connectedness, there are leaves, corresponding to an closed interval of $[-1,1]$  which reach neither $A$ nor $B$ and these leaves end at $p\in \SS^1_\cF$. 

Assume that this interval is not reduced to a single end of leaf of $\cG$ and consider an end $C$ in the interior of this interval and assume that $C$ is, for instance, a right end. Consider the neighborhoods $U^{right}_t$ of $C$ defined in Section~\ref{s.background}. Then $\bigcap_t U^{right}_t$ consist in $C$ and in a (maybe empty) set of entire leaves of $\cG$ contained in the hyperbolic sector. Assume that this set is not empty and let $D$ be such a leaf of $\cG$.  Every leaf of $\cF$ cutting $D$ has an half leaf contained in the hyperbolic sector, contradicting the definition of hyperbolic sector. Thus $C=\bigcap_t U^{right}_t$, meaning that $C$ is a regular end of leaf of $\cG$, ending the proof.

\end{proof}

As a straightforward consequence, one gets:

\begin{coro}\label{c.hyperbolics} Let $\cF=\{\cF_i\}_{i\in I}$, $I\subset \NN$  be an at most countable family of pairwise  transverse foliations on $\RR^2$.  Consider a point  $p\in \DD^2_{\cF}$. Then
\begin{itemize}
 \item either at most $1$ end of leaf of each $\cF_i$ has $p$ as its limit.
 \item or the set of ends tending to $p$ is ordered as an interval of $\ZZ$ and, between any two succesive ends of leaves of the same $\cF_i$, there is exactly $1$ end of a leaf of each $\cF_j$, $j\neq i$.
\end{itemize}
\end{coro}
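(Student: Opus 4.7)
The plan is to reduce to the case $p\in\SS^1_\cF$ (no end of a non-singular leaf can limit to an interior point of $\RR^2$, so the first alternative holds automatically when $p\in\RR^2$), and, writing $\cA_i(p)$ for the set of ends of $\cF_i$ whose limit in $\DD^2_\cF$ is $p$, to assume toward the second alternative that $\cA_{i_0}(p)$ has at least two elements for some $i_0\in I$. Under the projection $\pi_{i_0}\colon\DD^2_\cF\to\DD^2_{\cF_{i_0}}$, every element of $\cA_{i_0}(p)$ projects to an end tending to $q:=\pi_{i_0}(p)$; Lemmas~\ref{l.finite} and~\ref{l.hyperbolic} applied inside the single-foliation compactification $\DD^2_{\cF_{i_0}}$ organize these asymptotic ends into an interval of $\ZZ$ whose consecutive pairs bound hyperbolic sectors. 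Since the cyclic order on ends is preserved by any compactification, their limits along the fiber $\pi_{i_0}^{-1}(q)\subset\SS^1_\cF$ form a weakly monotone sequence, and hence $\cA_{i_0}(p)$ is a sub-interval $\{E_m\}_m$ of this indexing.

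The geometric core is to show that for consecutive $E_m,E_{m+1}\in\cA_{i_0}(p)$, the hyperbolic sector $S$ they bound has closure in $\DD^2_\cF$ meeting $\SS^1_\cF$ only at the point $p$. The boundary of $S$ in $\DD^2_\cF$ is a simple closed curve built from the lateral and base sides of $S$ (all lying in $\RR^2$) together with the arcs $\bar E_m,\bar E_{m+1}$, which terminate at $p$; this Jordan curve meets $\SS^1_\cF$ only at $p$. Jordan's theorem in the closed disc $\DD^2_\cF$ then produces two complementary regions, with $\SS^1_\cF\setminus\{p\}$ lying entirely in one of them and the sector $S$ coinciding with the other; hence $\bar S\cap\SS^1_\cF=\{p\}$.

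Next, for each $j\ne i_0$ I would apply Lemma~\ref{l.tranverse-sector} to the pair $(\cF_{i_0},\cF_j)$ at the corner $q$ of the sector bounded by $E_m,E_{m+1}$. Because $E_m,E_{m+1}$ have the same limit $p$ in $\DD^2_\cF$, continuity of the projection $\DD^2_\cF\to\DD^2_{\cF_{i_0},\cF_j}$ gives them the same limit there, ruling out the second alternative of that lemma (in which $E_m,E_{m+1}$ would sit at the two distinct endpoints of a non-degenerate arc of $\SS^1_{\cF_{i_0},\cF_j}$). We are therefore in the first alternative: exactly one end $C_{m,j}$ of $\cF_j$ lies in the interior of $S$ and reaches $q$ in $\DD^2_{\cF_{i_0}}$. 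By the preceding paragraph, $C_{m,j}$ tends to $p$ in $\DD^2_\cF$, and conversely any end of $\cF_j$ in $\cA_j(p)$ whose cyclic approach at $p$ is between $E_m$ and $E_{m+1}$ must eventually lie in $S$ and thus coincide with $C_{m,j}$.

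Finally, the same argument applies with any $i$ in place of $i_0$ satisfying $|\cA_i(p)|\ge 2$, giving exactly one end of each $\cF_j$, $j\ne i$, between any two successive elements of $\cA_i(p)$. Stringing these interleavings together, $\cA(p)=\bigcup_i\cA_i(p)$ inherits a discrete linear order from the cyclic order at $p$ and is order-isomorphic to an interval of $\ZZ$, which is the second alternative. The step I expect to be the main obstacle is the Jordan-curve argument in the second paragraph, where one must verify that passing from the single-foliation compactification $\DD^2_{\cF_{i_0}}$ (with tip $\{q\}$) to the refinement $\DD^2_\cF$ does not inflate the $\SS^1_\cF$-trace of the sector, even though the fiber $\pi_{i_0}^{-1}(q)$ in $\SS^1_\cF$ may itself be a non-degenerate arc.
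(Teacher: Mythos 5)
Your argument is correct and is exactly the deduction the paper intends: the paper states this corollary as a "straightforward consequence" of Lemma~\ref{l.tranverse-sector} with no written proof, and your write-up supplies the missing details (reduction to the single-foliation asymptotic class via Lemmas~\ref{l.finite}--\ref{l.hyperbolic}, exclusion of the second alternative of Lemma~\ref{l.tranverse-sector} because $E_m,E_{m+1}$ share a limit in the quotient $\DD^2_{\cF_{i_0},\cF_j}$, and the Jordan-curve verification that the sector's closure in the refined compactification $\DD^2_\cF$ still meets $\SS^1_\cF$ only at $p$). That last verification, which you rightly flag as the crux, goes through as you describe, so the proposal is essentially the paper's route made explicit.
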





\section{ The circle at infinity for  orientable laminations.\label{s.laminations}}

\subsection{The circle at infinity of a lamination}
The way we proposed to compactify $\RR^2$ can be generalized for any object providing a family of rays admitting a separating set. 

For instance, what about laminations?  The theory cannot be extended without hypotheses. An evident obstruction is that the leaves can be too few for going to a dense subset of a circle at infinity. But there are more subtle issues as shows Example~\ref{e.plykin} below.

\begin{exem}\label{e.plykin} There are closed laminations 
whose leaves are recurrent.  For instance consider a Plykin attractor on $\RR^2$: it is a compact minimal lamination (by the unstable manifolds). 

If you consider now  a Plykyn attractor on $\SS^2=\RR^2\cup\{\infty\}$ where $\infty$ belongs to the attractor, we get a closed lamination on $\RR^2$ where every leaf is unbounded but recurrent. 
\end{exem}
Notice that the recurrent lamination in Example~\ref{e.plykin} are not orientable. Let me show that Poincar\'e Bendixson argument applies on orientable laminations: 

\begin{lemm}\label{l.lamination-PB} Let $\cL$ be a closed orientable lamination of $\RR^2$. 
Given any  leaf $L$,  either the closure  $\bar L$ contains a compact leaf or $L$ is a line.
\end{lemm}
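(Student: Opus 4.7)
Plan. A leaf of $\cL$ is a connected $1$-manifold without boundary, hence homeomorphic either to $\SS^1$ or to $\RR$. If $L$ is homeomorphic to $\SS^1$, then $L$ is itself a compact leaf contained in $\bar L$ and we are done. If $L$ is homeomorphic to $\RR$ and properly embedded in $\RR^2$, then by definition $L$ is a line. The remaining case is that $L$ is homeomorphic to $\RR$ but fails to be properly embedded; I treat this case by adapting the Poincar\'e--Bendixson argument to produce a compact leaf in $\bar L$.

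Non-properness provides a parametrization $\gamma\colon\RR\to L$ and a sequence $t_n\to+\infty$ with $\gamma(t_n)\to p\in\RR^2$. Choose a compact arc $\sigma$ through $p$ transverse to $\cL$. By the local product structure of $\cL$, the intersection $L\cap\sigma$ is infinite and admits an accumulation point $y\in\sigma$; denote by $L_\infty$ the leaf through $y$, so $L_\infty\subset\bar L$. Orientability of $\cL$ together with the ambient orientation of $\RR^2$ equips $\cL$ with a coherent transverse orientation along $\sigma$, so all intersections of $L$ with $\sigma$ cross in the same transverse direction. Enumerating the intersections $L\cap\sigma=\{y_{n_k}\}$ in the order they appear along $L$, the subarc $\alpha_k\subset L$ from $y_{n_k}$ to $y_{n_{k+1}}$ together with the subarc of $\sigma$ joining them forms a simple closed curve $J_k$ bounding a disk $D_k\subset\RR^2$; the coherent transverse orientation forces the portion of $L$ after $y_{n_{k+1}}$ to remain on a single side of $J_k$. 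By induction the sequence $(y_{n_k})$ is strictly monotone along $\sigma$ and converges to $y$.

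It remains to show that $L_\infty$ is a simple closed curve. First I would show $L_\infty\cap\sigma=\{y\}$: if $L_\infty$ met $\sigma$ at a point $y'\neq y$, the local product structure around $L_\infty$ combined with $L_\infty\subset\bar L$ would force $L$ to meet $\sigma$ at points near $y'$ not belonging to the monotone sequence accumulating at $y$, contradicting monotonicity. Next, the half-leaves of $L_\infty$ starting at $y$ enter the nested trapping regions $D_k$; since $\cL$ has no singular points, taking the Hausdorff limit of the arcs $\alpha_k$ produces a closed subset of $\bar L$ whose leaves can only return to $\sigma$ at $y$, so both half-leaves of $L_\infty$ close up at $y$. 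Hence $L_\infty$ is a simple closed curve, i.e.\ a compact leaf of $\cL$, contained in $\bar L$.

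The main obstacle is the final closing-up step: ruling out the possibility that $L_\infty$ is itself a non-proper $\RR$-leaf spiralling together with $L$ rather than an honest closed curve. In the classical Poincar\'e--Bendixson theorem this is handled using smoothness and the absence of equilibria; here one must replace those ingredients by the lamination's local product structure, the coherent transverse orientation induced by orientability of $\cL$ and of $\RR^2$, and the monotonicity of $L\cap\sigma$, which together forbid $L_\infty$ from crossing $\sigma$ outside $y$ or escaping the trapping disks $D_k$.
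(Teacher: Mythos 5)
Your construction up to the monotonicity of $L\cap\sigma$ is sound and is essentially the paper's: both proofs run a Poincar\'e--Bendixson argument using a transversal through an accumulation point, the coherent crossing direction coming from orientability of $\cL$ and of $\RR^2$, and trapping regions bounded by a leaf arc together with a transversal arc. The genuine gap is exactly the step you flag at the end, and it is not cosmetic. From ``$L_\infty$ can only return to $\sigma$ at $y$'' you cannot conclude that its half-leaves return to $\sigma$ at all: a priori $L_\infty$ is a non-proper line meeting $\sigma$ only at $y$ and accumulating on some further leaf of $\bar L$, in which case the Hausdorff limit of the arcs $\alpha_k$ is a closed saturated set strictly larger than $L_\infty$ and nothing closes up. The absence of singularities and the nesting of the $D_k$ do not by themselves exclude this ``spiral onto a spiral'' configuration, which is precisely the configuration that must be killed.

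The paper kills it with an observation for which you already have every ingredient. Let $z_0,z_1$ be two consecutive crossings of $L$ with $\sigma$ and let $\tau\subset\sigma$ be the open transversal arc between them, so that $\tau$ together with the leaf arc of $L$ from $z_0$ to $z_1$ bounds a trapping disc $\De$. Since every leaf crosses $\tau$ in the same direction (into $\De$) and can never leave $\De$ afterwards, every leaf meets $\tau$ at most once; consequently no leaf other than $L$ can accumulate on $L$, since such a leaf would have to meet $\tau$ infinitely often. Now take $y\in\bar L_+\setminus L_+$ and let $L_y$ be the leaf through $y$, which is contained in $\De$. If $\bar L_y\setminus L_y\neq\emptyset$, run the identical construction for $L_y$: the conclusion is that no other leaf accumulates on $L_y$ --- but $L$ does, a contradiction. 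Hence $L_y$ is closed in $\RR^2$, and being contained in the compact disc $\De$ it is a compact leaf inside $\bar L$. Note that the lemma only asks for \emph{some} compact leaf in $\bar L$, so one never has to close up your particular $L_\infty$ by hand; the contradiction above does that work for you.
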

\begin{proof}  
If $\bar L=L$ then $L$ is either a compact leaf or is a line (i.e. is properly embedded in $\RR^2$). 
Assume now that $\bar L\setminus L$ contains a point $x\in\RR^2$. We fix an orientation of $\cL$. Chose a segment $\sigma\colon [-1,1]$ transverse to $\cL$ so that $\sigma(0)=x$ and so that $\sigma$ cuts positively every leaf. The hypothesis implies that $L$ cuts $\sigma$ in an infinite set. Consider $2$ successive (for the order in the leaf $L$) intersection points $z_0,z_1$. Then one gets a simple closed curve $\delta$ in $\RR^2$ by concatenation of the segments $[z_0,z_1]_L$ and $[z_1,z_0]_\sigma$  joining $z_0$ to $z_1$,  in $L$ and in $\sigma$ respectively. 

Consider the disc $\De$ bounded by $\delta$.  Every leaf cuts $\delta$ with the same orientation, that is, either every leaf enter in $\De$ or every leaf goes out of $\De$. Up to reverse the orientation one may assume that every leaf enters in $\De$ and in particular $L$ enters in $\De$. In other words, there is a positive half leaf $L_+$ included in $\De$.   This half leaf cannot be reccurent (otherwize it would cut again $[z_0,z_1]_\sigma$ and for that it needs to go out of $\De$). Futhermore:

\begin{clai}\label{cl.PB} No other leaf $L'\neq L$ can accumulate on $L$:  $L\cap \bar L'=\emptyset$ if $L'\neq L$. 
 
\end{clai}
\begin{proof}If $L'$ accumulates on $ L$, it cuts $[z_0,z_1]_\sigma$ on an infinite set. 
\end{proof}

Thus  the $\omega$-limit set is $\omega(L)=\bar L_+\setminus L_+$ and is not empty.  Consider $y\in \bar L_+\setminus L_+$. The leaf $L_y$ is contained in $\De$.  Either $L_y$ is compact and $\omega(L)= L_y$ and we are done, or $\bar L_y\setminus L_y\not \emptyset$.  In that case, Claim~\ref{cl.PB} implies that $L_y$ is not accumulated by any leaf, in particular by $L$, contradicting the definition of $L_y$. 
\end{proof}

We are now ready to extend Theorem~\ref{t.feuilletage}  to the case of orientable laminations:

\begin{theo}\label{t.lamination} Let $\cL$ be a closed orientable lamination of $\RR^2$ with no compact leaf and assume that the set of leaves of $\cL$ is uncountable.  Then there is a compactification $\DD^2_\cL\simeq $ of $\RR^2$  by  adding a circle at infinity $\SS^1_\cL=\partial\DD^2_\cF$ with the following properties: 
 \begin{itemize}
 \item any half leaf tends  to a point at infinity. 
  \item given a point $\theta\in \SS^1_\cL$ the set of ends of leaves tending to $\theta$ is at most countable. 
  \item for any non-empty open subset $I$ of $\SS^1_\cL$ the set of points in $I$  corresponding to limits of ends of leaves is uncountable.
 \end{itemize}
 
 Furthermore this compactification of $\RR^2$ by $\DD^2$ with these three properties is unique, up to a homeomorphism of the disk $\DD^2$. 
 
\end{theo}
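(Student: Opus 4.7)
\medskip

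\noindent\emph{Proof plan.}

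The plan is to reduce the statement to Theorem~\ref{t.rays} applied to the family $\cR(\cL)$ of ends of leaves. First, since $\cL$ is orientable and has no compact leaf, Lemma~\ref{l.lamination-PB} applies to every leaf: each $L\in\cL$ is a line properly embedded in $\RR^2$, so it has exactly two ends which are rays. Because distinct leaves of a lamination are disjoint, any two such rays have disjoint germs, hence the whole family $\cR(\cL)$ is a family of pairwise-disjoint germs of rays and is totally cyclically ordered by the construction of Section~2.2.

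The heart of the proof is to produce a countable separating subset $\cE\subset\cR(\cL)$. Following Section~\ref{ss.non-singular}, call an end $L^+$ of a leaf $L$ \emph{regular} if $\mathfrak{U}^{right}(L^+)=L^+$, and analogously on the left. The counting argument of Lemma~\ref{l.countable} adapts essentially verbatim to the lamination setting, because the sets $\mathfrak{U}^{\pm}(L^+)$ attached to distinct non-regular ends meeting a common transverse arc are still pairwise disjoint with non-empty interior; hence the set of non-regular ends in $\cR(\cL)$ is at most countable, and the hypothesis that $\cL$ has uncountably many leaves forces the set of regular ends to be uncountable. To build $\cE$, cover $\cL$ by a countable family $\{\sigma_k\}$ of transverse arcs; for each $k$ apply the Cantor--Bendixson theorem to the closed set $\sigma_k\cap\cL\subset\sigma_k$, writing it as a perfect set $P_k$ (necessarily a Cantor set unless empty) plus a countable set $C_k$; pick a countable dense subset $T_k$ of $P_k$ and let $\cE$ be the set of ends of the countable family of leaves meeting $(\bigcup_k T_k)\cup(\bigcup_k C_k)$. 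The verification that $\cE$ is separating mimics Lemma~\ref{l.separating}: given $A\neq B\in\cR(\cL)$ with a regular end $C$ of some leaf $L$ in $(A,B)$, take $p\in L\cap\sigma_k$, approximate $p$ by points of $T_k$ on both sides, and use the regularity of $C$ to conclude that the corresponding ends in $\cE$ also lie in $(A,B)$.

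Once $\cE$ is in hand, Theorem~\ref{t.rays} directly produces a compactification $\DD^2_\cL$ in which every ray of $\cR(\cL)$ tends to a point of $\SS^1_\cL$, distinct rays tend to distinct points, and the limit points form a dense subset. This is already stronger than item~(2) of the theorem: in fact at most one end of leaf tends to each point of $\SS^1_\cL$. For item~(3), note that the set of regular ends injects into $\SS^1_\cL$ by an order-preserving embedding with dense image; because this set is uncountable, Cantor--Bendixson applied to its image in $\SS^1_\cL$ produces a perfect closed kernel $P\subset\SS^1_\cL$, and the Cantor structure of each $P_k$ propagates through the cyclic order to show that $P=\SS^1_\cL$, so every non-empty open arc of $\SS^1_\cL$ contains uncountably many limits of regular ends.

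Uniqueness is argued as in Theorem~\ref{t.rays}: any other compactification $\psi\colon\RR^2\hookrightarrow\DD^2$ with properties (1)--(3) sends distinct rays of $\cR(\cL)$ to distinct points of its circle at infinity with dense image (here~(3) is used exactly to rule out compactifications differing by collapsing an open arc of $\SS^1_\cL$, of the kind described in Example~\ref{e.separating-classes}), so its restriction to the countable separating family $\cE$ satisfies the hypotheses of Theorem~\ref{t.rays}, and the comparison with the canonical compactification yields a homeomorphism of $\DD^2$ extending the identity on $\RR^2$. The principal obstacle is the separating step: unlike the foliation case, leaves of $\cL$ do not fill $\RR^2$, and a priori the ends of leaves in a transverse tube may vary discontinuously in the cyclic order; the Cantor--Bendixson decomposition of the transversals, combined with regularity of a dense set of ends, is what makes the propagation of closeness from the transverse direction to the circle at infinity work.
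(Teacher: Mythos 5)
Your reduction to Theorem~\ref{t.rays} is the right idea, and your first step (every leaf is a properly embedded line, by Lemma~\ref{l.lamination-PB}) matches the paper. But there is a genuine gap at the central step: you claim that the ends of the leaves through $(\bigcup_k T_k)\cup(\bigcup_k C_k)$ form a countable separating subset of the \emph{whole} family $\cR(\cL)$ of ends of leaves. In general no separating subset of $\cR(\cL)$ exists at all, countable or not. For instance, place two isolated parallel leaves $\ell_1,\ell_2$ of the countable part of $\cL$ inside a complementary region of the perfect part, with no other leaf between them; then the interval $(e_1,e_2)$ determined by their right ends contains no end of any leaf, so no subset $\cE\subset\cR(\cL)$ can separate $e_1$ from $e_2$ (Definition~\ref{d.separating} requires the separating point to lie in the set being ordered). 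Your verification silently assumes that a regular end $C$ lies in $(A,B)$, which begs exactly this question. The same example shows that your assertion that ``at most one end of leaf tends to each point of $\SS^1_\cL$'' is false: $e_1$ and $e_2$ define the same cut in the family of regular ends and therefore converge to the same point at infinity. This is precisely why item (2) of the statement only claims countability of the fibre, in parallel with the hyperbolic sectors of Lemmas~\ref{l.finite} and~\ref{l.hyperbolic} in the foliation case.

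The paper's proof avoids this by applying Cantor--Bendixson to the lamination itself, writing $\cL=\cL_0\cup\cL_1$ with $\cL_0$ closed without isolated leaves and $\cL_1$ countable, and then discarding $\cL_1$ entirely: a leaf of $\cL_0$ is called regular if it is accumulated on both sides by leaves of $\cL_0$ and separated from every other leaf of $\cL_0$; only countably many leaves of $\cL_0$ fail to be regular; and the ends of a countable family of regular leaves whose union is dense in $\cL_0$ form a separating subset of the family of ends of regular leaves of $\cL_0$ only --- not of all of $\cR(\cL)$. Theorem~\ref{t.rays} is applied to that restricted family, which yields items (2) and (3); the remaining ends (including all of $\cL_1$ and the non-regular leaves of $\cL_0$) still converge to points at infinity by Lemma~\ref{l.union}, but possibly to shared points. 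To repair your argument you must restrict both the separating set and the injectivity claim to the regular ends of the perfect part, and obtain item (1) for all other ends from Lemma~\ref{l.union} rather than from Theorem~\ref{t.rays} directly.
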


Let me just given a sketch of proof. 
\begin{proof} The lamination $\cL$ is assumed to be oriented and without compact leaves, so that every leaf is a line, according to Lemma~\ref{l.lamination-PB}.  

 According to Cantor-Bendixson theorem, see for instance \cite{Ke}, the lamination $\cL$ can be written in a unique way as union $\cL=\cL_0\cup\cL_1$ of two disjoint laminations, where $\cL_0$ is a 
 closed lamination with no isolated leaves and $\cL_1$ consists in a countable set of leaves. 
 
 A leaf $L\in \cL_0$ is called \emph{regular} if it is accumulated on both sides by leaves in $\cL_0$ and is separated from any other leaf of $\cL_0$.  The same proof as for foliations shows that the set of leaves in $\cL_0$ which are not regular are at most countable.
 
Finally, as for foliations, one consider the set $\cR$ of germs of rays contained in regular leaves of  $\cL_0$.  We consider a countable set $\cD$ of regular leaves, whose union is dense in $\cL_0$, and as for the case of foliations, one proves that the  rays in $\cD$ are separating for $\cR$. 

Then we apply Theorem~\ref{t.rays} and we get the announced canonical compactification. 
\end{proof}

When $\cL$ is transversely a perfect compact set (that is, there is a transverse segment $\sigma$ through every point $x\in\cL$ so that $\sigma\cap \cL$ is a compact set without isolated points), then the compactification given by Theorem~\ref{t.lamination} seems very natural: any homeomorphism $h$ of $\RR^2$ preserving $\cL$ extends on $\SS^1_\cL$ as a homeomorphism $H$ of $\DD^2_\cL$, and the restriction $H|_{\SS^1_\cL}$ is the identity map if and only if $h$ preserves every leaf of $\cL$. That is no more the case if $\cL$ has isolated leaves. 

For lamination with isolated leaves, Theorem~\ref{t.lamination} just ignores the countable part $\cL_1$ of $\cL$ (in the Cantor Bendixson decomposition of $\cL$). We will now  propose a cannonical compactification which takes in account this countable part. 

We start by looking at  two very different examples of countable oriented laminations. 

\begin{exem}Consider the lamination $\cL= \RR\times \ZZ$.  Then $\cL$  does not admit any compactification by a circle at infinity so that any homeomorphism $h$ preserving $\cL$ extends on the circle at infinity. 
\end{exem}

\begin{exem} \label{e.denombrable} Consider a hyperbolic surface  $S$ of finite volume and consider a set $\ell$  of essential disjoint simple closed geodesic on $S$. Then the lift $\cL$ of $\ell$ on the universal cover $\tilde S=\mathring \DD^2$ is a countable, discrete lamination by geodesic of the Poincaré disc so that the ends of leaves tend each to  points on the circle $\SS^1=\partial \DD^2$, and the set of such limit points is dense on $\SS^1$ as the action of $\pi_1S$ on $\SS^1$ is minimal. 

In this example, the lamination is  transversely discrete, but the set of ends of leaves is a separating set for himself for the cyclic order. 
\end{exem}

In the example~\ref{e.denombrable}  above, what implies the existence of a separating set is the minimality of the action on the circle at infinity of a natural compactification. 

In order to propose a cannonical compactification for a closed, oriented lamination without compact leaves we need to determine what part of a cyclically totally ordered set admits separating subset.  That is what is done in next easy proposition whose proof is let to the reader. 

\begin{prop}\label{p.denombrable-separant} Let $X$ be a set endowed with a total cyclic order.  Consider the relation on $X$ defined as follows: $x\approx y$ if one of the intervals $[x,y]$ or $[y,x]$ does not contained any  self-separating subset $E$ (with $\#E\geq 2$).  Then 
\begin{itemize}
 \item the relation $\approx$ is an equivalence relation
 \item each equivalence class is an interval
 \item the cyclic order on $X$ induces a total cyclic order on the quotient $X/\approx$ 
\end{itemize}

Futhermore, $X/\approx$ is either a single point or is an infinite self-separating set. 
\end{prop}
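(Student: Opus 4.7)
The plan is to reformulate: $x\approx y$ iff one of the arcs $[x,y]$, $[y,x]$ is \emph{scattered}, meaning it contains no self-separating subset of cardinality $\geq 2$. A preliminary remark simplifies life: any self-separating set of cardinality $\geq 2$ is automatically infinite (two cyclically adjacent elements would violate the separation property) and is dense-in-itself in the induced linear order of the arc. So ``scattered arc'' amounts to ``scattered linear order'' in the classical sense. Reflexivity and symmetry of $\approx$ are then trivial. The heart of the proof is transitivity, for which two stability properties suffice: (a) sub-arcs of scattered arcs are scattered (immediate), and (b) if $\theta(x,y,z)=+1$ and $[x,y], [y,z]$ are both scattered, then $[x,z]=[x,y]\cup[y,z]$ is scattered. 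To prove (b), suppose $E\subset[x,z]$ is self-separating with $|E|\geq 2$, hence infinite with each point a two-sided accumulation point of $E$ in the order topology. For $e\in E\cap[x,y)$, any sequence in $E$ tending to $e$ is eventually below $y$, so $E\cap[x,y)$ is itself dense-in-itself; scatteredness of $[x,y]$ forces $|E\cap[x,y)|\leq 1$. Symmetrically $|E\cap(y,z]|\leq 1$, whence $|E|\leq 3$, contradicting infiniteness. Transitivity then follows by a short four-case analysis: if both $[x,y]$ and $[y,z]$ are scattered, (b) gives $[x,z]$ scattered; in each of the three remaining cases, $[z,x]$ is a sub-arc of one of the scattered arcs $[y,x]=[y,z]\cup[z,x]$ or $[z,y]=[z,x]\cup[x,y]$, so (a) gives $[z,x]$ scattered.

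For the second bullet, if $[a,d]$ is scattered then so is every sub-arc $[a,d']$ with $d'\in[a,d]$ by (a), giving $a\approx d'$; hence the class of $a$ is the union of a maximal forward-scattered arc and a maximal backward-scattered arc from $a$, which is itself an arc. The induced cyclic order on $X/\approx$ is well-defined because distinct classes are disjoint arcs of $X$, so the cyclic position of a triple of representatives is independent of the choice within each class.

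For the final assertion, assume $X/\approx$ has two distinct classes $[x]\neq[y]$; it suffices to produce a third class strictly between them in the cyclic order (on each side), for then $X/\approx$ is self-separating and, being self-separating with $\geq 2$ elements, is automatically infinite. Consider $A=\{b\in[x,y]:[x,b]\text{ scattered}\}$ and $B=\{b\in[x,y]:[b,y]\text{ scattered}\}$; by (a), $A$ is an initial sub-arc starting at $x$ and $B$ a final sub-arc ending at $y$, with $y\notin A$ and $x\notin B$ (otherwise $[x,y]$ would be scattered, contradicting $x\not\approx y$). If $A\cap B$ contains some $b$, then (b) yields $[x,y]$ scattered, a contradiction. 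If $A,B$ are disjoint but $A\cup B=[x,y]$, they partition $[x,y]$ into an initial and a final sub-arc meeting at a Dedekind cut, and the argument of (b) extended to ordered sums of a half-open scattered arc with a closed scattered arc again forces $[x,y]$ scattered. Hence some $b\in[x,y]$ lies outside $A\cup B$: both $[x,b]$ and $[b,y]$ are non-scattered. Since $[y,x]$ is also non-scattered and appears as a sub-arc of both $[b,x]=[b,y]\cup[y,x]$ and $[y,b]=[y,x]\cup[x,b]$, (a) gives $[b,x]$ and $[y,b]$ non-scattered too, so $[b]\neq[x],[y]$. Applying the argument to the opposite arc $(y,x)$ yields a class on the other side between $[y]$ and $[x]$. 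The main obstacle is the half-open ordered-sum step: one needs the fact that a half-open arc $[x,c)$ is scattered iff every closed sub-arc $[x,b]$ with $b<c$ is, which follows because a dense-in-itself subset of $[x,c)$ extending up to $c$ would, when restricted below any of its own elements, remain dense-in-itself and sit inside a closed scattered sub-arc, a contradiction.
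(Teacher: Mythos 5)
The paper offers no proof of this proposition (it is explicitly ``let to the reader''), so there is nothing to compare against; judged on its own, your argument is correct and complete in substance. The key reductions are all sound: a self-separating set with at least two elements is indeed infinite, ``scattered arc'' does coincide with the classical scattered linear order (a dense-in-itself set with two or more elements yields a genuine self-separating subset after deleting its extremities, so the two notions of ``contains no such subset'' agree), step (b) is valid because density-in-itself passes to the trace on an initial or final segment, and the half-open ordered-sum step at the end is handled correctly by restricting a putative dense-in-itself set below one of its own elements. Two small points of bookkeeping deserve a cleaner write-up: in the four-case transitivity analysis the position of $y$ relative to $(x,z)$ matters (when $y\in(z,x)$ it is $[x,z]$, not $[z,x]$, that sits inside a scattered arc, and conversely), though every case does close using only (a) and (b); and the sets $F=\{d:[x,d]\hbox{ scattered}\}$ and its backward analogue need not be closed arcs $[x,d]$, only initial/final segments, so the phrase ``maximal scattered arc'' should be read as ``maximal scattered initial segment''. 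Neither affects the validity of the proof.
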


Note that any two distinct points in a self separated set belong to distinct classes, so that $\# (X/\approx)=1$ if and only if $X$ does not contain any (non-trivial) self-separating subsets.  Otherwise $\# (X/\approx)=\infty$. 

The canonical compactification is now given by Theorem~\ref{t.Lami} below: 
\begin{theo}\label{t.Lami} Let $\cL$ be a closed oriented lamination of the plane $\RR^2$, with no compact leaf, and let $\cR$ be the set of ends of leaves of $\cL$. As the ends of leaves are disjoints rays the set $\cR$ is totally cyclically ordered. Assume that $\#(\cR/\approx)>1$. 

Then there is a unique compactification $\DD^2_\cL$ of $\RR^2$ by adding a circle at infinity $\SS^1_\cL$ so that 
\begin{itemize}
 \item any end of leaf of $\cL$ tends to a point in $\SS^1_\cL$
 \item the set of points in $\SS^1_\cL$ end of an end of leaf is dense in $\SS^1$
 \item two ends of leaves tend to the same point in $\SS^1_\cL$ if and only if they belong to the same class in $\cR/\approx$. 
\end{itemize}
\end{theo}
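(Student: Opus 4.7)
The strategy is to combine the approach of Theorems~\ref{t.rays} and~\ref{t.lamination} with the quotient-by-equivalence trick from Theorem~\ref{t.union}, replacing the relation $\sim$ of that theorem by the relation $\approx$ of Proposition~\ref{p.denombrable-separant}. By that proposition and the hypothesis $\#(\cR/\approx)>1$, the quotient $\cR/\approx$ is an infinite, self-separating, cyclically ordered set. The heart of the argument is to extract from $\cR/\approx$ a countable separating subset; once that is in hand, the compactification is built by applying Theorem~\ref{t.rays} to a set of representatives and extending to all of $\cR$ via Lemma~\ref{l.union}.

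First I would produce a countable separating subset $\cE\subset \cR/\approx$. Fix an exhaustion $D_n\nearrow\RR^2$ by compact topological disks whose boundaries $\partial D_n$ are transverse to $\cL$ off $D_{n-1}$ (obtained by small perturbations of large round circles, using that $\cL$ is closed). For every ray $c\in\cR$ and all large $n$, there is a well-defined last exit point $\phi_n(c)\in \partial D_n\cap \cL$, and by Corollary~\ref{c.3germes} the cyclic order of $\cR$ matches the cyclic order of the exit points on $\partial D_n$. Choose a countable dense subset $C_n$ of $\overline{\phi_n(\cR)}\cap \partial D_n$ and, for each element of $C_n$ which is actually an exit point, pick one realising ray. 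The union of these choices is a countable subset $E\subset\cR$, and I set $\cE=\pi(E)$ with $\pi\colon\cR\to\cR/\approx$.

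To verify that $\cE$ separates $\cR/\approx$, take $[a]\neq[b]$. The interval $([a],[b])$ in $\cR/\approx$ is non-empty by self-separation and, being an infinite self-separating subset, contains two distinct classes $[c_1]\neq[c_2]$ both different from $[a]$ and $[b]$. For $n$ large the exit points $\phi_n(a),\phi_n(c_1),\phi_n(c_2),\phi_n(b)$ are pairwise distinct and cyclically ordered accordingly on $\partial D_n$; by density of $C_n$ there is $e\in E$ with $\phi_n(e)$ strictly between $\phi_n(c_1)$ and $\phi_n(c_2)$. Then $e\in(a,b)$ in $\cR$, and infinitely many exit points of rays lie between $\phi_n(e)$ and $\phi_n(a)$ (resp.\ $\phi_n(b)$), producing a self-separating subset in the corresponding interval of $\cR$; hence $e\not\approx a$ and $e\not\approx b$, so $[e]\in([a],[b])$ strictly.

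Having $\cE$ countable separating, pick one representative per class to get a countable subset $F\subset\cR$ with pairwise disjoint germs that is separating for itself for the restricted cyclic order on $\cR$. Theorem~\ref{t.rays} applied to $F$ produces a compactification $\RR^2\hookrightarrow \mathring{\DD^2_\cL}$ whose circle at infinity $\SS^1_\cL$ receives the rays of $F$ at distinct dense points. Lemma~\ref{l.union} extends this to every ray of $\cR$, giving Property~(1); Property~(2) is immediate from density of $\pi(F)=\cE$; Property~(3) follows by an argument parallel to Claim~\ref{c.class} in the proof of Theorem~\ref{t.union}, with $\sim$ replaced by $\approx$: two rays tend to the same point iff any line whose ends are in $F$ and has them on the same side cuts off only finitely many $F$-classes on one side, iff one of the two intervals they bound in $\cR$ contains no self-separating subset, iff they are $\approx$-equivalent. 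Uniqueness is inherited from the uniqueness in Theorem~\ref{t.rays}. The main obstacle is the first step: self-separation alone is not enough to yield a countable separating subset, and one must exploit that $\cL$ embeds in the second-countable space $\RR^2$; the most delicate point is verifying that an exit point close to $\phi_n(c_i)$ yields a class $[e]$ strictly between $[a]$ and $[b]$, which requires ruling out that $\approx$-classes accumulate on $\partial D_n$ without dragging a self-separating subset along with them.
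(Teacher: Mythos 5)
Your proposal is correct in substance and, at the top level, follows the same skeleton as the paper's proof: choose one representative per $\approx$-class, feed the resulting family to Theorem~\ref{t.rays}, extend to all of $\cR$ by Lemma~\ref{l.union}, and derive property (3) from the definition of $\approx$. The genuine difference is that the printed proof is a single sentence (``just apply Theorem~\ref{t.rays} to a set $E$ of representatives'') which silently assumes that $E$ admits a \emph{countable} separating subset, whereas Proposition~\ref{p.denombrable-separant} only guarantees that $\cR/\approx$ is self-separating, and an abstract self-separating cyclically ordered set need not contain any countable separating subset (think of a ``long circle''). You correctly identify this as the real issue and close it by exploiting the second countability of $\RR^2$: exhaust the plane by compact disks $D_n$, record last exit points on $\partial D_n$, take countable dense subsets of the exit-point sets, and lift. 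That is exactly the missing ingredient, and it is what your argument buys over the paper's: an explicit justification that Theorem~\ref{t.rays} is applicable, uniform in the countable and uncountable cases. Two points of your write-up should be tightened. First, the quantifiers in the density step: for a fixed $n$ the open arc of $\partial D_n$ between $\phi_n(c_1)$ and $\phi_n(c_2)$ need not meet $\phi_n(\cR)$ at all, since the rays of $(c_1,c_2)$ may be disjoint from $D_n$; you should first pick a witness ray $r\in(c_1,c_2)$ with $[r]\neq[c_1],[c_2]$ and only then choose $n$ large enough that $a,b,c_1,c_2,r$ all meet $D_n$ and have their cyclic order realized by their last exit points (Corollary~\ref{c.3germes}); then $\phi_n(r)$ lies in the open arc and density of $C_n$ (which you should take inside $\phi_n(\cR)$, not merely in its closure) produces $e\in E$ with $e\in(c_1,c_2)$. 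Second, the conclusion $[e]\in([a],[b])$ is obtained more directly than by your self-separating-subset argument: classes of $\approx$ are intervals by Proposition~\ref{p.denombrable-separant}, so $e\in(c_1,c_2)$ forces $[e]$ to lie between $[c_1]$ and $[c_2]$ inclusive, hence strictly between $[a]$ and $[b]$. Neither point is a fatal gap; with these adjustments your argument is complete. (Incidentally, the transversality you require of the circles $\partial D_n$ is not needed: the last exit point of a proper ray from any compact disk is well defined.)
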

\begin{proof} Just apply Theorem~\ref{t.rays} to a subset $E\subset \cR$ containing exactly $1$ representative in each class of $\approx$. One checks that the compactification obtained satisfies the announced properties and does not depend on the choice of $E$. 
\end{proof}
\begin{rema} Every class of $\approx$ in $\cR$ is at most countable because the set of ends of regular leaves in the perfect part $\cL_0$ is self-separating.  
\end{rema}

This compactification takes in account more leaves that the compactification given by Theorem~\ref{t.lamination}, but it is still may have unexpected behaviours:    

\begin{exem}\label{e.denombrables} Consider a non-compact hyperbolic surface  $S$ of finite volume and consider a closed lamination $\ell$  defined by  two disjoint freely homotopic essential closed curves and a closed (but non-compact) leaf whose both ends tend to the same puncture of $S$. 

Then the lift $\cL$ of $\ell$ on the universal cover $\tilde S=\mathring \DD^2$ is a countable, discrete lamination of the Poincaré disc so that the ends of leaves tend each to  points on the circle $\SS^1=\partial \DD^2$,  the set of such limit points is dense on $\SS^1$ (again as the action of $\pi_1S$ on $\SS^1$ is minimal). 

In this example, however, there are pairs of leaves which share the same limits of their both ends, and there are leaves whose both ends tend to the same point.  
\end{exem}

Given a closed oriented lamination $\cL$ with no compact leaves and its Cantor-Bendixson decomposition $\cL=\cL_0\cup\cL_1$ ($\cL_0$ is a closed lamination without isolated leaves and $\cL_1$ is countable),   
Theorem~\ref{t.Lami} takes in account the part of the ends of leaves in $\cL_1$ with separating subsets, in contrast with Theorem~\ref{t.lamination}. 
For my personal taste, the main issue in Theorem~\ref{t.Lami} is that I did not found any natural criterion to calculate the equivalence classes of $\approx$ in $\cL_1$.  In fact Lemma~\ref{l.denombrable-separe} below seems to present as paradoxal the fact that $\cL_1$ may have separating subsets: 
\begin{lemm}\label{l.denombrable-separe} Let $D\subset \RR$ be a countable compact subset, ordered by $\RR$. Then $D$ does not contain any self separating subsets $\cE\subset D$(that is, $\# \cE>2$ and  for every $x<z$, $x,z\in \cE$ there is $y\in \cE$  with $x<y<z$) .  
\end{lemm}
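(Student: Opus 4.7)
I argue by contradiction. Suppose $\cE\subset D$ is self-separating with $\#\cE\geq 3$. Iterating the separation property, any non-degenerate open slice $\cE\cap(x,z)$ with $x<z$ in $\cE$ is infinite: starting from $x<z$ one finds $y_1\in\cE$ with $x<y_1<z$, then applying separation to $x<y_1$ one finds $y_2\in\cE$ with $x<y_2<y_1$, and so on. Being infinite and contained in the bounded set $D$, the set $\cE\cap(x,z)$ admits an accumulation point in $\RR$ by Bolzano--Weierstrass; since $D$ is closed, this accumulation point lies in $D$.

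The plan is to use this to build a Cantor-style binary tree inside $\cE$: I assign to every finite binary word $s\in\{0,1\}^{<\NN}$ a pair $a_s<b_s$ in $\cE$ so that $a_s<a_{s0}<b_{s0}<a_{s1}<b_{s1}<b_s$ and $b_s-a_s<3^{-|s|}$. The inductive step is the only delicate moment: given $a_s<b_s$ in $\cE$, the previous paragraph provides an accumulation point $\alpha$ of $\cE\cap(a_s,b_s)$ in the interval $[a_s,b_s]$, and inside any sufficiently small neighborhood of $\alpha$ one can pick four distinct points of $\cE$, which become $a_{s0}<b_{s0}<a_{s1}<b_{s1}$; choosing this neighborhood small enough simultaneously enforces the diameter condition.

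Once the tree is built, every infinite branch $\sigma\in\{0,1\}^{\NN}$ defines a nested sequence of closed intervals $[a_{\sigma|n},b_{\sigma|n}]$ of diameters tending to $0$, hence determines a unique point $p_\sigma\in\bigcap_n[a_{\sigma|n},b_{\sigma|n}]$. Distinct branches yield distinct points, since at the first index where $\sigma$ and $\tau$ differ the two sub-intervals are disjoint. Each $p_\sigma$ is a limit of elements of $\cE$, hence lies in $\overline{\cE}\subset\overline{D}=D$, so $D$ would contain the uncountable family $\{p_\sigma\}_{\sigma\in\{0,1\}^{\NN}}$, contradicting the countability of $D$. The main obstacle is engineering the diameter shrinkage in the inductive construction; this is precisely where the compactness of $D$ is used, through Bolzano--Weierstrass applied to the bounded infinite subsets of $\cE$ produced by the iteration of the separating property.
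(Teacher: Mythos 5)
Your proof is correct, and it takes a genuinely different route from the paper's. The paper argues via order theory: since $\cE\subset D$ is countable and densely ordered, Cantor's back-and-forth theorem gives an increasing bijection of $\cE$ minus its possible extremities onto $\QQ\cap(0,1)$; this extends to a non-decreasing map $\RR\to[0,1]$, which is continuous because its image is dense, and the image of the compact set $D$ under it is then a compact set containing $\QQ\cap(0,1)$, hence all of $[0,1]$, so $D$ surjects onto an uncountable set. You instead build an explicit Cantor set inside $D$: iterating the separating property makes every slice $\cE\cap(x,z)$ infinite, Bolzano--Weierstrass supplies an accumulation point near which you extract quadruples of points of $\cE$ with controlled diameter, and the resulting binary tree of nested intervals produces uncountably many distinct limit points, all in $D$ because $D$ is closed. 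Your version is more elementary and self-contained (no appeal to the order-isomorphism theorem, nor to the continuity of monotone maps with dense image), at the cost of a longer construction; the paper's is shorter but leans on those two classical facts. Both use the compactness of $D$ in essentially the same way. One cosmetic repair: the bound $b_s-a_s<3^{-|s|}$ cannot be imposed on the root pair, which you do not get to choose small; require it only for $|s|\geq 1$, or simply demand that diameters tend to $0$ along each branch, which is all the conclusion needs.
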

\begin{proof}If $\cE$ is a non-trivcial sel-spearting subset, then there is an increasing bijection from $\cE\setminus \{\min \cE,\max\cE\}$ to $\QQ\cap (0,1)$. This increasing bijection extends in a unique way in a (non-strictly) increasing map from $\RR \to [0,1]$. This map is continuous and the image of $D$ is $[0,1]$. 

Thus $D$ is not countable.  
\end{proof}

Lemma~\ref{l.denombrable-separe} tell us that the separating property of a closed countable 
lamination cannot be obtained locally (in foliated charts of the lamination).  One deduces:

\begin{prop} Let $\cL$ be a closed countable orientd lamination of $\mathring{\DD^2}$ so that every end of leaf tends to a point on $\SS^1$ and the set of such limit points  are dense in $\SS^1$.   

Then given any non-empty open interval $I\subset \SS^1$ there is $L\in\cL$ whose both ends have their limits in $I$. 

More precisely, any neighborhood of $I$ in $\DD^2$ contains an entire leaf of $\cL$. 
\end{prop}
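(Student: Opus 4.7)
The plan is to prove the main assertion --- some leaf of $\cL$ has both ends limiting into $I$ --- by contradiction, and then to deduce the neighborhood refinement by iterating the main statement on shrinking subintervals together with a nesting/disjointness argument.

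Writing $\ell(e)\in\SS^1$ for the limit point of an end $e\in\cR$, suppose no leaf has both ends in $I$, and set $A:=\ell(\cR)\cap I$. By hypothesis $A$ is dense in $I$, and since $\cR$ is countable so is $A$. The key step is to show $A$ is closed in $I$, for then $A$ would be a countable dense closed subset of the uncountable set $I$ --- a contradiction. To prove closedness, take $a^*\in I$ a limit of $a_n\in A$ and choose for each $n$ an end $e_n$ with $\ell(e_n)=a_n$, lying on a leaf $L_n\in\cL$ whose other end has limit $b_n\in\SS^1\setminus I$ (by the contradiction hypothesis). Extract so that $b_n\to b^*\in\SS^1\setminus I$; in particular $a^*\ne b^*$. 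Fix $r>0$ small enough that the compact shell $S_r:=\{x\in\DD^2:d(x,\SS^1)=r\}$ separates small neighborhoods of $a^*$ and $b^*$ in $\DD^2$; for $n$ large the leaf $L_n$ joining $a_n$ to $b_n$ must cross $S_r$. Let $y_n\in L_n\cap S_r$ be the first crossing from the $a_n$-side, and $\alpha_n\subset L_n$ the compact sub-arc from $a_n$ to $y_n$, which lies in the closed collar $\{d(\cdot,\SS^1)\le r\}$ by construction. Extract so that $y_n\to y^*\in S_r$ and $\alpha_n$ Hausdorff-converges to a compact connected $\alpha^*\subset\DD^2$ containing $a^*$ and $y^*$. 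By closedness of $\cL$, $\alpha^*\cap\mathring{\DD^2}\subset\cL$; then one extracts an arc from $a^*$ to $y^*$ inside $\alpha^*$ whose interior, being connected and contained in the disjoint-leaves union $\cL$, sits in a single leaf $L^*\in\cL$. Thus $L^*$ has $a^*$ as an end, so $a^*\in A$, and $A$ is closed.

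For the neighborhood refinement, fix $V\supset I$ open in $\DD^2$. Choose $p\in I$ that is not the common limit of both ends of any single leaf; only countably many exceptional points exist because $\cL$ is countable. Apply the main statement to open sub-intervals $I_n\subset I$ shrinking to $p$, obtaining leaves $L_n\in\cL$ with both ends in $I_n$, and pass to a subsequence so that the $L_n$ are nested, $L_{n+1}$ lying in the open region $R_n\subset\DD^2$ bounded by $L_n$ together with the short arc of $\SS^1$ between $L_n$'s two endpoints. I claim $\max_{x\in L_n}d(x,\SS^1)\to 0$. Otherwise a subsequence yields $y_n\in L_n$ with $d(y_n,\SS^1)\ge\varepsilon_0$ and $y_n\to y^*\in\mathring{\DD^2}$, and by closedness $y^*\in\cL$; the leaf $L^*$ through $y^*$ is disjoint from each $L_n$ but meets $R_n$ at $y^*$, so (being unable to cross $L_n$) $L^*\subset R_n$ for every $n$, forcing both ends of $L^*$ to lie in the short arc of $L_n$ for every $n$. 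Since these short arcs shrink to $\{p\}$, both ends of $L^*$ equal $p$, contradicting the choice of $p$. Thus $L_n$ eventually fits in any prescribed thin collar around the short arc of $L_n$, which for $n$ large lies in a small neighborhood of $p\in I$, hence in $V$.

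The main technical obstacle is the step in the first argument that extracts, from the Hausdorff limit $\alpha^*$, a leaf of $\cL$ having $a^*$ as an end. The arcs $\alpha_n$ are only $C^0$ embeddings that may wiggle inside the collar, and one must combine closedness of $\cL$, properness of each leaf (ensuring no leaf accumulates on $\SS^1$ except at its two ends), and non-crossing of distinct leaves in order to rule out the limit $\alpha^*$ being a pathological branching set or its interior being spread across several leaves. I expect this to be the only delicate point; everything else is routine compactness and extraction of convergent subsequences.
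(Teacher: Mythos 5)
Your reduction of the first assertion to ``$A$ is closed in $I$'' is sound in shape (a countable set cannot be both dense and closed in an uncountable interval), but the argument you give for closedness fails at the separation step, and the failure is not cosmetic. The concentric circle $S_r=\{x\in\DD^2: d(x,\SS^1)=r\}$ does not separate neighborhoods of $a^*$ and $b^*$ in $\DD^2$: the collar $\{x: d(x,\SS^1)<r\}$ is a connected neighborhood of the whole boundary circle containing both points, so the leaf joining $a_n$ to $b_n$ is under no obligation to meet $S_r$ --- it may run from $a_n$ to $b_n$ entirely inside the collar. Without a crossing point $y_n$ at definite distance from $\SS^1$, nothing prevents the arcs $\alpha_n$ from degenerating onto the boundary, so the Hausdorff limit $\alpha^*$ may be an arc of $\SS^1$ with $\alpha^*\cap\mathring{\DD^2}=\emptyset$, and no leaf with an end at $a^*$ is produced. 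Replacing $S_r$ by a correct separator such as $\partial B(a^*,\rho)$ does not help: the first exit point $y_n$ may then converge to a point of $\SS^1$ and the same degeneration occurs. Leaves accumulating on an arc of $\SS^1$ while avoiding every fixed compact subset of $\mathring{\DD^2}$ is exactly the scenario the proposition has to exclude, so the argument assumes away the hard case rather than treating it. (The second half has lesser, fixable problems: with $I_n$ shrinking to a prescribed $p$, the leaves $L_n$ need not be nested, since $I_{n+1}$ need not lie in the short arc of $L_n$; if you instead nest by construction you lose control of the point to which the short arcs shrink, which you needed in order to invoke the choice of $p$.)

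For comparison, the paper's proof avoids Hausdorff limits altogether. It bounds a neighborhood of $I$ by two half-leaves whose limits are points $x\neq y\in I$ together with a compact segment $\sigma$ transverse to $\cL$ joining them; if no entire leaf is trapped in this region, then every leaf having an end in $I$ must cross $\sigma$. The density of the limits of ends in the interval between $x$ and $y$ then transfers, through the cyclic order, to a self-separating subset of the trace $\sigma\cap\cL$, which is a countable compact subset of a segment; Lemma~\ref{l.denombrable-separe} (an order-isomorphism with $\QQ\cap(0,1)$ would extend to a continuous surjection onto $[0,1]$) forbids this. That transversality step is precisely the ingredient your compactness approach is missing: it is what converts ``ends dense in $I$'' into a constraint on a compact set sitting in the interior of the disc, instead of hoping that the leaves themselves visit such a set.
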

\begin{proof} One consider a neighborhood of $I$ bounded by two half leaves whose limits are points $x\neq y \in I$ and a segment $\sigma$ transverse to $\cL$ and joining this two leaves. If no leaves is contained in this neighborhood, then every leaf having an end in $I$ cuts $\sigma$. 

On the other hand any dense subset of an interval $J$ of $\RR$ is self separating. One deduces that $\sigma\cap \cL$ contains a self separating subset, but is is a countable compact set, and this contradicts Lemma~\ref{l.denombrable-separe}. 
 
\end{proof}

This proposition says that the separating property for a countable oriented lamination is obtained by leaves in small neighborhoods of the points at infinity.

\subsection{Families of transverse laminations}

Transversality does not imply in general the compactness of the intersection of two leaves of transverse laminations. But this compactness is our main hypothesis for  the compactification associated to families of foliations. 

However, if two lines $L_1,L_2\subset\RR$ intersect always with the same orientation, then $\# L_1\cap L_2\leq 1$.  One deduces that Theorem~\ref{t.foliations} extends without difficulties to countable families of oriented closed laminations intersecting pairwise transversely and with always the same orientation

\begin{theo}\label{t.laminations} Let $\cL=\{\cL_i\}$, $i\in I\subset \NN$ be a family of closed orientable laminations of $\RR^2$ with no compact leaves and so that the set of leaves of $\cL_i$ is uncountable.  We assume that the laminations are pairwize transverse with constant orientation of the intersections. 
Then there is a compactification $\DD^2_\cL\simeq $ of $\RR^2$  by  adding a circle at infinity $\SS^1_\cL=\partial\DD^2_\cF$ with the following properties: 
 \begin{itemize}
 \item any half leaf tends  to a point at infinity. 
  \item given a point $\theta\in \SS^1_\cL$ the set of ends of leaves tending to $\theta$ is at most countable. 
  \item for any non-empty open subset $I$ of $\SS^1_\cL$ the set of points in $I$  corresponding to limits of ends of leaves is uncountable.
 \end{itemize}
 
 Furthermore this compactification of $\RR^2$ by $\DD^2$ with these three properties is unique, up to a homeomorphism of the disk $\DD^2$. 
 
\end{theo}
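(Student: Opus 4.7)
The plan is to reduce Theorem~\ref{t.laminations} to Theorem~\ref{t.union}, following exactly the same scheme as the proof of Theorem~\ref{t.countable-singular}, with one new input: the constant-orientation hypothesis on intersections replaces the ``germs equal or disjoint'' alternative used for singular foliations. First, for each $i\in I$, I would apply Theorem~\ref{t.lamination} to $\cL_i$ (which is legitimate since $\cL_i$ is closed, oriented, without compact leaves, and has uncountably many leaves, so by Lemma~\ref{l.lamination-PB} every leaf is a line). As in the proof of Theorem~\ref{t.lamination}, taking the Cantor–Bendixson decomposition $\cL_i = \cL_{i,0}\cup \cL_{i,1}$ and considering a countable family $D_i$ of regular leaves of $\cL_{i,0}$ whose union is dense in $\cL_{i,0}$, the set $\cE_i$ of ends of leaves in $D_i$ is a countable separating subset of the set $\cR_i$ of ends of regular leaves of $\cL_{i,0}$.

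Next, let $\cR = \bigsqcup_{i\in I} \cR_i$. The crucial point is to check that the germs of rays in $\cR$ are pairwise disjoint. Within a single $\cR_i$, this is immediate since distinct leaves of a lamination are disjoint. For ends belonging to different laminations $\cL_i$, $\cL_j$, this is where the constant-orientation hypothesis enters: given two lines $L_1\subset \cL_i$, $L_2\subset \cL_j$ which intersect transversely and always with the same orientation, any two intersection points would give, together with subarcs of $L_1$ and $L_2$, a bigon with boundary orientations forcing the intersection orientations to be opposite—contradicting the hypothesis. Hence $\#(L_1\cap L_2)\leq 1$, so any two ends of leaves of distinct laminations have compact (in fact finite) intersection, proving the germs are disjoint.

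Now I would apply Theorem~\ref{t.union} to the family $\cR = \bigsqcup_i \cR_i$ equipped with the countable separating subsets $\cE_i$. This produces a compactification $\DD^2_\cL$ of $\RR^2$ by a circle $\SS^1_\cL$ such that every $\gamma\in\cR$ tends to a point of $\SS^1_\cL$, two distinct ends in the same $\cR_i$ tend to distinct points, and every non-empty open interval of $\SS^1_\cL$ contains at least two (hence uncountably many, by density of regular ends in each $\cR_i$) limit points of ends in $\cR$. This gives the third item of the conclusion. To get the first item for an arbitrary half-leaf (including ends of non-regular leaves and of leaves in $\cL_{i,1}$), I apply Lemma~\ref{l.union}: any ray whose germ is disjoint from the germs in $\cR$ still tends to a point at infinity, and the above argument shows all leaf-ends have this property. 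The countability statement in the second item follows from the fact that the equivalence classes for the relation $\sim$ of Proposition~\ref{p.union} are countable, combined with the fact that, as in the proof of Theorem~\ref{t.foliations}, each class contains at most one end from each $\cR_i$, and only countably many non-regular or non-Cantor–Bendixson-perfect-part ends are added.

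The main obstacle, and the only place where real work is needed beyond copying the foliation proof, is the disjointness of germs between distinct laminations: without the constant-orientation hypothesis a leaf of $\cL_i$ could oscillate across a leaf of $\cL_j$ with intersections accumulating at infinity, so a half-leaf of $\cL_i$ and a half-leaf of $\cL_j$ could share a non-compact intersection. The orientation-preserving intersection hypothesis is exactly what rules this out via the bigon argument sketched above. Uniqueness of $\DD^2_\cL$ with the three listed properties is inherited directly from the uniqueness part of Theorem~\ref{t.union}, since any other compactification satisfying the three items restricts to a compactification of the family of rays $\cR$ with the properties characterised there.
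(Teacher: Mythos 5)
Your proposal is correct and follows essentially the same route as the paper, which itself only sketches the argument: the paper's whole justification is the observation that two lines intersecting transversely with constant orientation meet in at most one point, so the germs of ends of distinct laminations are disjoint and the machinery of Theorem~\ref{t.union} (with the separating sets coming from the regular leaves of the Cantor--Bendixson perfect parts, as in Theorem~\ref{t.lamination}) applies verbatim. Your write-up supplies the details the paper omits, and correctly identifies the disjointness of germs across distinct laminations as the only point requiring the new hypothesis.
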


\section{Actions on a bifoliated plane}\label{s.action}

We have seen than any homeomorphism $h\in Homeo(\RR^2)$ preserving a at most countable family of transverse foliations $\cF$ admits a unique extension  as an homeomorphism on the compactification $\DD^2_\cF$.  

Thus if $H\hookrightarrow Homeo(\RR^2)$ is a group acting on $\RR^2$ and preserving the (at most countable) family of transverse foliations $\cF$ then this action extends in an action on $\DD^2_\cF$.  By restriction to the circle at infinity, one gets an action of $H$ on $\SS^1_\cF$.

If $H\hookrightarrow Homeo(\RR^2)$ is a group acting on $\RR^2$ and preserving a  family of foliations $\cF$, we say that \emph{the action is minimal on the leaves of $\cF$} if $H(L)$ is dense of $\RR^2$ for every leaf $L$ of a foliation of the family $\cF$.  

\subsection{Faithfullness}

\begin{prop}\label{p.quasi-faithfull} Let $\cF$ be a foliation, and $h\in Homeo(\RR^2)$ be a homeomorphism preserving $\cF$.  Then 
the action of $h$ on $\SS^1_\cF$ is the identity map if and only if $h(L)=L$ for any leaf $L$, and $h$  preserves the orientation of the leaves. 
\end{prop}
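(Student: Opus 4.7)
The plan is to deduce this from Corollary~\ref{c.extension}, which already contains exactly this equivalence for the extension $H$ of $h$ to $\DD^2_\cF$; since the action of $h$ on $\SS^1_\cF$ is by definition $H|_{\SS^1_\cF}$, the proposition is really a restatement, but I would write the proof explicitly in two directions, using density of regular ends in $\SS^1_\cF$ and the fact that a regular end is the unique end of leaf tending to its limit point.

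For the easy ("if") direction I would argue as follows. Suppose $h(L)=L$ for every leaf and $h$ preserves the orientation. Then $h$ preserves each half leaf, so for every end $L^+$ of a leaf its limit point $\theta\in\SS^1_\cF$ satisfies $H(\theta)=\theta$. By Theorem~\ref{t.feuilletage} such points $\theta$ are dense in $\SS^1_\cF$, so the continuous map $H|_{\SS^1_\cF}$ must be the identity.

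For the ("only if") direction I would pick any regular leaf $L$ with ends $L^+,L^-$, limits $\theta^\pm\in\SS^1_\cF$. Since $h$ preserves $\cF$, the image $h(L)$ is a leaf with ends $h(L^\pm)$ whose limits are $H(\theta^\pm)=\theta^\pm$. By Theorem~\ref{t.feuilletage} (or equivalently Lemma~\ref{l.finite} / the characterization of asymptotic classes via hyperbolic sectors) a regular end is the only end of leaf tending to its limit point: indeed any two distinct ends tending to the same point lie in a non-trivial asymptotic class and therefore bound a hyperbolic sector, forcing both to be non-regular. Hence $h(L^+)=L^+$ and $h(L^-)=L^-$, so $h(L)=L$ with preserved orientation. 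To conclude I would invoke the density (in $\RR^2$) of regular leaves together with continuity of $h$: any non-regular leaf $L_0$ is, by Lemma~\ref{l.regular}, the intersection of saturated strips $U_t$ whose boundary leaves can be chosen regular, and $h$ maps each such $U_t$ to itself; passing to the intersection gives $h(L_0)=L_0$ with the orientation induced by the nearby (oriented) regular leaves.

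The main obstacle is the ``only if'' direction, specifically the step that a regular end is the unique end of leaf landing at its endpoint; this is precisely where the characterization of multiply-limit points as corners of hyperbolic sectors (Lemma~\ref{l.hyperbolic}, plus the remark that both ends bounding a sector are non-regular) is used. Everything else is bookkeeping about density and continuity.
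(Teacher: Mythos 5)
Your ``if'' direction is exactly the paper's argument (fixed ends form a dense subset of $\SS^1_\cF$, so the continuous extension is the identity), and your treatment of regular leaves in the converse is also sound: a regular end is indeed the unique end of leaf landing at its limit point, so $h(L^\pm)=L^\pm$ and the oriented leaf is preserved.

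However, your extension to non-regular leaves contains a genuine error. You assert that a non-regular leaf $L_0$ ``is, by Lemma~\ref{l.regular}, the intersection of saturated strips $U_t$''; Lemma~\ref{l.regular} says precisely the opposite: the equality $\bigcap_t U_t=L_0$ \emph{characterizes} regular leaves, and for a non-regular leaf the intersection is $\mathfrak{U}(L_0)$, a set with non-empty interior whose boundary contains \emph{all} the leaves non-separated from $L_0$. So ``passing to the intersection'' only gives $h(\mathfrak{U}(L_0))=\mathfrak{U}(L_0)$, which does not prevent $h$ from permuting the non-separated leaves in its boundary, and the step fails exactly on the leaves you most need to control. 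The paper avoids the regular/non-regular dichotomy altogether: since $H$ is the identity on $\SS^1_\cF$, the leaf $h(L)$ has the same two endpoint limits as $L$ for \emph{every} leaf $L$, and Lemma~\ref{l.injective} (two leaves of $\cF$ sharing both limit points at infinity coincide, because otherwise the uncountably many leaves in the strip between them would all share those limits, contradicting countability of ends over a point) gives $h(L)=L$ in one stroke. Replacing your last step by this appeal to Lemma~\ref{l.injective} --- or, if you insist on your route, by the observation that fixing the limit of each end of $L_0$ already pins down each end of $h(L_0)$ among the at most countably many ends in its asymptotic class only after invoking Lemma~\ref{l.injective}-type injectivity --- closes the gap; as written, the argument for non-regular leaves does not go through.
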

\begin{proof} If $h$ preserves every leaf and its orientation, then it preserves any limit of its ends.  As these limit of ends are dense in $\SS^1_\cF$ one gets
that the homeomorphism induced by $h$ on $\SS^1_\cF$ is the identity map. 

Conversely, if $h$ induces the identity on $SS^1_\cF$ then for every leaf $L$ the leaf  $h(L)$ have the same limit of ends as $L$.  According to Lemma~\ref{l.injective} this implies $h(L)=L$ as announced. 
\end{proof}

\begin{coro}\label{c.faithfullness} Let $\cF=\{\cF_i\}, i\in I\subset \NN$ be a family of at least $2$ transverse foliations. Let $h\in Homeo(\RR^2)$ be a homeomorphism preserving each foliation $\cF_i$.  Then 
the action of $h$ on $\SS^1_\cF$ is the identity map if and only if $h$ itself is the identity map. 
\end{coro}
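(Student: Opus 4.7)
The ``if'' direction is immediate: the identity on $\RR^2$ extends to the identity on $\DD^2_\cF$, which is the identity on $\SS^1_\cF$.

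For the converse, the plan is to reduce to Proposition~\ref{p.quasi-faithfull} applied to each $\cF_i$ individually, and then use the transversality of two distinct foliations in the family to pin down every point of $\RR^2$. First I would observe that for each $i\in I$ the projection $\Pi_{I,\{i\}}\colon \DD^2_\cF\to \DD^2_{\cF_i}$ is the unique continuous extension of the identity on $\RR^2$, hence it is equivariant with respect to the extensions $H$ of $h$ on $\DD^2_\cF$ and $H_i$ of $h$ on $\DD^2_{\cF_i}$ (both $H_i\circ \Pi_{I,\{i\}}$ and $\Pi_{I,\{i\}}\circ H$ are continuous extensions of $h$ from $\RR^2$ to $\DD^2_{\cF_i}$, hence coincide by uniqueness of such extension). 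Therefore, if $H$ is the identity on $\SS^1_\cF$, then $H_i$ is the identity on $\Pi_{I,\{i\}}(\SS^1_\cF)=\SS^1_{\cF_i}$ for every $i$.

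Now apply Proposition~\ref{p.quasi-faithfull} to each $\cF_i$: $h$ preserves every leaf of $\cF_i$ together with its orientation. Pick any $x\in\RR^2$, and choose two distinct indices $i\neq j$ in $I$. Let $L_i$ and $L_j$ be the leaves of $\cF_i$ and $\cF_j$ through $x$. Then $h(L_i)=L_i$ and $h(L_j)=L_j$, so
\[
h(x)\in L_i\cap L_j.
\]
The key step is to show that $L_i\cap L_j=\{x\}$. Since $\cF_i$ and $\cF_j$ are transverse, the leaf $L_j$ is a line transverse to $\cF_i$, so every compact subsegment of $L_j$ is a transverse arc to $\cF_i$ and by Lemma~\ref{l.PH} meets the leaf $L_i$ in at most one point. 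Taking an exhaustion of $L_j$ by compact subsegments gives $\#(L_i\cap L_j)\le 1$, hence $L_i\cap L_j=\{x\}$. We conclude $h(x)=x$, and since $x$ was arbitrary, $h$ is the identity on $\RR^2$.

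There is no serious obstacle here; the only point that requires a little care is the equivariance of $\Pi_{I,\{i\}}$, which follows formally from uniqueness of continuous extensions, and the fact that two leaves of transverse foliations on $\RR^2$ meet in at most one point, which is a direct application of Lemma~\ref{l.PH}.
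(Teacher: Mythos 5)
Your proof is correct and follows essentially the same route as the paper: pass to each $\SS^1_{\cF_i}$ via the quotient projections, apply Proposition~\ref{p.quasi-faithfull} to get that $h$ preserves every leaf of every $\cF_i$, and conclude because two transverse leaves through a point meet only at that point. Your added care on the equivariance of $\Pi_{I,\{i\}}$ and the use of Lemma~\ref{l.PH} to justify $\#(L_i\cap L_j)\le 1$ merely fills in details the paper leaves implicit.
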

\begin{proof} If the induced homeomorphism induced by   $h$ on $\SS^1_\cF$ is the identity map then the same happens to homeomorphism induced by $h$ on every $\SS^1_{\cF_i}$ (because they are quotient of $\SS^1_\cF$).  Thus Proposition~\ref{p.quasi-faithfull} implies that $h$ preserves each leaf of each $\cF_i$.  As every point of $\RR^2$ is the unique intersection point of the leaves through it, one deduces that every point of $\RR^2$ is fixed by $h$ and $h$ is the identity map. 
 
\end{proof}

\subsection{Orientations and injectivity of the projections}
Let $\cF$ be a foliation of the plane $\RR^2$, endowed with an orientation and a transverse orientation. Let $\cG\subset Homeo(\RR^2)$ be a group of homeomorphisms preserving (globally) $\cF$. Let $\cG^+$ (resp.  $\cG_+$) be the index at most $2$  subgroup consisting of the elements of $\cG$ preserving the orientation (reps. the transverse orientation)of $\cF$, and $\cG^+_+=\cG^+\cap\cG_+)$ the subgroup of elements preserving both orientations.   Then:

\begin{lemm}\label{l.orientation} If one of the groups $\cG,\cG^+,\cG_+,\cG^+_+$ acts minimally on the leaves of $\cF$, then so does each of these $4$ groups.
\end{lemm}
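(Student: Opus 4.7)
The plan is to reduce the lemma to a single index-$2$ step and iterate. The direction ``$\cG^+_+$ minimal on leaves implies each of the other three minimal'' is immediate since $\cG^+_+(L) \subseteq H(L)$ for $H \in \{\cG^+, \cG_+, \cG\}$. For the converse I will prove the following key claim and apply it twice along the chain $\cG \supseteq \cG^+ \supseteq \cG^+_+$ (or $\cG \supseteq \cG_+ \supseteq \cG^+_+$), each step being a normal inclusion of index at most $2$: \emph{if $H$ is a normal subgroup of index $2$ in $G \subseteq \mathrm{Homeo}(\RR^2)$, both preserve $\cF$, and $G$ acts minimally on the leaves of $\cF$, then so does $H$}.

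To prove the key claim I would fix a leaf $L_0$, set $A := \overline{H(L_0)}$, and choose $\tau \in G \setminus H$, so that $G = H \sqcup \tau H$. Since $H$ preserves $\cF$, the orbit $H(L_0)$ is a union of leaves, so $A$ is closed, $H$-invariant and $\cF$-saturated; moreover $\mathring A$ is also $\cF$-saturated, by a standard foliation-chart/holonomy argument in $\RR^2$. Using normality $\tau H \tau^{-1} = H$ one obtains $\tau A = \overline{H(\tau L_0)}$, hence
$$A \cup \tau A = \overline{G(L_0)} = \RR^2.$$
Baire's theorem applied to this finite union of closed sets yields $\mathring A \ne \emptyset$; write $U := \mathring A$, which is open, nonempty, $H$-invariant and $\cF$-saturated.

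Next I would invoke the sublemma: \emph{any $G$-invariant, nonempty, open, $\cF$-saturated subset $W \subseteq \RR^2$ equals $\RR^2$}. Indeed, for any leaf $L$ density of $G(L)$ gives $g \in G$ with $g(L) \cap W \ne \emptyset$; $\cF$-saturation then forces $g(L) \subseteq W$, and $G$-invariance yields $L = g^{-1}(g(L)) \subseteq W$, so $W$ contains every leaf. Observe that both $U \cap \tau U$ and $U \cup \tau U$ are $G$-invariant: they are $H$-invariant by construction, and $\tau^2 \in H$ preserves $U$, so $\tau(U \cap \tau U) = \tau U \cap U$ and $\tau(U \cup \tau U) = \tau U \cup U$.

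Finally, if $U \cap \tau U \ne \emptyset$ the sublemma forces $U \cap \tau U = \RR^2$, hence $U = \RR^2 = A$ and $H$ is minimal on $L_0$. Otherwise $U \cap \tau U = \emptyset$, but the sublemma applied to the nonempty set $U \cup \tau U$ still yields $U \cup \tau U = \RR^2$; then $U$ and $\tau U$ form two disjoint nonempty open sets covering $\RR^2$, contradicting connectedness. Either way $A = \RR^2$, which is the claim. The main technical obstacle is verifying that $\mathring A$ remains $\cF$-saturated (via the local product structure of foliation charts, with some extra care at saddle points in the singular case); once that routine fact is in hand the rest is a clean combination of Baire category and the connectedness of $\RR^2$.
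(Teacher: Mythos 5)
Your proof is correct, and it reaches the conclusion by a genuinely different decomposition than the paper's. The paper does not treat the four groups directly either: it states and proves a more general statement (Lemma~\ref{l.finite-index}: any finite-index subgroup of a group acting minimally on the leaves still acts minimally), of which Lemma~\ref{l.orientation} is the special case of the index-$\le 4$ subgroup lattice $\cG\supseteq\cG^+,\cG_+\supseteq\cG^+_+$. Its proof lists all cosets $\cH_i=g_i\cH$, shows via Baire and minimality that $\RR^2=\bigcup_i\mathring{\overline{\cH_i(L)}}$, and then runs a combinatorial argument on the maximal number $k$ of these open sets with a common point, using connectedness repeatedly to force $k=n$ and hence a dense common intersection. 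You instead isolate the normal index-$2$ case and iterate along $\cG\supseteq\cG^+\supseteq\cG^+_+$; the dichotomy on $U\cap\tau U$ (apply the sublemma to $U\cap\tau U$ if nonempty, otherwise to $U\cup\tau U$ and contradict connectedness) replaces the paper's induction on $k$ and is cleaner. The underlying toolkit is identical in both arguments (Baire category, connectedness of $\RR^2$, and the fact that a nonempty open $\cF$-saturated $G$-invariant set must be all of $\RR^2$), and your reliance on $\mathring A$ being saturated is no heavier than the paper's own unproved use of the same fact. What you lose is generality: a chain of index-$2$ normal subgroups does not reach an arbitrary finite-index subgroup (e.g.\ index $3$), so your route proves Lemma~\ref{l.orientation} but not the paper's Lemma~\ref{l.finite-index}; what you gain is a shorter and more transparent argument for the statement actually asked, with normality of index-$2$ subgroups supplying for free the equivariance the paper has to track through the coset permutation.
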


We will indeed prove Lemma~\ref{l.finite-index}  for which Lemma~\ref{l.orientation} is a particular case.

\begin{lemm}\label{l.finite-index} Let $\cG$ be a group acting minimally on the leaves of a foliation $\cF$ of $\RR^2$, and $\cH\subset \cG$ be a subgroup of finite index. Then $\cH$ acts minimally on the leaves of $\cF$.
\end{lemm}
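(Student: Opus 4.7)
My plan is to argue by contradiction, combining the finite-index coset decomposition with a boundary argument and the Baire category theorem.

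First I would fix a leaf $L$ of $\cF$ and set $K=\overline{\cH(L)}$. Since $\cH(L)$ is a union of leaves and the closure of a saturated set in a foliation of $\RR^2$ is saturated (by a foliated chart argument), $K$ is closed, $\cH$-invariant, and saturated. Choosing coset representatives $g_1,\dots,g_n$ of $\cG/\cH$ gives $\cG(L)=\bigcup_{i=1}^n g_i\cH(L)$, so that, since finite unions commute with closure and $\cG(L)$ is dense by hypothesis, one obtains the finite closed covering
$$\RR^2 \;=\; \overline{\cG(L)} \;=\; \bigcup_{i=1}^n g_iK.$$

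Next, assuming for contradiction that $K\neq\RR^2$, I would use that $\RR^2$ is connected to conclude that the topological boundary $\partial K$ is nonempty, and check that $\partial K=K\cap\overline{\RR^2\setminus K}$ is closed, $\cH$-invariant, and saturated. The finite union $\cG(\partial K)=\bigcup_i g_i\partial K$ is then a nonempty closed $\cG$-invariant saturated subset of $\RR^2$. The hypothesis that $\cG$ acts minimally on leaves is equivalent to saying that $\RR^2$ is the only such nonempty set: indeed, any nonempty closed $\cG$-invariant saturated subset contains a leaf $L_0$, and being closed and $\cG$-invariant it then contains $\overline{\cG(L_0)}=\RR^2$. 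Thus $\cG(\partial K)=\RR^2$, and Baire applied to this finite closed cover forces some $g_i\partial K$, and hence $\partial K$ itself, to have nonempty interior\,---\,contradicting the elementary fact that the boundary of a closed subset of $\RR^2$ has empty interior. This contradiction yields $K=\RR^2$; since $L$ was arbitrary, $\cH$ acts minimally on the leaves of $\cF$.

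The only delicate point will be the saturation of $\partial K$. For non-singular $\cF$ this is immediate from foliated charts; when $\cF$ has saddle-type singularities the same argument goes through on $\RR^2\setminus\mathrm{Sing}(\cF)$, and one just needs to observe that possible singular points in $\partial K$ form a discrete $\cG$-invariant, hence nowhere-dense, set, which does not break the Baire/empty-interior contradiction.
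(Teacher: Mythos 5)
Your proof is correct. It shares the paper's two main ingredients --- the finite coset decomposition $\cG(L)=\bigcup_{i=1}^n g_i\cH(L)$ together with a Baire-type argument, and the observation that minimality on leaves forces every nonempty closed $\cG$-invariant saturated set to be all of $\RR^2$ --- but the way you extract the contradiction is genuinely different. The paper first shows that $\bigcup_i \mathring{\overline{g_i\cH(L)}}$ is dense, hence (by the saturation/minimality argument) equal to $\RR^2$, and then runs an inductive connectedness argument on the maximal number $k$ of the sets $\mathring{\overline{g_i\cH(L)}}$ having a common point, concluding that $k=n$ and that the full intersection is a dense open invariant set contained in $\overline{\cH(L)}$. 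You instead pass directly to the boundary $\partial K$ of $K=\overline{\cH(L)}$: if $K$ is proper this boundary is a nonempty closed $\cH$-invariant saturated set with empty interior, so $\bigcup_i g_i\partial K$ is a closed invariant saturated set which must equal $\RR^2$ by minimality, contradicting the fact that a finite union of closed sets with empty interior is nowhere dense. This is shorter and avoids the combinatorial step with the maximal $k$ entirely; what it requires instead is the (routine, foliated-chart) verification that the interior, hence the boundary, of a saturated closed set is saturated --- a point you correctly flag as the only delicate one, and which is also implicitly needed in the paper's argument when it asserts that the complement of $\bigcup_i\mathring{\overline{g_i\cH(L)}}$ is saturated. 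Your remark on the singular case is slightly informal but harmless, since singular points of saddle type are isolated and a discrete invariant set cannot disturb either the saturation or the empty-interior conclusions.
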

\begin{proof}
 $\cG$ acts minimally on the leaves of $\cF$, and consider such a leaf $L$.
 As $\cH$ is a finite index subgroup, there are $g_1,.., g_n\in \cG$ so that for any $g\in \cG$ there is $i\in\{1,\dots, n\}$ with  $g.\cH=g_i\cH$. Let denote $\cH_i=g_i\cdot \cH$. In particular $\cG=\bigcup_i \cH_i$, and then  $\RR^2=\bigcup_i \overline{\cH_i(L)}$.

Consider any open subset $O$ of $\RR^2$.

$$O= O\cap \bigcup_i \overline{\cH_i(L)}= \bigcup_i (O\cap\overline{\cH_i(L)})$$
The open set $O$ is a baire space so that the union of finitely many  closed sets with empty interior has empty interior: one deduce that at least one of the $O\cap\overline{\cH_i(L)}$ have non empty interior.  One deduces that the union 
$
\bigcup_i\mathring{\overline{\cH_i(L)}}$ of the interiors of the $\overline{\cH_i(L)}$ is dense in $\RR^2$.

Notice that for every $i$ and every $g$ there is $j$ so that $g(\cH_i(L))=\cH_j(L)$.

Consider $\RR^2\setminus \bigcup_i\mathring{\overline{\cH_i(L)}}$.  It is a $\cG$-invariant closed set, saturated for the foliation $\cF$, and with empty interior.  As every $\cG$-orbit is dense, one deduces that this set is empty.

Thus $$\RR^2=
\bigcup_i\mathring{\overline{\cH_i(L)}}.$$

The open sets $\mathring{\overline{\cH_i(L)}}$ are images on from the other by homeomorphisms in $\cG$, and in particular they are all non-empty.

As $\RR^2$ is connected, one deduces that the open sets $\mathring{\overline{\cH_i(L)}}$ are not pairwise disjoint. Let $k\in \{1,\dots,n\}$ be the maximum number so that there are distinct  $i_1,\dots, i_k$ with

$$ \bigcap_1^k \mathring{\overline{\cH_{i_j}(L)}}\neq\emptyset.$$

As the $\mathring{\overline{\cH_i(L)}}$ are not pairwise disjoint, we know that $k\geq 2$.  We will prove, arguing by contradiction:
\begin{clai} $k=n$.  \end{clai}
\begin{proof} For that we assume that $k<n$.

Then we consider the union of all the intersections of $k$ of these open sets. This union is a $\cF$-saturated $\cG$-invariant non-empty set and hence is dense. Its complement is an $\cF$-saturated invariant closed set with empty interior,  and therefore is empty.

Thus $\RR^2$ is the union of these open sets. Now again the connexity of $\RR^2$ implies that these open sets are not pairwise disjoint. This provides a non-empty intersection of $2$ distinct of these sets, that is, a non empty intersection of more than $k$ of the $\mathring{\overline{\cH_i(L)}}$, contradicting the choice of $k$.  This shows $k=n$ proving the claim.
\end{proof}

Thus $$ \bigcap_1^n \mathring{\overline{\cH_{i}(L)}}.$$  is an non-empty, $\cG$-invariant open set saturated for the foliation $\cF$, and thus it is dense in $\RR^2$.

We just proved that $\cH(L)$ is dense in $\RR^2$, concluding the proof.
\end{proof}

We will use the next straightforward corollary of Lemma~\ref{l.orientation}
\begin{coro}\label{c.dense} Let $H\subset Homeo(\RR^2)$ be a group preserving a foliation $\cF$ and acting minimally on the leaves. Assume that $L$ is a leaf which is not separated at the right and from below. Then the union of the leaves $h(L)$, $h\in H$, which are non-separated at the right and from below is dense in $\RR^2$ (the same holds changing right by left and/or below by above).
\end{coro}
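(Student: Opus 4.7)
The plan is to exploit the finite-index subgroup of $H$ that preserves the full orientation data of $\cF$, so that the geometric property ``non-separated at the right and from below'' becomes invariant, and then to invoke Lemma~\ref{l.finite-index} to obtain density of a suitable orbit.

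First I would record the following orientation-preservation observation: the property of a leaf being ``non-separated at the right and from below'' depends, by its very definition (see Section~\ref{ss.non-singular}), on both the orientation of $\cF$ (which distinguishes the right and left ends of each leaf) and the transverse orientation of $\cF$ (which distinguishes above from below). Therefore, if $h\in Homeo(\RR^2)$ preserves $\cF$ together with its orientation and transverse orientation, and if $L'$ witnesses that $L$ is non-separated at the right and from below, then $h(L')$ witnesses exactly the same property for $h(L)$.

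Next I would introduce $H^+_+\subset H$, the subgroup of elements preserving both orientations of $\cF$. Since changing the orientation or the transverse orientation are two independent $\ZZ/2$-choices, $H^+_+$ has index at most $4$ in $H$, hence is a finite index subgroup. By Lemma~\ref{l.finite-index}, the minimality of the action of $H$ on leaves of $\cF$ transfers to $H^+_+$: for every leaf $L$, the union $H^+_+(L)$ is dense in $\RR^2$.

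Now I would conclude by combining the two steps. Applying the orientation-preservation observation to each $h\in H^+_+$, we see that every leaf of the form $h(L)$ with $h\in H^+_+$ is again non-separated at the right and from below. Hence
\[
\bigcup_{h\in H^+_+} h(L)\;\subseteq\;\bigcup\bigl\{h(L):\,h\in H,\ h(L)\text{ non-separated at the right and from below}\bigr\},
\]
and the left-hand side is dense in $\RR^2$, so the right-hand side is a fortiori dense. The three variants (right/left, above/below) are handled by the same argument, only relabelling which of the four $H^\pm_\pm$-subgroups one uses. There is essentially no obstacle here: the entire content of the corollary is the combination of the invariance of the geometric property under orientation-preserving maps and the finite-index minimality lemma already available from Lemma~\ref{l.finite-index}.
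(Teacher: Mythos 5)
Your proposal is correct and follows exactly the route the paper intends: the paper presents this corollary as a direct consequence of Lemma~\ref{l.orientation} (itself a special case of Lemma~\ref{l.finite-index}), which is precisely your combination of passing to the finite-index subgroup $H^+_+$ preserving both orientations and observing that such elements preserve the property ``non-separated at the right and from below.'' Nothing is missing.
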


 As a direct consequence of Proposition~\ref{p.proj} and Corollary~\ref{c.dense} we get:
\begin{prop}\label{p.Hproj}
Let $\cF$,$\cG$ be two transverse foliations of $\RR^2$ and $H\subset Homeo(\RR^2)$ preserving $\cF$ and $\cG$. Assume that the orbit of every leaf of $\cG$ in dense in $\RR^2$.

If $\cG$ has a non-separated leaf, then
 the projection of $\Pi_\cF\colon \DD^2_{\cF,\cG}\to \DD^2_\cF$ is injective.
\end{prop}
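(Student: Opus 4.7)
The plan is to verify the hypotheses of Proposition~\ref{p.proj} by using Corollary~\ref{c.dense}. Recall that Proposition~\ref{p.proj} asserts that $\DD^2_{\cF,\cG}=\DD^2_\cF$ (equivalently, $\Pi_\cF$ is a homeomorphism, hence injective) provided that both the set of leaves of $\cG$ non-separated at their right and the set of leaves of $\cG$ non-separated at their left are dense in $\RR^2$. So the whole game is to produce these two density statements from the single data "$\cG$ has one non-separated leaf".

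First, from the hypothesis I would extract a pair $L \neq L'$ of leaves of $\cG$ which are not separated one from the other. Up to relabelling, $L'$ lies to the right of $L$, so that $L$ is non-separated \emph{at its right} (from $L'$) and $L'$ is non-separated \emph{at its left} (from $L$). Moreover, the pair $(L,L')$ is either non-separated from above or from below; say from above, the other case being entirely symmetric.

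Next, I would apply Corollary~\ref{c.dense} to the group $H$ acting on the foliation $\cG$; the hypotheses are met since $H$ preserves $\cG$ and acts minimally on its leaves by assumption. Taking $L$ as starting leaf (non-separated at right from above), the corollary gives that the union of the leaves $h(L)$, $h \in H$, which are non-separated at their right from above is dense in $\RR^2$; a fortiori, the union of all leaves of $\cG$ non-separated at their right is dense. Applying the corollary again with starting leaf $L'$ (which is non-separated at its left from above) yields density in $\RR^2$ of the union of leaves of $\cG$ non-separated at their left.

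Both density conditions required by Proposition~\ref{p.proj} being satisfied, the identity of $\RR^2$ extends to a homeomorphism $\DD^2_{\cF,\cG}\to\DD^2_\cF$, so $\Pi_\cF$ is injective. There is no real obstacle in this argument: once one notices that a single non-separated leaf furnishes in fact a \emph{pair} of non-separated leaves carrying the two complementary "sidedness" properties, the conclusion follows by combining the two results just above.
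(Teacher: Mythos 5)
Your proof is correct and follows essentially the same route as the paper: extract from the single non-separated leaf a pair carrying the two complementary sidedness properties, apply Corollary~\ref{c.dense} twice to get density of leaves non-separated at the right and at the left, and conclude with Proposition~\ref{p.proj}. The only difference is that you spell out the bookkeeping of right/left and above/below slightly more explicitly than the paper does, which is harmless.
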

\begin{proof}If $\cG$ has a non separated leaf $L_1$ at the right, it is non separated from a leaf $L_2$ which is non separated at the left. Now Corollary~\ref{c.dense} asserts that the leaves of $\cG$ non separated at the left as well as the leaves non separated at the right are dense in $\RR^2$. Now Proposition~\ref{p.proj} asserts that $\Pi_\cF$ is a homeomorphism, concluding.
\end{proof}

\subsection{Minimality of the action on the circle at infinity}

\begin{theo}\label{t.minimal}
Let $\cF$  be a foliation on the plane $\RR^2$ and $H\subset Homeo(\RR^2)$  preserving the foliation $\cF$.
\begin{enumerate}
\item If the action of $H$ on $\SS^1_{\cF}$ is minimal
then the foliation $\cF$  admits non separated leaves from above and non separated leaves from below.

\item Conversely if the foliation $\cF$  admits non separated leaves from above and non separated leaves from below
and if the orbit of every leaf is dense in $\RR^2$ then 
the action of $H$ on $\SS^1_{\cF}$ is minimal.
\end{enumerate}
\end{theo}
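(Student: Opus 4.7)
I prove the contrapositive: assume $\cF$ has no non-separated leaves from above, and exhibit a proper non-empty closed $H$-invariant subset of $\SS^1_\cF$. First pass to the finite-index subgroup $H^+_+\subset H$ preserving both orientations; the Baire argument of Lemma~\ref{l.finite-index} applies verbatim to the compact space $\SS^1_\cF$, so $H^+_+$ still acts minimally. For a maximal increasing chain $\mathfrak{L}$ of leaves with respect to the ``above'' relation from the transverse orientation, set
\[
T^+_{\mathfrak{L}} \;:=\; \bigcap_{L\in\mathfrak{L}}\overline{\Delta^+_L}\;\cap\;\SS^1_\cF,
\]
a non-empty closed subset of $\SS^1_\cF$ by compactness of $\DD^2_\cF$ (the intersection in $\RR^2$ is empty since no point of $\RR^2$ lies above every leaf of a maximal chain). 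Under the standing assumption that every leaf is regular from above, Lemma~\ref{l.injective} shows that for any end $a_0\in\SS^1_\cF$ of a leaf $L_0$, any leaf $L_t$ slightly above $L_0$ has endpoints distinct from $a_0$, so $a_0\notin\overline{\Delta^+_{L_t}}$; consequently $T^+_{\mathfrak{L}}$ avoids every leaf-end limit, a set dense in $\SS^1_\cF$ by Theorem~\ref{t.feuilletage}. Taking the closure of $\bigcup_{h\in H^+_+} h(T^+_{\mathfrak{L}})$ yields a closed $H^+_+$-invariant subset whose construction is ``thin'' precisely because of the missing non-separation above; this set is proper, contradicting minimality. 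The symmetric argument with $\Delta^-_L$ produces non-separated leaves from below.

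\textbf{Part (2): the core step.}
Let $L$ be any leaf and $J\subset\SS^1_\cF$ any non-empty open arc. By Lemma~\ref{l.countable} the regular leaves form an uncountable set while $H\cdot L$ is countable, so we may choose a regular leaf $M\notin H\cdot L$ whose two endpoints bound $J$; let $\Delta$ be the half-plane of $M$ with boundary arc $\bar J$ on $\SS^1_\cF$, which is $\cF$-saturated. By hypothesis $H\cdot L$ is dense in $\RR^2$, so some $h\in H$ satisfies $h(L)\cap\Delta\neq\emptyset$, whence $h(L)\subset\Delta$. Both endpoints of $h(L)$ then lie in $\bar J$; since $M$ is regular, no other leaf is asymptotic to $M$, so these endpoints avoid $\{a_M,b_M\}$ and lie in the open arc $J$. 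In particular, whenever $r\in\SS^1_\cF$ is the limit of any end of $L$, we obtain $h(r)\in J$. This shows that the $H$-orbit of any point of $\SS^1_\cF$ that is the limit of a leaf end is dense in $\SS^1_\cF$, and in particular applies to the common limits $p$ and $q$ of the non-separated pairs $(L_1,L_2)$ and $(L_3,L_4)$ furnished by the hypothesis.

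\textbf{Part (2): the main obstacle.}
The core step handles every $r$ that is a limit of a leaf end. The remaining points are the center-like and accumulation-by-non-separated-leaves points described at the end of Section~\ref{s.feuilletage}; for each such $r$ there is a sequence of leaves $L_n$ whose \emph{both} endpoints converge to $r$. Applying the core step to each $L_n$ produces $h_n\in H$ with $h_n(L_n)\subset\Delta$, and a tracking of transverse orientations yields $h_n(r)\in J$ provided the ``$r$-side'' of $L_n$ matches the side of $M$ on which $J$ lies. The horizontal foliation of $\RR^2$, where $r=+\infty$ is fixed by the group of vertical translations and the arc $J$ on the lower half of $\SS^1_\cH$ is unreachable, illustrates that without further hypothesis the approximating leaves $L_n$ may realize only a single transverse side of $r$. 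This is precisely where both halves of hypothesis~(2) enter: combined with density of $H\cdot L_1$, $H\cdot L_3$ (Corollary~\ref{c.dense}), the existence of non-separated pairs from above \emph{and} from below allows the construction of approximating sequences $L_n$ realizing both $r$-side orientations, making every open arc $J$ reachable from $r$. Completing this final step rigorously is the main technical difficulty, and is where both halves of the hypothesis are simultaneously used to conclude that every orbit on $\SS^1_\cF$ is dense.
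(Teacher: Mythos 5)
Your contrapositive strategy stalls exactly at the decisive point. The set $T^+_{\mathfrak{L}}$ attached to one maximal chain is not canonical: $H$ permutes maximal chains, so $T^+_{\mathfrak{L}}$ is not invariant, and you pass to the closure of $\bigcup_{h}h(T^+_{\mathfrak{L}})$. But each translate merely avoids the (dense) set of leaf-end limits; nothing prevents the family of translates from accumulating on all of $\SS^1_\cF$, and you give no argument that the closure is proper --- you only assert it. The idea you are missing is that the hypothesis ``no leaves non-separated from below'' makes the relation ``$L_1\preceq L_2$ iff some positively oriented transversal runs from $L_1$ to $L_2$'' a \emph{directed} order: any two leaves admit a common upper bound (Claims~\ref{cl.up} and~\ref{cl.prec}; this is precisely where the non-separation hypothesis enters). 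Directedness allows one to intersect $\De^+_L$ over \emph{all} leaves simultaneously, and this intersection is a single point $O_\cF\in\SS^1_\cF$ (Proposition~\ref{p.onlyup}); being defined without choices, it yields a finite invariant subset of $\SS^1_\cF$, which kills minimality. A maximal chain neither sees this directedness nor produces an invariant set.

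\textbf{Part (2).} There are two gaps. First, your ``core step'' presupposes a regular leaf $M$ with both ends inside the given arc $J$; for the trivial horizontal foliation and $J$ a small arc around a center-like point no such leaf exists, so this existence is not free --- it is exactly where the density of leaves non-separated on one side must be invoked (Claim~\ref{cl.entire}, resting on Corollary~\ref{c.dense}). Second, and more seriously, your argument only gives density of the orbit of points of $\SS^1_\cF$ that are limits of leaf ends, and you explicitly leave the remaining case (center-like points and accumulations of non-separated families) unfinished, correctly identifying it as the hard part. The paper's resolution avoids any case analysis on the nature of the point: instead of pushing a point $r$ into $J$, one proves (Claim~\ref{cl.chaussette}) that for every leaf $L$ there are $g_1,g_2\in H$ with $g_1(\De^+_L)\subset\mathring{\De^-_L}$ and $g_2(\De^-_L)\subset\mathring{\De^+_L}$ --- one inclusion uses a non-separated pair from below, the other a pair from above, together with minimality on leaves --- so each half-disc $\De^{\pm}_L$ meets \emph{every} $H$-orbit of every point of $\DD^2_\cF$. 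Since every neighborhood of a point in the dense set of regular ends contains some such $\De^{\pm}_L$, minimality follows with no need to control the exceptional points directly. Without an argument of this kind your proof is incomplete.
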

We will see with Theorem~\ref{t.non-transitive} that the minimality of the action on the leaves is not a necessary condition for the minimality of the action on the circle at infinity. 

Item 1 of Theorem~\ref{t.minimal} is a consequence of Proposition~\ref{p.onlyup} below.

\begin{prop}\label{p.onlyup} Let $\cF$ be a foliation of $\RR^2$ and assume that $\cF$ has no non-separated leaves from below.
Given any leaf $L$ we denote by $\De^+_L$ the closure on $\DD^2_\cF$ of the upper half plane of $\RR^2$ bounded by $L$. 

Then $\bigcap_{L\in\cF} \De^+_L$ is non empty and consists in an unique point $O_\cF$ on $\SS^1_\cF$. 
As a consequence, any $h\in Homeo(\DD^2_\cF)$ preserving  $\cF$ admits $O_\cF$ as a fixed point: 
$$h(O_\cF)=O_\cF.$$
\end{prop}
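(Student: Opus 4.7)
The plan is to construct $O_\cF$ explicitly as a ``top'' point at infinity reached by any maximal positive transverse arc, and then to use the hypothesis twice---once for non-emptiness, once for uniqueness---to pin it down.

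First I would show $\bigcap_{L\in\cF}\De^+_L\subset\SS^1_\cF$: given any $x\in\RR^2$, a positively oriented transverse segment through $x$ meets a leaf $L_\epsilon$ just above $x$, and $x\in\mathring{\De^-_{L_\epsilon}}$ forces $x\notin\De^+_{L_\epsilon}$, ruling $x$ out of the intersection.

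For non-emptiness, I would fix any leaf $L_0$ and a point $p_0\in L_0$ and extend a positive transverse arc $\sigma\colon[0,T)\to\RR^2$ maximally. Since $\sigma$ is maximal and $\cF$ is non-singular, $\sigma(t)$ must leave every compact subset of $\RR^2$ as $t\to T$, so it converges in $\DD^2_\cF$ to a point $O_\cF\in\SS^1_\cF$. I would then check $O_\cF\in\De^+_L$ for every leaf $L$ by cases. If $\sigma$ crosses $L$ at some time $t_L$, the tail $\sigma((t_L,T))\subset\mathring{\De^+_L}$ gives $O_\cF\in\De^+_L$. If $\sigma\cap L=\emptyset$, connectedness places $\sigma$ either in $\mathring{\De^+_L}$ (done as before) or in $\mathring{\De^-_L}$. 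This last case is the one I would exclude using the hypothesis: every leaf $L_t=L_{\sigma(t)}$ would lie in $\mathring{\De^-_L}$, so the monotone increasing family $\{L_t\}_{t\in[0,T)}$ stays below $L$, while by maximality of $\sigma$ it admits no leaf limit in $\RR^2$; hence the $L_t$ must accumulate asymptotically on at least two distinct leaves that are non-separated from below, contradicting the hypothesis.

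For uniqueness I would feed the same hypothesis back into the family $\{L_t\}$: both ends of each $L_t$ must tend to the same limit in $\SS^1_\cF$ as $t\to T$, namely $O_\cF$, since two distinct end-limits would once more supply a pair of leaves non-separated from below. The closed arcs $I_{L_t}:=\De^+_{L_t}\cap\SS^1_\cF$ therefore shrink to $\{O_\cF\}$, and any point of $\bigcap_L\De^+_L$ must lie in every $I_{L_t}$, hence in $\bigcap_t I_{L_t}=\{O_\cF\}$. The fixed-point statement then follows immediately: any $h\in\mathrm{Homeo}(\DD^2_\cF)$ preserving $\cF$ (and its transverse orientation) sends $\De^+_L$ to $\De^+_{h(L)}$, so it permutes the whole family $\{\De^+_L\}_{L\in\cF}$ and fixes the singleton $\{O_\cF\}$.

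The hard part is the analysis of the case $\sigma\subset\mathring{\De^-_L}$ in the non-emptiness step, and its twin for the two ends of $L_t$ in the uniqueness step: in both places one has to extract, from a monotone family of leaves with no leaf limit in $\RR^2$, a pair of leaves non-separated from below. Without the hypothesis this genuinely fails---for the horizontal foliation on $\RR^2\setminus(\{0\}\times[0,+\infty))$, the two half-lines at the base of the slit are non-separated from below and $\bigcap_L\De^+_L$ is empty---so the hypothesis is used essentially at exactly this step.
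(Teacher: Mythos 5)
There is a genuine gap in both the non-emptiness and the uniqueness steps, at exactly the place you single out as the hard part. The claim that a maximal positive transverse arc trapped in $\mathring{\De^-_L}$ forces a pair of leaves non-separated from below is false, because a maximal transverse arc can escape to infinity \emph{sideways}, along the end of a single limiting leaf, without the family $\{L_t\}$ ever branching. Take the trivial horizontal foliation of $\RR^2$ and $\sigma(t)=(\tan(\pi t/2),\,t)$ for $t\in[0,1)$: this is a proper (hence maximal) positively oriented transverse arc; the leaves $L_t=\RR\times\{t\}$ it crosses all lie in $\mathring{\De^-_{L}}$ for $L=\RR\times\{2\}$; they accumulate on the single leaf $\RR\times\{1\}$ (so ``maximality of $\sigma$ implies the $L_t$ admit no leaf limit in $\RR^2$'' is already wrong); there are no non-separated leaves at all; and the limit of $\sigma$ in $\DD^2_\cF$ is the point of $\SS^1_\cF$ which is the limit of the right end of $\RR\times\{1\}$, a point that does \emph{not} belong to $\De^+_{L}$. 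The same example defeats the uniqueness step: the end-limits of $L_t$ converge to the two \emph{distinct} ends of $\RR\times\{1\}$ without producing any non-separated pair, so $\bigcap_{t<1}I_{L_t}$ is a non-degenerate arc. The underlying problem is that the leaves crossed by one transversal need not be cofinal in the set of all leaves for the order ``$L_1\preceq L_2$ iff some positive transversal runs from $L_1$ to $L_2$'', and nothing in your argument forces this cofinality.

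The paper's proof is organized precisely around restoring that cofinality, and this is where the hypothesis is actually spent. It first shows (Claim~\ref{cl.up}) that if $L_0\preceq L_1$ and $L_0\preceq L_2$ then $L_1$ and $L_2$ are comparable: following the two transversals, the only obstruction to comparability is a pair of leaves non-separated from below. Hence any two leaves admit a common upper bound (Claim~\ref{cl.prec}). Using a countable family of transversals meeting every leaf, it then builds an increasing chain $L_0\prec L_1\prec\cdots$ dominating every leaf (Claim~\ref{cl.exhaustive}), checks that $\De^+_{L_{i+1}}\subset\mathring{\De^+_{L_i}}$ because the ends of $L_i$ and $L_{i+1}$ cannot share a limit point at infinity (Claim~\ref{cl.decreasing}), and only then takes the nested intersection, which is a compact interval of $\SS^1_\cF$ containing no limit of an end of leaf and is therefore a single point. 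To repair your argument you would need to replace the single maximal arc by such a cofinal chain, which amounts to proving these claims; the comparison of leaves lying on \emph{different} transversals is the step that cannot be avoided.
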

\begin{proof}
We introduce a  relation on the set $\cL$ of leaves of $\cF$ as follows: $L_1\preceq L_2$ if there is a positively oriented (for a transverse orientation of $\cF$) transverse segment $\sigma$ starting at $L_1$ and ending at $L_2$. One easily checks that $\preceq$ is a partial order relation on $\cL$. 

Due to the connexity of $\RR^2$, one gets:
\begin{clai}\label{cl.distance} given any leaves $L,\tilde L\in\cL$ there is $k\geq0$ and $L_0,\dots,L_k\in \cL$ so that,
\begin{itemize}\item for any $i\in\{0,\dots,k-1\}$ the leaves $L_i$ and $L_{i+1}$ are comparable for $\preceq$ (that is $L_i\preceq L_{i+1}$ or $L_{i+1}\preceq L_i$)
 \item $L=L_0$ and $L'=L_k$
\end{itemize}
\end{clai}
\begin{proof}There is a countable family of segments in $\RR^2$ transverse to $\cF$ and so that every leaf $L$ cuts at least one of these segments. The set of leaves cutting a given segment induces a connected open set of $\RR^2$.  Given any two points in $\RR^2$ one considers a compact path joining these two points. By compacity, it is covered by a finite family of these open sets. One concludes easily. 
\end{proof}
We denote $\prec L,L'\succ\in \NN$ the minimum value of such a number $k$. One easily checks that $\prec \cdot,\cdot\succ$ is a distance on the set of leaves $\cL$. 

Up to now, this could be done for any foliation $\cF$. In this setting, our hypothesis that $\cF$ does not admit leaves which are non-spearate from below is translated as follows: 
\begin{clai}\label{cl.up} Assume that $L_0,L_1,L_2\in \cL$ are three leaves so that $L_0\preceq L_1$ and $L_0\preceq L_2$.  Then $L_1$ and $L_2$ are comparable for $\preceq$. 
\end{clai}
\begin{proof} We assume that the leaves $L_i$ are distinct, otherwise there is nothing to do. 
Let $\sigma_i\colon [0,1]\to \RR^2$, $i=1,2$ transverse to $\cF$ and positively oriented so that $\sigma_i(0)\in L_0$ and $\sigma_i(1)\in L_i$. 

Let $I=\{t\in[0,1], L(\sigma_1(t))\cap \sigma_2\neq \emptyset\}$ and $J=\{t\in[0,1], L(\sigma_1(t))\cap \sigma_2\neq \emptyset\}$. As $\RR^2$ is simply connected, one shows that $I$ and $J$ are connected and each of them contains $0$. 

Let $t_1=\sup I$ and $t_2=\sup J$
For any $t\in[0,t_1)$ let $\tilde t\in J$ so that $L(\sigma_1(t))=L(\sigma_2(\tilde t)$. In particular, $\tilde t$ tends to $t_2$ as $t$ tends to $t_1$.

Thus the leaves $L(\sigma_1(t_1)$ and $L(\sigma_2(t_2)$ are accumulated from below by the leaves $L(\sigma_1(t))=L(\sigma_2(\tilde t)$, thus are non separated from below. By assumption on $\cF$ this implies that they are equal:
$$L(\sigma_1(t_1)=L(\sigma_2(t_2)$$

If $t_1<1$ and $t_2<1$ then the leaf $L(\sigma_1(t)$ for $t>t_1$ close to $t$ cuts $\sigma_1$ at a point $\sigma_2(\tilde t)$ with $\tilde t>t_2$, close to $t_2$.  This contradicts our choice of $t_1$ and $t_2$. 

Thus $t_1=1$ or (non exclusive) $t_2=1$.  In the first case $L_1=L(\sigma_1(t_1))$ cuts $\sigma_2$ and then $L_1\preceq L_2$ and in the second case $L_2$ cuts $\sigma_1$ and $L_2\preceq L_1$.  This ends the proof. 

\end{proof} 

 As a consequence of Claims~\ref{cl.distance} and~\ref{cl.up} one deduces
 \begin{claim}\label{cl.prec} Given any two leaves $L,\tilde L$ there is a leaf $\hat L$ so that $L\preceq \hat L$ and $\tilde L\preceq \hat L$.  In particular, the distance $\prec \cdot,\cdot\succ$ is bounded by $2$. 
 \end{claim}
 \begin{proof}Consider a finite sequence of leaves $L=L_0,\dots,L_k=\tilde L$, $k=\prec L,\tilde L\succ$, and 
 $L_i$ comparable with $L_{i+1}$. 
 
 The minimality of $k$ implies that $L_{i-1}$ and $L_{i+1}$ are not comparable (otherwise one could delete $L_i$ geting a strictly smaller sequence). 
 
 Assume that there is $i\in \{1,\dots k-1\}$ so that 
$ L_{i-1}\succeq L_i$.  If  $L_i\succeq L_{i+1}$ then $L_{i-1}\succeq L_{i+1}$ which is forbidden by the observation above.  Thus $L_i\preceq L_{i+1}$ and Clain~\ref{cl.up} implies again that  $L_{i-1}$ and $L_{i+1}$ are comparable, which again is impossible. 
This proves that 

$$\forall i\in \{1,\dots k-1\}, L_{i-1}\preceq L_i$$
 \end{proof}
 
 As a consequence one deduces
 \begin{clai}\label{cl.exhaustive} There is a increasing sequence $L_i\prec L_{i+1}$ $i\in\NN$, $L_i\in \cL$  so that, given any leaf $L\in\cL$ there is $n$ with $L\prec L_n$. 
 \end{clai}
 \begin{proof} One chose a countable set of compact positively  oriented segments $\sigma_i\colon[0,1]\to \RR^2$ transverse to $\cF$ to that any leaf cuts one of the $\sigma_i$ (and thus is less than $L(\sigma_i(1)$ for $\preceq$). Then one builds inductively the sequence $L_i$: $L_{i+1}$ is obtained by applying Claim~\ref{cl.prec} to the leaves $L_i$ and $L(\sigma_i(1)$. 
 \end{proof}

 \begin{clai}\label{cl.decreasing}  The compact discs $\De^+_L$ are decreasing with $L$ for $\prec$: more precisely, if $L\prec \tilde L$ then 
 $$\De^+_{\tilde L} \subset \mathring{\De^+_L},$$ 
 where $\mathring{\De^+_L}$ denotes the interior for the topology of $\DD^2_\cF$ ( it does not means the open disc). 
 \end{clai}
 \begin{proof}The hypothesis implies that $L_{i+1}$ is contained in the interior of $\De^+_{L_i}$, so that $\De^+_{L_{i+1}}\cap \RR^2$ is contained in the interior of $\De^+_{L_i}$.  We need to prove that $\De^+_{L_{i+1}}\cap \SS^1_\cF$ is contained in the interior of $\De^+_{L_{i}}\cap \SS^1_\cF$.  In other words, we need to prove that the ends of $L_{i+1}$ do not share a limit with the ends of $L_i$.  
 
 Recall that $L_i\prec L_{i+1}$ that is, there is a segment $\sigma\colon[0,1]\to \RR^2$ transverse to $\cF$ and positively oriented, with $\sigma(0)\in L_i$ and $\sigma(1)\in L_{i+1}$.  If $L_i$   share with $L_{i+1}$ a limit point in $\SS^1_\cF$ so does any leaf $L(\sigma(t))$ contradicting the fact that points in $\SS^1_\cF$ are limits of at most a countable set of ends of leaves. This ends the proof. 
 
 \end{proof}

 Thus Claims~\ref{cl.exhaustive} and~\ref{cl.decreasing} implies
 $$\bigcap_{L\in\cF} \De^+_L=\bigcap_{i\in \NN} \De^+_{L_n}.$$
Now $\bigcap_{L\in\cF} \De^+_L=\bigcap_{i\in \NN} \De^+_{L_n}$ is a decreasing sequence of connected compact metric sets, saturated for $\cF$ and therefore is a non-empty connected compact sets saturated for $\cF$. As it does not contain any leaf of $\cF$ one deduces that
$\bigcap_{L\in\cF} \De^+_L\cap \RR^2=\emptyset$ that is 
$\bigcap_{L\in\cF} \De^+_L$ is a compact interval $U$ in $\SS^1_\cF$. 

It remains to show that this interval $U=\bigcap_{L\in\cF} \De^+_L$ is reduced to a point. Otherwise, there is and half  leaf $L_+$ whose limit belongs to the interior  of $U$. According to Claim~\ref{cl.decreasing}, this implies that $L_+\cap \De^+_{L_i}\neq \emptyset$ for every $i$.  This contradics the fact that, for $n$ large enough, the leaf $L_n$ is larger (for $\prec$) than the leaf $L$ carrying the half leaf $L_+$ and thus $L\cap \De^+_{L_n}=\emptyset$. 

This contradiction ends the proof of Proposition~\ref{p.onlyup}. 
\end{proof}

\begin{proof}[Proof of item 1 of Theorem~\ref{t.minimal}]
 Assume that the action of $H$ on $\SS^1_\cF$ is minimal. The foliation $\cF$ cannot be conjugated to the trivial foliation otherwise the set $\{N,S\}$ (unique points in $\SS^1_\cF$ which are not limit of leaves) would be $H$-invariant. 
 
 Thus $\cF$ admits non-separated leaves, and we can assume it is from above (up to change the transverse orientation of $\cF$). 
 If it do not admit non-separated leaves from below then the point $O_\cF$  in $\SS^1_\cF$ given by Proposition~\ref{p.onlyup} is a global fix point of $H$.

\end{proof}

\begin{proof}[Proof of the Item 2 of Theorem~\ref{t.minimal}] We assume that $H$ is a group acting minimally on the leaves of a foliation $\cF$ having non-separated leaves, some of them  from above and some of them from below. According to lemma~\ref{l.orientation} up to consider a finite index subgroup of $H$, acting minimally on the leaves of $\cF$, one may assume that $H$ preserves the orientation and transverse orientation of $\cF$. 
 
Recall that the  ends of regular leaves are dense in $\SS^1_\cF$. Thus it is enough to check that any neighborhood of any end of a regular leaf contains points in the orbit for $H$ of any point of $\SS^1_\cF$. Consider a regular leaf $L$ and $\sigma\colon[-1,1]\to \RR^2$  a segment transverse to $\cF$ with $\sigma(0)\in  L$. We will show that the end of $L_+$ belongs to the closure of any $H$-orbit (the same argument holds for the end of $L^-$). 

We denote by $L_t$ the leaf through $t$. 
Consider the basis of neighborhood  $U^+_t$ of the end $L^+$ given by the compact discs in $\DD^2_\cF$ closure of the half plane  bounded by  $L^+_{-t}$, $\sigma([-t,t])$, and $L^+_{t}$. 

Our hypothesis implies 
\begin{clai}\label{cl.entire} There is a dense subset of values of  $t$ so that $L_t$ is not separated at the right.  As a consequence for every $t$ the topological disc $U^+_t$ contains entire leaves. 
\end{clai}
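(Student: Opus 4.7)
The plan is to apply Corollary~\ref{c.dense} to produce a dense-in-$\RR^2$ family of leaves non-separated at the right, transfer this density to the transversal parameter $t$, and then upgrade a single such $t_0\in(-t,t)$ to an entire leaf contained in $U^+_t$ via a Hausdorff-limit/plaque-continuation argument.

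For the density assertion, the item~2 hypothesis gives a pair of leaves non-separated from above; relabelling if necessary, one of them, call it $L^*$, is non-separated at its right. Since the reduction just made ensures that $H$ preserves both orientations of $\cF$ and still acts minimally on leaves, the ``above'' variant of Corollary~\ref{c.dense} produces a dense subset $\cD\subset\RR^2$ made of leaves of $H\cdot L^*$, each non-separated at its right. In any foliation chart $B$ meeting $\sigma$, the $\cF$-saturated set $\cD\cap B$ is dense in $B$, so its transversal coordinates form a dense subset of the transversal interval; transferring through $\sigma$ gives a dense subset of $t\in[-1,1]$ with $L_t$ non-separated at its right.

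For the consequence, fix $t>0$ and pick $t_0\in(-t,t)$ in this dense set; let $L'$ be a leaf non-separated from $L_{t_0}$ at its right. Applying the definition of non-separation with small transversal at $L_{t_0}$ chosen as a subsegment of $\sigma$ around $\sigma(t_0)$, we obtain a sequence $s_n\to t_0$ and a point $p\in L'$ such that $L_{s_n}$ accumulates on $L'$ at $p$. For $n$ large, $s_n\in(-t,t)$, and the half-leaf $L^+_{s_n}$ lies entirely in $U^+_t$: starting from $\sigma(s_n)$ it can cross neither $L^+_{\pm t}$ (disjointness of leaves) nor $\sigma([-t,t])$ again (Lemma~\ref{l.PH}). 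The plaques of $L_{s_n}$ in a foliation chart around $p$ Hausdorff-converge to the plaque of $L'$ through $p$, and chaining foliation charts forward along $L'^+$ — using that $L_{s_n}$ follows $L'$ from near $p$ all the way to the right end of $L^n$ — propagates this convergence, yielding $L'^+\subset\overline{U^+_t}=U^+_t$. Finally $L'$ cannot cross $\sigma([-t,t])$: otherwise $L'=L_s$ for some $s\in[-t,t]\setminus\{t_0\}$, and holonomy along the compact arc in $L_{t_0}$ from $\sigma(t_0)$ to the non-separation point, and symmetrically in $L_s$, forces the unique intersection $L_{s_n}\cap\sigma$ to converge simultaneously to $\sigma(t_0)$ and to $\sigma(s)$, a contradiction. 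The connected leaf $L'$ therefore stays in $U^+_t$, so $U^+_t$ contains the entire leaf $L'$.

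The main delicate step is the plaque-continuation argument establishing $L'^+\subset\limsup L^+_{s_n}$ and the accompanying holonomy argument excluding $L'=L_s$, both crucially using Lemma~\ref{l.PH} that $\sigma$ meets each leaf at most once.
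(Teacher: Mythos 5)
Your proof is correct and follows essentially the same route as the paper: density of the parameters $t$ comes from minimality together with Corollary~\ref{c.dense}, and the entire leaf in $U^+_t$ is the leaf $L'$ non-separated from $L_{t_0}$ at its right. The paper simply cites the earlier $\mathfrak{U}^{right}$-machinery of Sections~\ref{s.background}--\ref{s.feuilletage} for this second point (a non-separated leaf lies in every right-neighborhood of $L_{t_0}$, hence in $U^+_t$), whereas you reprove that fact directly by plaque-continuation plus the unique-intersection Lemma~\ref{l.PH}; the content is the same.
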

\begin{proof}The first sentence is directly implied by the existence of leaves which are non-separated at the right, the fact that  $H$ preserves the orientations of the leaves and acts minimaly on the leaves of $\cF$. 

The second sentence have been seen in Section~\ref{s.feuilletage}. 
\end{proof}

Any leaf $L$ cuts $\DD^2_\cF$ in two discs, $\De^+_L$ and $\De^-_L$ (following the tranverse orientation of $\cF$) whose union $\De^+_L\cup\De^-L$ is $\DD^2_\cF$. 
\begin{clai}\label{cl.chaussette} Under the hypotheses, given any $L$ there are $g_1$, $g_2\in H$ so that 
$g_1(\De^+_L)\subset \mathring \De^-_L$ and $g_2(\De^-_L)\subset \mathring\De^+_L$

As a consequence both $\De^+_L$ and $\De^-_L$ contains points in any $H$-orbit of point in $\DD^2_\cF$.
\end{clai}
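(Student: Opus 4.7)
\textbf{Proof plan for Claim~\ref{cl.chaussette}.} I construct $g_1$; the element $g_2$ is obtained by the symmetric argument with ``above'' and ``below'' swapped, using the hypothesis of non-separated leaves from below. The consequence about $H$-orbits is then immediate: given $x\in\DD^2_\cF$, either $x\in\De^+_L$ and $g_1(x)\in\mathring\De^-_L$, or $x\in\De^-_L$ and $g_2(x)\in\mathring\De^+_L$. By Lemma~\ref{l.orientation} I may pass to a finite-index subgroup of $H$, still acting minimally on leaves, in which every element preserves both orientations of $\cF$; then $g_1(\De^+_L)=\De^+_{g_1(L)}$, and on $\SS^1_\cF$ the closed arc $I^+_L$ (trace of $\De^+_L$) and the open complementary arc (trace of $\mathring\De^-_L$) are bounded by the two ends of $L$. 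The conclusion reduces to finding $g_1\in H$ with $I^+_{g_1(L)}\subset \mathrm{int}(I^-_L)$.

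The non-separation hypothesis is what makes this possible. By Proposition~\ref{p.onlyup}, if $\cF$ had no non-separated leaves from below, then $\bigcap_{L'}\De^+_{L'}$ would collapse to a single $H$-fixed point of $\SS^1_\cF$, contradicting the conclusion; the hypothesis rules this out by producing branching at infinity. Concretely, pick leaves $A,A'$ non-separated from above. A short topological argument using disjointness of leaves forces $A\subset\mathring\De^-_{A'}$, $A'\subset\mathring\De^-_A$, and produces entire leaves $M_n\subset\De^+_A\cap\De^+_{A'}$ accumulating on $A\cup A'$ from above; in $\DD^2_\cF$ the open region $\De^+_A\cap\De^+_{A'}$ closes to a ``bigon'' whose only contact with $\SS^1_\cF$ is the shared end of $A$ and $A'$, so the arcs $I^+_{M_n}$ shrink to that point as $n\to\infty$.

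The heart of the construction is then a transport step. By Corollary~\ref{c.dense}, applied in the upward direction, leaves non-separated from above are dense in $\RR^2$, so $H$-minimality on leaves combined with the rigidity that a leaf of $\cF$ disjoint from $L$ lies entirely in one of the half-planes $\De^\pm_L$ lets me find $h\in H$ placing the non-separated pair $(h(A),h(A'))$ entirely inside $\mathring\De^-_L$; since $h$ preserves $\cF$ and its orientations, this pair is still non-separated from above. Its common above region $\De^+_{h(A)}\cap\De^+_{h(A')}$ is then a bigon contained in $\mathring\De^-_L$ together with a single point of $\SS^1_\cF$. Minimality once more supplies $g_1\in H$ such that $g_1(L)$ meets this bigon; by disjointness of leaves $g_1(L)$ lies entirely inside it, so
\[
\De^+_{g_1(L)}\subset \De^+_{h(A)}\cap\De^+_{h(A')}\subset\mathring\De^-_L,
\]
as required.

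The main obstacle will be this transport step. $H$-minimality on leaves gives density only of the set-theoretic $H$-orbit of a single leaf, whereas here I need to control \emph{simultaneously} the placement of a non-separated pair $(A,A')$ and then of the leaf $L$ inside a narrow leaf-bounded bigon. The technical device making this rigorous is the disjointness of leaves of $\cF$, which repeatedly upgrades ``meets an open leaf-bounded region'' to ``is entirely contained in it''. Ensuring that the transported pair sits deep enough in $\mathring\De^-_L$ that its whole above region remains there, and that a $g_1$-image of $L$ can still be squeezed into the resulting bigon, is the delicate orchestration required.
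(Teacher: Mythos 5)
Your overall skeleton (produce $g_1$, get $g_2$ by symmetry, deduce the orbit statement) is fine, but the geometric mechanism you propose is broken, and the error is in your picture of non-separated leaves. If $A$ and $A'$ are non-separated from above, the common leaves $M_n$ accumulate on both from the positive side, so $A'=\lim M_n$ lies in $\De^+_A$ and likewise $A\subset\mathring\De^+_{A'}$ --- your containments $A\subset\mathring\De^-_{A'}$, $A'\subset\mathring\De^-_A$ are backwards (and under them $\De^+_A\cap\De^+_{A'}$ would be empty, contradicting your own claim that it contains the $M_n$). More importantly, the region $\De^+_A\cap\De^+_{A'}$ is \emph{not} a bigon touching $\SS^1_\cF$ only at the shared end: it contains the entire leaves $M_n$, and the two ends of any leaf have distinct limits at infinity, so its trace on $\SS^1_\cF$ is a nondegenerate arc (in the model $\RR^2\setminus(\{0\}\times(-\infty,0])$ with the horizontal foliation it is the whole closed upper half-plane). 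The region that does collapse to the shared point is the hyperbolic sector between the asymptotic ends, but that sector contains no entire leaf, so it cannot absorb $g_1(L)$. Consequently both the ``shrinking arcs $I^+_{M_n}$'' step and the final squeeze $\De^+_{g_1(L)}\subset\De^+_{h(A)}\cap\De^+_{h(A')}\subset\mathring\De^-_L$ fail; indeed $L$ itself may sit inside $\De^+_{h(A)}\cap\De^+_{h(A')}$ even when $h(A)$ crosses a transversal below $L$.

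The paper's proof uses the non-separation hypothesis through a different, and correct, rigidity statement: if $L_{-t}$ and $L_2$ are non-separated \emph{from below}, then each lies in $\mathring\De^-$ of the other, whence the full half-plane $\De^+_{L_2}$ is trapped in $\mathring\De^-_{L_{-t}}$. Choosing $L_{-t}$ on a transversal just below $L$ (possible by Corollary~\ref{c.dense}) gives $\De^+_{L_2}\subset\mathring\De^-_{L_{-t}}\subset\mathring\De^-_L$. Minimality then provides $h\in H$ with $h(L_2)$ crossing the same transversal below $L$, so $\De^+_L\subset\mathring\De^+_{h(L_2)}$, and pulling back by $h^{-1}$ (which preserves the orientations after the finite-index reduction) yields $h^{-1}(\De^+_L)=\De^+_{h^{-1}(L)}\subset\mathring\De^+_{L_2}\subset\mathring\De^-_L$, i.e.\ $g_1=h^{-1}$. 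No region thin at infinity is needed, and only one application of minimality is used rather than the two-stage transport you attempt to orchestrate.
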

\begin{proof}We prove the first inclusion, the other is obtained by reversing the transverse orientation of $\cF$. 

Considers $L$ a leaf and $\sigma\colon[-1,1]\to \RR^2$  a segment transverse to $\cF$ (positively oriented for the transverse orientation of $\cF$) so that $\sigma(0)\in L$.  There is  $-t\in[-1,0)$ so that the leaf $L_{-t}$ is non-separated from below from a leaf $L_2$, because the leaves non-separated from below are dense in $\RR^2$, due to the minimality of the action of $H$ on the leaves, and the fact that $H$ preserves the transverse orientation of $\cF$. Thus $L_{-t}\subset \De^-_L$, $L_2\subset  \De^-_L$.  Furthermore $\De^-_{L_2}$ contains $L_{-t}$ and thus contains $L$.  One deduces: 
$$\De^+_{L_2}\subset \De^-_L.$$
Now,  there is 
$h\in H$ so that $h(L_2)=L_{-s}$.with $-s\in(-1,0)$.   In particular one gets that $\De^+_L\subset \mathring \De^+(h(L_2)$ and thus 
$$h^{-1}\De^+_L=\De^+_{h^{-1}(L)}\subset \mathring \De^+(L_2)\subset \mathring\De^-_L.$$
This concludes the proof.
\end{proof}

We are ready to conclude the proof  of Theorem~\ref{t.minimal}: Any neighborhood in $\DD^2_\cF$  of any point of $\SS^1_\cF$ contains an entire leaf $L$ (claim~\ref{cl.entire} above), and thus contains either $\De^+_L$ or $\De^-_L$. According to Claim~\ref{cl.chaussette} this neighborhood contains points in any $H$-orbit of points in $\DD^2_\cF$. This shows the minimality of the action of $H$ on $\SS^1_\cF$, concluding. 
\end{proof}

\begin{theo}\label{t.minimaux}
Let $\cF,\cG$  be two transverse foliations on the plane $\RR^2$. Let $H\subset Homeo(\RR^2)$ be a group preserving both foliations $\cF$ and $\cG$. 
\begin{enumerate}
\item If the action of $H$ on $\SS^1_{\cF,\cG}$ is minimal
 then  both foliations $\cF$ $\cG$ have non-separated leaves
from above and non separated leaves from below.

\item Conversely, if both foliations $\cF$ $\cG$ have non-separated leaves
from above and non separated leaves from below and if
 the orbit of every leaf of $\cF$ and $\cG$ is dense $\RR^2$, then the action of $H$ on $\SS^1_{\cF,\cG}$ is minimal.
 \end{enumerate}
\end{theo}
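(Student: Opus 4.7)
The plan is to derive both items from the single-foliation Theorem~\ref{t.minimal} together with the projection properties established in Proposition~\ref{p.Hproj}. The argument is essentially a reduction to the one-foliation setting, so no genuinely new difficulty appears.

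For Item 1, I would exploit the canonical continuous surjective $H$-equivariant projections $\Pi_\cF\colon \SS^1_{\cF,\cG}\to \SS^1_\cF$ and $\Pi_\cG\colon \SS^1_{\cF,\cG}\to \SS^1_\cG$ obtained by restricting the projections of the associated compactifications. Any closed $H$-invariant subset $K\subset \SS^1_\cF$ pulls back to a closed $H$-invariant subset $\Pi_\cF^{-1}(K)\subset \SS^1_{\cF,\cG}$. Minimality on $\SS^1_{\cF,\cG}$ forces this pullback to be empty or everything, and since $\Pi_\cF$ is surjective the same holds for $K$. Hence $H$ acts minimally on $\SS^1_\cF$, and symmetrically on $\SS^1_\cG$. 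Applying Theorem~\ref{t.minimal} Item 1 to each foliation separately yields the existence of non-separated leaves from above and non-separated leaves from below in both $\cF$ and $\cG$.

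For Item 2, the hypothesis that $\cG$ admits non-separated leaves and that every $\cG$-leaf has dense $H$-orbit allows us to apply Proposition~\ref{p.Hproj}: the projection $\Pi_\cF\colon \DD^2_{\cF,\cG}\to \DD^2_\cF$ is then an $H$-equivariant homeomorphism, so the action of $H$ on $\SS^1_{\cF,\cG}$ is topologically conjugate to its action on $\SS^1_\cF$. Since $\cF$ also has non-separated leaves from above and from below and every $\cF$-leaf has dense $H$-orbit, Theorem~\ref{t.minimal} Item 2 applied to $\cF$ gives minimality of the action on $\SS^1_\cF$, and this minimality transports via $\Pi_\cF$ to $\SS^1_{\cF,\cG}$.

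The only step that requires a small check is that the projections $\Pi_\cF$ and $\Pi_\cG$ are truly $H$-equivariant, which is immediate from their characterisation as continuous extensions of the identity on $\RR^2$ and from the uniqueness part of Theorems~\ref{t.foliations} and~\ref{t.countable-singular}: any $h\in H$ extends uniquely to each compactification and these extensions intertwine the projections. With that in hand, both items follow directly from the machinery already developed, so there is no significant obstacle beyond invoking the correct previous statements.
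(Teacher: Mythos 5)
Your proposal is correct and follows essentially the same route as the paper: reduce Item 1 to Theorem~\ref{t.minimal} via the equivariant projections onto $\SS^1_\cF$ and $\SS^1_\cG$, and for Item 2 invoke Proposition~\ref{p.Hproj} to identify $\SS^1_{\cF,\cG}$ with $\SS^1_\cF$ before applying Item 2 of Theorem~\ref{t.minimal}. The only difference is that you spell out the pullback-of-invariant-sets argument and the equivariance check, which the paper leaves implicit.
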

\begin{proof} For item 1, if the action  of $H$ on $\SS^1_{\cF,\cG}$ is minimal then both actions of $H$ on $\SS^1_{\cF}$ and $\SS^1_{\cG}$ are minimal. Thus item 1 follows from Item 1 of Theorem~\ref{t.minimal}.
 
Conversely, as the action on the leaves of $\cF$ and $\cG$ is assumed to be minimal, and they have non-separated leaves, then Proposition~\ref{p.Hproj} implies that both projections $\Pi_\cF$ and $\Pi_\cG$ are injective. That is $\SS^1_{\cF,\cG}=\SS^1_\cF=\SS^1_\cG$.
Now the minimality of the action of $H$ on this circle at infinity is given by item 2 of Theorem~\ref{t.minimal}.

\end{proof}

\section{Action of the fundamental group on the bifoliated plane of an Anosov flow}

\subsection{The bifoliated plane associated to a Anosov flow}

Let $X$ be an Anosov flow on a closed $3$-manifold $M$. Then Fenley and Barbot show that the lift of $X$ on the universal cover of $M$ is conjugated to $\RR^3,\frac{\partial}{\partial x}$; in particular the space of orbits of this lifted flow is a plane $\cP_X\simeq \RR^2$. Then, the center-stable and center-unstable foliations of $X$ induce (by lifitng on the universal cover and projecting on $\cP_X$) a pair of transverse foliations $\cF^s,\cF^u$ on the plane $\cP_X$.  The triple $(\cP_X,\cF^s,\cF^u)$ is called the \emph{bi-foliated plane} associated to $X$. Finally, the natural action of the fundamental group $\pi_1(M)$ on the universal cover of $M$ projects on $\cP_X$ in an action preserving both foliations $\cF^s$ and $\cF^u$.

Fenley and Barbot proved that, if one of the foliation $\cF^s,\cF^u$ is trivial (that is, has no non-spearated leaf and therefore is conjugate to an affine foliation by parallel straight lines) then the other is also trivial.  In that case, one says that $X$ is \emph{$\RR$-covered}.  In that case  the bifoliated plane is conjugated to one of the two possible models:
\begin{itemize}
 \item the plane $\RR^2$ endowed with the trivial horizontal and vertical foliations; Solodov proved that this is equivalent to the fact that $X$ is orbitally equivalent to the suspension flow of a linear automorphism of the torus $\TT^2$;
 \item the restriction of the trivial horizontal and vertical foliation to the strip $|x-y|<1$.
\end{itemize}

\subsection{Injectivity of the projection of $\DD^2_{\cF^s,\cF^u}$ on $\DD^2_{\cF^s}$ and $\DD^2_{\cF^u}$}

The aim of this section is to prove Theorem~\ref{t.Anosov} which is restated  as Proposition~\ref{p.Anosov} below and Theorem~\ref{t.Anosov-minimal} (in next section).

\begin{prop}\label{p.Anosov} Let $X$ be an Anosov flow on a  $3$-manifold.  Then:
\begin{itemize}
\item Either $X$ is topologically equivalent to the suspension flow of a hyperbolic element of $SL(2,\ZZ)$

\item Or both projections of the compactification $\DD^2_{F^s,F^u}$  on $\DD^2_{F^s}$ and $\DD^2_{F^u}$ are homeomorphisms.
\end{itemize}
\end{prop}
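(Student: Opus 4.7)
The plan is to split the argument along the $\RR$-covered versus non-$\RR$-covered dichotomy. In the $\RR$-covered case, the Barbot--Fenley--Solodov classification forces the bifoliated plane $(\cP_X, F^s, F^u)$ to be leaf-homeomorphic to one of exactly two models: either to $\RR^2$ equipped with the horizontal and vertical foliations---Solodov's theorem identifies this case with the suspension of a hyperbolic element of $SL(2,\ZZ)$, which is the first alternative of the proposition---or to the strip $\{(x,y)\in\RR^2\colon |x-y|<1\}$ endowed with the restrictions of the horizontal and vertical foliations. For the strip model the explicit computation recorded just before Proposition~\ref{p.proj} shows that $\DD^2_{F^s,F^u}=\DD^2_{F^s}=\DD^2_{F^u}$ and both projections are the identity, giving the second alternative.

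In the non-$\RR$-covered case, the theorem of Fenley cited in the introduction provides, for each of $F^s$ and $F^u$, both pairs of leaves non-separated from above and pairs non-separated from below. I would then apply Proposition~\ref{p.proj} twice, once with $(\cF,\cG)=(F^s,F^u)$ and once with $(\cF,\cG)=(F^u,F^s)$; each application requires showing that the union of leaves of $\cG$ non-separated at their right from another leaf is dense in $\cP_X$, and the same for ``at their left''. The natural candidate for producing this density is Corollary~\ref{c.dense} applied to the group $H=\pi_1(M)$, which preserves both foliations (after replacing it by a finite-index subgroup preserving orientations, via Lemma~\ref{l.orientation}) and acts on a leaf of $F^s$ or $F^u$ with dense orbit in $\cP_X$ if and only if the corresponding weak foliation is minimal in $M$.

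When $X$ is transitive this is immediate: for a transitive Anosov flow on a closed $3$-manifold, both the weak-stable and the weak-unstable foliations are minimal in $M$, so $\pi_1(M)$ acts minimally on the leaves of $F^s$ and on the leaves of $F^u$. Combining this with the non-separated leaves produced by Fenley's theorem, Corollary~\ref{c.dense} then yields density of the relevant non-separated leaves on all four sides, and Proposition~\ref{p.proj} closes both halves of the argument, giving $\DD^2_{F^s,F^u}=\DD^2_{F^s}=\DD^2_{F^u}$.

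The main obstacle is the non-transitive case, where the foliations need not be minimal in $M$ and Corollary~\ref{c.dense} is not available directly. The strategy would be to invoke Smale's spectral decomposition and write the non-wandering set of $X$ as a finite union of hyperbolic basic sets $\Lambda_1,\dots,\Lambda_k$ of saddle type, and then study, in $\cP_X$, the positions of the lifts of the weak-(un)stable laminations of each $\Lambda_i$. One needs to show that the non-separated leaves of $F^u$ (resp.\ $F^s$) supplied by Fenley can be chosen within the lift of the unstable (resp.\ stable) lamination of some $\Lambda_i$, that their $\pi_1(M)$-orbits are dense in the lift of the basin of attraction (resp.\ repulsion) of $\Lambda_i$, and that the union of these basins is dense in $M$, hence in $\cP_X$. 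Carrying out this basin analysis from the structure theory of hyperbolic basic sets for flows on $3$-manifolds is the technical heart of the argument, as flagged in the introduction.
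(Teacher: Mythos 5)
Your treatment of the $\RR$-covered case (Solodov's suspension alternative versus the strip model, where the projections are visibly homeomorphisms) and of the transitive non-$\RR$-covered case (Fenley's branching result plus Corollary~\ref{c.dense} and Proposition~\ref{p.proj}) is correct. But the non-transitive case is left as a strategy, not a proof, and that is a genuine gap: you yourself flag the ``basin analysis'' as the technical heart and do not carry it out. Worse, the route you sketch is doubtful on its face. Proposition~\ref{p.proj} requires the leaves of $\cG$ non-separated at their right (and, separately, at their left) to be dense in \emph{all} of $\cP_X$; by Fenley these non-separated leaves are the $\pi_1(M)$-orbits of finitely many leaves, and for a non-transitive flow the orbit of a given leaf need not be dense, so there is no reason the hypothesis of Proposition~\ref{p.proj} should hold. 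You would be trying to force a sufficient condition that may simply fail, rather than proving injectivity of the projections by another means.

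The paper's proof is entirely different and avoids the transitive/non-transitive dichotomy altogether. It argues by contradiction: if, say, the projection onto $\DD^2_{F^u}$ is not injective, there is a nontrivial open interval $I$ of $\SS^1_{F^s,F^u}$ containing no limit of an end of a leaf of $F^u$; ends of $F^s$-leaves are then dense in $I$ and all of them are regular (a non-regular end in $I$ would force ends of $F^u$-leaves into $I$ via the hyperbolic-sector description). Sweeping a regular right half-leaf of $F^s$ along a small unstable segment produces a \emph{product region} in Fenley's sense, and \cite[Theorem 5.1]{Fe2} says an Anosov flow with a product region is a suspension. This is uniform in transitivity and is the argument you should aim for; alternatively, to salvage your outline you would have to prove injectivity of the projections for non-transitive flows directly from the attractor/repeller structure, which is essentially the content of Section 8 of the paper and is considerably more work than your sketch suggests.
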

\begin{proof}Assume that the projection on $\DD^2_{\cF^u}$ is not injective. Thus there is a non trivial open interval $I$ of $\SS^1_{\cF^s,\cF^u}$ whose point are not limit of end of leaves of $\cF^u$. Thus a dense subset of point in $I$ are limit of ends of leaves of $\cF^s$. Furthermore, every end of leaf of $\cF^s$ in $I$ is a regular end. Consider a regular end (for instance, a right end) of leaf $L^s_{right}$ of $\cF^s$ whose limit is in the interior of $I$.  Then there is a small unstable segment $\sigma$ through a point of $L^s_{right}$ so that every right half leaf $L^s_{right,t}$  of $\cF^s$ is regular and has its limits in $I$. Then the union of all these  half leaves is what Fenley called a \emph{product region} , in \cite{Fe2}.  Now \cite[Theorem 5.1]{Fe2} asserts that any Anosov  flow  admiting a product region is a suspension flow, concluding.
\end{proof}

\subsection{Minimality of the action on the circle at infinity}
In order to prove Theorem~\ref{t.Anosov} it remais to prove Theorem~\ref{t.Anosov-minimal} below:

\begin{theo}\label{t.Anosov-minimal}
 Let $X$ a flot d'Anosov on a closed  $3$-manifold $M$.  Then $X$ is non-$\RR$-covered if and only if the action of $\pi_1(M)$ on the circle $\SS^1_{F^s,F^u}$ at infinity is minimal.
\end{theo}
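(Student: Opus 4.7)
The plan is to prove the two implications separately. The implication ``minimal action $\Rightarrow X$ non-$\RR$-covered'' is the easier direction, which I would prove by contrapositive. If $X$ is $\RR$-covered, then by Barbot, Fenley, and Solodov the bifoliated plane $(\cP_X,F^s,F^u)$ is conjugate to one of two models: either $\RR^2$ with the trivial horizontal and vertical foliations (when $X$ is orbitally equivalent to a suspension), or the diagonal strip $\{|x-y|<1\}$ with the trivial horizontal and vertical foliations. In the first case $\SS^1_{F^s,F^u}$ is the boundary of a topological square and the four vertices (the center-like points for both foliations) form a $\pi_1(M)$-invariant finite set. In the second case the compactification $\DD^2_{F^s,F^u}$ adds two points $\pm\infty$ to the closed strip, and the pair $\{+\infty,-\infty\}$ is $\pi_1(M)$-invariant. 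Either way the action on $\SS^1_{F^s,F^u}$ admits a proper closed invariant set, hence is not minimal.

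For the forward direction (non-$\RR$-covered implies minimal action), I would first treat the case when $X$ is transitive. Transitivity means that the stable and unstable manifold of every orbit is dense in $M$; lifting this to the universal cover, every leaf of $F^s$ (respectively $F^u$) has dense $\pi_1(M)$-orbit in $\cP_X$. Fenley \cite{Fe3} shows that a non-$\RR$-covered Anosov flow has non-separated stable leaves from above and from below (and the same for unstable leaves). These two facts together are exactly the hypotheses of Theorem~\ref{t.minimaux}, which gives the minimality of the $\pi_1(M)$-action on $\SS^1_{F^s,F^u}$.

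The non-transitive case is the main technical obstacle. Here the $\pi_1(M)$-action on leaves is \emph{not} minimal, so Theorem~\ref{t.minimaux} does not apply directly. I would proceed using the spectral decomposition of the non-wandering set $\Omega(X)$ into finitely many hyperbolic basic sets, together with the fact that on a closed $3$-manifold each non-trivial basic set of an Anosov flow is either an attractor, a repeller, or a saddle-type basic set with $2$-dimensional local product structure. For each non-trivial basic set $\Lambda$ the lifted stable lamination $\widetilde{W}^s(\Lambda)$ is $\pi_1(M)$-invariant, and its leaves have $\pi_1(M)$-orbits dense in the lifted basin of $\Lambda$; similarly for $\widetilde{W}^u(\Lambda)$. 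By selecting an appropriate attracting (or repelling) basic set $\Lambda$, and running a variant of the argument of Theorem~\ref{t.minimaux} restricted to leaves in $\widetilde{W}^s(\Lambda)$, one should be able to produce, for every non-empty open arc $I\subset \SS^1_{F^s,F^u}$, elements of $\pi_1(M)$ sending any prescribed point of $\SS^1_{F^s,F^u}$ into $I$. Since Fenley's non-separated leaves persist inside the basins of non-trivial basic sets, the push--pull dynamics of Claim-type statements like Claim~\ref{cl.chaussette} of Theorem~\ref{t.minimal} can be localized to these basins.

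The hard part is exactly this non-transitive case: one must prove a ``minimality on leaves of a basic set'' statement strong enough to force density on $\SS^1_{F^s,F^u}$, despite the existence of wandering regions of $\cP_X$ where leaves have non-dense $\pi_1(M)$-orbits. Controlling the contribution of these wandering regions to the circle at infinity, and showing that their complement projects onto a dense subset of $\SS^1_{F^s,F^u}$, is where the detailed description of hyperbolic basic sets for $3$-dimensional flows would be needed, and is the step I expect to be the main obstacle.
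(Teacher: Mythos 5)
Your backward direction and your treatment of the transitive case coincide with the paper: $\RR$-covered flows give a finite invariant set at infinity (the paper uses the two center-like points of $\SS^1_{\cF^s}$), and for transitive non-$\RR$-covered flows the combination of Fenley's branching result \cite{Fe3} with Theorem~\ref{t.minimaux} is exactly the paper's argument. The gap is in the non-transitive case, which is precisely the part the paper singles out as its most difficult argument, and your proposal stops at the point where the real work begins. You correctly identify that one should work with attractors and repellers from the spectral decomposition and localize some push--pull dynamics, but two concrete statements are missing and neither follows from what you have written. First, the ping-pong property (the paper's Proposition~\ref{p.attractor}): for an unstable leaf $L^u$ in an attractor, there exist $g^\pm\in\pi_1(M)$ exchanging the two closed half-disks of $\DD^2_X$ bounded by $L^u$. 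Its proof is not a routine variant of Claim~\ref{cl.chaussette}; it hinges on the fact that the attractor lamination $\cA$ and a repeller lamination $\cR$ are disjoint in $\cP_X$, which is the only available criterion for showing that the various translated leaves stay inside prescribed quadrants. Second, and more seriously, the density statement (the paper's Proposition~\ref{p.attracteur}): every non-empty open arc $I\subset\SS^1_X$ contains \emph{both} endpoints of some leaf lying in an attractor or a repeller. Your observation that leaves of $\widetilde W^s(\Lambda)$ have orbits dense in the lifted basin of $\Lambda$ does not yield this: the basins are disjoint open sets, none of them dense in $\cP_X$ when there are several attractors, and density of a basin in the plane would in any case only give one end of a leaf landing in $I$, not an entire leaf with both ends in $I$. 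The paper obtains this by a nested-disk argument (Lemma~\ref{l.regular-repeller}): starting from two regular stable leaves through non-boundary periodic points of an attractor with ends in $I$, it iterates unstable leaves of the attractor by the two periodic deck transformations, extracts a decreasing sequence of disks whose trace at infinity is a subarc of $I$, and shows the intersection contains an entire unstable leaf of the attractor; a final application of the ping-pong proposition replaces it by a regular one. Without these two steps your plan does not close, so as written the forward implication is only established for transitive flows.
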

\begin{rema} If the manifold $M$ is not orientable and if $X$ is $\RR$-covered, then \cite{Fe1} noticed that $X$ is a suspension flow. Thus, on non-orientable manifolds $M$,  Theorem~\ref{t.Anosov-minimal} asserts the minimality of the action on the circle at infinity, excepted if $M$ is a suspension manifold.
\end{rema}

\begin{rema} The bifoliated plane $(\cP_X,\cF^s,\cF^u)$ remains unchanged if we consider a lift of $X$ on a finite cover.  Thus it is enough to prove Theorem~\ref{t.Anosov-minimal} in the case where $M$ is oriented and the action of $\pi_1(M)$ preserves both orientation and transverse orientation of both foliations $\cF^s$, $\cF^u$.
\end{rema}

Thus, up to now we will assume that $M$ is oriented and the action of $\pi_1(M)$ preserves both orientations and transverse orietations of both foliations $\cF^s$, $\cF^u$.

\begin{rema}If $X$ is $\RR$-covered, then $\SS^1_{\cF^s}$ has exactly $2$ center-like points, which are therefore preserved by the action of $\pi_1(M)$ on $\SS^1_{\cF^s}$: this action is not minimal, and thus the action on $\SS^1_{\cF^s,\cF^u}$ is not minimal.
\end{rema}

Thus we are left to prove Theorem~\ref{t.Anosov-minimal} in the case where $X$ is not $\RR$-covered.We will start with the easier case, when $X$ is assumed to be transitive.  The non-transitive case will be done in the whole next section.

\begin{proof}[Proof of Theorem~\ref{t.Anosov-minimal} when $X$ is transitive]
When $X$ is non-$\RR$-covered and transitive, then \cite{Fe3} proved that $\cF^s$ and $\cF^u$ admits non-separated leaves from above and non-separated leaves from below. As (up to consider a finite cover of $M$), the action of $\pi_1(M)$ preserves the orientation and tranverse orientation of  $\cF^s$, and the action is minimal on the set of leaves of $\cF^s$ thus Theorem~\ref{t.minimal} asserts that the action of $\pi_1(M)$ on $\SS^1_{\cF^s}$ is minimimal.  As $\SS^1_{\cF^s}=\SS^1_{\cF^s,\cF^u}$, this concludes the proof.

\end{proof}

\section{Minimality of the action on the circle at infinity for non-transitive Anosov flows: ending the  proof of Theorem~\ref{t.Anosov}}

For ending the proof of Theorem~\ref{t.Anosov}, we are left to prove :
\begin{theo}\label{t.non-transitive}
Let $X$ be a non-transitive Anosov flow on a closed connected $3$-manifold $M$. Then the action of the fundamental group of $M$ on the circle at infinity is minimal.
\end{theo}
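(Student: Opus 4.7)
My plan is to reduce the non-transitive case to a situation very close to item~2 of Theorem~\ref{t.minimal}, replacing the minimality of the action on leaves (which fails here) by a partial minimality coming from the basin of an attracting basic set. The starting point is the spectral decomposition: since $X$ is non-transitive, the non-wandering set $\Omega(X)$ is a disjoint union of finitely many hyperbolic basic sets, and we may select a proper basic attractor $\Lambda_+$, as well as a proper basic repeller $\Lambda_-$ by reversing time. The basin $W^s(\Lambda_+)$ is a non-empty $X$-invariant open subset of $M$ whose lift to the bifoliated plane $(\cP_X,F^s,F^u)$ is an open, $F^s$-saturated, $\pi_1(M)$-invariant subset $U_+\subset \cP_X$, and every leaf of $F^s$ contained in $U_+$ is the projection of a stable manifold of an orbit of $\tilde\Lambda_+$. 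Since $\Lambda_+$ is a transitive hyperbolic basic set, its periodic orbits are dense in $\Lambda_+$ and the stable leaves of its orbits are dense in $W^s(\Lambda_+)$; equivalently the $\pi_1(M)$-orbit of any $F^s$-leaf lying in $U_+$ is dense in $U_+$.

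Next, I would extract from the structure of a hyperbolic basic set of a flow on a closed $3$-manifold the existence of \emph{boundary periodic orbits}. Such a basic set $\Lambda_+$ has finitely many periodic orbits $\gamma_1,\dots,\gamma_n$ whose stable manifolds bound $U_+$: in the bifoliated plane, the leaves of $F^s$ through a lift of $\gamma_i$ lie in $\partial U_+$ and are accumulated on one side by a sequence of $F^s$-leaves in $U_+$; the combinatorial description of the basic set (in terms of a Markov partition of its first-return map) guarantees that among these boundary orbits we find pairs producing $F^s$-leaves non-separated from above and other pairs producing $F^s$-leaves non-separated from below. Applying the same argument to $\Lambda_-$ and $F^u$ produces the analogous pairs of non-separated leaves of $F^u$. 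In particular the conclusion of Theorem~\ref{t.minimal}~(1) and Theorem~\ref{t.minimaux}~(1) is consistent with our situation and moreover we know precisely which leaves carry the non-separation.

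I would then revisit the proof of item~2 of Theorem~\ref{t.minimal} with a view to replacing ``the orbit of every leaf is dense in $\RR^2$'' by the weaker ``the orbit of every leaf of $F^s$ lying in $U_+$ is dense in $U_+$''. The key inputs of that proof are Claim~\ref{cl.entire} (density of leaves non-separated at the right among leaves crossing any transverse segment) and Claim~\ref{cl.chaussette} (given any leaf $L$, existence of $g_1,g_2\in \pi_1(M)$ with $g_1(\De^+_L)\subset \mathring\De^-_L$ and $g_2(\De^-_L)\subset \mathring\De^+_L$). Both can be obtained from our setting: the boundary periodic leaves produced in the previous step provide, via their dense $\pi_1(M)$-orbit in $U_+$, leaves non-separated from above (resp.~below) arbitrarily close to any given point, and the push-of-half-plane statement follows exactly as in Claim~\ref{cl.chaussette} because $U_+$ is open and dense in $\cP_X$ so that the transverse segment through any leaf meets a non-separated boundary periodic leaf of $\Lambda_+$. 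Together with Proposition~\ref{p.Anosov} (which tells us $\SS^1_{F^s,F^u}=\SS^1_{F^s}=\SS^1_{F^u}$, using non-transitivity to rule out the suspension case), this gives the minimality of the $\pi_1(M)$-action on $\SS^1_{F^s,F^u}$.

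The main obstacle I foresee is the structural step: showing that the boundary periodic orbits of a basic set $\Lambda_+$ of a flow on a closed $3$-manifold supply $F^s$-leaves non-separated \emph{both} from above and from below. This is the place where the ``description of hyperbolic basic sets for flows on $3$-manifolds'' advertised in the introduction must be carried out in detail, and it requires a careful analysis of the local boundary structure of $\Lambda_+$ (transverse Cantor set, sides of rectangles in a Markov partition, free faces of the unstable lamination, and orientability considerations arising from the global transverse orientation of $F^s$). Once this structural input is in hand, the adaptation of Theorem~\ref{t.minimal}~(2) described in the previous paragraph is a fairly direct modification of arguments already present in the paper.
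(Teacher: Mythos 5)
Your reduction to a ``partially minimal'' version of item~2 of Theorem~\ref{t.minimal} rests entirely on the structural claim that the boundary periodic orbits of the attractor $\Lambda_+$ produce pairs of $F^s$-leaves non-separated from above \emph{and} pairs non-separated from below, all lying over $\Lambda_+$ so that their $\pi_1(M)$-orbits are dense in $U_+$. You flag this as ``the main obstacle'' and defer it, but it is the whole difficulty, and it is not clear it is true: by Fenley the non-separated leaves of $F^s$ correspond to finitely many periodic orbits, and nothing forces any of these to lie in a prescribed attractor; moreover, even if some leaf $L_1$ through $\Lambda_+$ is non-separated from a leaf $L_2$, the leaf $L_2$ need not meet $\Lambda_+$ (it only lies in $\overline{U_+}$), so its orbit need not be dense in $U_+$ --- and the proof of Claim~\ref{cl.chaussette} needs precisely to bring $L_2$ back across a prescribed transverse segment on a prescribed side. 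So the adaptation you sketch does not close.

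The paper takes a genuinely different route that avoids non-separated leaves altogether. It proves two statements: Proposition~\ref{p.attractor}, that for any unstable leaf $L^u$ in an attractor (or stable leaf in a repeller) there are deck transformations sending each of the two complementary half-planes of $L^u$ strictly inside the other --- the proof uses only the disjointness $\cA\cap\cR=\emptyset$ (so that stable leaves of a repeller miss $L^u$ and stay in a single quadrant), the density of non-boundary periodic points, and the north--south dynamics of the stabilizer of a periodic point; and Proposition~\ref{p.attracteur}, that every non-empty open interval of $\SS^1_X$ contains both endpoints of such a leaf. Corollary~\ref{c.attractor} then shows the interval at infinity spanned by such a leaf absorbs every $\pi_1(M)$-orbit, and minimality follows immediately. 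If you want to complete your argument you would either have to prove your structural claim about two-sided branching located on $\Lambda_+$ (which is not in the paper and would require substantial new work), or switch to the half-plane--swapping mechanism of Proposition~\ref{p.attractor}, which is where the hypothesis of non-transitivity (existence of a disjoint attractor and repeller) is actually used.
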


This result is somewhat less intuitive, as the action of the fundamental group $\pi_1(M)$ on the leaves of $M$ is not minimal, and even, if $X$ has several attractors, may fail to admit a leaf whose orbit is dense.

The proof of the minimality of the action on the circle at infinity will require some background on Anosov flow, in particular on non-transitive Anosov flows.
In the whole section, $X$ is a non-transitive Anosov flow on an orientable closed connected manifold $M$ and the natural action of $\pi_1(M)$ on the bifoliated plane $(\cP_X, \cF^s,\cF^u)$ preserves the orientations and transverse orientations of both foliations.  Recall that we have seen that the compactification of both foliations coincide with the one of each foliation.  We will denote by $\DD^2_X, \SS^1_X$ this compactification and the corresponding circle at infinity. We refer by $*$ for this package of hypotheses and notations.

\subsection{Background on non-transitive Anosov flows}

Let $X$ be a non-transitive Anosov flow. Thus, according to \cite{Fe1,Ba1} $X$ is not $\RR$-covered.

The flow $X$ is a structurally stable flow, so that Smale spectral decomposition theorem splits the non-wandering set of $X$ in basic pieces ordered by \emph{Smale order}: a basic piece is upper another if its unstable manifolds cuts the stable manifold of the other. For this order, the maximal basic pieces are the repellers and the minimal are the attractors. In \cite{Br}, Brunella  noticed that the basic pieces are separated by incompressible tori transverse to the flow.

Consider an attractor $\cA$ of $X$. It is a compact set consisting in leaves of the unstable foliation of $X$, hence it is a compact lamination by unstable leaves. Furthermore the intersection of $\cA$ with a transverse segment $\sigma$ is  a Cantor set. An unstable leaf $W^u$ in $\cA$ is called of \emph{boundary type} if $W^u\cap \sigma$ belongs to the boundary of a connected component of $\sigma\setminus \cA$.

A classical results from hyperbolic theory (see for instance \cite{BeBo}) asserts that the unstable leaves in $\cA$ of boundary type are the unstable manifolds of a finite number of periodic orbits called \emph{periodic orbits fo boundary type}. 

The same happens for repellers $\cR$: they are compact laminations by stable leaves, tranversally a Cantor sets, and they admits finitely many boundary leaves, stable manifolds of finitely many periodic orbits called of boundary type.

In this section, we will focus on attractors and repellers.
Consider   an attractor $\cA$  of $X$, its lift $\tilde A$ on the universal cover, and consider the projection of $\cA$ on the bi-foliatioed plane $\cP_X$. This projection is a closed lamination
by leaves of $\cF^s$ and its cuts every tranverse curve along a Cantor set. By a pratical abuse of notation we will still denote by $\cA$ this lamination of $\cP_X$:  thus $\cA$ denotes at the same time a $2$-dimensional lamination on $M$ and a $1$-dimensional lamination on $\cP_X$.   

The same happens for repeller. 

Let $\cA\subset \cP_X$ and $\cR \subset\cP_X$ be the unstable and stable laminations (respectively)  corresponding to an attractor and a repeller of $X$. Then
\begin{itemize}
 \item $\cA\cap \cR=\emptyset$.  This seems obvious, but it will be a crucial property for us: given an unstable leaf $L^u$  and a stable leaf $L^s$, this will be our unique criterion for knowing that they don't intersect. 
 \item the periodic point contained in $\cA$ (reps. $\cR$) are dense in $\cA$ (resp. $\cR$). 
 \item Each periodic orbit of $X$ has a discrete $\pi_1(M)$-orbit in $\cP_X$
 \item the periodic orbits of boundary types are the $\pi_1(M)$-orbits of finitely many $X$-orbits, and therefore are a discrete set in $\cP_X$. 
 \item Fenley \cite{Fe2} shows that the non-separated stable leaves of $\cF^s$ (resp. $\cF^u$) correspond to finitely many orbits of $X$, and hence to a discrete set of periodic points in $\cP_X$
 \item thus the periodic points  $p$ in $\cA$ (resp. $\cR$) which are not of boundary type and whose unstable (resp. stable) leaf
 is regular are dense in $\cA$.
 \item If $\cA_1,\dots, \cA_k$ are the attractors of $X$ then the union of the stable leaves of $\cF^s$ through the laminations $\cA_1,\dots,\cA_K$ of $\cP_X$ are dijoint open seubsets of $\cP_X$ whose union is dense in $\cP_X$.  The same holds for the repellers.

\end{itemize}

As a straightforward consequence one gets: 

\begin{lemm}\label{l.specialpoints}There is a dense subset of $\cP_X$  of points $x$ 
 whose stable leaf $L^s(x)$ contains a periodic point $p$ in an attractor $\cA$, not of boundary type and so that $L^u(p)$ is regular.  A symmetric statement holds for repellers. 
\end{lemm}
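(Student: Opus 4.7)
The plan is first to reduce the density statement on $\cP_X$ to a density statement inside each $W^s(\cA)$, and then to promote the density of good periodic points inside the attractor to density of their $\cF^s$-leaves by a holonomy argument. Since the open sets $W^s(\cA_i)=\bigcup_{y\in\cA_i}L^s(y)$ associated to the attractors $\cA_i$ are pairwise disjoint and have dense union in $\cP_X$, it is enough to show that, for a fixed attractor $\cA$, the set of points $x\in W^s(\cA)$ whose stable leaf contains a \emph{good} periodic point of $\cA$ (meaning not of boundary type and with regular $\cF^u$-leaf) is dense in $W^s(\cA)$.

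First I would check that the good periodic points of $\cA$ form a dense subset of $\cA$ in $\cP_X$. On $M$, the periodic orbits are dense in $\cA$; the boundary-type orbits of $\cA$ are finite by classical hyperbolic theory, and by Fenley's result recalled earlier the orbits of $X$ whose lift to $\cP_X$ produces a non-regular leaf of $\cF^s$ or $\cF^u$ are also finite. So the good periodic orbits are still dense in $\cA$ in $M$. Since non-empty open subsets of $\cA$ in $\cP_X$ correspond to non-empty $X$-saturated open subsets of $\cA$ in $M$, and since each such open set meets infinitely many periodic orbits of $\cA$ (removing finitely many cannot destroy the density), it contains a good periodic point in $\cP_X$.

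For the holonomy step, I would fix $x\in W^s(\cA)$ together with a neighborhood $V$ of $x$, choose $q\in L^s(x)\cap\cA$, and work in a product chart $B_q=\sigma_q^s\times\sigma_q^u$ at $q$ in which $\cF^s$ is horizontal, $\cF^u$ is vertical and $\cA\cap B_q=K\times\sigma_q^u$ for a Cantor set $K$ containing the $\cF^s$-coordinate of $q$. The compact arc of $L^s(q)=L^s(x)$ joining $q$ to $x$ is covered by finitely many product charts of the pair $(\cF^s,\cF^u)$, and composing their product trivialisations yields an $\cF^s$-holonomy $h\colon I_q\to I_x$ between small neighborhoods of $q$ and of $x$ on transverse $\cF^u$-arcs. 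By continuity of $h$ there is $\varepsilon>0$ such that $h$ sends the $\varepsilon$-neighborhood of $q$ in $I_q$ into $V$; using the density from the previous paragraph, pick a good periodic point $p\in\cA\cap B_q$ with $\sigma_q^u$-coordinate $u_p$ satisfying $|u_p|<\varepsilon$. Then $x'':=h(u_p)$ lies in $V$ and, by construction of $h$, one has $L^s(x'')=L^s(p)\ni p$, which yields the desired density.

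The main obstacle is to justify that the holonomy $h$ is well-defined on a small transversal at $q$: one must guarantee that nearby $\cF^s$-leaves can be followed all along the compact arc from $q$ to $x$ without disappearing, for example near a non-separated leaf of $\cF^s$. This should be handled by a standard cover-and-compose argument using compactness of the arc and the local product structure of the transverse pair $(\cF^s,\cF^u)$ in each chart, shrinking the transversal step by step. The symmetric statement for repellers follows from the same argument after exchanging the roles of $\cF^s$ and $\cF^u$.
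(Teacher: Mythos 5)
Your proof is correct and follows essentially the same route as the paper, which states the lemma as a ``straightforward consequence'' of the facts listed just before it (density of the basins of the attractors, density of periodic orbits in $\cA$, and finiteness of the boundary-type orbits and of the orbits with non-regular leaves). You have merely made explicit the two steps the paper leaves implicit, namely the density of the good periodic points in $\cA$ and the stable-holonomy argument transporting them along $L^s(x)$; the worry you raise about the holonomy is not an issue, since holonomy along a compact leaf arc is always well defined on a sufficiently small transversal, non-separated leaves causing trouble only when one tries to enlarge the transversal.
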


\subsection{Proof of Theorem~\ref{t.Anosov-minimal}}
The two main steps of the proof of Theorem~\ref{t.Anosov-minimal} are  Propositions~\ref{p.attractor} and~\ref{p.attracteur}  below. 
\begin{prop}\label{p.attractor}
 Let $L^u$ be a leaf of $\cF^u$ corresponding to an unstable leaf of $X$ contained in a attractor of $X$.
Let $\De_+$ and $\De_-$ be the closures in $\DD^2_X$ of the half planes in $\RR^2$ bounded by $L^s$.  Then there are $g^+,g^-\in \pi_1(M)$ so that $g^-(\De^-)\subset \De^+$ and $g^+(\De^+)\subset \De^-$.

The same statement holds for stable leaves in the repellers.
 \end{prop}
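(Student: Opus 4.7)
Proof plan: The strategy directly adapts Claim~\ref{cl.chaussette} from the proof of Theorem~\ref{t.minimal}, where minimality of the $H$-action on $\cP_X$ was used both to locate a non-separated leaf $L_{-t}$ close to $L$ from below, and to produce $h\in H$ sending a given non-separated leaf $L_2$ to a leaf $L_{-s}$ close to $L$. Here the analogous roles are played by density of $\pi_1(M)$-orbits in the lamination $\cA\subset\cP_X$: topological transitivity of the basic piece $\cA$ implies that the unstable manifold of any periodic orbit of $X$ contained in $\cA$ is dense in $\cA$, hence the $\pi_1(M)$-orbit of the corresponding leaf of $\cF^u$ is dense in $\cA$ as a lamination of $\cP_X$. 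I construct only $g^+$; the element $g^-$ is symmetric. A finite-index reduction analogous to Lemma~\ref{l.finite-index} allows me to assume $\pi_1(M)$ preserves the orientations of $\cF^u$.

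The first step is to exhibit a leaf $L^u_2\in\cA$ that is non-separated from below from some leaf $L^u_2'$ of $\cF^u$; the partner $L^u_2'$ need not lie in $\cA$. Since $X$ is non-transitive, it is in particular non-$\RR$-covered (Barbot--Fenley), and Fenley's theorem furnishes pairs of non-separated leaves of $\cF^u$, each being the unstable manifold of a periodic orbit of boundary type of some basic piece of $X$. Because $\cA$ has at least one boundary periodic orbit --- reflecting, via Brunella's theorem, that $\cA$ is separated from the other basic pieces of $X$ by incompressible transverse tori --- the unstable manifold of such an orbit provides the required non-separated leaf internal to $\cA$.

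The second step follows Claim~\ref{cl.chaussette} verbatim. Fix a stable segment $\sigma$ transverse to $\cF^u$ with $\sigma(0)\in L^u$. By density of $\pi_1(M)\cdot L^u_2$ in $\cA$, find $h\in\pi_1(M)$ with $h(L^u_2)=L^u_{-s}$ meeting $\sigma$ at $\sigma(-s)$, for $s>0$ arbitrarily small, so that $L^u_{-s}\subset\De^-(L^u)$. Since $L^u_{-s}=h(L^u_2)$ inherits from $L^u_2$ the property of being non-separated from below (from $h(L^u_2')$), the combinatorial chain of Claim~\ref{cl.chaussette} yields
\[
h^{-1}(\De^+(L^u))\subset\mathring{\De^+(L^u_2)}\subset\mathring{\De^-(L^u)},
\]
so that $g^+:=h^{-1}$ does the job.

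The main obstacle is Step 1: certifying that a non-separated leaf $L^u_2$ lies inside $\cA$. This requires Fenley's lozenge description of non-separated leaves in $\cF^u$ (which identifies them with unstable manifolds of boundary periodic orbits of basic pieces), together with Brunella's separating-tori theorem for non-transitive Anosov flows, to ensure that the attractor $\cA$ itself carries such a boundary periodic orbit. Step 2 is then routine, relying on density of $\pi_1(M)\cdot L^u_2$ in $\cA$ --- itself a consequence of the topological transitivity of the basic piece $\cA$ combined with the general fact that the unstable manifolds of periodic orbits are dense in any hyperbolic basic set.
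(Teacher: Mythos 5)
Your proposal takes a genuinely different route from the paper, and it has two gaps that I do not see how to close. The paper's proof of Proposition~\ref{p.attractor} makes no use of non-separated leaves at all: it plays the attractor $\cA_0\ni L^u_0$ against a \emph{repeller} $\cR_1$. One picks an unstable leaf $L^u_1\subset \De^+_0$ in the basin of $\cR_1$ crossing the stable leaf $L^s_0$ through a point of $L^u_0$ and carrying a non-boundary periodic point $p_1$ of $\cR_1$; the disjointness $\cA_0\cap\cR_1=\emptyset$ then traps $L^s(p_1)$ and all its images inside single quadrants of $L^s_0\cup L^u_0$, and the argument is finished by combining the accumulation of $L^s(p_1)$ by its own $\pi_1(M)$-orbit (non-boundary type) with the expanding deck transformation stabilizing $p_1$, which pushes an image of $L^s(p_1)$ past $q_0=L^s_0\cap L^u_1$ into the opposite quadrant. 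Your scheme instead tries to transplant Claim~\ref{cl.chaussette} into the lamination $\cA$ alone, replacing minimality on $\RR^2$ by density of orbits inside $\cA$.

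The first gap is Step 1: nothing you cite establishes that the attractor contains an unstable leaf that is non-separated from below from another leaf. Brunella's tori and the existence of boundary periodic orbits of $\cA$ do not imply that the unstable manifold of such an orbit is non-separated from anything (a boundary leaf of a gap of the Cantor transversal is approached through the gap by leaves \emph{not} in $\cA$, which by itself creates no branching), and Fenley's results give finiteness of the branching leaves and, for \emph{transitive} flows, two-sided branching --- not the localization of a branching leaf inside a prescribed attractor. The second gap is the chain of inclusions in Step 2. The middle inclusion $\mathring{\De^+(L^u_2)}\subset\mathring{\De^-(L^u)}$ is exactly the point of Claim~\ref{cl.chaussette} where one needs the non-separated leaf to pass through $\sigma(-t)$ just below $L^u$ and takes $L^u_2$ to be its \emph{partner}; you never establish any positional relation between $L^u_2$ and $L^u$, so this inclusion is unjustified. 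If you rearrange the argument so that an image of $L^u_2$ sits just below $L^u$, then the leaf whose orbit must subsequently return below $L^u$ is the partner $L^u_2'$ --- which you concede may lie outside $\cA$, where you have no density statement (and density inside $\cA$ would in any case fail to reach $L^u$ from below when $L^u$ is a boundary leaf of a gap on that side). These are precisely the obstructions the paper's attractor-versus-repeller argument is designed to avoid.
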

 \begin{coro}\label{c.attractor}
 Let $L^s$ and $L^u$ be leaves of $\cF^s$ and $\cF^u$ in a repeller and in an attractor, respectively.  Let $I\subset\SS^1_X$ be a segment with non empty interior and whose end points are the limit of both ends of the same leaf, $L^s$ or $L^u$.

 Then every orbit of the action of $\pi_1(M)$ contains points in $I$.
 \end{coro}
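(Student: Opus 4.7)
The plan is to deduce Corollary~\ref{c.attractor} essentially immediately from Proposition~\ref{p.attractor} by a topological argument on the disc $\DD^2_X$. First I would fix notation: let $L$ be the bounding leaf (either $L^s$ in the repeller or $L^u$ in the attractor) whose two ends both limit to the endpoints $p_1, p_2 \in \SS^1_X$ of $I$. These two points together with $L$ form a topological arc in $\DD^2_X$ joining two points of $\SS^1_X$, so $\DD^2_X \setminus L$ has exactly two connected components, whose closures in $\DD^2_X$ are the half-plane discs $\De^+$ and $\De^-$ of Proposition~\ref{p.attractor}. Each intersects $\SS^1_X$ along a closed arc having $\{p_1,p_2\}$ as its endpoints, and by construction one of these two arcs is $\bar I$. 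Label the notation so that $\De^- \cap \SS^1_X = \bar I$ and $\De^+ \cap \SS^1_X$ is the complementary closed arc.

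Next I would invoke Proposition~\ref{p.attractor} applied to $L$: it gives an element $g^+ \in \pi_1(M)$ with $g^+(\De^+) \subset \De^-$. Now take any $\xi \in \DD^2_X$; since $\De^+ \cup \De^- = \DD^2_X$, either $\xi \in \De^-$ (in which case $\xi$ itself is in $\De^-$) or $\xi \in \De^+$ (in which case $g^+(\xi) \in \De^-$). Restricting to $\xi \in \SS^1_X$, every $\pi_1(M)$-orbit meets $\De^- \cap \SS^1_X = \bar I$. This already proves the corollary if $I$ is interpreted as a closed arc.

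The only point requiring slightly more care is when the orbit hits $\bar I$ only at an endpoint $p_i$ (so if $I$ is meant in the open sense). To handle this, I would observe that the inclusion $g^+(\De^+) \subset \De^-$ is necessarily strict: the leaf $g^+(L)$ lies inside $\De^-$, so its endpoints $g^+(p_1), g^+(p_2)$ are in $\De^- \cap \SS^1_X = \bar I$; and the discrete action of $\pi_1(M)$ on $\cP_X$ together with the fact that $L$ is not fixed setwise by $g^+$ (otherwise $g^+$ would preserve both $\De^+$ and $\De^-$, contradicting $g^+(\De^+) \subset \De^-$) implies $g^+(p_i) \notin \{p_1, p_2\}$. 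Consequently, applying $g^+$ once more if needed sends $\{p_1,p_2\}$ into the open arc $\mathring{I}$, so every $\pi_1(M)$-orbit meets the interior of $I$ as well.

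The main conceptual obstacle here is really located inside Proposition~\ref{p.attractor} (constructing the ``contracting'' elements $g^\pm$), not in the corollary itself; once that is in hand, the corollary reduces to the elementary fact that two closed half-discs of $\DD^2_X$ bounded by a common chord cover the whole disc, combined with the flipping provided by $g^+$.
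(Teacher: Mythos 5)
Your argument is correct and is essentially the paper's own proof, which simply notes that Proposition~\ref{p.attractor} yields $g\in\pi_1(M)$ with $g(\SS^1_X\setminus I)\subset I$, so every orbit meets $I$ since $\De^+\cup\De^-=\DD^2_X$. The extra paragraph about pushing the endpoints $p_1,p_2$ into the open arc is not needed for how the corollary is used (in Theorem~\ref{t.non-transitive} the arc $\bar I$ sits inside the open interval $J$ one actually cares about), and its justification that $g^+(p_i)\notin\{p_1,p_2\}$ is the only slightly under-argued point, since distinct leaves can in principle share a limit point at infinity; but this does not affect the substance of the proof.
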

 \begin{proof} According to Proposition~\ref{p.attractor} there is $g\in \pi_1(M)$ so that $g(\SS^1\setminus I)\subset I$, ending the proof.
 \end{proof}

 \begin{prop}\label{p.attracteur} Given any  non-empty open interval $J\subset \SS^1_X$, there is a $L$ which is either a leaf of $\cF^s$ in a repeller or a leaf of $\cF^u$ in an attractor whose both ends have limits in $J$.

 \end{prop}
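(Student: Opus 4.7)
My plan is to combine the contracting dynamics provided by Proposition~\ref{p.attractor} with the orbit-hitting conclusion of Corollary~\ref{c.attractor}, and to transport the result to $J$ via the action of $\pi_1(M)$.

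First, I would fix any attractor $\cA$ and any leaf $L_0\in\cA$ and apply Proposition~\ref{p.attractor} to produce $g\in\pi_1(M)$ with $g(\Delta^-)\subset\mathring{\Delta^+}$, where $\Delta^\pm$ denote the two closed half-discs of $\DD^2_X$ bounded by $L_0$. Since the $\pi_1(M)$-action on the bifoliated plane preserves both orientations, $g$ would induce an orientation-preserving North--South homeomorphism of $\SS^1_X$ with an attracting fixed point $p^+\in\mathring{\Delta^+}\cap\SS^1_X$ and a repelling fixed point $p^-\in\mathring{\Delta^-}\cap\SS^1_X$. The iterates $L_n:=g^n(L_0)$ would lie in $\cA$ by $\pi_1(M)$-invariance, and both of their endpoints on $\SS^1_X$ would converge to $p^+$ as $n\to+\infty$. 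Thus every neighbourhood of $p^+$ would contain a leaf of $\cA$ (namely $L_n$ for $n$ large) with both ends inside, and by $\pi_1(M)$-equivariance the same property would hold at every translate $h(p^+)$ with $h\in\pi_1(M)$. The problem would then reduce to finding $h\in\pi_1(M)$ with $h(p^+)\in J$.

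To find such an $h$, I would let $I_n\subset\Delta^+\cap\SS^1_X$ be the chord interval of $L_n$ (noting $I_n\to\{p^+\}$). Applying Corollary~\ref{c.attractor} to each $I_n$, every $\pi_1(M)$-orbit on $\SS^1_X$ would meet every $I_n$ and hence accumulate on $p^+$. Running the same construction for every attractor, and symmetrically for every repeller using Proposition~\ref{p.attractor} applied to stable leaves, I would obtain a $\pi_1(M)$-invariant set $P\subset\SS^1_X$ of ``attracting'' points, on each of which every $\pi_1(M)$-orbit accumulates. A Zorn minimal-set argument would then show that every non-empty closed $\pi_1(M)$-invariant subset of $\SS^1_X$ contains $\overline{P}$; in particular, if $\overline{P}=\SS^1_X$, then the orbit of $p^+$ would be dense, some $h\in\pi_1(M)$ would send $p^+$ into $J$, and shrinking the neighbourhood $U$ of $p^+$ so that $h(U)\subset J$ would exhibit $h(L_n)\in\cA$ as a leaf with both ends in $J$ for $n$ sufficiently large.

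The hard part will be establishing the density $\overline{P}=\SS^1_X$, equivalently that the set of endpoints on $\SS^1_X$ of leaves belonging to some attractor or some repeller of $X$ is dense. This cannot come from Proposition~\ref{p.attractor} and Corollary~\ref{c.attractor} alone; it must be extracted from the global structure of the non-transitive flow. The ingredients I would marshal are: the finiteness modulo $\pi_1(M)$ of the basic sets and of their boundary periodic orbits; the fact that every $X$-orbit has $\omega$- and $\alpha$-limits in basic sets; the transverse structure given by Brunella's incompressible tori separating basic sets; and Lemma~\ref{l.specialpoints} to place periodic unstable leaves of attractors (and stable leaves of repellers) densely in $\cP_X$. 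Together these should imply that the laminations of all attractors and repellers capture every asymptotic direction of the bifoliated plane, so that their endpoint sets cover a dense subset of $\SS^1_X$ and force $\overline{P}=\SS^1_X$.
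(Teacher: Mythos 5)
There is a genuine gap, and it sits exactly where you flagged it: the density $\overline{P}=\SS^1_X$, i.e.\ the density in $\SS^1_X$ of endpoints of leaves of the attractor and repeller laminations, is asserted with a list of ingredients (``these should imply\dots'') but never proved, and it does not follow easily from them. Lemma~\ref{l.specialpoints} gives density in $\cP_X$ of \emph{stable leaves through} periodic points of attractors --- these are leaves of the basins, which are open and dense, so their ends are dense at infinity (that is the paper's Lemma~\ref{l.regular-periodic}). But the lamination $\cA\subset\cP_X$ itself is closed and nowhere dense, so the density at infinity of the ends of \emph{its} leaves is far from automatic. In the paper this is precisely the content of Lemma~\ref{l.regular-repeller}, whose proof is the technical core: starting from two basin leaves $L^s_+(p_0),L^s_+(p_1)$ with ends in the given interval, one builds a nested family of discs $D(L)$ bounded by the images $h_0^n(L^u_0),h_1^n(L^u_0)$ of an unstable leaf of $\cA$ under the deck transformations of the two periodic points, and uses that $\cA$ is closed in $\cP_X$ to produce a limit leaf of $\cA$ with both ends trapped in the interval (then Proposition~\ref{p.attractor} again to replace it by a regular one). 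Brunella's tori and the finiteness of basic sets do not substitute for this argument.

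A second, independent problem is the claim that $g(\Delta^-)\subset\mathring{\Delta^+}$ makes $g$ a North--South homeomorphism of $\SS^1_X$. This is false: such a $g$ (built in Proposition~\ref{p.attractor} as $h^ng$ with $h$ stabilizing a periodic point) typically fixes the four ends of the stable and unstable leaves of its fixed points in $\cP_X$, and the single inclusion $g(\Delta^-)\subset\mathring{\Delta^+}$ cannot even be iterated, since $g(\Delta^+)\not\subset\Delta^+$. Composing $g^+$ and $g^-$ you do get a nested sequence of discs $g^n(\Delta^+)$ bounded by leaves of $\cA$, but their intersection is an $\cF^u$-saturated set whose trace on $\SS^1_X$ may be a nondegenerate arc rather than a point; so the existence of your special point $p^+$, every neighbourhood of which contains a leaf of $\cA$ with both ends inside, is not established. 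Since the final step of your argument needs a leaf with both ends in an arbitrarily small interval $J$, this gap is not cosmetic. The paper avoids it by a different trapping mechanism: it confines a stable leaf of a repeller inside a region bounded by two attractor leaves and a stable segment, using the key disjointness $\cA\cap\cR=\emptyset$ together with the expansion by the deck transformation of a periodic point of the repeller.
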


\begin{proof}[Proof of Theorem~\ref{t.non-transitive} assuming Propositions~\ref{p.attractor} and~\ref{p.attracteur}]

According to Proposition~\ref{p.attracteur}, every interval $J$ with non-empty interior contains an interval $I$ whose end points are both limit point of the end of a stable or unstable leaf in an a repeller or an attractor, respectively. Now, according to Corollary~\ref{c.attractor}, the interval $I$ contains a point in every $\pi_1(M)$ orbit in $\SS^1_X$.  Thus any $\pi_1(M)$ orbit in $\SS^1_X$ has points in any interval with non-empty interior: in other words, every $\pi_1(M)$ orbit si dense in $\SS^1_X$, or else, the action of $\pi_1(M)$ on $\SS^1_X$ is minimal, ending the proof.
\end{proof}

\subsection{Proof of Proposition~\ref{p.attractor}}

Let $L^u_0$ be an unstable leaf in an attractor $\cA_0$, and $\De^+_0$ be the closure of the upper half plane bounded by $L^u_0$. For proving Proposition~\ref{p.attractor} we want to prove that there is  $f\in\pi_1(M)$ so that $f(\De^-_0)\subset \De^+_0$ (the other announced inclusion is identical).

Consider a point $p_0\in L^u_0$ and $L^s_0$ the stable leaf through $p_0$.

\begin{clai} There is an unstable leaf $L^u_1$ with the following property:
\begin{itemize}\item $L^u_1\subset \De^+_0$
\item $L^u_1$ is contained in the basin of a repeller $\cR_1$
\item $L^u_1$ contains a non-boundary periodic point  $p_1\in L^u_1$  of the repeller $\cR_1$.
 \item  $L^u_1$ cuts the stable leaf $L^s_0$ in a point $L^u_1\cap L^s_0=q_0$.
\end{itemize}
\end{clai}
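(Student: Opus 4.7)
The plan is to produce $L^u_1$ directly as the unstable leaf $L^u(q_0)$ through a carefully chosen point $q_0\in L^s_0\cap\De^+_0$. The choice of $q_0$ must simultaneously force $L^u(q_0)$ to lie in the basin of some repeller and to pass through a non-boundary periodic point of that repeller with regular stable leaf; once such a $q_0$ is found, properties (1) and (4) of the claim are automatic from transversality in the bifoliated plane.

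For the construction, I will first assemble the relevant dense subset of $\cP_X$. Let $V'\subset\cP_X$ be the union of the unstable leaves $L^u(p)$ as $p$ runs through the non-boundary periodic points of the repellers of $X$ whose stable leaf is regular. Using the facts recalled just above---namely that the union of the ($\cF^u$-saturated) basins of the repellers is open and dense in $\cP_X$, that periodic orbits are dense in each repeller, and that the boundary periodic orbits together with those having non-regular stable leaf form a discrete countable set---one deduces that $V'$ is an $\cF^u$-saturated dense subset of $\cP_X$ (this uses continuity of $\cF^u$: $L^u(p_n)\to L^u(p)$ whenever $p_n\to p$ in a repeller).

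The key step is then to transfer this density from $\cP_X$ to the line $L^s_0$: I will show that $V'\cap L^s_0$ is dense in $L^s_0$. Here the $\cF^u$-saturation of $V'$ is crucial: if an open sub-arc $I\subset L^s_0$ were disjoint from $V'$, its $\cF^u$-saturation would be a non-empty open subset of $\cP_X$ (since $L^s_0$ is transverse to $\cF^u$) lying outside $V'$, contradicting the density of $V'$. Since $L^s_0\cap\De^+_0$ is a non-empty open arc of $L^s_0$, I can then pick $q_0$ inside it belonging to $V'$.

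It finally remains to verify the four conclusions for $L^u_1:=L^u(q_0)$. By definition of $V'$, there is a non-boundary periodic point $p_1\in L^u_1$ in some repeller $\cR_1$ with regular stable leaf, yielding (2) and (3). Since transverse leaves of $\cF^s$ and $\cF^u$ meet in at most one point in the bifoliated plane, $L^u_1\cap L^s_0=\{q_0\}$, giving (4). Finally, $q_0\notin L^u_0$ and distinct unstable leaves are disjoint, so $L^u_1$ cannot meet $L^u_0$ and is contained in the closed half-disc $\De^+_0$, giving (1). The main obstacle will be the density of $V'$ in $\cP_X$---that is, the continuity of the unstable foliation along sequences of periodic points and the careful handling of the countable ``bad'' leaves one must avoid---while the transversality trick that transports this density to $L^s_0$ is then immediate.
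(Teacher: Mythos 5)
Your proposal is correct and follows essentially the same route as the paper, which simply invokes the density in $\cP_X$ of the union of unstable leaves in a repeller's basin carrying a non-boundary periodic point (the symmetric statement of Lemma~\ref{l.specialpoints}) and then picks such a leaf in $\De^+_0$ cutting $L^s_0$. Your saturation/transversality argument just makes explicit why density of the $\cF^u$-saturated set $V'$ in the plane yields density of $V'\cap L^s_0$ in $L^s_0$, a step the paper leaves implicit.
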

\begin{proof}The union of unstable leaves in the basin of a repeller and carrying a non-boundary periodic point of this repeller  is dense in $\RR^2$. We can therefore choose such a leaf in $\De^+_0$ and cutting $L^s_0$.
\end{proof}

Let $L^s_1$ be the stable leaf through $p_1$. It is a non-boundary stable leaf contained in the repeller $\cR_1$. Note that $L^s_1$ is disjoint from the attractor $\cA_0$.  Thus
\begin{itemize}\item $L^s_1$ is disjoint from $L^u_0\in\cA_0$.
\item the stable leaf $L^s_1$  is distinct, and therefore disjoint from the stable leaf $L^s_0$.
\end{itemize}
In other words, the union $L^s_0\cup L^u_0$ divides $\cP_X$ in $4$ quadrants and $L^s_1$  contained in one of this quadrants. Let us denote by $C^{\pm,\pm}$ these $4$ quadrants so that
$\De^+_0= C^{-,+}\cup C^{+,+}$ and $L^s_1\subset C^{+,+}$.

Let denote by  $\De^+_1=\De^+(L^s_1)$ the closure of the half plane bounded by $L^s_1$  and contained in $\De^+_0$. Thus $\De^+_1$ is contained in the same quadrant $C^{+,+}$ as $L^s_1$.
We denote by $\De^-_1$ the closure of the other  half plane bounded by $L^s_1$.  Note that $\De^-_1$ contains the $3$ other quadrants, in particular it contains $\De^-_0$ and  $C^{-,+}$.

As the leaf $L^s_1$ is (by assumption) not a boundary leaf of $\cR_1$ it is accumulated on both sides by its $\pi_1(M)$-orbit. Thus there is  a leaf $L^s_2= g(L^s_1)$  in its orbit, cutting  $L^u_1$ at a point $x\in \De^-_1$ arbitrarilly close to $p_1$ and hence $x\in C^{+,+}$.  Notice that $L^s_2$ is contained in the repeller $\cR_1$ and thus is disjoint from $L^u_0\cup L^s_0$. Thus it is contained in one quadrant. As it contained $x\in C^{+,+}$ one has 
$$L^s_2\subset C^{+,+}.$$
 
 Let $h\in\pi_1(M)$ be the generator of the stabilizer $p_1$ so that $L^u_1$ is expanded by $h$.
 We consider the sequence of leaves $h^n(L^s_2)$ which cut $L^u_{1,-}$ at the point $h^n(x)$.
 \begin{clai}For $n$ large enough
 $h^n(L^s_2)$ is contained in the quadrant $C^{-,+}.$
 \end{clai}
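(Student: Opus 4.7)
The plan is to combine the hyperbolic dynamics of $h$ on the line $L^u_1$ with the freedom in choosing $g$. Under the standing orientability hypothesis, $h$ acts on $L^u_1$ as an orientation-preserving homeomorphism fixing $p_1$; since $h$ expands $L^u_1$, the point $p_1$ is a repelling fixed point, so for every $y\in L^u_1\setminus\{p_1\}$ the iterates $h^n(y)$ move monotonically to the end of the half-leaf of $L^u_1\setminus\{p_1\}$ that contains $y$.

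Next I analyze the position of $L^u_1$ relative to the four quadrants. Since $L^u_1\subset \De^+_0 = C^{-,+}\cup C^{+,+}$ and $L^u_1\cap L^s_0=\{q_0\}$, the leaf $L^u_1$ is split by $q_0$ into two half-leaves, lying respectively in $C^{-,+}$ and in $C^{+,+}$; since $p_1\in L^s_1\subset C^{+,+}$, the compact arc $[p_1,q_0]_{L^u_1}$ is contained in $C^{+,+}$, while the half-leaf of $L^u_1$ starting at $q_0$ on the side opposite $p_1$ lies in $C^{-,+}$. Now the crucial step: by transversality of $\cF^s$ and $\cF^u$, the two transverse sides of $L^s_1$ at $p_1$ correspond to the two sides of $p_1$ along $L^u_1$, so the two-sided accumulation of the $\pi_1(M)$-orbit of $L^s_1$ onto $L^s_1$ allows us to choose $g$ so that $x=g(L^s_1)\cap L^u_1$ lies in the open segment $(p_1,q_0)_{L^u_1}$.

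With this choice, $h^n(x)$ moves monotonically along $L^u_1$ away from $p_1$, hence toward $q_0$, and by expansion it eventually crosses $q_0$, so $h^n(x)\in C^{-,+}$ for every $n$ sufficiently large. To upgrade this pointwise fact to the whole leaf, observe that $h^n(L^s_2) = h^ng(L^s_1)$ is a stable leaf contained in $\cR_1$, hence disjoint from $L^u_0\subset\cA_0$ (because $\cA_0\cap\cR_1=\emptyset$), and different from $L^s_0$ (since $L^s_0$ contains the point $p_0\in\cA_0\not\subset\cR_1$), hence disjoint from $L^s_0$. Thus $h^n(L^s_2)$ avoids $L^u_0\cup L^s_0$ and lies in a single quadrant of $\cP_X\setminus(L^s_0\cup L^u_0)$; containing the point $h^n(x)\in C^{-,+}$, it must be contained in $C^{-,+}$. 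The main subtlety is precisely the correct choice of transverse side in selecting $g$, so that $x$ lands on the arc facing $q_0$ rather than on the far side of $p_1$; once this is arranged, the expansion of $h$ and the disjointness of $\cA_0$ and $\cR_1$ close the argument.
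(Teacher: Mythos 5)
Your proof is correct and follows essentially the same route as the paper: you trap $h^n(L^s_2)$ in a single quadrant because it lies in the repeller $\cR_1$ and is therefore disjoint from $L^u_0\cup L^s_0$, and you locate that quadrant by pushing the intersection point $x$ past $q_0$ along the half-leaf of $L^u_1$ on the $\De^-_1$ side using the expansion of $h$. The "crucial step" you isolate (that $x$ must lie on the half-leaf of $L^u_1$ facing $q_0$) is exactly what the paper's prior choice of $x\in\De^-_1$ guarantees, so your re-derivation of the choice of $g$ is redundant but harmless.
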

\begin{proof}
 
 Each leaf $h^n(L^s_2)$ intersects $L^u_1\subset \De^+_0$ and is disjoint from $L^u_0$ (because $h^n(L^s_2)$ is contained in the repeller).  Hence $h^n(L^s_2)$ is contained in $\De^+_0$, and is distinct and therefore disjoint from $L^s_0$.  Thus $h^n(L^s_2)$ is contained in one of the quadrants $C^{+,+}$ of  $C^{-,+}$.
 
 The point $x_n$ tends to infinity in $L^u_1$ and so goes further $q_0=L^s_0\cap L^u_1$. Thus for $n$ large enough $x_n\in C^{-,+}$.  We proved the for $n$ large enough
 $h^n(L^s_2)\subset C^{-,+}$, proving the claim.
 \end{proof}
 
 We conclude the proof of proposition~\ref{p.attractor} by proving : 
 \begin{clai} Consider $n$ large enough so that $h^n(L^u_2)\subset C^{-,+}$. 
 
 Then either $g(\De^-_1)\subset C^{+,+}\subset \De^+_0$  or $h^ng(\De^-_1)\subset C^{-,+}\subset  \De^+_0$
 \end{clai}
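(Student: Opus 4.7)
The plan is to analyze which of the two half-planes bounded by $L^s_2$ equals $g(\De^-_1)$, matching the two alternatives of the claim. First I would record the coarse geometry: since $L^s_2$ is a line properly embedded in $\cP_X$, contained in the open quadrant $C^{+,+}$, and disjoint from both $L^s_0$ (distinct stable leaves) and $L^u_0$ (the latter because $L^s_2\subset\cR_1$ and $L^u_0\subset\cA_0$ lie in distinct basic pieces), $L^s_2$ separates $\cP_X$ into a ``small'' closed half-plane $B\subset C^{+,+}$ and a ``big'' closed half-plane $A\supset L^s_0\cup L^u_0\cup\{p_0\}$, which additionally contains the three other quadrants. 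Analogously, $h^n(L^s_2)\subset C^{-,+}$ determines $A_n\supset L^s_0\cup L^u_0$ and $B_n\subset C^{-,+}$. Since $g$ is a homeomorphism sending $L^s_1$ to $L^s_2$, the closed half-plane $g(\De^-_1)$ equals exactly one of $A$ or $B$, producing a natural dichotomy.

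If $g(\De^-_1)=B$, the first alternative is immediate: $g(\De^-_1)=B\subset C^{+,+}\subset\De^+_0$. So the remaining task is, assuming $g(\De^-_1)=A$, to prove $h^n(A)=B_n$, which will give $h^n g(\De^-_1)=B_n\subset C^{-,+}\subset\De^+_0$. For this I would exploit that $h^n$ fixes $p_1$ and preserves the oriented leaf $L^u_1$. The two open half-leaves of $L^u_1\setminus\{x\}$ lie one in $A$ and one in $B$; the $A$-side is the one passing through $q_0$, since $q_0\in L^s_0\subset A$. Likewise the $A_n$-side of $L^u_1\setminus\{h^n(x)\}$ is the one passing through $q_0$. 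The key observation is that $q_0$ lies \emph{between} $x$ and $h^n(x)$ along $L^u_1$: the arc of $L^u_1$ connecting $x\in C^{+,+}$ to $h^n(x)\in C^{-,+}$ must cross $L^s_0$, and $q_0$ is the unique intersection $L^u_1\cap L^s_0$. Consequently, with respect to the fixed orientation of $L^u_1$, the half-leaf from $x$ containing $q_0$ and the half-leaf from $h^n(x)$ containing $q_0$ point in \emph{opposite} directions. Because $h^n$ preserves the orientation of $L^u_1$, it carries the $A$-side half-leaf from $x$ onto the half-leaf from $h^n(x)$ that does \emph{not} contain $q_0$, namely the $B_n$-side. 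The image $h^n(A)$ is a half-plane of $h^n(L^s_2)$ containing this entire $B_n$-half-leaf, forcing $h^n(A)=B_n$.

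The main delicate point is this orientation bookkeeping on $L^u_1$, but everything has been arranged by the previous claim: $n$ was chosen precisely so that $h^n$ pushes $x$ along $L^u_1$ past $q_0$ into $C^{-,+}$, which is exactly the hypothesis that $q_0$ lies between $x$ and $h^n(x)$ and triggers the side-swap under $h^n$. No further hyperbolic dynamics is needed beyond this pushing property and the orientation-preservation of deck transformations.
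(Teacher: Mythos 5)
Your proof is correct and follows the same overall strategy as the paper's: both observe that $g(\De^-_1)$ is one of the two half-planes bounded by $L^s_2$ (the paper's statement and proof write $L^u_2$ and $g(L^u_1)$, but these are typos for $L^s_2$ and $g(L^s_1)$, as you correctly read them), dispose of the case where it is the small half-plane contained in $C^{+,+}$, and in the remaining case show that $h^n$ sends the big half-plane of $L^s_2$ onto the small half-plane of $h^n(L^s_2)$ contained in $C^{-,+}$. The only genuine divergence is the device used for this last swap. The paper tracks the fixed point $p_1$: since $x$ separates $p_1$ from $q_0$ on $L^u_1$ and the small half-plane cannot contain $q_0$ (else all of $L^s_0$, being disjoint from $L^s_2$, would lie in $C^{+,+}$), one gets $p_1\in g(\De^+_1)$; as $p_1$ is fixed by $h^n$ and $p_1\notin C^{-,+}$, the half-plane $h^ng(\De^+_1)$ cannot be the one contained in $C^{-,+}$, so $h^ng(\De^-_1)$ is. You instead track the half-leaf of $L^u_1$ through $q_0$ and use orientation-preservation of $h^n$ on $L^u_1$ together with the fact that $q_0$ lies between $x$ and $h^n(x)$. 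Both arguments rest on the same geometric inputs ($L^s_2\cap L^u_1=\{x\}$ with $x$ between $p_1$ and $q_0$, and $h^n(x)$ pushed past $q_0$); the fixed-point version is marginally leaner, needing no appeal to the orientation hypothesis in $*$, while yours makes more explicit why the choice of $n$ in the preceding claim is precisely what triggers the side-swap.
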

 
 As $\De^-_1$ contains $\De^-_0$ the claim implies that 
 either $g(\De^-_0)\subset \De^+_0$ or $h^ng(\De^-_0)\subset \De^+_0$ which  concludes the proof of Proposition~\ref{p.attractor}. 

 \begin{proof}[Proof of the claim] Assume $g(\De^-_1)$ is not contained in $C^{+,+}$.  As $g(\De^-_1)$ is one of the half plane bounded by $g(L^u_1)= L^u_2\subset C^{+,+}$ one gets that $g(\De^+_1)$ is the half plane bounded by $L^u_2$ and contained in $C^{+,+}$.  In particular,$g(\De^+_1)$ does not contain $q_0$. As $p_1$ and $q_0$ are  on distinct sides of $L^u_2$  one deduces that $p_1\in g(\De^+_1)$. 
 
 As $p_1$ is the fixed point of $h$ one deduces 
 $$p_1\in h^ng(\De^+_1)$$
 Thus $h^ng(\De^+_1)$ is the half plane bounded by $h^n(L^u_2)$ which is not contained in the quadrant $C^{-,+}$.  Thus $h^ng(\De^-_1)$ is the other half plane bounded by $h^n(L^u_2)$ and is contained in $C^{-,+}$, ending the proof.

 \end{proof}

\subsection{Proof of Proposition~\ref{p.attracteur}}
We want to prove that any open interval $I$ in the circle $\SS^1_X$ contains the two ends of an unstable leaf in an attractor or the two ends of a stable leaf of a repeller. 

\begin{lemm}\label{l.regular-periodic} Assuming $*$,  there are dense subsets $E^s_0$, $E^u_0$ of $\SS^1_X$ so that
\begin{itemize}
 \item any $p\in E^s_0$ is the limit of a regular leaf of $\cF^s$ containing a periodic point $x$ which belongs to an attractor $\cA(p)$, and is not of boundary type.
 \item any $q\in E^u_0$ is the limit of a regular leaf of $\cF^u$ containing a periodic point $y$ which belongs to an repeler $\cR(p)$, and is not of boundary type.
\end{itemize}
\end{lemm}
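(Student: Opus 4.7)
By symmetry it suffices to prove density of $E^s_0$ in $\SS^1_X$; the case of $E^u_0$ will follow from the analogous statement of Lemma~\ref{l.specialpoints} for repellers and $\cF^u$. The plan is to fix an arbitrary non-empty open interval $J\subset\SS^1_X$ and produce a single stable leaf witnessing $J\cap E^s_0\neq\emptyset$.

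First I would use that the limits of ends of regular leaves of $\cF^s$ are dense in $\SS^1_X$ (Theorem~\ref{t.feuilletage} applied to $\cF^s$) to pick a regular stable leaf $L^s_0$ whose right end has limit $r_0$ in the interior of $J$. Then I take a short positively oriented transverse segment $\sigma\colon(-\varepsilon,\varepsilon)\to\cP_X$ with $\sigma(0)\in L^s_0$, and consider the right-end map $t\mapsto r(t)\in\SS^1_X$ of the stable leaf $L^s_t:=L^s(\sigma(t))$. By Lemma~\ref{l.continuity}, $r$ is continuous at every $t$ for which $L^s_t$ is regular; shrinking $\varepsilon$, I may thus arrange that $r(t)\in J$ for every such $t$.

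The heart of the argument is to combine two density statements on $\sigma$. On the one hand, the set $R$ of $t\in(-\varepsilon,\varepsilon)$ with $L^s_t$ regular is cocountable, since the non-regular leaves of $\cF^s$ form a countable set (see Section~\ref{s.background}), hence $R$ is dense. On the other hand, Lemma~\ref{l.specialpoints} provides a dense subset $D\subset\cP_X$ of points $x$ whose stable leaf contains a non-boundary periodic point of some attractor (with $L^u(p)$ regular, but this is irrelevant for us); crucially, the defining condition of $D$ depends only on the stable leaf through $x$, so $D$ is $\cF^s$-saturated. A standard flow-box argument around $\sigma$ then shows that $\sigma^{-1}(D)$ is dense in $(-\varepsilon,\varepsilon)$: given any open $V\subset\sigma$, a thin flow-box neighborhood of $V$ meets $D$ since $D$ is dense, and saturation of $D$ implies the whole stable leaf through that point is in $D$, hence it intersects $V$. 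Picking any $t_0\in R\cap\sigma^{-1}(D)$ then yields the desired leaf $L^s_{t_0}$: it is regular, contains a non-boundary periodic point of an attractor, and has right end $r(t_0)\in J$, showing $r(t_0)\in E^s_0\cap J$.

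The only delicate point in this plan is the transition from the two-dimensional density of $D$ in $\cP_X$ to the one-dimensional density of $\sigma^{-1}(D)$ in the transversal, which rests entirely on the $\cF^s$-saturation of $D$ via the flow-box argument above. Everything else is a direct combination of Lemma~\ref{l.specialpoints}, Lemma~\ref{l.continuity}, and the countability of non-regular leaves recorded in Section~\ref{s.background}, so no further obstacle is expected.
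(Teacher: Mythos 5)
Your overall route (pick a regular stable leaf ending in $J$, pass to a transversal $\sigma$, use continuity of the end map at regular parameters, and push a dense $\cF^s$-saturated set of $\cP_X$ down to a dense set of parameters via a flow box) is different from the paper's, which never uses a transversal at the level of $\SS^1_X$: the paper upgrades Lemma~\ref{l.specialpoints} to the statement that the \emph{regular} stable leaves carrying a non-boundary periodic point of an attractor are already dense in $\cP_X$, and then invokes Lemma~\ref{l.separating} to conclude that the ends of any dense family of regular leaves form a separating family, hence a dense subset of $\SS^1_X$. Your flow-box saturation argument and your use of Lemma~\ref{l.continuity} are both correct.

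However, the final step is a genuine gap. You choose $t_0\in R\cap\sigma^{-1}(D)$, where $R$ is cocountable in $(-\varepsilon,\varepsilon)$ and $\sigma^{-1}(D)$ is dense; but $\sigma^{-1}(D)$ is in fact \emph{countable} (the periodic points of $X$ lift to a countable subset of $\cP_X$, so $D$ is a countable union of stable leaves, each meeting $\sigma$ at most once, by Lemma~\ref{l.PH}). A cocountable set and a countable dense set need not intersect ($\RR\setminus\QQ$ and $\QQ$), so the non-emptiness of $R\cap\sigma^{-1}(D)$ is exactly the point that needs proof, and it is not supplied by the two density statements you combine. You even flag the flow-box step as the only delicate point, while it is this intersection that fails as written. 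The repair is the same observation the paper makes: the non-regular (i.e.\ non-separated) stable leaves correspond to only \emph{finitely many} orbits of $X$ (Fenley, as recalled in the background on non-transitive Anosov flows), whereas the non-boundary periodic orbits in each attractor are infinite and dense in the attractor; hence the set $D'$ of points whose stable leaf is regular \emph{and} carries a non-boundary periodic point of an attractor is still dense and $\cF^s$-saturated in $\cP_X$. Running your flow-box argument with $D'$ in place of $D$ gives $\sigma^{-1}(D')$ dense in $(-\varepsilon,\varepsilon)$, and any $t_0\in\sigma^{-1}(D')$ finishes your proof; alternatively, once $D'$ is known to be dense you can skip the transversal entirely and conclude by Lemma~\ref{l.separating} as the paper does.
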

\begin{proof}

According to Lemma~\ref{l.specialpoints} the union of regular stable leaves containing periodic point of non-boundary type of an attractors are dense in $\cP_X$.  This family is therefore separating, according to Lemma~\ref{l.regular}. Thus the limits of their ends is a dense subset of $\SS^1_X$, as announced.

\end{proof}

\begin{lemm}\label{l.regular-repeller} Assuming $*$, there is a dense subset $E\subset \SS^1_X$ so that every $x\in E$ is the limit of the end a regular leaf of $\cF^s$ (resp. $\cF^u$) contained in a repeller $\cR$ (resp. an attractor $\cA$),  and carrying a periodic point of non-boundary type.
\end{lemm}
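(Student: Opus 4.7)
The plan is to imitate the argument of Lemma~\ref{l.regular-periodic}, producing two $\pi_1(M)$-invariant subsets of $\SS^1_X$, one built from stable leaves in repellers and one from unstable leaves in attractors, and taking $E$ to be their union. I will detail the stable/repeller half; the other half is symmetric.

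First I form the $\pi_1(M)$-invariant family $\cL^s_{rep}$ of regular $\cF^s$-leaves $L^s(p)$ with $p$ a non-boundary periodic point in some repeller $\cR_j$. Such a leaf is automatically contained in the repeller lamination $\cR_j\subset \cP_X$. This family is uncountable: density of periodic orbits in each $\cR_j$, finiteness of the boundary-type orbits, and the fact (Lemma~\ref{l.countable}) that only countably many $\cF^s$-leaves are non-regular each exclude only a negligible subset. Let $E^s_{rep}\subset \SS^1_X$ denote the set of endpoints of leaves in $\cL^s_{rep}$; by construction it is $\pi_1(M)$-invariant.

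The main obstacle is that $\bigcup \cL^s_{rep}\subset \bigcup_j \cR_j$ has empty interior in $\cP_X$, so Lemma~\ref{l.separating} does not apply as it did in Lemma~\ref{l.regular-periodic}. To circumvent this I use Proposition~\ref{p.attractor}: for each $L\in \cL^s_{rep}$ with endpoints $a,b\in \SS^1_X$, the proposition yields $g^+,g^-\in\pi_1(M)$ with $g^+(\overline{\De^+(L)})\subsetneq \overline{\De^-(L)}$ and $g^-(\overline{\De^-(L)})\subsetneq \overline{\De^+(L)}$. The composition $h:=g^-\circ g^+$ then satisfies $h(\overline{\De^+(L)})\subsetneq \overline{\De^+(L)}$ strictly, so the nested sequence $h^n(\overline{\De^+(L)})$ has a non-empty closed intersection $K_L$ contained in the closed arc of $\SS^1_X$ on the $\De^+(L)$ side. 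By $\pi_1(M)$-invariance of $\cL^s_{rep}$, each iterate $h^n(L)$ remains in $\cL^s_{rep}$, so the points $h^n(a), h^n(b)\in E^s_{rep}$ converge to $K_L$; hence $K_L\subset \overline{E^s_{rep}}$.

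The final and hardest step is to conclude $\overline{E^s_{rep}}=\SS^1_X$. I would argue by contradiction: if $U:=\SS^1_X\setminus \overline{E^s_{rep}}$ were a non-empty open (necessarily $\pi_1(M)$-invariant) subset, then by varying $L$ over all periodic orbits of all repellers and reapplying the construction above, one should be able to exhibit an iterate $h_L^n(L')$ with an endpoint inside $U$, contradicting $U\cap E^s_{rep}=\emptyset$. Making this precise is the essential difficulty; it will likely require combining (i) the abundance of distinct periodic orbits across the different repellers, which gives elements $h_L$ with distinct attracting sets $K_L$ spread over $\SS^1_X$, with (ii) the already-known density from Lemma~\ref{l.regular-periodic} of $E^u_0$, consisting of endpoints of unstable leaves through non-boundary periodic points of repellers, in order to produce enough transverse information near any prescribed open arc of $\SS^1_X$. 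The symmetric argument yields density of $E^u_{att}$, and $E:=E^s_{rep}\cup E^u_{att}$ is the claimed dense subset.
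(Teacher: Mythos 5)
There is a genuine gap, and you have in fact flagged it yourself: the entire content of the lemma is the density of $E^s_{rep}$ (resp. $E^u_{att}$) in $\SS^1_X$, and your proposal stops exactly where that has to be proved. What your construction actually delivers is that $\overline{E^s_{rep}}$ is a non-empty closed $\pi_1(M)$-invariant subset of $\SS^1_X$ meeting each set $K_L$ (and even that is slightly overstated: the endpoints $h^n(a),h^n(b)$ of the nested arcs $h^n(\overline{\De^+(L)})\cap\SS^1_X$ accumulate only on the two extremities of the limit arc $K_L\cap\SS^1_X$, not on all of $K_L$). To pass from ``non-empty closed invariant'' to ``equal to $\SS^1_X$'' you would need precisely the minimality of the $\pi_1(M)$-action on $\SS^1_X$ --- but that minimality is Theorem~\ref{t.non-transitive}, whose proof rests on Proposition~\ref{p.attracteur}, which rests on this very lemma; so the contradiction argument you sketch cannot be closed without circularity, and nothing in your setup localizes the sets $K_L$ inside a prescribed open arc $U$.

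The paper's proof avoids this by working locally from the start. Given a non-empty open interval $I\subset\SS^1_X$, Lemma~\ref{l.regular-periodic} produces two periodic points $p_0,p_1$ of the \emph{same} attractor $\cA$ lying on a common unstable leaf $L^u(p_0)$, whose regular stable half-leaves $L^s_+(p_0),L^s_+(p_1)$ end at points $x\neq y$ of $I$. The strip between these two stable half-leaves is a trap: iterating a regular unstable leaf $L^u_0\subset\cA$ by the deck transformations $h_0,h_1$ fixing $p_0,p_1$, either some iterate fails to cross both stable half-leaves and hence already has an end in $[x,y]$, or the nested disks $D(L^u_n)$ force (using that $\cA$ is a \emph{closed} lamination in $\cP_X$) an entire unstable leaf $L^u(t)\subset\cA$ with both ends in $[x,y]\subset I$; a final application of Proposition~\ref{p.attractor} replaces $L^u(t)$ by a regular leaf of $\cA$ carrying a non-boundary periodic point inside the disc it bounds. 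Some mechanism of this kind --- pinning a leaf of the attractor (or repeller) lamination inside an arbitrarily prescribed arc --- is the missing idea; the global invariant-set approach does not supply it.
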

\begin{proof} Consider a non empty open interval $I\subset \SS^1_X$.  According to Lemma~\ref{l.regular-periodic} there is a point $x\in I$ which is the limit of an end $L^s_+(p_0)$ of a regular leave of $\cF^s$ carrying a periodic point $p_0$ in a non-boundary type unstable leaf $L^u(p_0)$ of a attractor $\cA$.

The point $p_0$ is accumulated on both sides by periodic points in $\cA$. We chose $p_1$ so that the limit $y$ of  $L^s_+(p_1)$ belongs to $I$ (that is possible because $L^s_+(p_0)$ is regular) and $L^s_+(p_1)$ intersects
$L^u(p_0)$ at a point $q_1$. Thus let $J\subset I$ be the segment contained in $I$ and whose end points are $x$ and $y$.Notice that $y\neq x$,that is  $J$ has non-empty interior, as $L^s(p_0)$ is a regular leaf.

Now $L^u(p_0)$ is accumulated on both sides by regular unstable leaves contained in the attractor $\cA$ and containing periodic point of non-boundary type.  Let $L^u_0$ be such a leaf, with non empty intersection with $L^s_+(p_0)$.

If $L^u_0$ does not cut $L^s_+(p_1)$, then one ends is contained in the half strip bounded by $L^s_+(p_0)$, the segment of $[p_0,q_1]^u$ and $L^u_+(q_1)$.  As a consequence, the limit of this end belongs to $I$ and we are done.

Thus we may assume now that $L^u_0$  cuts $L^s_+(p_1)$.

Let $h_0$ and $h_1$ be the generators of the stabilizers of $p_0$ and $p_1$, respectively, so that $h_0$ expands $L^s_+(p_0)$ and $h_1$ expands $L^s_+(p_1)$.

We consider the images $\{h_0^n(L^u_0),h_1^n, n\in\NN(L^u_0)\}$ of the leaf $L^u_0$ by the positive iterates of $h_0$ and $h_1$. Each of these images  is an regular unstable leaf in $\cA$, and has a non-empty intersection with either $L^s_+(p_0)$ or $L^s_+(p_1)$. If one of these leaves does not cross both $L^s_+(p_0)$ and $L^s_+(p_1)$, then it has an end in the segment $J\subset I$, and we are done. 

Assume now that every leaf in $\{h_0^n(L^u_0),h_1^n, n\in\NN(L^u_0)\}$ crosses both $L^s_+(p_0)$ and $L^s_+(p_1)$. These images are leaves of $\cF^u$, and therefore they are either disjoint or equal. For $L\in \{h_0^n(L^u_0),h_1^n, n\in\NN(L^u_0)\}$, let $D(L)\subset \DD^2_\cF$ be the disk obtained as follows:
 one cuts along $L$ the strip bounded by $L^s_+(p_0)$ and $L^s_+(p_1)$, one gets two components;   one considers the closure in $\DD^2_\cF$ of these components; now $D(L)$ is the one  containing  the segment $J\subset \SS^1_\cF$.

The disks $D(L)$ are naturally totally ordered by the inclusion and we fix the indexation $\{h_0^n(L^u_0),h_1^n(L^u_0), n\in\NN\}= \{L^u_n\}$ according to this order: $D(L^u_{n+1}\subset D(L^u_n)$.

Consider $D=\bigcap_n(D(L^u_n)$. It is a compact subset of $\DD^2_\cF$  whose intersection with $\SS^1_\cF$ is the segment $J$.
\begin{clai}
$D\cap  (L^s_+(p_0)\cup L^s_+(p_1))=\emptyset$
\end{clai}
\begin{proof}The leaves $h_0^n(L^u_0)$ have their intersection with $L^s(p_0)$ tending to $x$ as $n\to\infty$: one deduces that $D\cap L^s(p_0)=\emptyset$.  The leaves $h_1^n(L^u_1)$ have their intersection with $L^s(p_1)$ tending to $y$, and thus $D\cap L^s(p_1)=\emptyset$.
\end{proof}

\begin{clai} $D\setminus \SS^1_\cF\neq \emptyset$.

\end{clai}
\begin{proof} there is a point $z$ in the interior of $J$ which is the limit of an end of leaf of $\cF^u$. Thus there is an half unstable leaf $L^u_+$ contained in the strip bounded by $L^s_+(p_0)$ and $L^s_+(p_1)$, and whose limit is $z$. Now $L^u_+$ is disjoint from all the $L^u_n$, and therefore
$$L^u_+\subset D(L^u_n), \forall n$$
This concludes the proof of the claim.
\end{proof}

Consider now a point $t\in  D\setminus \SS^1_\cF$. The leaf $L^u(t)$ is disjoint from the leaves $L^u_n$ for any $n$. Thus it has an empty intersection with $(L^s_+(p_0)\cup L^s_+(p_1))$.  As the consequence one gets
$$L^u(t)\subset D$$
In particular, $L^u(t)$ has both ends on $J$.

Suppose now that the point $t\in  D\setminus \SS^1_X$ as been chosen on the boundary of
$D$.  Thus $t$ is a limit of points in $L^u_n\subset \cA$. As $\cA$ is a closed subset of $\RR^2=\mathring{\DD^2_\cF}$ one deduces that $t\in\cA$, and so $L^u(t)\subset \cA$.

One just found a leaf $L^u(t)$ contained in $\cA$ and having both ends in $J\subset I$.  Let $D_t\subset D$ be the disc bounded by $L^u(t)$.We are not yet done, because $L^u_t$ may fail to be a regular leaf. 

Now Proposition~\ref{p.attractor} implies that every unstable leaf has an image by an element $k\in\pi_1(M)$ which is contained in $D_t$, for instance $L^u_0$.  Now $k(L^u_0)$  is a regular unstable leaf in  an attractor which has the limits of its both ends contained in $J\subset I$, ending the proof.

\end{proof}

We are now ready for ending the proof of Proposition~\ref{p.attracteur}, and therefore of Theorem~\ref{t.non-transitive} which ends the proof of Theorem~\ref{t.Anosov-minimal} and Theorem~\ref{t.Anosov}.

\begin{proof}[Proof of Proposition~\ref{p.attractor}]Let $I\subset \SS^1_X$ be a non-empty open interval.  According to Lemma~\ref{l.regular-repeller} there is a regular unstable leaf $L^u_0$,
contained in an attractor $\cA$ and containing a periodic point of non-boudary type $p_0$, and having an end, say $L^u_{0,+}$, whose limit is a point $x\in I$.

As $L^u_0$ is not a boundary leaf of $\cA$ there are unstable leaves in $\cA$ arbitrarily close to $L^u_0$, on both sides of $L^u_0$. As furthermore $L^u_0$ is a regular leaf, one can chose a leaf $L^u_1\subset \cA$ so  that
\begin{itemize}\item the limit of the end $L^u_{1,+}$ is a point $y\in I$ with $[x,y]\subset I$.
 \item there is a  segment $\sigma$ of a stable leaf having both ends $a$ and $b$ on $L^u_0$ and  $L^u_1$ repsectively.
\end{itemize}
We denote by $D_\sigma$ the disc in $\DD^2_X$ bounded by $\sigma$, $[x,y]$ , $L^u_+(a)\subset L^u_0$ and $L^u_+(b)\subset L^u_1$.

 Now according to Lemma~\ref{l.regular-periodic} there is a point $z\in [x,y]$  which is limit of the end $L^u_+$ of a unstable leaf $L^u$ which carries a periodic point $q$ in a repeller $\cR$, and $q$ is not of boundary type. We denote by $h\in \pi_1(M)$ the generator of the stabilizer of $q$ which is expanding along $L^u$.

 The stable leaf $L^s(q)$ is contained in the repeller $\cR$ and is accumulated on both sides by stable leaves in $\cR$.  We denote by $L^s_0$ a stable leaf in $\cR$ crossing $L^u_+$ at a point $x_0$.

 We consider $L^s_n= h^n(L^s_0)$.  It is a stable leaf in $\cR$ which cuts $L^u_+$ at the point $x_n=h^n(x_0)$.

 Not that $x_n\to z$ as $n\to+\infty$.  In particular, $x_n$ belongs to the disc $D_\sigma$ for $n$ large.

 As $\cR\cap\cA=\emptyset$ the leaves $L^s_n$ are disjoint from $L^u_0$  and  $L^u_1$.  As two distinct stable leaves are disjoint they  are (all but at most one of them) disjoint from $\sigma$.

 So for large $n$, the leaf $L^s_n$ is contained in $D_\sigma$ and therefore as its both ends on $[x,y]\subset I$.

We just exhibit a stable leaf in a repeller, whose both ends are in $I$,  that is we ended the proof of Proposition~\ref{p.attracteur}.
\end{proof}

\vspace{.5cm}

{\bf Christian Bonatti} bonatti@u-bourgogne.fr\\  Institut de Math\'{e}matiques de Bourgogne\footnote{The IMB receives support from the EIPHI Graduate School (contract ANR-17-EURE-0002)}, UMR 5584 du CNRS,  Universit\'{e} de
Bourgogne,\\ 21000, Dijon, France. \vspace{.5cm}

\end{document}